\documentclass{amsart}
\usepackage{amsmath}
\usepackage{amsfonts}
\usepackage{amssymb,amscd,amsthm}
\usepackage{geometry}
\usepackage{mathrsfs}
\usepackage[bookmarksnumbered, colorlinks, linktocpage, plainpages]{hyperref}
\usepackage{nameref}
\usepackage{wasysym,amssymb,indentfirst,color}
\usepackage{enumitem}
\usepackage{mathtools}
\usepackage{multirow}
\usepackage{appendix}
\usepackage{booktabs}
\usepackage{ragged2e}
\usepackage{setspace}
\usepackage{array}

\usepackage{cases}
\usepackage{graphicx}
\usepackage{ragged2e}

\usepackage{fmtcount}

\numberwithin{equation}{section}

\newcommand{\algma}{\mathcal{U}}
\newcommand{\Hom}{\text{Hom}}

\newcommand{\lpa}{-liftable }
\newcommand{\sinv}{sphere invariant}

\newcommand{\spc}{\mathbb{C}}
\newcommand{\spr}{\mathbb{R}}
\newcommand{\spo}{\mathbb{O}}

\newcommand{\re}{\operatorname{Re}}

\newcommand{\lif}{\mathrm{lif}\,}
\newcommand{\ext}{\mathrm{ext}\,}
\newcommand\fx[2]{\left<#1,#2\right>}%

\newcommand\fsh[1]{\|#1\|}

\def\O{\mathbb{O}}
\def\R{\mathbb{R}}

\def\abs#1{\left|#1\right|}

\newtheorem{mydef}{Definition}[section]
\newtheorem{rem}[mydef]{Remark}
\newtheorem{eg}[mydef]{Example}
\newtheorem{cor}[mydef]{Corollary}
\newtheorem{prop}[mydef]{Proposition}
\newtheorem{lemma}[mydef]{Lemma}
\newtheorem{thm}[mydef]{Theorem}

\newtheorem{step }[stp]{Step }

\begin{document}
	\title[Octonionic Riesz-Dunford  functional calculus]{Octonionic Riesz-Dunford  functional calculus}	
	\author{Qinghai Huo}
	
	\email[Q.~Huo]{hqh86@mail.ustc.edu.cn}
	\address{Department of Mathematics, Hefei University of Technology, Hefei 230009, China}
	
	\author{Guangbin Ren}
	\email[G.~Ren]{rengb@ustc.edu.cn}
	\address{Department of Mathematics, University of Science and Technology of China, Hefei 230026, China}

	\author{Irene Sabadini}

	\email[I.~Sabadini]{irene.sabadini@polimi.it}
	\address{Politecnico di Milano, Dipartimento di Matematica, Via E. Bonardi 9, 20133 Milano, Italy}
	
	\author{Zhenghua Xu}
	
	\email[Z.~Xu]{zhxu@hfut.edu.cn.}
	\address{Department of Mathematics, Hefei University of Technology, Hefei 230009, China}

	\date{}
	\keywords{Riesz-Dunford functional calculus; nonassociative functional analysis; octonions; spectral theory; para-linear operators.}
	
	\subjclass[2020]{Primary: 17A35; Secondary 46S10, 47B37, 47A70}

	\thanks{Q. Huo is  supported  partially by the National Natural Science Foundation of China (No 12301097) and the Fundamental Research Funds for the Central Universities (No. 	JZ2025HGTB0171).  G. Ren is  supported  partially by the National Natural Science Foundation of China (No. 12571090). I. Sabadini  is partially supported by PRIN 2022 {\em Real and Complex Manifolds: Geometry and Holomorphic Dynamics} and is member of GNASAGA of INdAM. Z. Xu is partially supported by  Anhui Provincial Natural Science Foundation (No. 2308085MA04), Fundamental Research Funds for the Central Universities (No. JZ2025HGTG0250) and China Scholarship Council (Nos. 202506690055, 202506690052).}

	\begin{abstract}

		The Riesz-Dunford functional calculus over the algebra of octonions, denoted by $\mathbb{O}$, has long been an open problem due to the nonassociativity of octonions. Two core obstacles hinder its development: first, the generalization of the resolvent operator series identity  produces unexpected associator terms that invalidate standard expansions; second, the nonassociativity spoils the analyticity of the resolvent operator, a key property for defining a functional calculus via Cauchy integrals. 	In this paper,	we  initiate the study of the Riesz-Dunford functional calculus for bounded  power-associative para-linear operators in Banach octonionic  bimodules. To address the above issues, we introduce several pivotal concepts: power-associative operators (to eliminate the unwanted associator terms and recover valid resolvent series expansions), the notions of regular inverse of $R_s-T$ for  $s\in \O$ (which serve as the octonionic versions of the resolvent operator),  $\mathbb{C}_J$-extendable power-associative operators, and $\mathbb{C}_J$-liftable power-associative operators (to characterize the slice regularity of the resolvent operators). Based on these notions, we define two types of octonionic spectra: the pull-back spectrum $\sigma^*(T)$ and the push-forward spectrum $\sigma_*(T)$. These give rise to  the  left and right slice regular functional calculi of bounded power-associative para-linear operators, respectively.
		This theory unifies the Riesz-Dunford functional calculus over  division algebras ($ \mathbb{C}, \mathbb{H}, \mathbb{O}$) and fills the six-decade-long gap in octonionic (nonassociative) functional analysis.

	\end{abstract}

	\maketitle
	
	\tableofcontents

	\section{Introduction}
	\label{sec:intro}
	
	\subsection{Background and Motivation}

	The development of spectral theory over division algebras has driven progress in operator theory, algebraic geometry, and quantum mechanics. Among the four normed division algebras — real numbers ($\mathbb{R}$), complex numbers ($\mathbb{C}$), quaternions ($\mathbb{H}$), and octonions ($\mathbb{O}$) — the algebra of  octonions stands out as the unique nonassociative and noncommutative one.	
	In contrast to the well-established spectral theory for the complex case and the rapidly developing spectral theory for the quaternionic case, this distinctive algebraic structure of octonions led to the absence of a systematic octonionic spectral theory for six decades prior to recent advancements.
	
	The Riesz-Dunford functional calculus \cite{Dunford1958} stands as a cornerstone of modern operator theory and spectral theory, bridging complex analysis and linear operator theory in Banach spaces. Proposed by F. Riesz and N. Dunford, this theory enables the definition of operator-valued functions $f(T)$ for bounded linear operators $T$ via a canonical Cauchy integral formula: 
	$$f(T)=\frac{1}{2\pi i}\int_{\gamma}f(\lambda)(\lambda \mathcal{I}-T)^{-1}d\lambda.$$ Here, $f$ denotes a function holomorphic in a neighborhood of $T$'s spectrum \(\sigma(T)\), \(\gamma\) represents a simple closed contour enclosing \(\sigma(T)\), and $\mathcal{I}$ represents the identity operator. By establishing this rigorous connection, it resolves the fundamental challenge of extending scalar function operations to operators. This calculus preserves algebraic properties of scalar functions, ensuring coherence between function composition and operator multiplication. Its applications permeate spectral theory, differential equations, and mathematical physics, providing essential tools for analyzing operator behavior. As a foundational framework, it also lays the groundwork for generalizations to unbounded operators and multi-operator systems.

	For associative division algebras $\mathbb{C}$ and  $\mathbb{H}$, spectral theory, the Riesz-Dunford functional calculus theory, is well-known:
	\begin{enumerate}
		\item
		
		Complex spectral theory hinges on the holomorphic functional calculus \cite{Dunford1958}, which is constructed upon two core properties: first, the resolvent operator series identity given by
		\begin{eqnarray}\label{eqintr:complex resol op series}
			(s\mathcal{I} - T)^{-1} = \sum_{n=0}^\infty T^n s^{-(n+1)}
		\end{eqnarray}
		for all complex numbers \( s \) satisfying \( |s| > \|T\| \); and second, the holomorphicity of the function \( (s\mathcal{I} - T)^{-1} \) with respect to the complex variable \( s \) in the resolvent set \( \rho(T) \).
		Here $T$ is a bounded complex linear operator on a complex Banach space.
		\item Quaternionic spectral theory extends the complex case via slice hyperholomorphic functional calculus and it is based on the so-called S-spectrum. It is nowadays a well established field of research, see the books  \cite{AlpColSab2016,AlpColSaba2020,ColGanbook2019,ColGanKimbook2018,colombo2011noncomfunctcalculus} and  references therein, adapting to noncommutativity while resting on associativity.
		The quaternionic spectral theory rooted in the slice-Cauchy integral formula \cite[Section 4.4, 4.5]{colombo2011noncomfunctcalculus}, which inspired the discovery of the so-called S-resolvent operator series identities
		\begin{eqnarray}
			\sum_{n \geq 0} T^n s^{-1 - n} &= -\left( T^2 - 2\operatorname{Re}[s]T + |s|^2 \mathcal{I} \right)^{-1} (T - \overline{s}\mathcal{I}),\qquad |s| > \|T\|;\label{eqintr:H res op l}\\
			\sum_{n \geq 0} s^{-1 - n} T^n &= -\left( T - \overline{s}\mathcal{I}\right) \left( T^{ 2} - 2\operatorname{Re}[s]T + |s|^2 \mathcal{I} \right)^{-1},\qquad |s| > \|T\|.\label{eqintr:H res op r}
		\end{eqnarray}
		Here $T$ is a bounded right quaternionic linear operator in a two-sided quaternionic Banach space and the scalar multiplications are defined by $$(sT)(v):=L_sT(v):=s(Tv),\qquad (Ts)(v):=(TL_s)(v)=T(sv)$$ for any $s\in \mathbb H$ and any element $v$ in the two-sided quaternionic Banach space.
		This  gives rise to the  definitions of  left $S$-resolvent operator as
		\begin{eqnarray}\label{eqintr:SL-1}
			S_{\rm L}^{-1}(s, T) := -\left(T^2 - 2\operatorname{Re}[s]T + |s|^2 \mathcal{I}\right)^{-1}(T - \overline{s}\mathcal{I}),
		\end{eqnarray}
		and the right $S$-resolvent operator as	
		\begin{eqnarray}\label{eqintr:SR-1}
			S_{\rm R}^{-1}(s, T) := -(T - \overline{s}\mathcal{I})\left(T^2 - 2\operatorname{Re}[s]T + |s|^2 \mathcal{I}\right)^{-1}.
		\end{eqnarray}
		These definitions further lead to the crucial notion of S-spectrum defined as \begin{eqnarray}
			\sigma_S(T) = \left\{ s \in \mathbb{H} \, : \, T^2 - 2 \operatorname{Re}[s]T + |s|^2 \mathcal{I} \text{ is not boundedly invertible} \right\}.		
		\end{eqnarray}
		It also proves that the $S$-resolvent operator acting on some elements in the two-sided quaternionic Banach space is a slice regular function with respect to the quaternion variable \( s \) in the resolvent set \( \rho_S(T):=\mathbb{H}\setminus \sigma_S(T) \).
		
		Based on these results and the recently developed theory of slice regular function over quaternions \cite{Gentili2007slice,colombo2011noncomfunctcalculus,Gentili2013slice}, the Riesz-Dunford functional calculus theory, as well as other spectral theories, can be well established in the quaternionic case.
		This method can also be extended to the case of associative Clifford algebras \cite{Colombo2009IsraelJM}.						
		
	\end{enumerate}
	
	In contrast, octonionic nonassociativity presents significant obstacles within these frameworks.
	The field of octonionic functional analysis originated by Goldstine and Horwitz in 1964 \cite{goldstine1964hilbert}, who conducted foundational investigations on octonionic Hilbert spaces. However, recent studies have revealed that one of the axioms in Goldstine and Horwitz’s definition of an octonionic Hilbert space, see \cite{goldstine1964hilbert}, can be derived from the other axioms. This insight has led to the introduction of the crucial concept of para-linearity \cite{huoqinghai2022Riesz}, which is essential for addressing the octonionic generalization of the Riesz representation theorem. It turns out that the appropriate subject of study in octonionic functional analysis is para-linear operators \cite{huo2025BLMSHB,huo2025Trendsmath,huoqinghai2025Oselfadjoint}.

	A preliminary breakthrough in octonionic spectral theory was only achieved recently: for bounded self-adjoint para-linear operators with strong eigenvalues on octonionic Hilbert spaces, we have developed a preliminary spectral theory \cite{huoqinghai2025Oselfadjoint}, based on para-linearity—a nonassociative analog of linearity \cite{huoqinghai2022Riesz}.
	
	However, the spectral theory for general bounded octonionic para-linear operators acting on Banach $\mathbb{O}$-bimodules remains an open problem. Resolving this issue has profound mathematical significance, as it would complete the spectral theories on division algebras, and bears practical relevance for extending nonassociative analysis to domains such as octonionic quantum mechanics.

	\subsection{Core Challenges  and Resolutions}
	
	It is a significant goal in octonionic functional analysis to establish the Riesz-Dunford functional calculus theory for octonionic para-linear operators.
	
	Let $V$ be a Banach $\mathbb{O}$-bimodule (see Definition \ref{def:O-bimod}). A bounded real linear operator $T \in \mathscr{B}_{\mathbb{R}}(V)$ is called \textbf{right para-linear} if
	\[
	\operatorname{Re} B_p(T,x) = 0, \quad \forall \, p \in \mathbb{O}, \, \forall \, x \in V,
	\]
	where the so-called \textbf{second right associator} $B_p(T,x)$ is defined by
	\[
	B_p(T,x) := T(x)p - T(xp),
	\]
	and $\operatorname{Re}: V \to \operatorname{Re}(V)$ denotes the real part operator (see Definition \ref{def:real part}). The set of all {bounded} right para-linear operators on $V$ is denoted by $\mathscr{B}_{\mathcal{RO}}(V)$.		
	The obstacle to generalizing classical spectral theory to the octonionic setting  arises from two fundamental issues, both rooted in the nonassociativity of $\mathbb{O}$ that we discuss below.
	
	\begin{enumerate}[label=(\arabic*), leftmargin=*, itemsep=0.3cm]
		\item \textbf{Generalization of the Resolvent Operator Series Identity}: In light of \eqref{eqintr:complex resol op series} \eqref{eqintr:H res op l} and \eqref{eqintr:H res op r}, for bounded complex linear or right quaternionic linear operators $T$, the power series $$\sum_{n=0}^\infty T^n s^{-(n+1)}, \text{ or }\ \sum_{n=0}^\infty s^{-(n+1)}T^n $$ will converge to a complex linear or \textbf{right quaternionic linear} operator provided $\abs{s}>\fsh{T}$.
		To mimic this approach, we first note that to obtain an \textbf{octonionic right para-linear} operator, we   consider series of the form
		\begin{eqnarray}\label{eqintr:resolv op O}
			\sum_{n\geqslant 0} T^{\circledcirc n} \odot s^{-1-n} \quad \text{or} \quad \sum_{n\geqslant 0} s^{-1-n} \odot T^{n\circledcirc},
		\end{eqnarray}
		where both the multiplication by a scalar and the composition should be suitably replaced to preserve the right para-linearity. Specifically, we use the \textbf{octonionic scalar multiplication 	``$\odot$''} (as defined in \eqref{eqdef:pf} and \eqref{eqdef:fp}),  the \textbf{regular composition ``$\circledcirc$''} (Definition \ref{def:right mod regular composition}) and the \textbf{regular power}  $T^{\circledcirc n}$ or  $T^{n\circledcirc}$ (Definition \ref{def:ncirc}).
		
		However, for general para-linear operators, there appears  no easily expressible {closed} form of the operator defined by the series  \eqref{eqintr:resolv op O}. Indeed, the result we obtained is far more complicated than {in all the associative cases}.
		
		The \textbf{composition associator} associated with the para-linear maps $f$ and $g$ is  defined as
		\[
		[f, g, x]_{\circledcirc} := (f \circledcirc g)(x) - (f \circ g)(x).
		\]
		For any $s \in \O$, let $R_s: V \to V$ denote the right scalar multiplication operator defined by $R_s(v) = vs$. \\
		Nevertheless, we have proved the following result, in which some unexpected associators emerge (see Theorem \ref{thm:Rs-T}):
		\begin{thm}
			Let $V$ be a Banach $\O$-bimodule and $T$ be a bounded para-linear operator on $V$. Suppose $s\in \spc_J\subseteq\O$ for some $J\in \mathbb S$ such that $|s|>\fsh{T}.$ Then we have
			\begin{enumerate}
				\item For all $x\in \spc_J(V):=\re V\oplus J\re V$, we have	\begin{eqnarray}\label{eqintr:1resolveformula}
					(R_s-T)\sum_{n\geqslant 0}\left(T^{\circledcirc n}\odot s^{-1-n}\right)(x)=x+\alpha(s,T)(x).
				\end{eqnarray}
				Here $\alpha(s,T):\spc_J(V)\to V$ is defined by $\displaystyle 	\alpha(s,T)(x):=\sum_{n\geqslant 0}[T,T^{\circledcirc n}, xs^{-1-n}]_{\circledcirc}.$
				\item For all $x\in V$, we have
				\begin{eqnarray}\label{eqintr:2resolveformula}
					\pi_J \sum_{n\geqslant 0}(s^{-1-n}\odot T^{n\circledcirc})(R_s-T)(x)=\pi_J x+\beta(s,T)(x).
				\end{eqnarray}
				Here   $\beta(s,T):V\to \spc_J(V)$ is defined by $\displaystyle \beta(s,T)(x):=\pi_J \sum_{n\geqslant 0}[T^{n \circledcirc},T,x]_{\circledcirc}s^{-1-n}$ and $\pi_J$ is the projection onto $\spc_J(V) $.
			\end{enumerate}
		\end{thm}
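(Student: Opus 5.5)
Both identities are telescoping computations in which every failure of associativity is kept as an explicit error term. The tools are the definitions of the scalar actions $\odot$ in \eqref{eqdef:pf} and \eqref{eqdef:fp}, the regular composition $\circledcirc$, and the regular powers $T^{\circledcirc n}$ and $T^{n\circledcirc}$. The basic facts I will use are these. For $x\in\spc_J(V)$ and $s\in\spc_J$ one has $(T^{\circledcirc n}\odot s^{-1-n})(x)=T^{\circledcirc n}(xs^{-1-n})$. This holds because $\spc_J(V)$ is closed under right multiplication by $\spc_J$, and on it the scalar action reduces to precomposition with $R_{s^{-1-n}}$. Moreover $T^{\circledcirc (n+1)}=T\circledcirc T^{\circledcirc n}$, and $\|T^{\circledcirc n}\|\le\|T\|^n$. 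The norm bound should follow from the corresponding estimate for $\circledcirc$ proved earlier; it guarantees absolute convergence of every series below when $|s|>\|T\|$.

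\textbf{Part (1).} Set $y_n:=xs^{-1-n}\in\spc_J(V)$. Since $\spc_J$ is commutative and associative, $y_ns=y_{n-1}$, and $y_{-1}=x$.
\begin{enumerate}
\item Apply $R_s-T$ termwise, using boundedness to pass through the sum.
\item For the $R_s$ part, I need $T^{\circledcirc n}(y_n)s=T^{\circledcirc n}(y_ns)=T^{\circledcirc n}(y_{n-1})$. This holds because $\re B_p=0$ together with $T^{\circledcirc n}$ being para-linear gives $\spc_J$-linearity on $\spc_J(V)$ for $p\in\spc_J$. I plan to derive this from the earlier lemma that para-linear maps restricted to $\spc_J(V)$ commute with $R_p$ for $p\in\spc_J$, possibly after projecting. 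This is the delicate point discussed below.
\item For the $T$ part, write
\[
T(T^{\circledcirc n}(y_n))=(T\circ T^{\circledcirc n})(y_n)=T^{\circledcirc(n+1)}(y_n)-[T,T^{\circledcirc n},y_n]_{\circledcirc}.
\]
\item Summing, the terms $T^{\circledcirc n}(y_{n-1})$ and $T^{\circledcirc (n+1)}(y_n)$ telescope. What remains is $T^{\circledcirc 0}(y_{-1})=x$ plus $\sum_n[T,T^{\circledcirc n},y_n]_{\circledcirc}=\alpha(s,T)(x)$. The signs must come out to match the statement, so the convention for the associator sign has to be tracked carefully.
\end{enumerate}

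\textbf{Part (2).} Here $x$ is general, so I expand
\[
(s^{-1-n}\odot T^{n\circledcirc})((R_s-T)x)
\]
and apply $\pi_J$ at the end.
\begin{enumerate}
\item Use $T^{(n+1)\circledcirc}=T^{n\circledcirc}\circledcirc T$, so that
\[
T^{n\circledcirc}(Tx)=T^{(n+1)\circledcirc}(x)-[T^{n\circledcirc},T,x]_{\circledcirc}.
\]
\item Use $\pi_J(T^{n\circledcirc}(xs)s^{-1-n})=\pi_J(T^{n\circledcirc}(x)s^{-n})$. This is where right para-linearity and the projection enter: $\re B_p=0$ is exactly what makes $\pi_J(B_s(T^{n\circledcirc},x)\,q)=0$ for $s,q\in\spc_J$. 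I would verify this by decomposing $V=\spc_J(V)\oplus\spc_J(V)^{\perp}$ along a splitting $\O=\spc_J\oplus\spc_J^\perp$, and checking that the $\spc_J(V)$-component of $B_s$ is determined by its real part.
\item Telescoping then leaves $\pi_Jx$ plus $\pi_J\sum_n[T^{n\circledcirc},T,x]_{\circledcirc}s^{-1-n}=\beta(s,T)(x)$.
\end{enumerate}

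The main obstacle is the commutation of the scalar factor $s$ past $T^{\circledcirc n}$ in step 2 of each part. For Part (1) this means showing the regular powers act $\spc_J$-linearly on $\spc_J(V)$; for Part (2) it means showing the second associator is killed by $\pi_J$. Both rely on translating $\re B_p(T,x)=0$ into a statement on the $\spc_J$-slice. I would first try to prove a lemma: if $f$ is para-linear and $p,q\in\spc_J$, then $\pi_J\big((f(xp)-f(x)p)q\big)=0$ for all $x$, and $f(xp)=f(x)p$ for $x\in\spc_J(V)$. The proof would use the real-part characterization $\re(vq)$ and the polarization identity $\re(v\bar q)$ that recovers the $q$-component.
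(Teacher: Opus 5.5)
Your proposal is correct and is essentially the paper's own proof: the same telescoping of the $R_s$-terms against $T\circledcirc T^{\circledcirc n}=T^{\circledcirc(n+1)}$ (resp.\ $T^{n\circledcirc}\circledcirc T=T^{(n+1)\circledcirc}$ under $\pi_J$), with the leftover composition associators collected into $\alpha(s,T)$ and $\beta(s,T)$. The step you flag as delicate is already in the paper. The $\spc_J$-linearity of para-linear maps on $\spc_J(V)$ is Theorem~\ref{lem:paralinear-char2}(4), and $\pi_J B_p(f,x)=0$ for $p\in\spc_J$ is Lemma~\ref{lem:piJBpfx=0}; together with the $\spc_J$-bilinearity \eqref{eq:piJ} of $\pi_J$, the latter also gives the conversion $\pi_J\big((s^{-1-n}\odot T^{n\circledcirc})(y)\big)=\pi_J\big(T^{n\circledcirc}(y)s^{-1-n}\big)$ that you use implicitly. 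The one thing to fix is the norm estimate: the paper proves no bound $\|T^{\circledcirc n}\|\le\|T\|^n$. Use instead $\|T^{\circledcirc n}(x)\|\le 2\|T\|^n\|x\|$ on $\spc_J(V)$, which follows from $T^{\circledcirc n}(a+bJ)=T^n(a)+T^n(b)J$ and $\|\re v\|\le\|v\|$, and which suffices for convergence.
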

		
		The \textbf{identities \eqref{eqintr:1resolveformula} and \eqref{eqintr:2resolveformula} play a key role} in establishing the octonionic version of the Riesz-Dunford functional calculus theory.
		
		To deal with the unexpected associator terms $\alpha(s,T)$ and $\beta(s,T)$, inspired by Lemma \ref{lem:powass} below, we introduce the notion of \textbf{power-associative operators} (Definition \ref{def:powerass}) for which both  $\alpha(s,T)$ and $\beta(s,T)$ vanish.
		It is important to note that the assumption of \textbf{power-associativity} is automatically satisfied  when working in the associative quaternionic case.  Moreover, since an octonionic para-linear operator $T$ can be written as
		\[
		T = \sum_{i=0}^7 e_i \odot T_i,\qquad T_i \text{ is $\O$-linear for }i=0,\dots,7,
		\]
		then $T$ is power-associative (Proposition \ref{prop:commutativepowass})
		also in the case in which it has commuting components $T_i$.   Compared with \cite{Taylor1970acta}  in this article we first address the  functional calculus for several operators with non-associative  coefficients.

		For power-associative operators  (Corollary \ref{cor:reolv op ser}) we proved that:
		\begin{thm}
			Let $T\in \mathscr{B}_{\mathcal{RO}}(V)$ be a power-associative operator and   $s\in \O$ such that $|s|>\fsh{T}$. Then  we have
			\begin{eqnarray}
				(R_s-T)^{\circledcirc -}&=&\sum_{n\geqslant 0}T^{ n}\odot s^{-1-n}\label{intreq:powextinvRs-T};\\
				(R_s-T)^{-\circledcirc }&=&\sum_{n\geqslant 0}s^{-1-n}\odot T^{n} \label{intreq:powlifinvRs-T}.
			\end{eqnarray}
			Here the \textbf{regular inverses} $(R_s-T)^{\circledcirc -}$ and $(R_s-T)^{-\circledcirc }$ are defined in Definition \ref{def:ncirc}.
			
		\end{thm}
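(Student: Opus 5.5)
The plan is to derive the statement as a direct corollary of the preceding theorem (Theorem \ref{thm:Rs-T}), together with the definition of power-associative operators (Definition \ref{def:powerass}) and the definition of the regular inverses in Definition \ref{def:ncirc}.

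First I fix $s\in\O$ with $|s|>\fsh{T}$ and choose $J\in\mathbb S$ with $s\in\spc_J$; if $s$ is real, any $J$ works. The series converges absolutely in operator norm. Indeed, $\fsh{T^{\circledcirc n}}\le \fsh{T}^n$ up to a fixed constant coming from the $\odot$-multiplication estimates, so the sum is dominated by $\sum \fsh{T}^n|s|^{-1-n}<\infty$. Its sum is a bounded right para-linear operator, because each partial sum is and $\mathscr{B}_{\mathcal{RO}}(V)$ is closed in $\mathscr{B}_{\spr}(V)$: the condition $\re B_p(T,x)=0$ passes to limits.

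Next I use power-associativity. By Definition \ref{def:powerass}, and the lemma that motivated it (Lemma \ref{lem:powass}), a power-associative $T$ has $T^{\circledcirc n}=T^{n\circledcirc}=T^n$, and the composition associators $[T,T^n,\cdot]_{\circledcirc}$ and $[T^n,T,\cdot]_{\circledcirc}$ vanish. Hence $\alpha(s,T)=0$ and $\beta(s,T)=0$. Theorem \ref{thm:Rs-T} then gives two identities:
\[
(R_s-T)\Big(\sum_{n\ge0}T^n\odot s^{-1-n}\Big)(x)=x \quad\text{for } x\in\spc_J(V),
\]
\[
\pi_J\Big(\sum_{n\ge0}s^{-1-n}\odot T^n\Big)(R_s-T)(x)=\pi_J x \quad\text{for all } x\in V.
\]

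Finally I match these identities to Definition \ref{def:ncirc}. I expect the regular inverse $(R_s-T)^{\circledcirc-}$ to be defined as the unique para-linear $S$ such that $(R_s-T)S=\mathcal I$ on $\spc_J(V)$. Equivalently, it is the para-linear extension of the right inverse on the slice $\spc_J(V)$, since a para-linear operator is determined by its restriction to $\spc_J(V)$ through the extension/lifting correspondence. Similarly, $(R_s-T)^{-\circledcirc}$ should be the para-linear lift of the left inverse after projecting with $\pi_J$.

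The main obstacle is this matching step. I must check two things:
\begin{itemize}
\item the definition is independent of the choice of $J$ when $s$ is real;
\item uniqueness holds, i.e.\ if a para-linear $S$ satisfies the slice identity, then it coincides with the series.
\end{itemize}
For uniqueness I would argue as follows. $(R_s-T)$ restricted appropriately is injective/surjective on the slice, because a Neumann-type argument shows that $\sum T^n\odot s^{-1-n}$ is also a left inverse there. Then the difference of two candidate inverses vanishes on $\spc_J(V)$, and therefore vanishes identically by para-linearity.
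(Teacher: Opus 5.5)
Your reduction matches the paper's: power-associativity gives $T^{\circledcirc n}=T^{n\circledcirc}=T^n$ and $\alpha(s,T)=\beta(s,T)=0$ by Lemma~\ref{lem:powass}, and Theorem~\ref{thm:Rs-T} then yields your two identities. The gap is in the matching step, which you flag yourself.

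First, Definition~\ref{def:ncirc} is not a slice characterization. It sets $(R_s-T)^{\circledcirc -}=\ext\big((R_s-T)^{-1}|_{\re V}\big)$ and $(R_s-T)^{-\circledcirc}=\lif\big(((R_s-T)^{-1})_{\R}\big)$. Here $(R_s-T)^{-1}$ is the genuine two-sided inverse in $\mathscr{B}_{\R}(V)$, which exists by Lemma~\ref{lem:Rs-Tinv} because $\fsh{R_{s^{-1}}T}=|s|^{-1}\fsh{T}<1$. No $J$ enters, so your $J$-independence worry is moot. Your guessed characterization is not the definition: an $S$ with $(R_s-T)S=\mathcal I$ on $\spc_J(V)$ exists only if $(R_s-T)^{-1}$ is right $\spc_J$-linear on $\spc_J(V)$.

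More seriously, your uniqueness argument rests on the series also being a left inverse of $R_s-T$ on $\spc_J(V)$, and that is false. $R_s-T$ does not map $\spc_J(V)$ into itself. Concretely, take $V=\O$, $T=L_{e_1}$, $s=2e_2$. By \eqref{eq:Lqp} the series equals $L_a$ with $a=\sum_{n}e_1^n s^{-1-n}=-\tfrac13(e_1+2e_2)$. Yet $a\big((R_s-T)(1)\big)=a(2e_2-e_1)=1-\tfrac43 e_3\neq 1$, even though $1\in\re V$.

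What closes the argument, exactly as in the paper's proof of \eqref{eq:extinvRs-T} and \eqref{eq:lifinvRs-T}, is only the bijectivity of $R_s-T$ on all of $V$; no one-sided inverse on the slice is needed.

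For the first formula, apply $(R_s-T)^{-1}$ to your first identity. This shows the series agrees with $(R_s-T)^{-1}$ on $\spc_J(V)\supseteq\re V$, hence with $\ext\big((R_s-T)^{-1}|_{\re V}\big)$ on $\re V$. Both are right para-linear, so part (2) of the Uniqueness Lemma~\ref{lem:Uniqueness Lemma} gives equality on $V$.

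For the second formula, which your proposal never actually matches, put $x=(R_s-T)^{-1}y$ in your second identity. This gives $\pi_J\sum_{n}(s^{-1-n}\odot T^{n})(y)=\pi_J(R_s-T)^{-1}(y)$ for every $y\in V$. Taking real parts (using $\re\pi_J=\re$) shows that the series and $\lif\big(((R_s-T)^{-1})_{\R}\big)$ have the same real part. Part (1) of the Uniqueness Lemma then finishes.
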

		The so-called \textbf{regular inverses} are defined by two crucial bijections: {$\lif$ }and $\ext$ (see Definition \ref{def:lif and ext}), which serve as a bridge connecting real linear operators to para-linear operators. It is worth noting that \textbf{$R_s$ is not, in general, right para-linear} (and it is not right quaternionic linear in the quaternionic case). {Thus $R_s$ will always be considered as a real linear operator.}

		We use the \textit{regular inverses} $(R_s-T)^{\circledcirc -}$ and $(R_s-T)^{-\circledcirc }$ to define the left and right \textbf{octonionic resolvent operators}, respectively (see Definition \ref{def:resolvent op}). Roughly speaking, we employ $(R_s-T)^{\circledcirc -}$ to \textbf{pull back} to the real part $\re V$ or $\spc_J$-slice $\spc_J(V)$, and then \textbf{extend} the result to the entire space by means of the bijection $\ext$; symmetrically, we use $(R_s-T)^{-\circledcirc }$ to \textbf{push forward} to the projection onto the real part $\re V$ or  $\spc_J(V)$, and then \textbf{lift} the outcome to the full target space via the bijection $\lif$. This approach runs  through in this framework.
		
		We further note that this approach is equally applicable to the quaternionic case, where the resolvent operators defined in this manner coincide with those presented in \eqref{eqintr:SL-1} and \eqref{eqintr:SR-1} (see Remark \ref{rem:resop}).

		\item \textbf{Slice Regularity of the Resolvent Operator}: 			
		Classical spectral theory relies on the holomorphicity (or slice regularity in the case of quaternions) of resolvent operators, which  forms the foundation of functional calculus through Cauchy integrals. For octonionic para-linear operators, nonassociativity gives rise to associators when commuting the  operator  $\left( \frac{\partial}{\partial x} + \frac{\partial}{\partial y}R_J \right)$  with $T$ and $R_s$. This phenomenon causes the fact that the resolvent operators $(R_s - T)^{\circledcirc -}$ and $(R_s - T)^{-\circledcirc }$ lose their slice regularity, in general. To characterize in which cases the resolvent operators are slice regular, we introduce the concepts of \textbf{$\mathbb{C}_J$-extendable operators} and \textbf{$\mathbb{C}_J$-liftable operators} (see Definitions \ref{def:ext pa} and \ref{def:lpa}), which induce the concepts of octonionic \textbf{pull-back spectrum} $\sigma^{*}(T)$ and \textbf{push-forward spectrum} $\sigma_{*}(T)$ (Definitions \ref{def:pullback spec} and \ref{def:pushforward sepc}).
		\begin{mydef}[Octonionic spectrum]\label{intrdef:pullback spec}
			Let $T\in \mathscr{B}_{\mathcal{RO}}(V)$ be a bounded right para-linear operator. We define the \textbf{pull-back resolvent set} of $T$ as	
			$$\rho^*(T):=\bigcup_{J\in \mathbb S}\rho_J^*(T),$$
			where the \textbf{pull-back slice-resolvent set} of $T$ for  $J\in \mathbb S$ is defined as
			\[
			\begin{split}\rho_J^*(T):=&\{s\in \mathbb C_J: R_s-T \text{ is bounded invertible and }(R_s-T)^{-1}
				\\
				&
				\text{is } \mathbb C_J-\text{extendable power associative}    \}.
			\end{split}\]
			The \textbf{pull-back spectrum} of $T$ is defined as	$$\sigma^*(T):=\O\setminus \rho^*(T).$$
			Similar definitions can be given  for the \textbf{push-forward spectrum}.
		\end{mydef}
		We point out that the requirement of being $\mathbb{C}_J$-extendable or $\mathbb{C}_J$-liftable holds automatically  for the operator $(R_s-T)^{-1}$ in the quaternionic case. Thus the octonionic spectrum can be compared with the quaternionic the S-spectrum (see Remark \ref{rem:pull-back spec}).
		
		Now we can establish the slice regularity for  resolvent operators on the  resolvent set as in classical cases, see Theorems \ref{thm:rhosliceopen} \ref{thm:Rs-Tsliceregular}, \ref{lem:sigmaTbd}, \ref{thm:extspec cpt} for the  pull-back spectrum and Theorems \ref{thm:pushforw slice open}, \ref{thm:liftRs-Tsliceregular}, \ref{thm:pushforw bound}, \ref{thm:pushfw cpt}  for the push-forward spectrum.
		\begin{thm}\label{intrthm:Rs-Tsliceregular}
			Let $T\in \mathscr{B}_{\mathcal{RO}}(V)$ be a bounded right para-linear operator. Then both $\rho^*(T)$ and ${\rho}_{*}(T)$ are   slice-open, and the following hold:
			\begin{enumerate}
				\item  For any   $v\in \re V$ and any octonionic linear functional  $\phi\in V^{*_{\O}}$, the function $\phi ((R_s-T)^{\circledcirc -}(v))$ with respect to the variable $s$  is right slice regular on $\rho^*(T)$.
				\item For any   $v\in V$ and for any octonionic linear functional  $\phi\in V^{*_{\O}}$,  let $s\in {\rho_*}_{J}(T)$  $$g(s):=\phi(	\pi_J\,  (R_s-T)^{-\circledcirc}(v)).$$ Then $g:{\rho_*}_{J}(T)\to  \spc_J$ is holomorphic.
			\end{enumerate}
			Further if $T$ is power-associative, then $s\in {\rho}^{*}(T)\cap {\rho}_{*}(T) $ for $\abs{s}>\fsh{T}$, both $\sigma^*(T)$ and ${\sigma}_{*}(T)$ are non-empty compact.
		\end{thm}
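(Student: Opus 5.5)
We prove slice-openness and regularity one slice at a time, by reducing each slice to classical complex Banach space theory, and then obtain compactness and non-emptiness from the series expansions of Corollary \ref{cor:reolv op ser}.

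\textbf{Slice-openness.} Fix $J\in\mathbb S$ and $s_0\in\rho_J^*(T)$. Since $R_{s}-R_{s_0}=R_{s-s_0}$ has norm $|s-s_0|$, a Neumann series gives that $R_s-T$ is boundedly invertible for $s\in\spc_J$ near $s_0$. Concretely,
\begin{equation*}
(R_s-T)^{-1}=\sum_{k\ge0}(R_{s_0}-T)^{-1}\bigl(R_{s_0-s}(R_{s_0}-T)^{-1}\bigr)^{k}.
\end{equation*}
We must then check that $\spc_J$-extendable power-associativity survives this perturbation. On $\spc_J(V)=\re V\oplus J\re V$, right multiplication by elements of $\spc_J$ is associative, since $\spc_J$ is a commutative associative subalgebra. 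The defining identities of $\spc_J$-extendability are linear conditions on the restriction to $\spc_J(V)$ that are closed under norm limits. They are also stable under composition with $R_{s_0-s}$ for $s_0-s\in\spc_J$, because $R_{s_0-s}$ preserves $\spc_J(V)$. Passing to the limit in the partial sums therefore keeps $s$ in $\rho_J^*(T)$, so $\rho_J^*(T)$ is open in $\spc_J$. The same argument with $\pi_J$ and $\lif$ handles ${\rho_*}_J(T)$.

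\textbf{Regularity.} For (1), take $v\in\re V$ and $\phi\in V^{*_\O}$. On the slice, the function $s\mapsto (R_s-T)^{-1}(v)$ satisfies the resolvent identity $(R_s-T)^{-1}-(R_{t}-T)^{-1}=(R_s-T)^{-1}R_{t-s}(R_t-T)^{-1}$. Its derivative is therefore $-(R_s-T)^{-1}R_1(R_s-T)^{-1}$ in the $x$-direction and $-(R_s-T)^{-1}R_J(R_s-T)^{-1}$ in the $y$-direction. Using $\spc_J$-extendability, which is exactly the hypothesis that lets $R_J$ commute through the restriction to $\spc_J(V)$ with no associator, we obtain
\begin{equation*}
\Bigl(\tfrac{\partial}{\partial x}+\tfrac{\partial}{\partial y}R_J\Bigr)(R_s-T)^{\circledcirc-}(v)=0 .
\end{equation*}
Applying the $\O$-linear functional $\phi$ gives right slice regularity on each slice, hence on $\rho^*(T)$. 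Part (2) is analogous: after applying $\pi_J$, everything lives in the complex Banach space $\spc_J(V)$, and $g$ is a resolvent-type function of a complex-linear operator, hence holomorphic.

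\textbf{Compactness and non-emptiness.} For power-associative $T$ and $|s|>\|T\|$, Corollary \ref{cor:reolv op ser} gives the convergent expansions \eqref{intreq:powextinvRs-T} and \eqref{intreq:powlifinvRs-T}, so $R_s-T$ is invertible there. Each partial sum restricted to $\spc_J(V)$ is built from powers of $T$ and right multiplication by $s^{-1-n}\in\spc_J$, so it is $\spc_J$-extendable. Hence $s\in\rho^*(T)\cap\rho_*(T)$, and both spectra are bounded.

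Closedness in $\O$ needs more than slice-openness. It uses the axial symmetry of the conditions, since conjugating $J$ by octonion automorphisms, or the $\re V$-structure, should transport $\rho_J^*$ to $\rho_I^*$. Then $\rho^*(T)$ is axially symmetric and slice-open, hence open. Alternatively one can argue directly with a uniform Neumann estimate in $s\in\O$.

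Non-emptiness follows from a Liouville argument. Suppose $\sigma^*(T)=\emptyset$. Then $s\mapsto \phi((R_s-T)^{\circledcirc-}(v))$ is entire slice regular. The series shows it tends to $0$ at infinity, so by the slice Liouville theorem it vanishes identically. This contradicts the series coefficient $\phi(v)$ of $s^{-1}$ whenever $\phi(v)\ne0$, and such $v,\phi$ exist by Hahn--Banach for $\O$-functionals.

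\textbf{Main obstacle.} The hardest step is showing that $\spc_J$-extendability (resp.\ liftability) is preserved under the Neumann perturbation and under passage to series limits. That is the only place nonassociativity enters, and I would first try to express extendability as commutation of the operator with $R_J$ on $\spc_J(V)$, which is visibly closed under composition and limits.
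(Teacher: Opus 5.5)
Your Neumann perturbation does not establish $\spc_J$-extendability (or liftability). Definition \ref{def:ext pa} requires right $\spc_J$-linearity on $\spc_J(V)$ of \emph{every} power $(R_s-T)^{-m}$, $m\geqslant1$. That property is not preserved under composition, because the operators involved do not map $\spc_J(V)$ into itself. For example, on $V=\O$ with $J=e_1$, both $L_{e_2}$ and $L_{e_4}$ are right $\spc_{e_1}$-linear on $\spc_{e_1}(\O)=\spc_{e_1}$, yet $L_{e_4}L_{e_2}(1\cdot e_1)=e_7\neq-e_7=(L_{e_4}L_{e_2}(1))e_1$. So the principle in your last paragraph is false, and ``each partial sum is $\spc_J$-extendable'' does not follow.

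Already for $m=1$, the Neumann terms $A(R_\mu A)^k$, with $A=(R_{s_0}-T)^{-1}$ and $\mu=s_0-s$, can only be controlled by rewriting $(R_\mu A)^k=R_\mu^kA^k$ on $\spc_J(V)$ (Lemma \ref{lem:CJpower associative}). That rewriting uses the hypothesis on all powers of $A$. For $m\geqslant2$ you give no description of $(R_s-T)^{-m}$ on $\spc_J(V)$ at all. Yet these powers are exactly what membership in $\rho_J^*(T)$ demands, and part (1) uses $A^2$ in the identity $R_JAR_JA(v)=-A^2(v)$. The paper supplies the missing step with the binomial expansion
\[
(R_s-T)^{-m}|_{\spc_J(V)}=\sum_{n\geqslant0}\binom{m+n-1}{m-1}(R_{s_0}-T)^{-m-n}R_{s_0-s}^{n}|_{\spc_J(V)},
\]
proved by induction on $m$ with Lemma \ref{lem:amn} (Theorem \ref{thm:rhosliceopen}). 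Every term there is visibly right $\spc_J$-linear on $\spc_J(V)$. The analogous series $\sum_n\binom{m+n-1}{m-1}T^nR_s^{-m-n}$ gives Lemma \ref{lem:Rs-TinvCpow} for $\abs{s}>\fsh{T}$. The $\pi_J$-versions (Lemmas \ref{lem:cy lpa}, \ref{lem:CJliftpower associative}, \ref{lem:liftRs-TinvCpow}) handle liftability. Your derivative computation for part (1) matches the paper's.

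Three further steps fail.

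\textbf{Axial symmetry.} Your route to closedness in $\O$ is wrong, because $T$ is not equivariant under automorphisms of $\O$. Example \ref{eg:non sphere} has $\sigma^*(T)=\{\pm1\}\cup(\mathbb S\cap\spc_{e_1}^{\perp_{\R}})$, which is not axially symmetric. A uniform Neumann estimate does not help either: whether $s\in\spc_I$ lies in $\rho_I^*(T)$ depends on $\spc_I$-extendability, which perturbing from $s_0\in\spc_J$ does not control.

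\textbf{Compactness and non-emptiness.} What the paper actually proves (Theorems \ref{thm:extspec cpt} and \ref{thm:pushfw cpt}) is that each slice $\sigma_J^*(T)$, ${\sigma_*}_J(T)$ is compact and nonempty in $\spc_J$. Compactness needs only slice-openness plus boundedness. Non-emptiness comes from Liouville on the single plane $\spc_J$, after splitting into $\spc_J$-valued holomorphic components. Your global argument yields only $\sigma^*(T)\neq\emptyset$, not the slice-wise statement, and nothing for $\sigma_*(T)$.

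\textbf{Part (2).} The function $g$ is not the resolvent of a complex-linear operator on $\spc_J(V)$: $T$ does not preserve $\spc_J(V)$, and $v\in V$ is arbitrary. You need two steps. First, $\pi_J (R_s-T)^{-\circledcirc}=\pi_J (R_s-T)^{-1}$; this follows from right para-linearity of $(R_s-T)^{-\circledcirc}$ together with $\re_V (R_s-T)^{-1}R_{\overline J}=\re_V R_{\overline J}(R_s-T)^{-1}$. Second, $\pi_J R_J(R_s-T)^{-1}=\pi_J (R_s-T)^{-1}R_J$ (Lemma \ref{lem:cy lpa}), which makes $\pi_J(\partial_x+R_J\partial_y)(R_s-T)^{-1}(v)$ vanish.
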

		
	\end{enumerate}
	
	\subsection{Main Results}

	Having overcome these challenges, we establish the octonionic version of the Riesz–Dunford functional calculus theory.
	
	We first generalize the notion of $T$-admissible sets from the quaternionic  \cite{colombo2011noncomfunctcalculus} to the octonionic case (see Definition \ref{def:T-admiss}).
	We denote by $\mathcal{SR}^L(\Omega)$ and $\mathcal{SR}^R(\Omega)$ the set of all left and right  slice regular functions on $\Omega$ (see Definition \ref{def:slice reg}), respectively. The set of
	slice preserving  regular functions is denoted by $\mathcal{SR}_{\R}(\Omega)$ (see Definition \ref{def:slice pres}).
	We remark that for an axially symmetric domain $\Omega$, both $\mathcal{SR}^L(\Omega)$ and $\mathcal{SR}^R(\Omega)$ are $\mathbb{O}$-bimodules, where the $\mathbb{O}$-scalar multiplications are defined via the so-called slice products $\bullet^L$ and $\bullet^R$ (see Definition \ref{def:slice product}). Furthermore, their real parts coincide with $\mathcal{SR}_{\mathbb{R}}(\Omega)$.
	When replacing $\Omega$ with the octonionic spectrum,  for example, by the notation $f\in \mathcal{SR}^L(\sigma^*(T)) $, we mean $f\in \mathcal{SR}^L(W) $ for some  $T$-left-admissible set $W$ (see Definition \ref{def:Tadmiss}) containing $\sigma^*(T)$.
	
	Our main results  (Theorems \ref{thm:f*TindependentU} and \ref{thm:main thm2}) are:
	\begin{thm}\label{intrthm:f*TindependentU}
		Let $T\in \mathscr{B}_{\mathcal{RO}}(V)$ be a power-associative operator.	Let $U\subseteq \O$  be a $T$-left-admissible (resp. $T$-right-admissible) domain and $f\in  {\mathcal{SR}_{\R}}({\sigma^*(T)})$ (resp. $f\in  {\mathcal{SR}_{\R}}({\sigma_*(T)})$) be a slice preserving function. For any $J\in \mathbb S$, set ${d}s_J = -{d}sJ$. Then the integral
		\begin{eqnarray*}\label{intreq:intfTJ}
			f^*(T)_J:&=&\dfrac{1}{2\pi} \int_{\partial (U\cap \spc_J)}  (R_s-T)^{\circledcirc -}\odot ({d}s_Jf(s))\\
			\Bigg( \text{resp. } f_*(T)_J:&=&\dfrac{1}{2\pi} \int_{\partial (U\cap \spc_J)}(f(s){d}s_J)   \odot(R_s-T)^{-\circledcirc }\Bigg)
		\end{eqnarray*}
		does not depend on the choice of  $U$.
		Moreover, the map
		\begin{eqnarray*}\label{intreqdef:f(T)}
			f^*(T) \quad(\text{resp. } f_*(T)):\mathbb S&\to& \mathscr{B}_{\mathcal {RO}}(V)\\
			J&\mapsto& f^*(T)_J \quad(\text{resp. } f_*(T)_J)\notag
		\end{eqnarray*} is continuous, and  for any $I,J\in \mathbb S$, we have
		$$\operatorname{Re}_{\mathscr{B}_{\mathcal{RO}}(V)}\, f^*(T)_J=\operatorname{Re}_{\mathscr{B}_{\mathcal{RO}}(V)}\, f^* (T)_I\qquad (\text{resp. }\operatorname{Re}_{\mathscr{B}_{\mathcal{RO}}(V)}\, f_*(T)_J=\operatorname{Re}_{\mathscr{B}_{\mathcal{RO}}(V)}\, f_* (T)_I).$$
		Here $\operatorname{Re}_{\mathscr{B}_{\mathcal{RO}}(V)}$ is the real part operator on the $\O$-bimodule $\mathscr{B}_{\mathcal{RO}}(V)$.
	\end{thm}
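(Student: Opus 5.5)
We argue for the pull-back case $f^*(T)_J$; the push-forward case is symmetric, with $(R_s-T)^{-\circledcirc}$, $\pi_J$ and $\lif$ in place of $(R_s-T)^{\circledcirc -}$, the restriction to $\spc_J(V)$ and $\ext$. The plan has three steps. First, reduce the statement to scalar contour integrals over the single complex plane $\spc_J$. Second, use Cauchy's theorem to get independence of $U$. Third, use the slice structure (a Cauchy formula comparing slices, plus the series of Corollary \ref{cor:reolv op ser} on large circles) to compare different $J$. Throughout, $f$ is slice preserving, so $f(\spc_J\cap W)\subseteq \spc_J$. Hence ${d}s_J f(s)$ is a $\spc_J$-valued differential form on $U\cap\spc_J$. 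The integrand $(R_s-T)^{\circledcirc -}\odot({d}s_Jf(s))$ therefore involves only $\spc_J$-scalars, and these associate with one another.

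\textbf{Step 1 (independence of $U$).} By construction, $(R_s-T)^{\circledcirc -}$ is determined by its values on $\re V$ (or $\spc_J(V)$) followed by $\ext$. It therefore suffices to show independence for $(R_s-T)^{\circledcirc -}(v)\,f(s)\,{d}s_J$ with $v\in\re V$. We test against all $\phi\in V^{*_{\O}}$; octonionic functionals separate points, by the Hahn–Banach-type results earlier in the paper. By Theorem \ref{intrthm:Rs-Tsliceregular}(1), the map $s\mapsto\phi((R_s-T)^{\circledcirc -}(v))$ is right slice regular on $\rho^*(T)$. Restricted to $\spc_J$, split it along $\O=\spc_J\oplus\spc_J^{\perp}$ using a splitting basis. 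Each component is then holomorphic in $s\in\spc_J$ on $\rho^*(T)\cap\spc_J$, and multiplication by the holomorphic function $f$ on the right preserves this. Let $U,U'$ be two $T$-left-admissible domains contained in $W$. The set $(U\triangle U')\cap\spc_J$ lies in $\rho^*(T)\cap\spc_J$. Since $\partial(U\cap\spc_J)-\partial(U'\cap\spc_J)$ bounds a region there, the classical Cauchy theorem gives equality of the two integrals. Boundedness of $f^*(T)_J$ and its para-linearity follow from the closedness of $\mathscr{B}_{\mathcal{RO}}(V)$ under $\odot$ by scalars and under norm-convergent Riemann sums.

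\textbf{Step 2 (continuity and equality of real parts).} Since the integral does not depend on $U$, we may fix one admissible $U$ for all $J$. The contour $\partial(U\cap\spc_J)$ depends continuously on $J$ via rotation $\spc_I\to\spc_J$, because $U$ is axially symmetric. The resolvent is continuous in $s$ on $\rho^*(T)$ (slice-openness and norm continuity of inverses), so $J\mapsto f^*(T)_J$ is continuous. For the real parts, the natural first attempt is to enlarge the domain, but $f$ lives only near $\sigma^*(T)$, so this needs care. I would try to show
$$\operatorname{Re}_{\mathscr{B}_{\mathcal{RO}}(V)} f^*(T)_J(v)=\re\bigl(f^*(T)_J(v)\bigr)\qquad (v\in\re V).$$
To evaluate the right-hand side, write $s=x+Jy$ on the contour and use the representation formula for slice functions. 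Conjugation symmetry of the axially symmetric contour together with $f(\bar s)=\overline{f(s)}$ (slice preserving) pairs $s$ with $\bar s$. This shows that the $\spc_J$-imaginary contributions cancel in the real part, so the real part becomes an integral of $J$-free quantities, namely a real combination of $(R_s-T)^{\circledcirc -}$ and $(R_{\bar s}-T)^{\circledcirc -}$ over the upper half-contour. If direct symmetry fails, the fallback is to approximate $f$ by slice-preserving rational functions (Runge on axially symmetric sets). For these, Corollary \ref{cor:reolv op ser} lets us compute $f^*(T)_J$ explicitly as a polynomial or series in $T$ with real coefficients, independent of $J$. Continuity then passes to the limit.

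\textbf{Main obstacle.} The main obstacle is the real-part identity. In the octonions the interplay between $\odot$, $\ext$ and the associators may prevent the contributions from different slices from matching termwise. Power-associativity is exactly what makes $T^n\odot s^{-1-n}$ behave like the associative case. So I expect the rational-approximation route, combined with the series expansion valid for power-associative $T$, to be the cleanest way through.
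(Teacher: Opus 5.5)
Your Step 1 is the paper's argument: reduce to $v\in\re V$, test with $\phi\in V^{*_{\O}}$, use the right slice regularity of $g(s)=\phi((R_s-T)^{-1}(v))$, and apply Cauchy's theorem on $\spc_J$ via the splitting lemma, as in Lemma \ref{lem:intgdf=0}.

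Your continuity argument is valid and simpler than the paper's associator estimate. It fixes one axially symmetric $U$, parametrizes $\partial(U\cap\spc_J)$ by $x(t)+y(t)J$, and uses three facts:
\begin{itemize}
\item $s\mapsto (R_s-T)^{-1}$ is norm continuous on the Euclidean-open set where $R_s-T$ is invertible, and this set contains the compact set $\partial U$;
\item $\ext$ is bounded;
\item $\odot$ is bounded.
\end{itemize}

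\textbf{The gap is in the real-part identity}, which is the only genuinely octonionic content of the statement. Your main route claims that pairing $s$ with $\bar s$ turns the integrand into $J$-free quantities. Conjugation symmetry of the contour and $f(\bar s)=\overline{f(s)}$ give nothing of the sort on their own. What you need is that $g$ is a right slice function along $\partial U$, i.e. $g(x+yJ)=G_1(x,y)+G_2(x,y)J$ with $G_1,G_2$ independent of $J$. Theorem \ref{thm:Rs-Tsliceregular} only gives regularity slice by slice. The stem structure would have to be imported: Corollary \ref{cor:reolv op ser} exhibits it for $|s|>\fsh{T}$, and it would then have to be propagated by a representation formula or identity principle on an axially symmetric slice domain inside $\rho^*(T)$ containing $\partial U$. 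You never do this.

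Moreover, once you have the stem structure, the cancellation does not happen only ``in the real part''. Write $f(x+yJ)=\alpha+\beta J$ and $ds_Jf(s)=\lambda_0+\lambda_1J$, with $\lambda_0=\alpha\,dy+\beta\,dx$ and $\lambda_1=\beta\,dy-\alpha\,dx$. Alternativity gives $g\,ds_Jf=(G_1\lambda_0-G_2\lambda_1)+(G_1\lambda_1+G_2\lambda_0)J$. Since $G_1,\alpha$ are even and $G_2,\beta$ are odd in $y$, the form $G_1\lambda_1+G_2\lambda_0$ is invariant under the orientation-reversing reflection $y\mapsto -y$, so its integral vanishes. Thus all of $\phi(f^*(T)_J(v))$ is independent of $J$. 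The mechanism you describe has therefore not actually been worked out.

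\textbf{Your fallback fails as stated.} Corollary \ref{cor:reolv op ser} describes $(R_s-T)^{\circledcirc -}$ only for $|s|>\fsh{T}$. It therefore handles polynomials (this is Theorem \ref{thm:poly left fctcal}), but not rational functions with poles in bounded components of $\spc_J\setminus\overline{U\cap\spc_J}$. Such poles contribute residue terms built from the resolvent at the poles. These are not real series in $T$, and their $J$-dependence is exactly what is in question. Polynomial Runge approximation needs $\overline{U\cap\spc_J}$ to have connected complement, which admissibility does not guarantee.

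The paper connects the slices directly instead:
\begin{itemize}
\item It represents $g(s)$, for $s\in\partial(U\cap\spc_I)$, by the right slice Cauchy formula \eqref{eq:rslicecauchy} over $\partial(U'\cap\spc_J)$. Here $U'\supseteq\partial U$ is axially symmetric with $\overline{U'}\subseteq\rho^*(T)$. This is where the cross-slice information your pairing lacks enters.
\item It rewrites $\bullet^R_s$ as an ordinary product plus an associator, using that $Q_q(s)^{-1}$ lies in the nucleus.
\item It inserts this into $\phi(f^*(T)_I(v))=\frac{1}{2\pi}\int g\,ds_If$.
\item It identifies the main term with $\phi(f^*(T)_J(v))$ via Fubini and Corollary \ref{cor:slicecauchy}.
\item It shows the remaining associator terms have zero real part by the five-term identity \eqref{eq:five_term}, since $B(s)\in\spc_I$ and $\bar q-s,\ Q_q(s)^{-1}\in\mathbb H_{I,J}$.
\end{itemize}
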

	
	Let $\Gamma(\mathbb S, \mathscr{B}_{\mathcal{RO}}(V))$ denote the set of continuous right para-linear operator-valued sections on the six dimensional sphere $\mathbb S$. The $\O$-bimodule structure of $\mathscr{B}_{\mathcal{RO}}(V)$   induces an $\O$-bimodule structure on $\Gamma(\mathbb S, \mathscr{B}_{\mathcal{RO}}(V))$. Theorem \ref{intrthm:f*TindependentU} gives rise  to the definition of octonionic functional calculus (Definitions \ref{def:ofunccal} and \ref{def:lifofunccal}).

	
	\begin{mydef}[The octonionic  functional calculus]\label{intrdef:ofunccal}
		Let $T\in \mathscr{B}_{\mathcal{RO}}(V)$ be a power-associative operator.	Define
		\begin{eqnarray*}
			(\Phi_T)_0:{\mathcal{SR}_{\R}}({\sigma^*(T)})&\to& \Gamma(\mathbb S, \mathscr{B}_{\mathcal{RO}}(V))\\
			f&\mapsto& (f^*(T))(J):=f^*(T)_J=\dfrac{1}{2\pi} \int_{\partial (U\cap \spc_J)}  (R_s-T)^{\circledcirc -}\odot ({d}s_Jf(s)), \notag
		\end{eqnarray*}	
		where $U\subseteq \O$  is a $T$-left-admissible domain.
		
		We then define the octonionic left slice regular functional calculus as $$\Phi_T:=\ext 	(\Phi_T)_0:\mathcal{SR}^L({\sigma^*(T)})\to \Gamma(\mathbb S, \mathscr{B}_{\mathcal{RO}}(V)).$$
		Analogous definition for the octonionic right slice regular functional calculus
		$$\Psi_T:\mathcal{SR}^R({\sigma_*(T)})\to \Gamma(\mathbb S, \mathscr{B}_{\mathcal{RO}}(V)).$$
	\end{mydef}
	For concrete examples, we refer the reader to Section \ref{sec:eg}. {We note that, by definition, the functional calculi depend on $J\in \mathbb S$.}
	We call an operator	$T$ is  \textbf{(left)  \sinv} if for any $f\in \mathcal{SR}^L({\sigma^*(T)})$, $f^*(T)_J$ is independent of $J\in \mathbb S$. Similar definition can be given for \textbf{(right) \sinv}. It is worth noting that  if we regard the real part operator $\operatorname{Re}_{\mathscr{B}_{\mathcal{RO}}(V)}$ as the projection onto the nucleus, where the nucleus coincides with the original space in the associative case,  then every right quaternionic linear operator $T$ is automatically ``\sinv''.
	
	We emphasize that although the properties of $\Phi_T$ and $\Psi_T$ turn out to be similar, their proofs proceed in entirely different ways: one relies on the pull-back method, while the other is based on the push-forward method. Hence, in this work, we present both proofs.
	
	For general slice regular functions,   the functional calculus is as follows (see Theorems \ref{thm:f(T)} and \ref{def:rsliceinv}):
	\begin{thm}
		Let $T\in \mathscr{B}_{\mathcal{RO}}(V)$ be a power-associative operator.
		\begin{enumerate}
			\item 	Let	$U\subseteq \O$  be a $T$-left-admissible domain.
			Suppose $f=\sum_{i=0}^7f_{(i)}\bullet^L e_i\in\mathcal{SR}^L({\sigma^*(T)}) $ with $f_{(i)}$ being slice preserving for $i=0,\dots,7$. Then 
			\begin{eqnarray*}\label{intreq:f(T)J}
				f^*(T)_J	&=&\dfrac{1}{2\pi} \int_{\partial (U\cap \spc_J)} 	(R_s-T)^{\circledcirc -}\odot ({d}s_Jf(s)) + \\
				&&	\dfrac{1}{2\pi}\sum_{i=1}^{7} \int_{\partial (U\cap \spc_J)} \big[(R_s-T)^{\circledcirc -},{d}s_Jf_{(i)}(s),e_i\big]_{\mathscr{B}_{\mathcal{RO}}(V)}.\notag
			\end{eqnarray*}
			\item Let	$U\subseteq \O$  be a $T$-right-admissible domain.
			Suppose $f=\sum_{i=0}^7e_i\bullet^R f_{(i)}\in\mathcal{SR}^R({\sigma_*(T)}) $ with $f_{(i)}$ being slice preserving for $i=0,\dots,7$. Then
			\begin{eqnarray*}\label{intrlifeq:f(T)J}
				f_*(T)_J	&=&\dfrac{1}{2\pi} \int_{\partial (U\cap \spc_J)} (f(s){d}s_J)\odot 	(R_s-T)^{-\circledcirc } - \\
				&&	\dfrac{1}{2\pi}\sum_{i=1}^{7} \int_{\partial (U\cap \spc_J)} \big[e_i,f_{(i)}(s){d}s_J,(R_s-T)^{-\circledcirc }\big]_{\mathscr{B}_{\mathcal{RO}}(V)}.\notag
			\end{eqnarray*}

		\end{enumerate}
	\end{thm}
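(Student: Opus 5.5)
The plan is to reduce everything to the slice preserving case of Theorem \ref{intrthm:f*TindependentU} and to the definition $\Phi_T=\ext(\Phi_T)_0$ (resp. $\Psi_T=\lif(\Psi_T)_0$), and then to quantify how far the $\O$-bimodule structure of $\mathscr{B}_{\mathcal{RO}}(V)$ is from being associative.

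For (1), I first decompose $f=\sum_{i=0}^7 f_{(i)}\bullet^L e_i$ with each $f_{(i)}$ slice preserving. Since the real part of $\mathcal{SR}^L(\Omega)$ is $\mathcal{SR}_{\R}(\Omega)$, this is exactly the decomposition of $f$ into real components of the bimodule. Extending $(\Phi_T)_0$ by $\ext$ is right $\O$-linear in the real coordinates. Hence, by definition,
\[
f^*(T)_J=\sum_{i=0}^7 f_{(i)}^*(T)_J\, e_i=\sum_{i=0}^7\Big(\dfrac{1}{2\pi}\int_{\partial(U\cap\spc_J)}(R_s-T)^{\circledcirc -}\odot({d}s_Jf_{(i)}(s))\Big)e_i .
\]
The product by $e_i$ is taken in the bimodule $\mathscr{B}_{\mathcal{RO}}(V)$. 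It is continuous and real linear, so it can be moved inside the integral.

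The key step is the pointwise identity inside the integrand. For a fixed $s\in\spc_J$, put $A=(R_s-T)^{\circledcirc -}$ and $a=ds_J f_{(i)}(s)$. By the definition of the associator in $\mathscr{B}_{\mathcal{RO}}(V)$,
\[
(A\odot a)e_i = A\odot(a e_i)+[A,a,e_i]_{\mathscr{B}_{\mathcal{RO}}(V)} .
\]
I must check that $a e_i = ds_J\,(f_{(i)}(s)e_i)$ and that $\sum_i f_{(i)}(s)e_i=f(s)$ on $\spc_J$. This holds because $f_{(i)}(s)\in\spc_J$ when $f_{(i)}$ is slice preserving and $ds_J\in\spc_J$, so $\{ds_J,f_{(i)}(s),e_i\}$ generate an associative subalgebra by Artin's theorem. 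Moreover, on the slice, $f_{(i)}\bullet^L e_i$ restricts to the pointwise product $f_{(i)}(s)e_i$. Summing over $i$, the $i=0$ associator vanishes since $e_0=1$, and the remaining terms give exactly the stated formula.

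Part (2) is symmetric. I use $\Psi_T=\lif(\Psi_T)_0$ and the left decomposition $f=\sum e_i\bullet^R f_{(i)}$, and write
\[
e_i\big((f_{(i)}ds_J)\odot B\big)=(e_i f_{(i)}ds_J)\odot B-[e_i,f_{(i)}ds_J,B],
\]
with $B=(R_s-T)^{-\circledcirc}$. This accounts for the minus sign.

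I expect the main obstacle to be verifying two facts precisely from the definitions of $\ext$ and $\lif$:
\begin{itemize}
\item that the functional calculus of $f_{(i)}\bullet^L e_i$ is indeed $f^*_{(i)}(T)_J\,e_i$, i.e.\ the bimodule product in $\mathscr{B}_{\mathcal{RO}}(V)$ and not some other action;
\item that the associator $[\cdot,\cdot,\cdot]_{\mathscr{B}_{\mathcal{RO}}(V)}$ is defined with the sign and ordering convention used above.
\end{itemize}
Once these conventions are matched, the rest is bookkeeping. Independence of $U$ then follows from Theorem \ref{intrthm:f*TindependentU} applied to each $f_{(i)}$.
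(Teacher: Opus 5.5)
Your proposal is correct and follows the paper's proof essentially line by line: you expand $f^*(T)_J=\sum_{i}\big((\Phi_T)_0(f_{(i)})(J)\big)\odot e_i$ via $\ext$, move $\odot e_i$ under the integral, split $(A\odot a)\odot e_i=A\odot(ae_i)+[A,a,e_i]_{\mathscr{B}_{\mathcal{RO}}(V)}$, recombine $\sum_i (ds_Jf_{(i)}(s))e_i=ds_Jf(s)$ because slice products with slice preserving functions are pointwise products, and treat part (2) symmetrically. One small correction: the paper defines $\Psi_T$ as the left extension $\operatorname{ext}_L(\Psi_T)_0$, not $\lif(\Psi_T)_0$ (the lifting map does not even apply here, since $(\Psi_T)_0$ is defined on the real part $\mathcal{SR}_{\R}$ of the domain rather than taking values in a real part), but the formula you actually compute with, $\Psi_T\big(\sum_i e_i\bullet^R f_{(i)}\big)=\sum_i e_i\odot (f_{(i)})_*(T)$, is exactly that left extension.
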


	Finally, we investigate the algebraic properties of the octonionic functional calculus.
	Note that both $\mathscr{B}_{\mathcal{RO}}(V)$ and $\mathcal{SR}^L(\Omega)$ (resp. $\mathcal{SR}^R(\Omega)$) are not only $\mathbb{O}$-bimodules, but also $\mathbb{O}$-algebras (see Theorems \ref{thm:slice algebra}, \ref{thm:BROV is Oalg}  and Proposition \ref{prop:SR O-alg}), where the notion of  $\mathbb{O}$-algebra (Definition \ref{def:O alg}) is introduced in \cite{huoqinghai2025Oselfadjoint}.
	By definition, $\Phi_T$ (resp. $\Psi_T$) are right (resp. left) para-linear maps. The following result illustrates how the algebraic structure is preserved:
	\begin{thm}\label{inrtthm:algebra prop}
		Let $V$ be a Banach  $\O$-bimodule and $T\in \mathscr{B}_{\mathcal {RO}}(V)$  be power-associative.
		The following assertions hold:			
		\begin{enumerate}
			\item Let $T$ be left \sinv. If $f\in \mathcal{SR}_{\R}({\sigma(T)})$, $g\in \mathcal{SR}^L({\sigma(T)})$, then
			\begin{eqnarray*}\label{inrteq:reslicef-*Tg*T}
				\operatorname{Re}f_{*}(T)\circledcirc g^*(T)=\operatorname{Re}(f{g})^{*}(T)
			\end{eqnarray*}
			\item
			Let $T$ be right \sinv. If $f\in \mathcal{SR}^R({\sigma(T)})$, $g\in \mathcal{SR}_{\R}({\sigma(T)})$, then
			\begin{eqnarray*}\label{inrteq:ref-*Tsliceg*T}
				\operatorname{Re}f_{*}(T)\circledcirc g^*(T)=\operatorname{Re}({f} g)_{*}(T).
			\end{eqnarray*}
		\end{enumerate}
		Here $\re$ is  the real part operator on the $\O$-bimodule $\Gamma(\mathbb S, \mathscr{B}_{\mathcal{RO}}(V))$.
	\end{thm}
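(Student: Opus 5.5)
The plan is to run the classical resolvent-equation proof of the product rule $f(T)g(T)=(fg)(T)$ slice by slice, using the two resolvent identities in their power-associative form (Corollary \ref{cor:reolv op ser} and the definitions of the regular inverses). The sphere-invariance hypothesis lets us put both integrals on the same slice $\spc_J$. I describe assertion (1); assertion (2) is symmetric, with the roles of pull-back and push-forward, and of $\lif$ and $\ext$, interchanged.

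\textbf{Step 1: reduce to a slice-preserving $g$ on one fixed slice.}
Write $g=\sum_i g_{(i)}\bullet^L e_i$ with each $g_{(i)}$ slice preserving. Since $f$ is slice preserving, $fg=\sum_i (fg_{(i)})\bullet^L e_i$. Both $\Phi_T=\ext(\Phi_T)_0$ and the composition with $f_*(T)$ are right para-linear, and we are only comparing real parts. So I expect, via Theorem \ref{thm:f(T)} and the fact that $\operatorname{Re}$ annihilates the associator terms $[\cdot,\cdot,e_i]$, that it suffices to treat $g\in\mathcal{SR}_{\R}$.

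By left sphere invariance, $g^*(T)_J$ does not depend on $J$. By Theorem \ref{intrthm:f*TindependentU}, $\operatorname{Re} f_*(T)_J$ does not depend on $J$ either. We may therefore fix one $J$ and compute on $\spc_J$.

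\textbf{Step 2: set up the double integral and the resolvent equation.}
Choose nested admissible domains $U_1\supset\overline{U_2}$ with $U_1$ $T$-right-admissible, used for $f$ with variable $s$, and $U_2$ $T$-left-admissible, used for $g$ with variable $p\in\spc_J$. This gives
\[
f_*(T)_J\circledcirc g^*(T)_J=\frac{1}{(2\pi)^2}\int_{\partial(U_1\cap\spc_J)}\int_{\partial(U_2\cap\spc_J)}(f(s)ds_J)\odot\Big((R_s-T)^{-\circledcirc}\circledcirc(R_p-T)^{\circledcirc-}\Big)\odot(dp_Jg(p)).
\]
Here we move $f(s)ds_J$ and $dp_Jg(p)$ outside. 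This is legitimate because all scalars involved lie in $\spc_J$ and are real-slice-preserving, and because composition with power-associative operators has vanishing associators on $\spc_J(V)$ (this is the mechanism that killed $\alpha,\beta$).

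The key identity to prove is the octonionic resolvent equation on $\spc_J(V)$:
\[
(R_s-T)^{-\circledcirc}\circledcirc(R_p-T)^{\circledcirc-}=\big[(R_p-T)^{\circledcirc-}-(R_s-T)^{-\circledcirc}\big](p-s)^{-1}.
\]
Since $s,p$ commute in $\spc_J$, this follows from $(R_s-T)-(R_p-T)=R_{s-p}$. To prove it, I would first establish it for $|s|,|p|>\fsh{T}$ directly from the series in Corollary \ref{cor:reolv op ser}. There power-associativity makes all products of powers $T^n\circledcirc T^m=T^{n+m}$, so the series algebra is identical to the commutative case. I would then extend it to the whole of $\rho^*_J(T)\cap{\rho_*}_J(T)$ by the holomorphy in Theorem \ref{intrthm:Rs-Tsliceregular} and the identity principle on each slice.

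\textbf{Step 3: evaluate the two terms by Cauchy's theorem.}
Substitute the resolvent equation into the double integral, which gives two terms.
\begin{itemize}
\item In the term containing $(R_p-T)^{\circledcirc-}$, the $s$-integral $\int f(s)(p-s)^{-1}ds_J$ equals $2\pi f(p)$, since $p\in U_2\subset U_1$. What remains is $\frac1{2\pi}\int (R_p-T)^{\circledcirc-}\odot(dp_J f(p)g(p))=(fg)^*(T)_J$.
\item In the term containing $(R_s-T)^{-\circledcirc}$, the $p$-integral $\int (p-s)^{-1}dp_Jg(p)$ vanishes, since $s\notin\overline{U_2}$.
\end{itemize}
Taking real parts then yields $\operatorname{Re} f_*(T)\circledcirc g^*(T)=\operatorname{Re}(fg)^*(T)$.

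\textbf{Main obstacle.}
The hard part is Step 2 (and the reduction in Step 1): making the mixed composition of a push-forward resolvent with a pull-back resolvent well defined, and justifying that the $\spc_J$-valued scalars can be pulled out without associators. The two resolvents are built by different bijections ($\lif$ versus $\ext$), so they agree only after projecting to $\spc_J(V)$ or $\re V$. This is exactly why the statement controls only $\operatorname{Re}$. I would handle it by applying the real part operator $\operatorname{Re}_{\mathscr{B}_{\mathcal{RO}}(V)}$ from the outset, so that every identity only needs to hold on $\re V$, where both regular inverses restrict to the genuine inverse of $R_s-T$.
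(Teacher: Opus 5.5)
There are two genuine gaps, one in each of your first two steps.

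\textbf{The reduction in Step 1 fails.} Writing $g=\sum_i g_{(i)}\bullet^L e_i$ gives $g^*(T)_J=\sum_i g_{(i)}^*(T)_J\odot e_i$. Right para-linearity of $X\mapsto f_*(T)_J\circledcirc X$ only yields
$\operatorname{Re}\big(f_*(T)_J\circledcirc(X\odot e_i)\big)=\operatorname{Re}\big((f_*(T)_J\circledcirc X)\odot e_i\big)$.
But if $Z=\sum_k e_k\odot Z_k$ with $Z_k\in\mathscr{B}_{\mathbb{O}}(V)$, then $\operatorname{Re}(Z\odot e_i)=-Z_i$. This is the $e_i$-component of $Z$, not its real part. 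So the real-part identity for the slice preserving $g_{(i)}$ tells you nothing about the $i$-th summand. The Uniqueness Lemma would let you test on slice preserving functions only if you had full equality there, not equality of real parts. For the same reason, $\operatorname{Re}$ does not kill the associator terms of Theorem~\ref{thm:f(T)} once they are composed with $f_*(T)_J$, which need not be octonionic linear.

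A symptom of the problem: for slice preserving $f,g$ the identity already holds on every slice with no sphere invariance, so your argument never genuinely uses the hypothesis. The paper reduces instead to $g\in\mathbb{C}_J(\mathcal{SR}^L(\sigma(T)))$. It first proves $\operatorname{Re}\,f_*(T)_J\circledcirc g^*(T)_J=\operatorname{Re}(\tilde f\bullet^L g)^*(T)_J$ for such $\mathbb{C}_J$-valued pieces on the slice $\mathbb{C}_J$. It then applies this with $J=e_i$ to $e_i\bullet^L g_{(i)}$, which is where the $e_i$-component is captured. Only then does it invoke left sphere invariance, to pass from the slice $\mathbb{C}_{e_i}$ to all $J\in\mathbb{S}$.

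\textbf{The mixed resolvent identity in Step 2 is false.} There is a sign slip: the commutative form is $[R(s)-R(p)](p-s)^{-1}$, and your sign is compensated in Step 3 by a second slip, since $\frac{1}{2\pi}\int f(s)(p-s)^{-1}\,ds_J=-f(p)$. The real problem is that $s,p$ commute with each other but not with the coefficients of $T$.

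Take $V=\mathbb{O}$ and $T=L_{tq}$, with $0\neq q$ orthogonal to $\mathbb{C}_J$ and $t\in\mathbb{R}$ small. Then $(R_s-T)^{-\circledcirc}=L_a$ and $(R_p-T)^{\circledcirc-}=L_b$, where $a=\sum_n s^{-1-n}(tq)^n$ and $b=\sum_m (tq)^m p^{-1-m}$, and their $\circledcirc$-composition is $L_{ab}$. The $t$-coefficients are:
\begin{itemize}
\item of $ab$: $q(p+\overline{s})\overline{s}^{-2}p^{-2}$;
\item of $(a-b)(p-s)^{-1}$: $q(p+\overline{s})(p-\overline{s})(p-s)^{-1}\overline{s}^{-2}p^{-2}$.
\end{itemize}
These differ whenever $s\notin\mathbb{R}$ and $p\neq-\overline{s}$, even when evaluated at $1\in\operatorname{Re}V$. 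The failure is already quaternionic: a right times a left $S$-resolvent satisfies the $S$-resolvent equation, not the commutative one. Likewise, pulling $f(s)ds_J$ and $dp_Jg(p)$ out of a $\circledcirc$-product is not legitimate in $\mathscr{B}_{\mathcal{RO}}(V)$.

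Your ``main obstacle'' remark points the right way but is misstated. $(R_s-T)^{-\circledcirc}$ does not restrict to $(R_s-T)^{-1}$ on $\operatorname{Re}V$. What holds is $\operatorname{Re}_V(R_s-T)^{-\circledcirc}=\operatorname{Re}_V(R_s-T)^{-1}$ on all of $V$. The paper uses exactly this:
\begin{itemize}
\item For $v\in\operatorname{Re}V$, it rewrites $\operatorname{Re}_V f_*(T)_J\big(g^*(T)_J(v)\big)$ via \eqref{eqpf:f*TJv} and \eqref{eqpf:ref-*T} as a double integral of $\operatorname{Re}_V(R_s-T)^{-1}(R_q-T)^{-1}(v)\,g(q)f(s)$, with genuine real-linear inverses.
\item It uses only the real-linear identity $(R_s-T)^{-1}-(R_q-T)^{-1}=(R_s-T)^{-1}R_{q-s}(R_q-T)^{-1}$.
\item It moves $R_{q-s}$ and the $\mathbb{C}_J$-valued scalars through using $\mathbb{C}_J$-extendability on $\mathbb{C}_J(V)$ and $\mathbb{C}_J$-liftability under $\operatorname{Re}_V$.
\item Fubini and the scalar Cauchy formula then finish the argument.
\end{itemize}
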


	To summarize, this article establishes a unified framework for investigating the Riesz-Dunford functional calculus theory over the division algebras \(\mathbb{C}\), \(\mathbb{H}\), and \(\mathbb{O}\). The key results of the three distinct Riesz-Dunford functional calculus theories are displayed in Tables \ref{tab:spec}, \ref{tab:formula}, and \ref{tab:prop} in Section \ref{sec:conclusion}.

	\section{Preliminaries}
	\label{sec:algebraic_foundations}
	
	In this section, we review  and establish the algebraic framework for para-linear operators within Banach octonionic modules, while also summarizing the key results in the theory of octonionic slice regular functions.
	\subsection{Octonions}
	The octonions $\mathbb{O}$ form an $8$-dimensional normed division algebra over $\mathbb{R}$ with basis $\{e_0=1, e_1, \dots, e_7\}$. The basis elements multiply via the cyclic rules
	\[
	e_i e_j = \epsilon_{ijk} e_k - \delta_{ij}, \quad i,j,k = 1,\dots,7,
	\]
	where $\epsilon_{ijk} = 1$ for oriented triples $(123)$, $(145)$, $(176)$, $(246)$, $(257)$, $(347)$, $(365)$. These rules are encoded in \textbf{Fano Plane}:
	\
	
	\noindent \small{\textbf{Fig.1} Fano mnemonic graph}
	\begin{flushright}
		\centerline{\includegraphics[width=3cm]{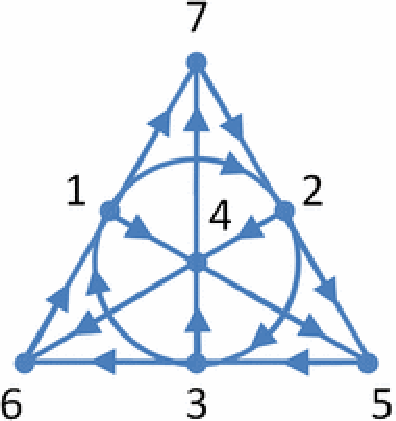}}
	\end{flushright}
	\normalsize

	Some other basic properties include:
	
	\begin{enumerate}
		\item \textit{Conjugation and Norm:} For $x = x_0 + \sum_{i=1}^7 x_i e_i$,
		\[
		\overline{x} = x_0 - \sum_{i=1}^7 x_i e_i, \quad |x| = \sqrt{x\overline{x}}.
		\]
		
		\item \textit{Non-Associative Structure:} The associator $[x,y,z] = (xy)z - x(yz)$ is fully antisymmetric. The commutator $[x,y] = xy - yx$ measures non-commutativity.
		
		\item \textit{Moufang Identities:} For all $x,y,z \in \mathbb{O}$:
		\[
		(xyx)z = x(y(xz)), \quad
		z(xyx) = ((zx)y)x, \quad
		x(yz)x = (xy)(zx).
		\]
		
		\item \textit{Five-Terms Identity:} For all $x,y,z,w \in \mathbb{O}$:
		
		\begin{equation}
			\label{eq:five_term}
			x[y,z,w] + [x,y,z]w = [xy,z,w] - [x,yz,w] + [x,y,zw].
		\end{equation}

		\item
		Denote by $\mathbb S $ the set of all imaginary units of $\O$.	
		It is well-known that the automorphism  group of the algebra of octonions is $G_2$.	For any $J\in \mathbb S$, there exists a (not unique) automorphism $\tau\in G_2$  such that $\tau(e_4)=J$. Denote $J_i:=\tau(e_i)$ for $i=1,\dots,7$. Then
		\begin{eqnarray}\label{eqdef:orthbasis}
			\{1,\ J_1,\ J_2,J_3=J_1J_2,\ J_4=J,\ J_5=J_1J,\ J_6=J_2J,\ J_7=(J_1J_2)J\}
		\end{eqnarray}
		is an orthonormal basis of $\O$. Such an orthonormal basis \eqref{eqdef:orthbasis} is called a \textbf{standard orthonormal basis (related to $J$)}.
		
	\end{enumerate}
	
	\subsection{Octonionic Module Theory}
	We now define module structures which are compatible with octonionic non-associativity.
	
	\begin{mydef}
		An $\mathbb{R}$-vector space $M$ is a \textbf{right $\mathbb{O}$-module} if equipped with a right scalar multiplication satisfying the alternativity  of right associators:
		\[
		[x,p,q]_M := (xp)q - x(pq) = -[x,q,p]_M, \quad \forall x \in M, \, p,q \in \mathbb{O}.
		\]
		Similar definition can be introduced for \textbf{left $\mathbb{O}$-module}. If there are no confusions, we shall omit the subscript $M$ for simplicity.
	\end{mydef}
	
	\begin{mydef}
		A right $\mathbb{O}$-module $M$ is an \textit{$\mathbb{O}$-bimodule} if  there is a left   multiplication satisfying
		\[
		[p,q,x]_M = [q,x,p]_M = [x,p,q]_M = -[q,p,x]_M, \quad \forall p,q \in \mathbb{O}, \, x \in M.
		\]
		Here the left associators and middle associators are defined canonically. The subscript $M$ will be omitted for simplicity if no confusions.
	\end{mydef}

	The regular $\O$-bimodule, which as a set is just $\O$, equipped with the natural module structure, is the unique irreducible $\mathbb{O}$-bimodule by Schafer \cite{Schafer1952repaltalg} and   Jacobson \cite{jacobson1954structure,Springer2000jordanalg}. This enables the definition of the real part of an $\mathbb{O}$-bimodule \cite{huoqinghai2020nonass}.
	The following definition is slightly different from that in \cite{huoqinghai2020nonass}, yet equivalent to the one presented therein. This adjustment is made to reproduce the quaternionic results when replacing $\mathbb{O}$ with $\mathbb{H}$.
	\begin{mydef}\label{def:real part}
		For an $\mathbb{O}$-bimodule $M$, the \textbf{real part} $\operatorname{Re} M$ is defined as
		\[
		\operatorname{Re} M := \{m \in M : [p,q,m] =0,\, [p,m]:=pm-mp= 0,  \ \forall p,q \in \mathbb{O}\}.
		\]
		Any element $m\in 	\operatorname{Re}  M$ is called an \textbf{associative} element.
		The \textbf{real part operator} is defined as the  projection $\operatorname{Re}_M: M \to \operatorname{Re} M$. If no confusion, we shall omit the subscript $M$.
		
		For any $J\in \mathbb S$, we denote by
		\begin{eqnarray}\label{eq:CJM}
			\mathbb C_J(M):=\operatorname{Re}  M\oplus J\operatorname{Re} M
		\end{eqnarray}
		the \textbf{$\mathbb C_J$-slice} of $M$.
	\end{mydef}
	
	With slight modifications,   we rewrite the results on properties of real part operator  in \cite[Section 3.1]{huoqinghai2020nonass} as follows.
	\begin{prop}[Properties of Real Part Operator]
		\label{prop:real_part}
		Let $M$ be an $\mathbb{O}$-bimodule. Then the real part has the following properties.
		\begin{enumerate}
			\item $\operatorname{Re} M =  \{m \in M : [p,q,m] = 0,  \ \forall p,q \in \mathbb{O}\}= \{m \in M : pm = mp,  \ \forall p \in \mathbb{O}\}$.
			\item $\re\, [p,q,m]=\re\, [p,m]=0$ for any $p,q\in \O$ and $m\in M$.
			\item $M \cong \operatorname{Re} M \otimes_{\mathbb{R}} \mathbb{O}$ as $\mathbb{O}$-bimodules.
			\item 	Each $x$ can be decomposed into
			\begin{eqnarray}\label{eq:redec}
				x=\sum_{i=0}^{7}e_ix_i
			\end{eqnarray}
			with $x_i\in \re M$ for $i=0,\dots,7$.
			\item $\operatorname{Re}$ is given explicitly by
			\begin{equation}
				\label{eq:real_part_formula}
				\operatorname{Re} x = \frac{5}{12}x - \frac{1}{12} \sum_{i=1}^7 e_i x e_i.
			\end{equation}
			
		\end{enumerate}
		Moreover, one can substitute the standard orthonormal basis  $\{1,J_1,\dots,J_7\}$ related to some $J\in \mathbb S$ for $\{1,e_1,\dots,e_7\}$ in \eqref{eq:redec} and \eqref{eq:real_part_formula}.
	\end{prop}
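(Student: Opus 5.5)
The plan is to reduce everything to the regular bimodule $\O$. The reduction rests on the structure theorem of Schafer and Jacobson quoted just before the proposition: every $\O$-bimodule is completely reducible, and its only irreducible summand is the regular bimodule $\O$.

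\textbf{Step 1 (structure, item (3)).} Write $M=\bigoplus_{\lambda}\O_\lambda$ with each $\O_\lambda$ isomorphic to the regular bimodule, say $\O_\lambda=\{p\,u_\lambda : p\in\O\}$. In the regular bimodule, $\re\O=\R$: an element commuting with all of $\O$ is real, and so is an element lying in the nucleus. Hence every $u_\lambda$ lies in $\re M$ as defined in Definition \ref{def:real part}, and $\re M=\bigoplus_\lambda \R u_\lambda$, because the defining conditions are checked componentwise. Define
\[
\re M\otimes_\R\O\to M,\qquad r\otimes p\mapsto pr .
\]
This map is a bijection. It is an $\O$-bimodule map because an element $r\in\re M$ commutes and associates with all scalars, so $q(pr)=(qp)r$ and $(pr)q=(pq)r$. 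This gives (3). Item (4) follows at once by expanding $p=\sum_i p_ie_i$ in each summand.

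\textbf{Step 2 (items (1), (2)).} Using the identification $x=\sum_i e_ix_i$ with $x_i\in\re M$, every associator and commutator is computed coefficientwise from the corresponding octonion ones:
\[
[p,q,x]=\sum_i [p,q,e_i]\,x_i ,\qquad [p,x]=\sum_i [p,e_i]\,x_i .
\]
For (1), the condition $[p,q,x]=0$ for all $p,q$ forces $[p,q,e_i]=0$ whenever $x_i\neq0$. This can happen only for $i=0$, since the nucleus of $\O$ is $\R$; the argument for the commutator is the same, since the center of $\O$ is $\R$. Thus either condition alone characterizes $\re M$. For (2), associators and commutators in $\O$ are purely imaginary, and $\re$ acts on $\sum_i e_ix_i$ by picking out the $e_0$-coefficient, so both vanish.

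\textbf{Step 3 (formula (5)).} It suffices to check the formula on $x=e_jr$ with $r\in\re M$. Since $r$ is central and nuclear, $e_i(e_jr)e_i=(e_ie_je_i)r$, the bracketing being irrelevant by alternativity. In $\O$ one computes:
\[
\sum_{i=1}^7 e_i e_0 e_i=-7 .
\]
For $j\ge1$ we have $e_je_je_i\big|_{i=j}=e_je_je_j=-e_j$, while for $i\neq j$, anticommutativity gives $e_ie_je_i=-e_i^2e_j=e_j$. Hence
\[
\sum_{i=1}^7 e_ie_je_i=-e_j+6e_j=5e_j .
\]
Therefore the right-hand side of \eqref{eq:real_part_formula} equals $\tfrac5{12}r+\tfrac7{12}r=r$ for $j=0$, and $\tfrac5{12}e_jr-\tfrac5{12}e_jr=0$ for $j\ge1$, as required.

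\textbf{Step 4 (standard basis).} For the final remark, note that $\tau\in G_2$ maps the basis $\{e_i\}$ to $\{J_i\}$. The definition of $\re M$ quantifies over all $p,q\in\O$, so it is $\tau$-invariant. Moreover, the computations in Steps 2–3 only used the multiplication table of the basis, which $\tau$ preserves.

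The main obstacle is Step 1, namely justifying complete reducibility and the uniqueness of the irreducible bimodule for possibly infinite-dimensional $M$. I would handle it by citing \cite{Schafer1952repaltalg,jacobson1954structure,huoqinghai2020nonass}, as the paper does. A more concrete alternative is to verify directly that $x\mapsto\frac5{12}x-\frac1{12}\sum_i e_ixe_i$ is a projection onto $\re M$, by checking the identity on the finitely generated sub-bimodule generated by $x$, which is finite-dimensional.
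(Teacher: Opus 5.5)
Your proposal is correct and follows the same route the paper relies on. The paper gives no separate proof: it invokes the Schafer--Jacobson fact that the regular bimodule $\O$ is the only irreducible $\O$-bimodule, via \cite[Section 3.1]{huoqinghai2020nonass}, so reducing to $M\cong\bigoplus_\lambda \O u_\lambda$ and checking \eqref{eq:real_part_formula} on elements $e_jr$ is exactly the intended argument. The one slip is in Step 2: a vanishing sum $\sum_i [p,q,e_i]\,x_i$ cannot be split term by term, because the $x_i$ need not be linearly independent over $\R$. This is harmless, since your componentwise check over the summands $\O_\lambda$ in Step 1 already proves item (1), as both the nucleus and the center of $\O$ equal $\R$.
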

	
	\begin{rem}\label{rem:uniform law}
		In general, let $M$ be an alternative bimodule over an alternative algebra $\mathbb K$, where $\mathbb K=\O$ or $\mathbb H$  (for definition of alternative bimodules, see \cite{jacobson1954structure}). The \textbf{(commutative) center} of $M$ is defined as
		$$\mathscr{Z}_{\mathbb K}(M):=\{m\in M\mid pm=mp\text{ for all } p\in \mathbb K\}$$ and the \textbf{(right) nucleus} of $M$ consisting of all associative elements is defined as
		$$\mathscr{A}_{\mathbb K}{(M)}:=\{m\in M\mid [m,p,q]=0 \text{ for all } p\in \mathbb K\}.$$
		If  $\mathbb K=\mathbb H$, then $$\mathscr{A}_{\mathbb H}{(M)}=M,\qquad \mathscr{Z}_{\mathbb H}(M)=\re M.$$
		If  $\mathbb K=\mathbb O$, then by Property \ref{prop:real_part},  $$\mathscr{A}_{\mathbb O}{(M)}= \mathscr{Z}_{\mathbb O}(M)=\re M.$$
		
		{Although we confine ourselves to the octonionic case, all the discussions can go back to the quaternionic case, by taking care of the  counterpart of the real part $\re M$ in that framework: it may be the nucleus $\mathscr{A}_{\mathbb H}{(M)}$ or the commutative center $\mathscr{Z}_{\mathbb H}{(M)}$, i.e., $M$ or $\re M$. Hence the quaternionic counterpart of the  real part operator may be the projection operator on $M$ or $\re M$.}
		
	\end{rem}

	Let $M,M'$ be two $\O$-bimodules. 	Denote by $\operatorname{Hom}_{\mathbb{R}}(M,M') $ the set of all real linear   maps from $M$ to $M'$.
	A real linear map $f\in \operatorname{Hom}_{\mathbb{R}}(M,M') $ is called \textit{left (resp. right) octonionic linear} if
	$$f(px)=pf(x)\qquad  (\text{resp.}\ \ f(xp)=f(x)p )$$ for all $x\in M$ and all $p\in \O$.
	By Remark 3.2 in \cite{huo2024aacasubmod},  \textbf{the left octonionic linearity of  $f$ is equivalent to the right octonionic linearity}. Hence we shall just use the terminology ``octonionic linearity".
	Combing with \eqref{eq:real_part_formula}, we immediately obtain that,   for an octonionic linear map $f$ we have
	\begin{eqnarray}\label{eq:refx=frex}
		\re f(x)=f(\re x)
	\end{eqnarray}
	for all $x\in M$.
	Denote by  $\operatorname{Hom}_{\O}(M,M') $ the set of all  octonionic linear maps from $M$ to $M'$.
	
	We introduce the notion of Banach $\O$-module \cite{huoqinghai2022Riesz,huo2025BLMSHB}.
	\begin{mydef}\label{def:O-bimod}
		A normed left $\spo$-module consists of a left $\O$-module $M$ paired with a function $\fsh{x}$ mapping elements of $M$ to $\mathbb{R}$, satisfying:
		\begin{itemize}
			\item $\fsh{x} \geqslant 0 $ for all $x\in M$ with equality if and only if $x = 0$.
			\item  $\fsh{p x}=|p|\fsh{x}$ for all $p\in \spo$ and all $x\in M$.
			\item  $\fsh{x+y}\leqslant\fsh{x}+\|{y}\|$ for all $x,y\in M$.
		\end{itemize}
		A normed left $\O$-module becomes a Banach left $\O$-module if it is complete with respect to its natural distance induced by the norm. Similarly, normed right $\O$-modules, normed $\O$-bimodules and Banach $\O$-bimodules can be defined.
	\end{mydef}
	
	\subsection{Slice regular functions}
	Gentili and Struppa \cite{Gentili2006slice, Gentili2007slice} proposed slice regular functions—a new regularity notion for quaternionic functions inspired by Cullen \cite{Cullen1965intrinsic}—which have notable properties and find applications in functional calculus and the mathematical foundations of quantum mechanics \cite{colombo2011noncomfunctcalculus, ghiloni2013slicefct} {which began already in \cite{Viswanath} but without an appropriate notion of spectrum valid in the general case}; later, slice regularity was extended to Clifford algebras \cite{Colombo2009IsraelJM} and octonions \cite{Gentili2010RMJM}, and \cite{Ghiloni2011ADV} presented a slightly different approach, although equivalent on suitable open sets, applicable to all real alternative *-algebras. {This approach is based on the concept of stem functions, see \cite{TransSceBook,Fueter,Ghiloni2011ADV, Sce}.}

	Let us recall that the complexification of $\O$ is the real algebra given by the tensor product $$\O_{\spc}:=\O\otimes_{\R} \spc\simeq \O^2$$that can be described by setting $1 := (1, 0) \in \O^2$ and $ {\imath}:= (0, 1) \in \O^2$, so that every $z=(x, y) \in \O^2 $ can be uniquely written in the form $z = x + y\imath$. Thus the product induced by tensor product  can be described as
	$$(x + y\imath)(x' + y'\imath) = (x x' - yy') + (x y' + yx')\imath.$$
	This makes $\O_{\spc}$ an alternative algebra as well.
	The \textit{complex conjugation} of $ z=x + y\imath\in \O_{\spc}$ is defined by $\overline{z}:=x - y\imath$.
	
	Now let $D$ be a subset of $\spc$, invariant under the complex conjugation. Define  $$\Omega_D := \{r + s J \in D : r, s \in \R, J\in \mathbb S, r + s\sqrt{-1}\in \spc\}.$$
	A subset of $\O$ is said to be \textbf{axially symmetric} if it is equal to $\Omega_D$ for some set $D$ as above.
	
	\begin{mydef}[octonionic stem functions and slice functions]
		A function $F=F_1+\imath F_2:D\subseteq \spc \to \O_{\spc}$ is called \textit{stem function} if $F(\overline{z})=\overline{F(z)}$ for all $z\in D$.
		
		The stem function $F=F_1+\imath F_2$ on $D$ induces a left slice function $\mathcal{I}^L(F) : \Omega_D\to \O$ on $\Omega_D$ as follows. Let  $x\in \Omega_D$. By definition of  $\Omega_D$, there exist $r, s \in \R$ and $J\in \mathbb S$ such that $x = r + s J$. Then we set:  $$\mathcal{I}^L(F) (x) := F_1(z) + J F_2(z),$$ where $z = r + s\sqrt{-1}\in  D$. The set of all left slice functions defined on some axially symmetric set $\Omega$ is denoted by
		$\mathcal {S}^L(\Omega)$.
		
		Similarly, a stem function $F=F_1+\imath F_2$ on $D$ also  induces a right slice function $\mathcal{I}^R(F):= F_1(z) +  F_2(z)J $. The set of all right slice functions defined on some axially symmetric set $\Omega$ is denoted by
		$\mathcal {S}^R(\Omega)$.

	\end{mydef}
	It can be verified that	the definition  is well-posed. \textbf{From now on let $\Omega$ be an axially symmetric subset of $\O$.}

	\begin{mydef}\label{def:slice pres}
		Let $F=F_1+\imath F_2$ be a stem function. The left slice function $f = \mathcal{I}^L(F)$ induced by $F$ is said to be \textbf{slice preserving} if $F_1$ and $F_2$ are real-valued.
		The set of all   slice preserving functions in $\mathcal {S}^L(\Omega)$ is denoted by $\mathcal {S}_{\R}(\Omega)$.
	\end{mydef}
	In the literature these functions are also known as	intrinsic functions.
	
	\begin{mydef}[slice product]\label{def:slice product}
		Let $f=\mathcal{I}^L(F),g=\mathcal{I}^L(G)\in \mathcal {S}^L(\Omega)$ be two left slice functions on $\Omega$. We define the \textbf{left slice product} $f\bullet^L g$ as the left slice function $\mathcal{I}^L(FG)$.
		
		In particular, for any $p\in \O$, 
		$$p\bullet^L f=\mathcal{I}^L(pF);\qquad f\bullet^L p=\mathcal{I}^L(Fp).$$
		
		Similar definition of   \textbf{right slice product} $\bullet^R$ can also be given by stem functions. If there are  no confusions, we shall omit  the superscript $L$ or $R$.
	\end{mydef}
	\begin{prop}\label{prop:slicepre}
		\begin{enumerate}
			\item 	A left slice function $f\in \mathcal {S}^L(\Omega)$ is slice preserving if and only if $f (\Omega\cap \spc_J)\subseteq \spc_J$ for every $J\in\mathbb{S}$.
			\item 	Let $f\in \mathcal {S}_{\R}(\Omega),g\in \mathcal {S}^L(\Omega)$, then
			\begin{eqnarray}\label{eq:sliceprod}
				f\bullet g=g\bullet f=fg.
			\end{eqnarray}
		\end{enumerate}

		Similar results holds for right slice functions.
	\end{prop}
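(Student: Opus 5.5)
Both parts of Proposition \ref{prop:slicepre} reduce to statements about stem functions, and I would prove them at that level.

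For part (1), write $f=\mathcal{I}^L(F)$ with $F=F_1+\imath F_2$.

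\emph{Forward direction.} If $F_1,F_2$ are real-valued and $x=r+sJ\in\Omega\cap\spc_J$, then $f(x)=F_1(z)+JF_2(z)\in\spc_J$, as required.

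\emph{Converse.} Assume $f(\Omega\cap\spc_J)\subseteq\spc_J$ for every $J\in\mathbb S$, and fix $z=r+s\sqrt{-1}\in D$.
\begin{itemize}
\item If $s\neq 0$, use both $x=r+sJ$ and $\bar x$-type points: the point $r+s(-J)$ corresponds to the same $z$ with $-J$. This gives $F_1(z)\pm JF_2(z)\in\spc_J$, hence $F_1(z)\in\spc_J$ and $JF_2(z)\in\spc_J$, so $F_2(z)\in\spc_J$.
\item This holds for every $J\in\mathbb S$. Since $\bigcap_{J\in\mathbb S}\spc_J=\R$ (take $J=e_1$ and $J=e_2$, whose slices meet only in $\R$), both $F_1(z)$ and $F_2(z)$ are real.
\item If $s=0$, the stem condition $F(\bar z)=\overline{F(z)}$ with $z=\bar z$ forces $F_2(z)=0$. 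Then $f(r)=F_1(z)$ lies in every $\spc_J$, hence is real.
\end{itemize}
One subtlety: $F$ is not determined by $f$ alone. I would note that $F_1(z)=\tfrac12(f(x)+f(\bar x))$ and $F_2(z)=-\tfrac12 J(f(x)-f(\bar x))$ recover $F$ from $f$ when $s\neq0$, so the conclusion is independent of the chosen representation.

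For part (2), let $f=\mathcal{I}^L(F)$ with $F_1,F_2$ real, and $g=\mathcal{I}^L(G)$. The key point is that the product in $\O_{\spc}$ of $F$ and $G$ commutes when $F$ has real components. Indeed, real scalars are central in $\O$, and the complex unit $\imath$ commutes with everything in $\O_{\spc}$. Hence $FG=GF$, and therefore $f\bullet g=g\bullet f$.

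Next I compute $FG$ explicitly:
\[
FG=(F_1G_1-F_2G_2)+\imath(F_1G_2+F_2G_1).
\]
Evaluating at $x=r+sJ$ gives
\[
f\bullet g(x)=F_1G_1-F_2G_2+J(F_1G_2+F_2G_1).
\]
On the other hand,
\[
f(x)g(x)=(F_1+JF_2)(G_1+JG_2)=F_1G_1+F_1JG_2+F_2JG_1+F_2J(JG_2).
\]
Since $F_1,F_2$ are real, they can be moved freely. The only potentially nonassociative term is $J(JG_2)$, and by alternativity $J(JG_2)=(JJ)G_2=-G_2$. The two expressions therefore agree, giving $f\bullet g=fg$.

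The right slice case is symmetric, using $(G_2J)J=-G_2$. The only real obstacle in the whole argument is this alternativity step, and it is standard.
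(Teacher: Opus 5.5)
Your proof is correct. The paper states this proposition without proof, as standard background from the stem-function theory of slice functions, so there is no argument in the paper to compare against. Your stem-level verification is the standard one. In part (1) it recovers $F$ from $f$, uses $\bigcap_{J\in\mathbb{S}}\mathbb{C}_J=\mathbb{R}$, and uses the stem condition at real points. In part (2) it uses the centrality of $\mathbb{R}+\mathbb{R}\imath$ in $\mathbb{O}_{\mathbb{C}}$ and the left alternative law $J(JG_2)=-G_2$.

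\textbf{The right-slice case.} When you say this case is ``symmetric,'' you should state that the pointwise product comes out in the reversed order. For $f$ slice preserving the correct statement is $f\bullet^R g=g\bullet^R f=gf$.

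To see this, note that
\[
\mathcal{I}^R(FG)(x)=(F_1G_1-F_2G_2)+(F_1G_2+F_2G_1)J .
\]
This agrees with $(G_1+G_2J)(F_1+F_2J)$, and that computation is where your identity $(G_2J)J=-G_2$ enters. It does not agree with $(F_1+F_2J)(G_1+G_2J)$, because that product contains $F_2(JG_1)$ rather than $F_2(G_1J)$. For example, $f(x)=x$ and $g\equiv c$ give $(f\bullet^R g)(x)=cx$, which differs from $xc$ in general.

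This reversed form is exactly how the paper uses the right-slice version, for example in \eqref{eq:SR-1}.
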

	
	It turns out that the set of slice functions forms an alternative algebra \cite{Ghiloni2017TAMStaosf}. Through direct verification, we obtain the following results:
	
	\begin{thm}\label{thm:slice algebra}
		The following properties hold:
		\begin{enumerate}
			
			\item 	Endowed with the slice product, $(\mathcal {S}^L(\Omega),\bullet)$ is an alternative algebra.
			\item With the  scalar  multiplication defined by slice product,	$\mathcal {S}^L(\Omega)$ is an $\O$-bimodule.
			\item The nucleus of  the alternative algebra $(\mathcal {S}^L(\Omega),\bullet)$ is $\mathcal {S}_{\R}(\Omega)$, also  coincides with its commutative center.
			\item The real part of the $\O$-bimodule	$\mathcal {S}^L(\Omega)$ is $\mathcal {S}_{\R}(\Omega)$.
		\end{enumerate}
		
	\end{thm}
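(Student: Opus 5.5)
The plan is to reduce everything to pointwise algebra in $\O_{\spc}$ via stem functions. The slice product is defined by $f\bullet g=\mathcal{I}^L(FG)$, and the map $F\mapsto \mathcal{I}^L(F)$ is a real-linear bijection from stem functions on $D$ onto $\mathcal S^L(\Omega)$. Well-posedness of this map is already noted after the definition, and injectivity follows by evaluating on $\spc_J$ and $\spc_{-J}$ to recover $F_1,F_2$. So $(\mathcal S^L(\Omega),\bullet)$ is isomorphic, as a real algebra, to the algebra $\mathrm{Stem}(D)$ of stem functions with pointwise product in $\O_{\spc}$. The first check is closure: if $F(\bar z)=\overline{F(z)}$ and $G(\bar z)=\overline{G(z)}$, then $(FG)(\bar z)=\overline{F(z)}\,\overline{G(z)}=\overline{(FG)(z)}$. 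This holds because complex conjugation on $\O_{\spc}=\O\otimes\spc$ is an algebra automorphism; it is not an anti-automorphism, since it acts only on the $\spc$ factor.

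For (1), $\O_{\spc}$ is alternative, being a scalar extension of the alternative algebra $\O$, and stated as such. Hence the pointwise associator $[F,G,H](z)=[F(z),G(z),H(z)]$ is alternating, and $\mathrm{Stem}(D)$ is alternative.

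For (2), the scalar multiplications $p\bullet f=\mathcal{I}^L(pF)$ and $f\bullet p=\mathcal{I}^L(Fp)$ are exactly multiplication by the constant stem function $p$, which is a stem function since $\bar p=p$ in $\O_{\spc}$. All module associators $[p,q,f]$, $[p,f,q]$, $[f,p,q]$ are therefore associators in the alternative algebra $\mathrm{Stem}(D)$. Full antisymmetry of associators in an alternative algebra then gives precisely the bimodule identities of Definition 2.2.

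For (3) and (4), I would first show that $\mathcal S_{\R}(\Omega)$ is contained in both the nucleus and the center. If $F_1,F_2$ are real-valued, then $F(z)\in \R\otimes\spc$, which is central and associative in $\O_{\spc}$; this also recovers Proposition 2.6(2). The converse is the main step. Suppose $f$ satisfies $[p,q,f]=0$ for all $p,q\in\O$, or $p\bullet f=f\bullet p$ for all $p$. Pointwise this says that $F_1(z),F_2(z)\in\O$ satisfy $[p,q,F_k(z)]=0$ (respectively $pF_k(z)=F_k(z)p$) for all $p,q\in\O$. Since the nucleus and the commutative center of $\O$ are both $\R$, $F_k$ is real-valued.

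For the nucleus of the alternative algebra we need more, namely $[G,H,F]=0$ for all stem functions $G,H$. Constant functions are in particular stem functions, so the same argument applies, and conversely real-valued $F$ associates with everything pointwise. This identifies nucleus $=$ center $=\mathcal S_{\R}(\Omega)$ and, via Definition 2.3 and Proposition 2.5(1), $\re\,\mathcal S^L(\Omega)=\mathcal S_{\R}(\Omega)$.

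The only delicate point I expect is making sure the identification of $\O_{\spc}$-valued conditions componentwise is legitimate. Writing $G=G_1+\imath G_2$ with $G_1,G_2$ $\O$-valued, the associator $[p,q,F_1+\imath F_2]=[p,q,F_1]+\imath[p,q,F_2]$ splits because $\imath$ is central and associative. With that in place the rest is routine.
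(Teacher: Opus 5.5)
Your proposal is correct and is essentially the ``direct verification'' the paper has in mind; the paper gives no further details beyond citing \cite{Ghiloni2017TAMStaosf} for the alternative-algebra structure, and your transfer to stem functions with the pointwise product in $\O_{\spc}$ settles all four parts, using that conjugation on $\O_{\spc}$ is an algebra automorphism, that $\O_{\spc}$ is alternative, and that the nucleus and commutative center of $\O$ are both $\R$. One minor overstatement: centrality of $\R\otimes\spc$ only yields $f\bullet g=g\bullet f$ in Proposition~\ref{prop:slicepre}(2), while the equality with the pointwise product $fg$ needs the short check $(F_1+JF_2)(G_1+JG_2)=F_1G_1-F_2G_2+J(F_1G_2+F_2G_1)$ for real-valued $F_1,F_2$; that remark is not used in the theorem.
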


	The notion of \textbf{slice topology} is introduced in \cite{Dou2023JEMS}.
	\begin{mydef}
		A subset $U$ of $\O$ is called \textbf{slice-open} if $U\cap \spc_J$ is open in $\spc_J$ for any $J\in\mathbb S$. All slice-open sets form  a topology on $\O$, which is referred to as  the \textbf{slice topology} of $\O$.	
	\end{mydef}
	\begin{mydef}
		A set $U$ in $\mathbb{O}$ is called a \textbf{slice domain}, for short \textbf{s-domain}, if it is a domain in the Euclidean topology, $\mathbb{R} \cap U \neq \emptyset$, and $U\cap \spc_J$ is a domain in $\mathbb{C}_J$ for any $J \in \mathbb{S}$.
		
	\end{mydef}

	\begin{mydef}[Slice regular functions]\label{def:slice reg}
		Let  $U\subseteq \O$ be a slice-open set. A $C^1$ function $f:U\to \O$  function is called \textbf{left slice regular}	{if for all $J\in \mathbb S$, the restriction $f|_{U\cap \spc_J}$satisfies:  $$\left(\dfrac{\partial}{\partial x}+L_J\dfrac{\partial}{\partial y}\right)f|_{U\cap \spc_J}=0  .$$Here $L_J$  is the left  multiplication operator by $J$.}
		A similar definition holds for right slice regular
		functions by replacing above “left” by “right” and the operator $L_J$ by $R_J$, the right multiplication operator by $J$.
	\end{mydef}
	It is well-known that all  slice regular functions  defined on an axially symmetric slice domain are
	slice functions \cite{Gentili2013slice}. The set of all left and right  slice regular functions on $U$ are  denoted by $\mathcal{SR}^L(U)$ and $\mathcal{SR}^R(U)$ respectively. The set of
	slice preserving  regular functions is denoted by $\mathcal{SR}_{\R}(U)$.
	
	\begin{lemma}[Splitting Lemma]\label{lem:splittinglem}
		Let $U$ be a slice-open
		subset in $\O$.	For any $J\in \mathbb S$, there exist imaginary units $J_0=1,J_1$ orthogonal with $J$, $J_2$ which is   orthogonal with  $J,J_1$, and $J_3:=J_1J_2$.
		If $f\in \mathcal{SR}^L(U)$, then there exist holomorphic functions $F_i:U\cap \spc_J\to \spc_J$ for $i=0,\dots,3$, such that
		\begin{eqnarray}
			f(z)=\sum_{0}^{3}F_i(z){J_i}.
		\end{eqnarray}
		If $f\in \mathcal{SR}^R(U)$, then there exist holomorphic functions $G_i:U\cap \spc_J\to \spc_J$ for $i=0,\dots,3$, such that
		\begin{eqnarray}
			f(z)=\sum_{0}^{3}J_iG_i(z).
		\end{eqnarray}

	\end{lemma}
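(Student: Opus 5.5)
We will reduce the octonionic slice regular function on a single slice to the classical situation on the quaternionic subalgebra generated by $J$, $J_1$, $J_2$, where the splitting is well understood. Fix $J\in\mathbb S$ and choose, as in the construction of the standard orthonormal basis \eqref{eqdef:orthbasis}, a unit $J_1$ orthogonal to $1$ and $J$, then a unit $J_2$ orthogonal to $1,J,J_1$ \emph{and} to $J_1J$, and set $J_3=J_1J_2$. Then $\{1,J,J_1,J_1J,J_2,J_2J,J_3,J_3J\}$ is an orthonormal basis of $\O$, so that
\[
\O=\spc_J\oplus \spc_J J_1\oplus \spc_J J_2\oplus \spc_J J_3 ,
\]
where $\spc_J J_i=\{aJ_i:a\in\spc_J\}$. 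This holds because for $a=x+yJ$ we have $aJ_i=xJ_i+y\,JJ_i=xJ_i-y\,J_iJ$, using the anticommutation of orthogonal imaginary units.

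Next, write $f|_{U\cap\spc_J}$ in this basis. Each value decomposes uniquely as $f(z)=\sum_{i=0}^3F_i(z)J_i$ with $F_i(z)\in\spc_J$. The coefficient functions $F_i$ are obtained from $f$ by real linear projections, so they are $C^1$.

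Now apply the operator $\partial_x+L_J\partial_y$. Since $J$ and $F_i$ lie in the associative subalgebra $\spc_J$, Artin's theorem gives $J(F_iJ_i)=(JF_i)J_i$, because any two elements of $\O$ generate an associative subalgebra. More precisely, $F_i=a+bJ$ is a combination of $J_i$ and $JJ_i$, and $J(JJ_i)=(JJ)J_i$ by alternativity. It follows that
\[
0=\Big(\tfrac{\partial}{\partial x}+L_J\tfrac{\partial}{\partial y}\Big)f=\sum_{i=0}^3\Big(\tfrac{\partial F_i}{\partial x}+J\tfrac{\partial F_i}{\partial y}\Big)J_i .
\]
Each bracket lies in $\spc_J$, so by uniqueness of the decomposition every bracket vanishes. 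These are exactly the Cauchy--Riemann equations in $\spc_J$, so each $F_i$ is holomorphic on the open set $U\cap\spc_J$.

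The right-regular case is symmetric. We use $\O=\spc_J\oplus J_1\spc_J\oplus J_2\spc_J\oplus J_3\spc_J$ and the identity $(J_iG_i)J=J_i(G_iJ)$, which again follows from alternativity, $(J_iJ)J=J_i(JJ)$. The only genuine obstacle is nonassociativity when moving $L_J$ or $R_J$ past $J_i$, and alternativity settles it; checking the direct-sum decomposition via the choice of $J_2\perp J_1J$ is the remaining point requiring care.
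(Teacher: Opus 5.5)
Your proposal is correct and follows the paper's proof. Both arguments decompose $f|_{U\cap\spc_J}$ along $\{1,J_1,J_2,J_3\}$ with $\spc_J$-valued coefficients, pass $\partial_x+L_J\partial_y$ (resp.\ $\partial_x+R_J\partial_y$) through the constant units, and use $\spc_J$-linear independence to obtain the Cauchy--Riemann equations for each coefficient. Your extra requirement $J_2\perp J_1J$ and your explicit alternativity step $J(F_iJ_i)=(JF_i)J_i$ are details the paper leaves implicit. Under the paper's stated conditions one could take $J_2=JJ_1$, which gives $J_3=J$ and destroys the independence, so your version is the more careful of the two.
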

	\begin{proof}
		We only  give the proof in the  case $f\in \mathcal{SR}^L(U)$.
		
		For any $J\in \mathbb S$, one can choose imaginary units $J_0=1,J_1,J_2,J_3:=J_1J_2$ as required.  Hence for any $z\in U$, we can decompose $f(z)$ into
		$$	f(z)=\sum_{0}^{3}F_i(z){J_i},$$
		where $F_i(z)\in \spc_J$ for $i=0,\dots,3$. Let $z=x+yJ$.
		It follows from $\left(\dfrac{\partial}{\partial x}+J\dfrac{\partial}{\partial y}\right)f(z)=0$ and $F_i(z)\in \spc_J$ that $$\sum_{0}^{3}\left(\left(\dfrac{\partial}{\partial x}+J\dfrac{\partial}{\partial y}\right)F_i(z)\right){J_i}=0.$$
		It is straightforward to verify that the set $\{1, J_1, J_2, J_3\}$ is linearly independent over $\spc_J$ in the $\spc_J$-vector space $\O$, where $\O$ is induced by the complex structure associated with right multiplication by $J$.
		Therefore, $\left(\dfrac{\partial}{\partial x}+J\dfrac{\partial}{\partial y}\right)F_i(z)=0$ for $i=0,\dots,3$. This completes the proof.
	\end{proof}
	
	\begin{lemma}\label{lem:intgdf=0}
		Let $U$ be a slice-open
		subset in $\O$ and  $f\in  \mathcal{SR}_{\R}(U)$.   Fix $J\in \mathbb S$. For any  function 
{$g\in  \mathcal{SR}^R(U)\cap C(\overline{U\cap \spc_J})$} and every open $W\subseteq U\cap \spc_J$ whose  boundary is a finite union of continuously differentiable Jordan curves, we have
		\begin{eqnarray}\label{eq:intgdf=0}
			\int_{\partial W}g(s)ds_{J}f(s)=0
		\end{eqnarray}
		where $ds_J=-Jds$.

	\end{lemma}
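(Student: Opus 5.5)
We reduce the statement to the classical Cauchy theorem on the slice $\spc_J$ via the Splitting Lemma \ref{lem:splittinglem}. Since $g\in\mathcal{SR}^R(U)$, we apply the right version of Lemma \ref{lem:splittinglem}. This gives imaginary units $J_0=1,J_1,J_2,J_3=J_1J_2$ and holomorphic functions $G_i:U\cap\spc_J\to\spc_J$ with
\[
g(s)=\sum_{i=0}^{3}J_iG_i(s),\qquad s\in U\cap\spc_J .
\]
Since $f$ is slice preserving, Proposition \ref{prop:slicepre} gives $f(U\cap\spc_J)\subseteq\spc_J$. Restricted to the slice, $f$ is therefore a $\spc_J$-valued function $F$. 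It is holomorphic: left regularity $(\partial_x+J\partial_y)f=0$ with values in the commutative field $\spc_J$ is exactly the Cauchy--Riemann system. On the slice we also have $ds_J=-J\,ds=-J(dx+J\,dy)$, which is a $\spc_J$-valued $1$-form.

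Substituting these expressions, we regroup
\[
\int_{\partial W} g(s)\,ds_J\,f(s)=\sum_{i=0}^3\int_{\partial W}\big(J_iG_i(s)\big)\big(-J\,ds\big)F(s).
\]
The key algebraic point is that each product $\big(J_iG_i\big)\big(-J\,ds\,F\big)$ equals $J_i\big(G_i\cdot(-J)\,ds\,F\big)$. This is legitimate because all of $G_i$, $J$, $ds$ and $F$ lie in the single associative subalgebra $\spc_J$. By Artin's theorem, the subalgebra generated by the two elements $J_i$ and $J$ is associative, so $(J_ia)b=J_i(ab)$ for all $a,b\in\spc_J$. Since multiplication is real bilinear and continuous, we can pull $J_i$ out of the integral:
\[
\int_{\partial W} g\,ds_J\,f=-\sum_{i=0}^3 J_i\left(J\int_{\partial W} G_i(s)F(s)\,ds\right).
\]
Here we also used the commutativity of $\spc_J$ to move $J$ to the front.

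The integrand $G_iF$ is holomorphic on $U\cap\spc_J\supseteq W$. The continuity hypothesis $g\in C(\overline{U\cap\spc_J})$, together with the continuity of $f$, ensures that the boundary integrals make sense up to $\partial W$. If $\overline W\subseteq U\cap\spc_J$, the classical Cauchy theorem for domains bounded by finitely many $C^1$ Jordan curves gives $\int_{\partial W}G_iF\,ds=0$ for each $i$. If $\overline W$ touches the boundary of $U\cap\spc_J$, we obtain the same conclusion by approximating $W$ from inside by smooth subdomains and passing to the limit using continuity up to the boundary. Each term vanishes, hence \eqref{eq:intgdf=0}.

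The step we expect to need the most care is the nonassociative regrouping. The factor $J_i$ must be placed on the left of $g$, so that it is not trapped between $\spc_J$-valued factors. A naive rearrangement of a triple product involving $J_i$, $J$ and another element outside $\spc_J$ could produce an associator, and the right-hand form of the Splitting Lemma is what avoids this. Artin's theorem, applied to the pair $\{J_i,J\}$, is the tool we rely on to justify each regrouping.
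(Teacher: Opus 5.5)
Your proposal is correct and follows the paper's proof essentially step for step. Both arguments apply the right-hand Splitting Lemma to write $g=\sum_{i=0}^3 J_iG_i$, use that $f$ is $\spc_J$-valued on the slice, pull each $J_i$ out of the integral, and conclude with the classical Cauchy theorem applied to $G_if$. Your use of Artin's theorem for the regrouping $(J_ia)b=J_i(ab)$, and your remark on the case where $\overline W$ meets the boundary of $U\cap\spc_J$, only make explicit what the paper leaves implicit when it says there is ``no associative issue'' and appeals to holomorphicity.
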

	\begin{proof}
		By Proposition \ref{prop:slicepre}, for any $s\in \partial W$, $f(s)\in \spc_J$. Hence there is no associative issue in  the integral \eqref{eq:intgdf=0}.
		
		In view of Lemma \ref{lem:splittinglem},	there exist imaginary units $J_0=1,J_1,J_2,J_3:=J_1J_2$ orthogonal with $J$ and  holomorphic functions $G_i:U\cap \spc_J\to \spc_J$ for $i=0,\dots,3$, such that
		\begin{eqnarray}
			g(s)=\sum_{0}^{3}J_iG_i(s).
		\end{eqnarray}
		Thus
		\begin{eqnarray*}
			\int_{\partial W}g(s)ds_{J}f(s)&=&\sum_{0}^{3}\int_{\partial W}(J_iG_i(s))(ds_{J}f(s))\\
			&=&\sum_{0}^{3}J_i\int_{\partial W}G_i(s)ds_{J}f(s)\\
			&=&\sum_{0}^{3}J_i\int_{\partial W}G_i(s)f(s)ds_{J}\\
			&=&0.
		\end{eqnarray*}
		The last line follows from the  holomorphicity of $G_i(s)$ and $f(s)$.
	\end{proof}


	For any $ s\in \O$,	the \textit{characteristic polynomial} of $s$ is the slice preserving slice regular function $Q_{s}$
	\begin{eqnarray}
		Q_{s}(q)=q^2-2(\re s)q+\abs{s}^2.
	\end{eqnarray}
	It follows that  $Q_{s}^{-1}$ is also slice preserving.
	
	Suppose $s=x+yI$ for some $I\in \mathbb S $. Then the set of zeroes of $Q_{s}$ is
	$$[s]:=\{x+yJ:x,y\in \R, J\in \mathbb S\}.$$
	Define the left and right Cauchy kernels as
	\begin{eqnarray}
		{S_L}^{-1}(s,q):&=	Q_{s}(q)^{-1}(\overline{s}-q);\\
		{S_R}^{-1}(s,q):&=	(\overline{s}-q)Q_{s}(q)^{-1}.
	\end{eqnarray}
	
	Let $U=\O\setminus [s]$. It is immediate to verify that  ${S_L}^{-1}(s,\cdot)\in \mathcal{SR}^L(U)$ and ${S_R}^{-1}(s,\cdot)\in \mathcal{SR}^R(U)$. Moreover, Proposition \ref{prop:slicepre} gives
	\begin{eqnarray}
		{S_L}^{-1}(s,q)&=	Q_{s}(q)^{-1}\bullet_q^L (\overline{s}-q);\label{eq:SL-1}\\
		{S_R}^{-1}(s,q)&= (\overline{s}-q)	\bullet_q^R Q_{s}(q)^{-1},\label{eq:SR-1}
	\end{eqnarray}
	where $\bullet_q$ denotes the  slice product of functions with respect to the variable
	$q$.
	As in the quaternionic case, we also have
	\begin{eqnarray}\label{eq:SL=-SR}
		{S_L}^{-1}(s,q)=-{S_R}^{-1}(q,s).
	\end{eqnarray}

	\begin{thm}[Slice Cauchy integral formula \cite{Ghiloni2017caot}]\label{thm:cauchy int}
		Let $D\subseteq \spc$ be a bounded domain, $J\in \mathbb S$ and $D_J:=\Omega_D\cap \spc_J$. Let $\partial D_J$ denote the boundary of $D_J$ in $\spc_J$ and assume that it is piecewise $C^1$.
		
		If $f\in \mathcal{SR}^L(\Omega_D)$, then
		\begin{eqnarray}\label{eq:lslicecauchy}
			f(q)=\frac{1}{2\pi}\int_{\partial D_J}{S_L}^{-1}(s,q)\bullet_q^L(ds_Jf(s))
		\end{eqnarray}
		for all $q\in \Omega_D$.
		
		If $f\in \mathcal{SR}^R(\Omega_D)$, then
		\begin{eqnarray}\label{eq:rslicecauchy}
			f(q)=\frac{1}{2\pi}\int_{\partial D_J}(f(s)ds_J)\bullet_q^R{S_R}^{-1}(s,q)
		\end{eqnarray}
		for all $q\in \Omega_D$.
		
	\end{thm}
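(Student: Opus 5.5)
The plan is to reduce the octonionic statement to the classical Cauchy formula on a single slice $\spc_J$, via the Splitting Lemma \ref{lem:splittinglem}, and then pass from the slice to all of $\Omega_D$ using the representation formula for slice functions. I treat the left case \eqref{eq:lslicecauchy}; the right case \eqref{eq:rslicecauchy} is symmetric, interchanging left and right multiplications and using \eqref{eq:SR-1}.

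\textbf{Step 1: $q\in D_J$.} First I prove the formula for $q$ in the same slice $D_J$. There ${S_L}^{-1}(s,q)=Q_s(q)^{-1}(\overline s-q)$ is a product of elements of $\spc_J$, so it equals $(s-q)^{-1}\in\spc_J$. This uses that $Q_s(q)=(q-s)(q-\overline s)$ when $s,q$ commute, and $Q_s(q)\neq 0$ since $s\in\partial D_J$ and $q$ is interior.

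On $\spc_J$, the slice product $\bullet^L_q$ of a function with values in $\spc_J$ by another function reduces to pointwise multiplication. I will take this as the content of Proposition \ref{prop:slicepre}, since ${S_L}^{-1}(s,\cdot)$ is slice preserving in $q$.

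Next I write $f=\sum_{i=0}^3 F_iJ_i$ with each $F_i$ holomorphic $\spc_J$-valued, by Lemma \ref{lem:splittinglem}. Note $ds_J=-J\,ds$, and let $J_1,J_2,J_3$ be the units of that lemma. Each summand $(s-q)^{-1}(ds_J F_i(s))J_i$ involves at most two octonions outside $\spc_J$ multiplied against a single $J_i$. By Artin's theorem the subalgebra generated by $J$ and $J_i$ is associative, so the summand equals $\big((s-q)^{-1}(-J)\,ds\,F_i(s)\big)J_i$.

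The factor $\frac{1}{2\pi}(-J)$ is the slice version of $\frac{1}{2\pi i}$, since $-J=J^{-1}$. The classical Cauchy formula in $\spc_J\cong\spc$ then gives $F_i(q)J_i$, and summing over $i$ yields $f(q)$.

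\textbf{Step 2: general $q=x+yI\in\Omega_D$.} I use that $f$ is a slice function, which holds because $\Omega_D$ is axially symmetric (the cited result of \cite{Gentili2013slice}; if $\Omega_D$ is not a slice domain, I would work directly with the stem function). Put $z=x+yJ$. The representation formula then gives
\[
f(q)=\tfrac12\big(f(z)+f(\overline z)\big)-\tfrac{I}{2}\Big(J\big(f(z)-f(\overline z)\big)\Big).
\]

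The kernel $q\mapsto {S_L}^{-1}(s,q)\bullet^L_q(ds_Jf(s))$ is, for fixed $s$, a left slice regular function of $q$. Indeed, it is the slice product of the left slice regular function ${S_L}^{-1}(s,\cdot)$ with a constant on the right. Hence it also obeys the representation formula in $q$, with the same real coefficients. Since the representation formula is $\R$-linear, it commutes with the integral over $s$. Applying Step 1 at $z$ and at $\overline z$ and recombining gives the formula at $q$.

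\textbf{Main obstacle.} The hard point is making the slice product with a constant rigorous in the nonassociative setting. Concretely, I must check that $q\mapsto {S_L}^{-1}(s,q)\bullet_q c$ is the left slice function induced by the stem function $\mathcal{S}(s,\cdot)\,c$, so that the representation formula applies to it. The product $Fc$ of a stem function with a constant is again a stem function, so this reduces to a short verification. The remaining care is associativity bookkeeping, which I handle with Artin's theorem; I expect it to be routine.
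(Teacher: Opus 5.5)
The paper gives no proof of this theorem; it is imported from Ghiloni et al. So there is no internal argument to compare against. Judged on its own, your two-step route is sound. You first use the splitting lemma and the classical Cauchy formula on $\spc_J$. You then transport the result to all of $\Omega_D$ through the representation formula in $q$. That transport is legitimate: for each $s$ the integrand is the slice function induced by the stem function $\mathcal S(s,\cdot)c$, which is immediate from the definition $f\bullet^L p=\mathcal I^L(Fp)$, and the representation formula is $\R$-linear and commutes with the integral. This is the standard quaternionic argument, with Artin's theorem supplying the associativity needed in each summand.

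One step, however, rests on a false claim. ${S_L}^{-1}(s,\cdot)$ is \emph{not} slice preserving unless $s\in\R$. For $q\in\spc_I$ with $I\neq\pm J$, the factor $\overline{s}-q$ has a nonzero $J$-component, so ${S_L}^{-1}(s,q)\notin\spc_I$. Hence Proposition~\ref{prop:slicepre}(2) does not apply to it.

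The identity you need is still true, for a different reason. Apply Proposition~\ref{prop:slicepre}(2) only to the genuinely slice-preserving factor $Q_s^{-1}$. Then compute $(\overline{s}-q)\bullet^L_q c=(\overline{s}-q)c$ directly from the stem function $(\overline{s}-x)-y\,\imath$. Together these give, for every constant $c\in\O$ and every $q\in\Omega_D$,
\[
{S_L}^{-1}(s,q)\bullet^L_q c=Q_s(q)^{-1}\big((\overline{s}-q)c\big).
\]
This is exactly the first three lines of the computation in the proof of Corollary~\ref{cor:slicecauchy}, which hold for arbitrary $p$. For $q\in\spc_J$, both $Q_s(q)^{-1}$ and $\overline{s}-q$ lie in $\spc_J$. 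The associator $[Q_s(q)^{-1},\overline{s}-q,c]$ therefore vanishes for every $c$ by Artin's theorem, and you get $(s-q)^{-1}c$ as required. The structural reason is that the stem function of ${S_L}^{-1}(s,\cdot)$ is $\spc_J$-valued, not real-valued.

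Two smaller points.

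First, your hedge about non-slice domains is unnecessary. $D$ is connected and conjugation invariant, so it meets $\R$, and $\Omega_D$ is an axially symmetric s-domain. The proposed fallback would not work anyway: the right-hand side is always a slice function of $q$, so the formula can only hold for slice functions.

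Second, Step~1 applies the classical Cauchy formula to the $F_i$ up to $\partial D_J$, which needs $f$ continuous on $\overline{D_J}$ (e.g.\ slice regular on a neighbourhood of $\overline{\Omega_D}$). It also uses conjugation invariance of $\partial D_J$ to guarantee $q\neq\overline{s}$. The statement leaves the continuity hypothesis implicit, and the paper later uses the theorem in exactly that setting ($\overline{U}\subseteq W$), so you should state it explicitly.
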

	\begin{cor}\label{cor:slicecauchy}
		Under the same assumptions of Theorem \ref{thm:cauchy int}, if
		$f\in \mathcal{SR}_{\R}(\Omega_D)$, then
		\begin{eqnarray}\label{eq:slicecauchy}
			f(q)=\frac{1}{2\pi}\int_{\partial D_J}{S_L}^{-1}(s,q)ds_Jf(s)=\frac{1}{2\pi}\int_{\partial D_J}f(s)ds_J{S_R}^{-1}(s,q)
		\end{eqnarray}
		for all $q\in \Omega_D$.

	\end{cor}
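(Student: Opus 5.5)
We derive Corollary \ref{cor:slicecauchy} directly from Theorem \ref{thm:cauchy int}. Since $f\in\mathcal{SR}_{\R}(\Omega_D)\subseteq \mathcal{SR}^L(\Omega_D)\cap\mathcal{SR}^R(\Omega_D)$ (a slice preserving function has real-valued stem components, so it is both left and right slice regular), both formulas \eqref{eq:lslicecauchy} and \eqref{eq:rslicecauchy} apply. It remains to remove the slice products $\bullet_q^L$ and $\bullet_q^R$ and the parenthesization.

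For the left formula, fix $s\in\partial D_J\subseteq\spc_J$. Since $f$ is slice preserving, Proposition \ref{prop:slicepre}(1) gives $f(s)\in\spc_J$, so $ds_Jf(s)=-J\,ds\,f(s)$ is an element $c$ of $\spc_J$, constant in $q$. The function $q\mapsto {S_L}^{-1}(s,q)$ is the left slice function with stem $Q_s(z)^{-1}(\overline{s}-z)$, where $Q_s(z)^{-1}$ is real-valued-stem. By Definition \ref{def:slice product}, ${S_L}^{-1}(s,\cdot)\bullet^L c$ has stem $\bigl(Q_s(z)^{-1}(\overline{s}-z)\bigr)c$. Writing $z=\alpha+\beta\imath$, its value at $q=\alpha+\beta I$ is obtained by splitting this stem into $\O$-components. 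Because $Q_s^{-1}$ has real components and commutes and associates with everything, the stem equals
\[
Q_s^{-1}\bigl((\overline{s}-\alpha)c-\beta c\,\imath\bigr),
\]
and evaluating gives
\[
Q_s(q)^{-1}\bigl((\overline{s}-\alpha)c-\beta I c\bigr)=Q_s(q)^{-1}(\overline{s}-q)c' ,
\]
and the point is to check that $c'=c$. Here is the one delicate step: the term $\beta (Ic)$ versus $\beta I c$ with $\overline{s},c\in\spc_J$ but $I$ arbitrary. We need $((\overline{s}-q)c)$ to equal $(\overline{s}c-\alpha c)-\beta (Ic)$, which is just distributivity: $(\overline{s}-\alpha-\beta I)c=\overline{s}c-\alpha c-\beta(Ic)$. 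No associator appears since each product involves only two factors. Hence ${S_L}^{-1}(s,q)\bullet^L_q(ds_Jf(s))=\bigl(Q_s(q)^{-1}(\overline{s}-q)\bigr)(ds_Jf(s))$. Since $Q_s(q)^{-1}$ is real, this equals ${S_L}^{-1}(s,q)\,ds_J f(s)$ with unambiguous bracketing: $ds_J$ and $f(s)$ both lie in $\spc_J$, which is commutative and associative. This gives the first equality in \eqref{eq:slicecauchy}.

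The right formula is symmetric. $c=f(s)ds_J\in\spc_J$, and $c\bullet^R_q{S_R}^{-1}(s,q)$ has stem $c\bigl((\overline{s}-z)Q_s^{-1}\bigr)$. Evaluating on the right slice, with the imaginary unit multiplied on the right, yields $c\bigl((\overline{s}-q)Q_s(q)^{-1}\bigr)$, again by distributivity only.

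The main care point is precisely verifying that evaluation of the slice product against a constant reduces to pointwise multiplication. This holds because a constant $c$ times a stem function is the stem of the pointwise product whenever one factor is constant, by Definition \ref{def:slice product}. Consequently the slice product reduces to the ordinary product, and the associativity ambiguity disappears because two of the three factors, $ds_J$ and $f(s)$, lie in the commutative, associative subalgebra $\spc_J$.
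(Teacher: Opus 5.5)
\textbf{There is a genuine gap, at the one point where nonassociativity matters.}

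Your stem computation is correct up to
\[
\bigl({S_L}^{-1}(s,\cdot)\bullet^L c\bigr)(q)=Q_s(q)^{-1}\bigl((\overline{s}-q)c\bigr).
\]
You then rewrite this as $\bigl(Q_s(q)^{-1}(\overline{s}-q)\bigr)c={S_L}^{-1}(s,q)\,c$, and justify it by saying that $Q_s(q)^{-1}$ is real. That is false. Having real stem components makes $Q_s^{-1}$ central and nuclear in $\mathbb{O}_{\mathbb{C}}$, but its \emph{values} are not real. For $q=\alpha+\beta I$ one has $Q_s(q)^{-1}\in\mathbb{C}_I$; for example, $s=1+J$ and $q=I$ give $Q_s(I)=1-2I$. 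So the rebracketing needs $[Q_s(q)^{-1},\overline{s}-q,c]=0$, and your remark that ``no associator appears'' overlooks exactly this.

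Your closing principle, that slice multiplication by a constant always reduces to the pointwise product, is also false over $\mathbb{O}$. For $g=\mathcal{I}^L(G_1+\imath G_2)$ one gets $(g\bullet^L c)(q)=g(q)c-[I,G_2(z),c]$. For instance, $g(q)=qe_2$ with $I=e_1$, $c=e_4$ gives a nonzero correction. So pointwise-ness depends on the specific kernel.

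The right formula has the same problem; it is not ``distributivity only''. Evaluating gives $cK_1+(cK_2)I$, where $K_1+\imath K_2$ is the stem of ${S_R}^{-1}(s,\cdot)$; equivalently it gives $\bigl(c(\overline{s}-q)\bigr)Q_s(q)^{-1}$. Reaching $c\,{S_R}^{-1}(s,q)$ again requires an associator to vanish.

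\textbf{The missing ingredient} is the one the paper's proof uses. With $q\in\mathbb{C}_I$, the elements $Q_s(q)^{-1}\in\mathbb{C}_I$, $\overline{s}-q$, and $c=ds_Jf(s)\in\mathbb{C}_J$ all lie in the subalgebra $\mathbb{H}_{I,J}$ generated by $I$ and $J$. That subalgebra is associative (Artin), so the associator vanishes. Concretely, write $Q_s(q)^{-1}=a+bI$. Trilinearity and alternativity give
\[
[Q_s(q)^{-1},\overline{s}-q,c]=b\,[I,\overline{s},c]=0,
\]
since $\overline{s},c\in\mathbb{C}_J$. Symmetrically, on the right, $[c,\overline{s}-q,Q_s(q)^{-1}]=b\,[c,\overline{s},I]=0$.

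The paper organizes the argument as
\[
{S_L}^{-1}(s,\cdot)\bullet^L p=Q_s^{-1}\bullet^L\bigl((\overline{s}-q)\bullet^L p\bigr)=Q_s(q)^{-1}\bigl((\overline{s}-q)p\bigr),
\]
using that $Q_s^{-1}$ lies in the nucleus of the slice algebra. It then discards $[Q_s(q)^{-1},\overline{s}-q,p]$ for $p\in\mathbb{H}_{I,J}$.

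Once you insert this step, and its analogue on the right, your argument is the paper's. The rest of your proposal is correct: $f$ is both left and right slice regular, $f(s)\in\mathbb{C}_J$, and the bracketing of ${S_L}^{-1}(s,q)\,ds_J\,f(s)$ is unambiguous because two of the factors lie in $\mathbb{C}_J$.
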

	\begin{proof}
		Suppose $q\in \spc_I$ for some $I\in \mathbb S$. Denote by $\mathbb H_{I,J}$ the associative subalgebra  of $\O$ generated by $I,J$.
		In view of \eqref{eq:SL-1} and noting $Q_{s}(\cdot)^{-1}$ is slice preserving,  for any $p\in \mathbb H_{I,J}$ we have
		\begin{align*}
			{S_L}^{-1}(s,q)\bullet_q^L p&=	(Q_{s}(q)^{-1}\bullet_q^L (\overline{s}-q))\bullet_q^L p\\
			&=Q_{s}(q)^{-1}\bullet_q^L ((\overline{s}-q)\bullet_q^L p)\\
			&=Q_{s}(q)^{-1} ((\overline{s}-q) p)\\
			&=(Q_{s}(q)^{-1} (\overline{s}-q)) p-[	Q_{s}(q)^{-1} ,\overline{s}-q, p)]\\
			&={S_L}^{-1}(s,q) p.
		\end{align*}
		Combining with formula \eqref{eq:lslicecauchy}, we obtain for 	$f\in \mathcal{SR}_{\R}(\Omega_D)$
		$$f(q)=\frac{1}{2\pi}\int_{\partial D_J}{S_L}^{-1}(s,q)ds_Jf(s),\qquad \text{$q\in \Omega_D,$}$$ as desired.
		The second formula can be proved with a similar reasoning.
	\end{proof}
	
	\subsection{Para-Linear Maps}

	Para-linearity generalizes linearity to the non-associative setting by constraining the associator's real part. The notion of para-linearity is first introduced  in \cite{huoqinghai2022Riesz} and then extended into a more general setting in \cite{huoqinghai2020nonass}.
	
	\begin{mydef}[Right Para-Linear Map]
		\label{def:para_linear}
		Let $M$ be a right $\mathbb{O}$-module and $M'$ an $\mathbb{O}$-bimodule. A map $f \in \operatorname{Hom}_{\mathbb{R}}(M,M')$ is \textbf{right para-linear} if
		\[
		\operatorname{Re} B_p(f,x) = 0, \quad  p \in \mathbb{O}, \, x \in M,
		\]
		where the \textbf{second right associator} is
		\[
		B_p(f,x) := f(x)p - f(xp),
		\]
		and $\operatorname{Re}: M' \to \operatorname{Re} M'$ is the real part operator. The set of right para-linear maps is denoted by $\operatorname{Hom}_{\mathcal{RO}}(M,M')$. Left para-linearity is defined analogously and the set of left para-linear maps is denoted by $\operatorname{Hom}_{\mathcal{LO}}(M,M')$.
	\end{mydef}
	
	For any $f \in \operatorname{Hom}_{\mathbb{R}}(M,M')$, we denote
	\begin{eqnarray}\label{eqdef:fR}
		f_{\R}(x):=\re f(x).
	\end{eqnarray}
	
	\begin{thm}[Para-Linearity Characterization I]
		\label{thm:para_linear_char}
		Let $M$ be a right $\mathbb{O}$-module and $M'$ an $\mathbb{O}$-bimodule. For $f \in \operatorname{Hom}_{\mathbb{R}}(M,M')$ decomposed as $f(x) = f_{\mathbb{R}}(x) + \sum_{i=1}^7 f_i(x)e_i$, the following are equivalent:
		\begin{enumerate}
			\item $f \in \operatorname{Hom}_{\mathcal{RO}}(M,M')$,
			\item $f_i(x) = -f_{\mathbb{R}}(x e_i)$ for $i=1,\dots,7$,
			\item $B_p(f,x) = \sum_{i=1}^7 f_{\mathbb{R}}([x,p,e_i])e_i$ for all $p\in \O$.
		\end{enumerate}
		
	\end{thm}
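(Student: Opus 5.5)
We prove the equivalences in the cycle (1)$\Rightarrow$(2)$\Rightarrow$(3)$\Rightarrow$(1). The tools are the decomposition $M'\cong \re M'\otimes_{\R}\O$ from Proposition \ref{prop:real_part}, the explicit formula \eqref{eq:real_part_formula}, and the fact that $\re$ annihilates associators and commutators. Throughout we write $y=y_0+\sum_{i=1}^7 y_ie_i$ with $y_i\in\re M'$. Since the $y_i$ lie in the nucleus and centre, one checks that $y_i=-\re(ye_i)$ for $i\geq 1$: the element $(y_je_j)e_i=y_j(e_je_i)$ has real part $-\delta_{ij}y_j$.

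\emph{(1)$\Rightarrow$(2).} Take $p=e_i$ in the definition of para-linearity. Then $\re(f(x)e_i)=\re f(xe_i)=f_{\R}(xe_i)$. By the remark above, the left-hand side equals $-f_i(x)$, so $f_i(x)=-f_{\R}(xe_i)$.

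\emph{(2)$\Rightarrow$(3).} Substituting (2) gives $f(x)=f_{\R}(x)-\sum_{i}f_{\R}(xe_i)e_i$, so $f$ is determined by $f_{\R}$. Using real linearity, reduce to $p=e_j$ or $p$ real; the real case is trivial. We then compute
$$f(x)p-f(xp)=f_{\R}(x)p-\sum_i (f_{\R}(xe_i)e_i)p-f_{\R}(xp)+\sum_i f_{\R}((xp)e_i)e_i.$$
Since $f_{\R}(\cdot)\in\re M'$, we have $(f_{\R}(xe_i)e_i)p=f_{\R}(xe_i)(e_ip)$. Expanding $e_ip$ in the basis and regrouping, the terms of the form $f_{\R}(x(e_ip))$ cancel against $f_{\R}(xp)$ and $f_{\R}((xp)e_i)$. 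The leftover is exactly $\sum_i f_{\R}\big((xp)e_i-x(pe_i)\big)e_i=\sum_i f_{\R}([x,p,e_i])e_i$.

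This bookkeeping is the main obstacle. The real coefficient (the $e_0$-component) must be matched: we need $f_{\R}(x)p$ paired with $-\sum_i f_{\R}(xe_i)(e_ip)$ to produce $f_{\R}(xp)$ plus imaginary terms. This uses $e_ie_j=\epsilon_{ijk}e_k-\delta_{ij}$, together with the full antisymmetry $[x,p,e_i]=-[x,e_i,p]$ of right associators in the right $\O$-module $M$, to rewrite $x(e_ip)$ versus $(xe_i)p$. I would carry this out explicitly for $p=e_j$, tracking the structure constants $\epsilon_{ijk}$.

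\emph{(3)$\Rightarrow$(1).} Each term $f_{\R}([x,p,e_i])e_i$ with $i\geq 1$ has zero real part, since $\re(ae_i)=0$ for $a\in\re M'$. Hence $\re B_p(f,x)=0$, which is para-linearity.
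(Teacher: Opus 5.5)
The paper states this theorem without proof; it is imported from \cite{huoqinghai2020nonass}. So the only question is whether your cycle closes. Your (1)$\Rightarrow$(2) and (3)$\Rightarrow$(1) are complete and correct. In fact (1) and (2) are the same condition. Since $f_i(x)=-\re(f(x)e_i)$, and $p\mapsto\re B_p(f,x)$ is real linear and vanishes for real $p$, condition (2) says exactly that $\re(f(x)q)=f_{\R}(xq)$ for all $q\in\O$.

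The step that is not yet a proof is (2)$\Rightarrow$(3). This is a fixable incompleteness rather than a wrong approach, but your account of it is inaccurate in two respects.

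First, the only cancellation is in the $e_0$-slot. Using just real linearity of $f_{\R}$ and of $q\mapsto xq$, the regrouping you need is
\[
f_{\R}(x)p-\sum_{i=1}^{7} f_{\R}(xe_i)(e_ip)=f_{\R}(xp)-\sum_{k=1}^{7} f_{\R}\big(x(pe_k)\big)e_k .
\]
Here $f_{\R}(xp)$ cancels. The terms $-f_{\R}(x(pe_k))$ do not cancel at all; each pairs with $f_{\R}((xp)e_k)$ to form $f_{\R}([x,p,e_k])$.

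Second, the antisymmetry $[x,p,e_i]=-[x,e_i,p]$ in $M$ plays no role. You never need to relate $x(e_ip)$ to $(xe_i)p$. The displayed identity depends only on the octonion multiplication table, namely
\[
\sum_{i=0}^{7}\langle e_ip,e_k\rangle\,\overline{e_i}=p\,\overline{e_k}
\]
for the Euclidean inner product. For $p=e_j$ and $k\neq j$ this reduces to $\sum_i\epsilon_{ijk}e_i=e_je_k$, by cyclic symmetry of $\epsilon$, together with $e_je_j=-1$. Once you state this identity, your computation is a proof. It also shows the theorem uses no alternativity of $M$.

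There is a shorter route that avoids structure constants entirely:
\begin{itemize}
\item The $e_k$-coefficient of $y\in M'$ is $\re(y\overline{e_k})$.
\item $\re$ annihilates associators in $M'$, by Proposition~\ref{prop:real_part}(2) together with the bimodule identity $[m,p,q]=[p,q,m]$.
\end{itemize}
Using (1) twice, you get
\begin{align*}
\re\big(B_p(f,x)\overline{e_k}\big)&=\re\big(f(x)(p\overline{e_k})\big)-\re\big(f(xp)\overline{e_k}\big)\\
&=f_{\R}\big(x(p\overline{e_k})\big)-f_{\R}\big((xp)\overline{e_k}\big)=-f_{\R}\big([x,p,\overline{e_k}]\big).
\end{align*}
This is $0$ for $k=0$ and $f_{\R}([x,p,e_k])$ for $k\geq 1$, which gives (1)$\Rightarrow$(3) directly.
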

	
	Recall the definition \eqref{eq:CJM} of  ${\mathbb C_J(M)} $. The following result indicates that the para-linearity can be viewed as ``slice (complex) linearity'' in some sense.
	\begin{thm}[Para-Linearity Characterization II]\label{lem:paralinear-char2}
		Let $M,M'$ be two $\O$-bimodules and $f \in \operatorname{Hom}_{\mathbb{R}}(M,M')$. Then the following are equivalent:
		\begin{enumerate}
			\item $f\in \operatorname{Hom}_{\mathcal{RO}}(M,M')$;
			\item $B_p{(f,x)}=0  \text{ for all } p\in \O \text{ and all } x\in \re M$;
			\item for any $J\in \mathbb S$, $B_p(f,z)=0$ for all $p\in \mathbb C_J$ and all $z\in \mathbb C_J(M) $;
			\item for any $J\in \mathbb S$,  $f|_{\mathbb C_J(M)}$ is right $\mathbb C_J$-linear, i.e., $$f(zp)=f(z)p$$for all $p\in \mathbb C_J$ and all $z\in \mathbb C_J(M) $.
		\end{enumerate}
		
	\end{thm}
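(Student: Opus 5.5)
The plan is to prove the cycle $(1)\Rightarrow(2)\Rightarrow(4)\Rightarrow(3)\Rightarrow(2)\Rightarrow(1)$. Throughout I use three facts from Proposition~\ref{prop:real_part}. First, real elements have vanishing associators with any two octonions. By the bimodule identities this holds in every position, e.g. $[m,p,q]=[p,q,m]=0$ for $m\in\re M$. Second, real elements commute with octonions. Third, $\re[p,q,m]=0$ for all $m\in M$.

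\emph{$(1)\Rightarrow(2)$.} Invoke Theorem~\ref{thm:para_linear_char}(3):
\[
B_p(f,x)=\sum_{i=1}^7 f_{\R}([x,p,e_i])e_i .
\]
For $x\in\re M$ every associator $[x,p,e_i]$ vanishes, hence $B_p(f,x)=0$.

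\emph{$(3)\Rightarrow(2)$.} Given $x\in\re M$ and $p\in\O$, pick $J\in\mathbb S$ with $p\in\spc_J$; this is possible because every octonion lies in some $\spc_J$. Since $x\in\re M\subseteq\spc_J(M)$, hypothesis (3) gives $B_p(f,x)=0$.

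\emph{$(4)\Rightarrow(3)$.} This is immediate from the definition $B_p(f,z)=f(z)p-f(zp)$.

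\emph{$(2)\Rightarrow(4)$.} Write $z=a+Jb$ with $a,b\in\re M$; note $Jb=bJ$. Let $p\in\spc_J$. Since $[b,J,p]=0$, we have $zp=ap+b(Jp)$. Applying (2) twice, to $a$ with scalar $p$ and to $b$ with scalar $Jp$, gives
\[
f(zp)=f(a)p+f(b)(Jp).
\]
Applying (2) with scalar $J$ gives $f(z)=f(a)+f(b)J$, so $f(z)p=f(a)p+(f(b)J)p$. Hence
\[
f(z)p-f(zp)=[f(b),J,p].
\]
Write $p=u+vJ$ with $u,v\in\R$. Real linearity and alternativity of the right associator give $[f(b),J,p]=v[f(b),J,J]=0$.

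\emph{$(2)\Rightarrow(1)$.} This is the only step with real content, since we must control $B_p(f,x)$ for non-real $x$. Decompose $x=\sum_j x_je_j$ with $x_j\in\re M$, using Proposition~\ref{prop:real_part}(4) and commutativity of real elements. By real linearity it suffices to treat $x=me_j$ with $m\in\re M$. Applying (2) with the scalars $e_j$ and $e_jp$, and using $(me_j)p=m(e_jp)$ (the associator with a real element vanishes), we get
\[
f(me_j)=f(m)e_j,\qquad f((me_j)p)=f(m)(e_jp).
\]
Therefore
\[
B_p(f,me_j)=(f(m)e_j)p-f(m)(e_jp)=[f(m),e_j,p].
\]
By the bimodule identities this equals $[e_j,p,f(m)]$, whose real part vanishes by Proposition~\ref{prop:real_part}(2). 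Hence $\re B_p(f,x)=0$ for all $x$ and $p$, which is (1). I expect the only delicate point to be this rewriting of $B_p$ as an associator and the appeal to $\re[p,q,m]=0$; everything else is bookkeeping of associators that vanish on $\re M$.
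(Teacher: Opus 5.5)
Your proof is correct and follows essentially the same route as the paper: (1)$\Rightarrow$(2) via Theorem~\ref{thm:para_linear_char}(3); (2)$\Rightarrow$(1) by reducing $B_p(f,x_je_j)$ to the associator $[f(x_j),e_j,p]$, whose real part vanishes; and the slice statements via $f(z)p-f(zp)=[f(z_1),J,p]=0$ for $p\in\spc_J$. The only cosmetic difference is that you go (2)$\Rightarrow$(4)$\Rightarrow$(3), whereas the paper proves (2)$\Rightarrow$(3) and then notes (3)$\Leftrightarrow$(4).
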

	\begin{proof}
		(1) $\implies$ (2):
		If  $f \in \operatorname{Hom}_{\mathcal{RO}}(M,M')$, then for any $x\in \re M$, it follows from Theorem \ref{thm:para_linear_char} that $$B_p(f,x) = \sum_{i=1}^7 f_{\mathbb{R}}([x,p,e_i])e_i=0.$$
		
		(2) $\implies$ (1):
		For any $x=\sum_{i=0}^7x_ie_i$ with $x_i\in \re M$,   we have
		$$\re B_p{(f,x)}=\re B_p{(f,\sum_{i=0}^7x_ie_i)}=\re \sum_{i=0}^7[f(x_i),e_i,p]=0.$$
		This proves  $f \in \operatorname{Hom}_{\mathcal{RO}}(M,M')$ as desired.
		
		(2) $\implies$ (3): Let $z=z_0+z_1J\in \mathbb C_J(M)$ with $z_0,z_1\in \re (M)$.
		Then by part (2), we obtain
		$$B_p(f,z)=B_p(f,z_1J)=f(z_1J)p-f((z_1J)p)=(f(z_1)J)p-f(z_1(Jp))=[f(z_1),J,p]=0.$$

		It is  obvious that part (3) implies (2). That  part (3) is equivalent to (4) is trivial. This completes the proof.
	\end{proof}
	The second associators satisfy some identities (see \cite{huoqinghai2020nonass} for second left associators  case and \cite{huoqinghai2025Oselfadjoint} for the right case.)
	\begin{lemma}\label{lem:secass prop}
		Let $M,M'$ be two $\O$-bimodules and $f \in \operatorname{Hom}_{\mathbb{R}}(M,M')$. Then  for any $r\in \O$, we have
		\begin{eqnarray}
			B_r(f,x)&=&-B_{\overline{r}}(f,x),\label{eq:Br=-Brbar}\  \\
			B_r(f,xr)&=&B_r(f,x)\overline{r}\ =\ rB_r(f,x) ,\label{eq:B1}\\
			B_r(f,x)r&=&B_r(f,rx).\label{eq:Brtxr}
		\end{eqnarray}
	\end{lemma}

	For later use, we define  for any  $J\in \mathbb S$ the \textbf{$\spc_J$-projection operator} $\pi_J$ on an $\O$-bimodule $M$ as follows:
	\begin{eqnarray}\label{eqdef:piJ}
		\pi_J:M&\to& \spc_J(M)\\
		x&\mapsto& \re x+J\re (\overline{J}x).\notag	
	\end{eqnarray}
	\begin{prop}
		Let $M$ be an $\O$-bimodule and 	$J\in \mathbb S$. the projection operator $\pi_J$ is both left and right $ \spc_J$-linear, i.e.,
		\begin{eqnarray}\label{eq:piJ}
			\pi_J(xs)=\pi_J(x)s=s\pi_J(x)=\pi_J(sx), \qquad x\in M, s\in \spc_J.
		\end{eqnarray}
	\end{prop}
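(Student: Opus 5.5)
Since $\pi_J$ is built only from $\re$ and multiplication by $J$, the plan is to reduce everything to the decomposition $x=\sum_{i=0}^7 J_i x_i$ with $x_i\in\re M$ relative to a standard orthonormal basis related to $J$ (Proposition \ref{prop:real_part}, which allows this basis to replace $\{e_i\}$). First I compute $\pi_J$ in these coordinates. By the definition of $\re M$ via associators and commutators, together with $\re[p,q,m]=\re[p,m]=0$ and the fact that $\re$ is the projection along the splitting $M\cong \re M\otimes_{\R}\O$, we get $\re(J_ix_i)=\delta_{i0}x_i$. Here I use $J_ix_i\in J_i\re M$, and these summands are independent of $\re M$ for $i\ge1$. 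Next, $\overline{J}(J_ix_i)=(\overline{J}J_i)x_i$, because $x_i\in\re M$ has vanishing associators with any two octonions. Since $\re(\overline{J}J_i)=\delta_{i4}$, this gives $\re(\overline{J}x)=x_4$. Hence $\pi_J(x)=x_0+Jx_4$. In other words, $\pi_J$ is exactly the projection onto the $\spc_J$-slice along $\bigoplus_{i\ne0,4}J_i\re M$.

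Next I verify the four equalities for $s=a+bJ$ with $a,b\in\R$. By real linearity it suffices to treat $s=J$. The left side: $J x=\sum_i (JJ_i)x_i$, again by associativity with elements of $\re M$. Left multiplication by $J$ permutes the basis up to sign: it sends $1\mapsto J$, $J\mapsto -1$, and maps the set $\{J_1,J_2,J_3,J_5,J_6,J_7\}$ into $\pm$ itself. This holds because $J$ times an element orthogonal to $1$ and $J$ stays orthogonal to $1$ and $J$, since $\langle Jy,1\rangle=-\langle y,J\rangle$ and $\langle Jy,J\rangle=\langle y,1\rangle$. Therefore $\pi_J(Jx)=Jx_0-x_4=J(x_0+Jx_4)=J\pi_J(x)$. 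The same argument with right multiplication gives $\pi_J(xJ)=\sum_i(J_iJ)x_i$ projected, which equals $Jx_0-x_4$. Here I use $x_iJ=Jx_i$ and $(J_ix_i)J=(J_iJ)x_i$; this holds because $x_i$ is associative and central, and an associator involving $x_i$ vanishes since the associator in an $\O$-bimodule is alternating and vanishes when any entry lies in $\re M$.

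Finally, $\pi_J(x)J=(x_0+Jx_4)J=Jx_0+(Jx_4)J$. Rewriting $(Jx_4)J=J(x_4J)=J(Jx_4)=J^2x_4=-x_4$ gives $\pi_J(x)J=J\pi_J(x)$. This establishes all four equalities.

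The main obstacle is justifying cleanly the identities $(pq)m=p(qm)$, $(pm)q=p(mq)$ and $pm=mp$ for $m\in\re M$ and $p,q\in\O$. These are needed to move $J$ through $J_ix_i$. I would derive them from Definition \ref{def:real part} together with the bimodule relation $[p,q,x]=[q,x,p]=[x,p,q]$: if $[p,q,m]=0$ for all $p,q$, then all three types of associator vanish. The remaining steps are bookkeeping with the Fano-plane structure of the standard basis.
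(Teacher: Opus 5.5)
Your proof is correct and is essentially the paper's argument: the paper reduces to $s=J$ by real linearity and then says the identities ``can be checked directly.'' Your computation of $\pi_J(x)=x_0+Jx_4$ for $x=\sum_i J_ix_i$ with $x_i\in\re M$, followed by the check of each identity, is exactly such a direct check. (Your orthogonality argument shows only that $L_J$ and $R_J$ preserve the real orthogonal complement of $\spc_J$, not that they permute the basis up to sign, but the weaker fact is all the computation uses.)
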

	\begin{proof}
		Since $\pi_J$ is real linear, we can just assume that $s=J$.
		This can be checked directly.
	\end{proof}
	
	We present the following useful lemma regarding the properties of $\pi_J$.
	\begin{lemma}\label{lem:piJBpfx=0}
		Let $M,M'$ be two $\mathbb{O}$-bimodules and  $f\in \operatorname{Hom}_{\mathcal{RO}}(M,M')$. Then for any  $J\in \mathbb S$, we have
		\begin{eqnarray}\label{eqpf:piJB=0}
			\pi_J(B_{p}(f,x))=0, \qquad x\in M,  p\in \spc_J.
		\end{eqnarray}
	\end{lemma}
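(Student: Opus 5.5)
By definition $\pi_J(y)=\re y+J\re(\overline{J}y)$, so it suffices to prove that both real parts vanish for $y=B_p(f,x)$:
\[
\re B_p(f,x)=0,\qquad \re\big(\overline{J}\,B_p(f,x)\big)=0,\qquad x\in M,\ p\in\spc_J .
\]
The first identity is exactly the definition of right para-linearity of $f$, so it holds for every $p\in\O$, not just for $p\in\spc_J$.

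For the second identity, write $p=a+bJ$ with $a,b\in\R$. Since $B_p(f,x)$ is real linear in $p$ and $B_1(f,x)=f(x)-f(x)=0$, we get $B_p(f,x)=b\,B_J(f,x)$. So it remains to show $\re(\overline{J}B_J(f,x))=0$, i.e. $\re(J B_J(f,x))=0$.

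For this we use \eqref{eq:Brtxr} from Lemma \ref{lem:secass prop} with $r=J$:
\[
J\,B_J(f,x)\;\text{ versus }\;B_J(f,x)J=B_J(f,Jx).
\]
Part (2) of Proposition \ref{prop:real_part} gives $\re[p,m]=0$, so $\re(Jm)=\re(mJ)$ for every $m\in M'$. Taking $m=B_J(f,x)$ yields
\[
\re\big(JB_J(f,x)\big)=\re\big(B_J(f,x)J\big)=\re B_J(f,Jx),
\]
and the right-hand side vanishes by para-linearity applied to the element $Jx\in M$.

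Combining the two identities, both components of $\pi_J(B_p(f,x))$ vanish, which proves \eqref{eqpf:piJB=0}.

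The only delicate point is the middle step: the nonassociative factor $J$ must be moved past $B_J$. Property \eqref{eq:Brtxr} handles this, together with the fact that $\re$ kills commutators. If one prefers to avoid \eqref{eq:Brtxr}, there is an alternative route via \eqref{eq:B1}: it gives $B_J(f,xJ)=J B_J(f,x)$, and para-linearity at $xJ$ then yields the same conclusion directly.
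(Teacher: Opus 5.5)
Your proof is correct and follows the paper's argument. The paper likewise uses $\re B_p(f,x)=0$, reduces to $p=bJ$, and disposes of $\re\big(JB_J(f,x)\big)$ via Lemma~\ref{lem:secass prop}; your alternative through \eqref{eq:B1} is the most direct form of that step.

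One caveat worth knowing. Lemma~\ref{lem:secass prop} is stated for all real linear $f$, but the identities \eqref{eq:Brtxr} and $B_r(f,x)\overline{r}=rB_r(f,x)$ in fact need para-linearity: both fail for $f=\re\colon\O\to\O$, $r=e_1$, $x=1$. This is harmless here since $f$ is para-linear. Moreover, the identity $B_J(f,xJ)=B_J(f,x)\overline{J}=-B_J(f,x)J$ holds for every real linear $f$, and together with $\re(Jm)=\re(mJ)$ it already gives $\re\big(JB_J(f,x)\big)=-\re B_J(f,xJ)=0$.
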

	\begin{proof}
		Let $p=p_0+p_1J\in \spc_J$ with $p_0,p_1\in \R$, it follows from Lemma \ref{lem:secass prop} that
		\begin{eqnarray*}
			\pi_J(B_{p}(f,x))=\pi_J (\overline{J}B_{p_1J}(f,x))=-p_1\re (JB_{J}(f,x))=0.
		\end{eqnarray*}
		This completes the proof.
	\end{proof}

	The following Uniqueness Lemma will be used several  times in the sequel.
	\begin{lemma}[Uniqueness Lemma \cite{huoqinghai2020nonass}]\label{lem:Uniqueness Lemma}
		\label{cor: A_p(x,f)=0 and f(px)=pf(x)}
		Let $M,M'$ be two $\mathbb{O}$-bimodules	and  $f \in \operatorname{Hom}_{\mathcal{RO}}(M,M')$. Then:
		\begin{enumerate}
			\item $f_{\mathbb{R}} = 0$ if and only if  $f = 0$.
			\item  $f|_{\operatorname{Re} M} = 0$ if and only if  $f = 0$.
		\end{enumerate}
	\end{lemma}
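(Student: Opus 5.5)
The plan is to derive both parts directly from the two characterizations of para-linearity, Theorem \ref{thm:para_linear_char} and Theorem \ref{lem:paralinear-char2}, together with the decomposition \eqref{eq:redec} of Proposition \ref{prop:real_part}. In each part the implication ``$f=0 \Rightarrow \dots$'' is trivial, so only the converse needs an argument.

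For part (1), suppose $f_{\mathbb R}=0$. I will use Theorem \ref{thm:para_linear_char}, which writes $f(x)=f_{\mathbb R}(x)+\sum_{i=1}^7 f_i(x)e_i$. By part (2) of that theorem the components are determined by the real part through
\[
f_i(x)=-f_{\mathbb R}(xe_i), \qquad i=1,\dots,7 .
\]
Hence $f_{\mathbb R}\equiv 0$ forces every $f_i\equiv 0$, and therefore $f=0$.

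For part (2), suppose $f|_{\operatorname{Re} M}=0$, and take an arbitrary $x\in M$.
\begin{enumerate}
\item By \eqref{eq:redec}, write $x=\sum_{i=0}^7 e_i x_i$ with $x_i\in\operatorname{Re} M$.
\item Since elements of $\operatorname{Re} M$ commute with octonions (Proposition \ref{prop:real_part}(1)), this can be rewritten as $x=\sum_{i=0}^7 x_i e_i$.
\item By Theorem \ref{lem:paralinear-char2}(2), $B_{e_i}(f,x_i)=0$ because $x_i\in\operatorname{Re} M$. This means $f(x_ie_i)=f(x_i)e_i$.
\item The right-hand side vanishes by hypothesis, so $f(x_ie_i)=0$ for each $i$.
\end{enumerate}
Real linearity of $f$ then gives $f(x)=0$.

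An alternative route for part (2) is to reduce it to part (1). One would show that $f_{\mathbb R}(x)=\operatorname{Re} f(x)=0$ by expanding $\operatorname{Re} f(x)=\sum_i \operatorname{Re}\bigl(f(x_i)e_i\bigr)$, but the direct argument above is simpler.

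I do not expect a genuine obstacle here. The only point requiring care is step (2), the passage from $e_ix_i$ to $x_ie_i$. This is legitimate precisely because real parts are central, so no associator or commutator terms appear when the scalar $e_i$ is moved past $x_i$ inside $f$.
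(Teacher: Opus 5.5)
Your proof is correct. Part (1) follows from the component formula $f_i(x)=-f_{\mathbb R}(xe_i)$ of Theorem \ref{thm:para_linear_char}, and part (2) follows from $B_{e_i}(f,x_i)=0$ for $x_i\in\operatorname{Re} M$ together with the centrality of $\operatorname{Re} M$; neither characterization depends on the Uniqueness Lemma, so there is no circularity. The paper does not reprove the lemma but cites \cite{huoqinghai2020nonass}, where, judging by the lemma's label, it is obtained as a corollary of essentially these facts, so your route is likely the intended one.
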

	Uniqueness Lemma \ref{lem:Uniqueness Lemma} gives rise to two bijections  \cite{huoqinghai2020nonass}.
	\begin{mydef}[lifting and extension maps]\label{def:lif and ext}
		Let $M,M'$ be two $\mathbb{O}$-bimodules.	The (right) lifting map is defined by
		\begin{eqnarray}\label{eq:lif}
			\operatorname{lif}_R:\operatorname{Hom}_{\R}(M,\re M')&\to& \operatorname{Hom}_{\mathcal{RO}}(M,M')\notag\\
			f&\mapsto &(\operatorname{lif}_R f)(x):=\sum_{i=0}^7f(x\overline{e_i})e_i
		\end{eqnarray} and the  (right) extension map is defined by
		\begin{eqnarray}\label{eq:ext}
			\operatorname{ext}_R:\operatorname{Hom}_{\R}(\re M,M')&\to& \operatorname{Hom}_{\mathcal{RO}}(M,M')\notag\\
			f&\mapsto& (\operatorname{ext}_R f)\left(\sum_{i=0}^7x_ie_i\right):=\sum_{i=0}^7f(x_i)e_i.
		\end{eqnarray}
		Similarly, we can also define the left lifting map $$\operatorname{lif}_L:\operatorname{Hom}_{\R}(M,\re M')\to \operatorname{Hom}_{\mathcal{LO}}(M,M')$$ and  the left extension map
		$$	\operatorname{ext}_L:\operatorname{Hom}_{\R}(\re M,M')\to \operatorname{Hom}_{\mathcal{LO}}(M,M').$$
		Unless explicitly specified, we shall use the abbreviated form 	$\operatorname{lif}$ and $	\operatorname{ext}$ instead of $\operatorname{lif}_R$ and $	\operatorname{ext}_R$, respectively.
	\end{mydef}
	​

	\begin{thm}[$\mathbb{O}$-Bimodule Structure \cite{huoqinghai2020nonass}]
		\label{thm:para_bimodule}
		Let  $M,M'$ be two $\mathbb{O}$-bimodules. The space $\operatorname{Hom}_{\mathcal{RO}}(M,M')$ is an $\mathbb{O}$-bimodule with respect to the following multiplications with $p\in\mathbb{O}$:
		\begin{eqnarray}
			(p \odot f)(x) :&= p f(x) + B_p(f,x),\label{eqdef:pf} \\
			(f \odot p)(x) :&= f(px) - B_p(f,x).\label{eqdef:fp}
		\end{eqnarray}
		
		Moreover, \begin{eqnarray}
			\re \operatorname{Hom}_{\mathcal{RO}}(M,M')= \operatorname{Hom}_{\O}(M,M')
		\end{eqnarray}
		and
		\begin{eqnarray}\label{eq:re f}
			\re_{\operatorname{Hom}_{\mathcal{RO}}(M,M')}  f = \ext ( f_{\R}|_{\re M}),\qquad f\in \operatorname{Hom}_{\mathcal{RO}}(M,M').
		\end{eqnarray}
	\end{thm}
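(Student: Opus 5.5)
The plan is to transport the problem to a space where the bimodule structure is obvious, via the extension bijection $\ext$ of Definition~\ref{def:lif and ext}. Consider $\mathcal H:=\operatorname{Hom}_{\R}(\re M,M')$ with the \emph{pointwise} multiplications $(pg)(x):=p\,g(x)$ and $(gp)(x):=g(x)p$ for $x\in\re M$. All associators of $\mathcal H$ are computed pointwise in $M'$, so the bimodule identities $[p,q,g]=[q,g,p]=[g,p,q]=-[q,p,g]$ follow at once from those of $M'$. Hence $\mathcal H$ is an $\O$-bimodule. Its real part is $\operatorname{Hom}_{\R}(\re M,\re M')$, and its real part operator is $g\mapsto \re_{M'}\circ g$. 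This uses Proposition~\ref{prop:real_part}: the decomposition $M'=\bigoplus_i e_i\re M'$ and the characterization of $\re M'$ by $pm=mp$ hold pointwise. By the Uniqueness Lemma~\ref{lem:Uniqueness Lemma}, $\ext:\mathcal H\to \operatorname{Hom}_{\mathcal{RO}}(M,M')$ is a real linear bijection, with inverse given by restriction to $\re M$.

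The key step is to show that the formulas \eqref{eqdef:pf} and \eqref{eqdef:fp} are exactly the pointwise structure transported through $\ext$. This requires two things. First, $p\odot f$ and $f\odot p$ must again be right para-linear. Second, their restrictions to $\re M$ must be $x\mapsto p f(x)$ and $x\mapsto f(x)p$. The second point is immediate: for $x\in\re M$, Theorem~\ref{lem:paralinear-char2}(2) gives $B_p(f,x)=0$, and $px=xp$, so $(p\odot f)(x)=pf(x)$ and $(f\odot p)(x)=f(xp)=f(x)p$. Once both points are established, injectivity of restriction (Uniqueness Lemma) identifies $p\odot f=\ext(p\,f|_{\re M})$ and $f\odot p=\ext(f|_{\re M}\,p)$. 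The bimodule axioms for $\operatorname{Hom}_{\mathcal{RO}}(M,M')$ then follow from those of $\mathcal H$.

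The main obstacle is the para-linearity of $p\odot f$ and $f\odot p$. By Theorem~\ref{lem:paralinear-char2}(2), it suffices to check $B_q(p\odot f,x)=0$ for $x\in\re M$ and $q\in\O$. For such $x$ we have $f(xq)=f(x)q$ and $f(xqp)=f(x)(qp)$, since $xq\in M$ decomposes with real coefficient $x$. Expanding the definition then gives
\[
B_q(p\odot f,x)=(pf(x))q-p(f(x)q)-(f(x)q)p+f(x)(qp)=[p,f(x),q]-[f(x),q,p],
\]
which vanishes by the bimodule relation $[p,y,q]=[y,q,p]$ in $M'$. The computation for $f\odot p$ is analogous. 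There one uses $(f\odot p)(xq)=f(pxq)-B_p(f,xq)$ and reduces, in the same way, to a difference of associators that cancel by the alternating relations.

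Finally, for the real part, $f\in\re\operatorname{Hom}_{\mathcal{RO}}(M,M')$ if and only if $g=f|_{\re M}$ takes values in $\re M'$. In that case $f(\sum x_ie_i)=\sum g(x_i)e_i$ with $g(x_i)\in\re M'$, and this map is octonionic linear because $M\cong\re M\otimes\O$. Conversely, an octonionic linear $f$ maps $\re M$ into $\re M'$ by \eqref{eq:refx=frex}, so $\re\operatorname{Hom}_{\mathcal{RO}}(M,M')=\operatorname{Hom}_{\O}(M,M')$. Transporting the real part operator $g\mapsto\re\circ g$ of $\mathcal H$ through $\ext$ yields
\[
\re f=\ext(\re\circ f|_{\re M})=\ext(f_{\R}|_{\re M}),
\]
which is \eqref{eq:re f}.
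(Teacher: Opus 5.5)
The paper gives no proof of this theorem; it is quoted from \cite{huoqinghai2020nonass}, so there is no in-paper argument to compare against. Your transport-of-structure proof is correct and self-contained. It uses only what the paper has already set up: Definition~\ref{def:lif and ext}, the Uniqueness Lemma~\ref{lem:Uniqueness Lemma} and Theorem~\ref{lem:paralinear-char2}(2).

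The computation you left as ``analogous'' does close. For $x\in\re M$ every associator with an entry $x$ vanishes and $px=xp$, so $p(xq)=x(pq)$ and $(xq)p=x(qp)$. Hence
\[
B_q(f\odot p,x)=(f(x)p)q-f(x)(pq)+(f(x)q)p-f(x)(qp)=[f(x),p,q]+[f(x),q,p]=0
\]
by right alternativity of $M'$. This is a sum of associators rather than a difference, but the cancellation is the same.

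Two points you use implicitly deserve one line each. First, surjectivity of restriction needs $\ext g$ to be right para-linear. This holds because $\ext g(xp)=g(x)p$ for $x\in\re M$, so Theorem~\ref{lem:paralinear-char2}(2) applies. Second, transporting the real part operator through $\ext$ is legitimate because $\re$ is expressed purely through the bimodule operations by \eqref{eq:real_part_formula}, so every bimodule isomorphism intertwines it.

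As for what your route buys: a direct verification of the bimodule identities for $\odot$ on all of $M$ would require handling the second associators $B_p(f,x)$ at non-real $x$. That means working through Theorem~\ref{thm:para_linear_char}(3) and identities like those in Lemma~\ref{lem:secass prop}. You only ever evaluate at $x\in\re M$, where $B_p(f,x)=0$. In one stroke you get the bimodule axioms, the equality $\re\operatorname{Hom}_{\mathcal{RO}}(M,M')=\operatorname{Hom}_{\O}(M,M')$, and formula \eqref{eq:re f}.

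The by-product is stronger than the statement: $\ext$ is an isomorphism of $\O$-bimodules from $\operatorname{Hom}_{\R}(\re M,M')$, with pointwise actions, onto $(\operatorname{Hom}_{\mathcal{RO}}(M,M'),\odot)$. This makes decompositions $f=\sum_i e_i\odot f_{(i)}$ with octonionic linear $f_{(i)}$, as used in Lemma~\ref{cor:HBcor}, immediate.
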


	\begin{rem}
		In view of Theorem \ref{thm:para_bimodule}, we conclude that a right para-linear $f$ is associative in the bimodule $\Hom_{\mathcal {RO}}(M',M'')$ (see Definition \ref{def:real part}) if and only if $f$ is octonionic linear.
	\end{rem}

	We next introduce the notion of the  regular composition for para-linear maps. This notion has been given in \cite{huoqinghai2020nonass} and \cite{huoqinghai2025Oselfadjoint}. We aim  to give a modification to this notion which is more natural in the sequel. Denote by $\circ  $ the classical  composition as usual.  To this end, we firstly give the following lemma.
	
	\begin{lemma}\label{lem:lif=ext}
		Let $M,\ M',\ M''$  be  $\spo$-bimodules. If  $f\in  \Hom_{\mathcal{RO}}(M',M'')$ and $g\in  \Hom_{\mathcal{RO}}(M,M')$, then $$\lif (f\circ g)_{\R}=\ext (f\circ g)|_{\re M}.$$
		
	\end{lemma}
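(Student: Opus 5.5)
By the Uniqueness Lemma \ref{lem:Uniqueness Lemma}(2), it suffices to check that the two sides agree on $\re M$. Both sides are right para-linear maps $M\to M''$: the left side because $\lif$ takes values in $\Hom_{\mathcal{RO}}(M,M'')$, and the right side because $\ext$ does (Definition \ref{def:lif and ext}). So the plan is to evaluate both on an arbitrary $x\in\re M$ and compare.

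\textbf{Right-hand side.} For $x\in\re M$ we write $x=x\cdot e_0$, so the decomposition $x=\sum_i x_ie_i$ has $x_0=x$ and $x_i=0$ for $i\ge1$. The definition of $\ext$ then gives
$$\ext\big((f\circ g)|_{\re M}\big)(x)=f(g(x)).$$

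\textbf{Left-hand side.} By definition of $\lif$,
$$\lif\big((f\circ g)_{\R}\big)(x)=\sum_{i=0}^7\re f\big(g(x\overline{e_i})\big)e_i.$$
\begin{enumerate}
\item Since $g$ is right para-linear and $x\in\re M$, Theorem \ref{lem:paralinear-char2}(2) gives $B_{\overline{e_i}}(g,x)=0$. Hence $g(x\overline{e_i})=g(x)\overline{e_i}$.
\item Set $y:=g(x)\in M'$. The expression becomes $\sum_{i=0}^7 f_{\R}(y\overline{e_i})e_i$.
\item Apply Theorem \ref{thm:para_linear_char} to the para-linear map $f$. It gives $f(y)=f_{\R}(y)+\sum_{i=1}^7 f_i(y)e_i$ with $f_i(y)=-f_{\R}(ye_i)=f_{\R}(y\overline{e_i})$ for $i\ge 1$.
\item Therefore $\sum_{i=0}^7 f_{\R}(y\overline{e_i})e_i=f(y)=f(g(x))$. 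In other words, this step is the identity $\lif(f_{\R})=f$ for para-linear $f$, evaluated at the point $g(x)$.
\end{enumerate}

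\textbf{Conclusion.} The two sides coincide on $\re M$, so the Uniqueness Lemma gives equality on all of $M$.

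The only point needing care is step 1: it is exactly where $x\in\re M$ is used, and it removes the associator $B_{\overline{e_i}}(g,x)$. For general $x$ this associator is nonzero, which is why $f\circ g$ itself is not para-linear and the lemma is needed. I expect no real obstacle beyond checking the sign convention $f_i(y)=-f_{\R}(ye_i)=f_{\R}(y\overline{e_i})$.
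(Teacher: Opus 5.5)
Your proposal is correct and follows essentially the same route as the paper. Both arguments evaluate the two sides on $\re M$, use $B_{\overline{e_i}}(g,x)=0$ for $x\in\re M$ to write $g(x\overline{e_i})=g(x)\overline{e_i}$, recognize the left side as $(\lif f_{\R})(g(x))=f(g(x))$, which equals the value of the $\ext$ side, and then conclude on all of $M$ by the Uniqueness Lemma.
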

	\begin{proof}
		Let $f\in  \Hom_{\mathcal{RO}}(M',M'')$ and $g\in  \Hom_{\mathcal{RO}}(M,M')$.	For any $x\in \re M$, in view of Definition \ref{def:lif and ext}  and Theorem   \ref{lem:paralinear-char2}, we have
		\begin{eqnarray*}
			\lif (f\circ g)_{\R}(x)&=&\re f(g(x))+\sum_{i=1}^7e_i\re f(g(x\overline{e_i}))\\
			&=&\re f(g(x))+\sum_{i=1}^7e_i\re f(g(x)\overline{e_i})\\
			&=&(\lif f_{\R})(g(x))\\
			&=&f(g(x))\\
			&=&\ext (f\circ g)|_{\re M}(x).
		\end{eqnarray*}
		By  Uniqueness Lemma \ref{lem:Uniqueness Lemma}, we prove the assertion.
	\end{proof}

	By Lemma \ref{lem:lif=ext}, we give the following notion of the regular composition.
	\begin{mydef}\label{def:right mod regular composition}
		Let $M,\ M'$,  $\ M''$ be  $\spo$-bimodules.
		We define the \textbf{(right) regular composition} as
		$$	f \circledcirc_R g:=
		\begin{cases}
			\lif (f\circ g)_{\R}&f\in  \Hom_{\mathcal {RO}}(M',M''),\, g\in  \Hom_{\spr}(M,M');\\
			\ext (f\circ g)|_{\re M}&f\in  \Hom_{\R}(M',M''),\, g\in  \Hom_{\mathcal {RO}}(M,M'),
		\end{cases}
		$$
		and define the \textbf{(right) composition associator}  as
		$$ [f,g,x]_{\circledcirc_R}:=(f\circledcirc_R g)\,(x)-(f\circ g)(x).$$
		It is easy to verify
		\begin{equation}\label{eq:[fgx]}
			[f,g,x]_{\circledcirc_R}=
			\begin{cases}
				\sum_{i=1}^7 e_i \operatorname{Re}\big(f(B_{e_i}(g, x))\big)&f\in  \Hom_{\mathcal {RO}}(M',M''),\, g\in  \Hom_{\spr}(M,M');\\
				\sum_{i=1}^7  \operatorname{Re}\big(f(B_{e_i}(g, \re (x\overline{e_i})))\big)&f\in  \Hom_{\R}(M',M''),\, g\in  \Hom_{\mathcal {RO}}(M,M').
			\end{cases}
		\end{equation}
		If there is no confusion, we shall  omit  the subscript $R$  for simplicity.
	\end{mydef}
	
	In view of Theorem \ref{thm:para_bimodule}, we conclude that a right para-linear $f$ is associative (see Definition \ref{def:real part}) in the bimodule $\Hom_{\mathcal {RO}}(M',M'')$ if and only if $f$ is octonionic linear.
	Identity \eqref{eq:[fgx]}  implies that if one of $f,g,x$ is associative (see Definition \ref{def:real part} for the notion of associative element), then the associator $[x,f,g]_{\circledcirc}$ vanishes.
	\begin{lemma}[Vanishing Composition Associator]\label{lem:vanishing[f,g,x]=0}
		{Let $M,\ M'$,  $\ M''$ be  $\spo$-bimodules.}
		Let  $f\in\Hom_{\mathcal {RO}}(M',M'')$ and  $g\in  \Hom_{\mathcal {RO}}(M,M')$. We have $\re [x,f,g]_{\circledcirc} = 0$.
		Moreover, the composition associator $[x,f,g]_{\circledcirc} = 0$ and hence $(f \circledcirc g)(x) = f(g(x))$ under any of the following:
		\begin{enumerate}
			\item $g\in \Hom_{\spo}(M,M')$;
			\item  $x\in \re{M}$;
			\item $f\in  \Hom_{\spo}(M',M'')$.

		\end{enumerate}
	\end{lemma}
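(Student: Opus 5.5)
The plan is to read everything off the explicit formula \eqref{eq:[fgx]} for the composition associator, together with the characterizations of para-linearity already established. Since both $f$ and $g$ are right para-linear, Lemma \ref{lem:lif=ext} shows that the two cases in Definition \ref{def:right mod regular composition} give the same operator $f\circledcirc g$. I will therefore work with the first case, $f\circledcirc g=\lif (f\circ g)_{\R}$, and with the corresponding expression
\[
[f,g,x]_{\circledcirc}=\sum_{i=1}^7 e_i\operatorname{Re}\big(f(B_{e_i}(g,x))\big).
\]

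For the statement $\re [f,g,x]_{\circledcirc}=0$, I would argue directly from the definition of $\lif$. For any $h\in \Hom_{\R}(M,\re M'')$, Definition \ref{def:lif and ext} gives $(\lif h)(x)=h(x)+\sum_{i\ge1}h(x\overline{e_i})e_i$ with every $h(\cdot)\in\re M''$. Taking real parts, and using that $\re(m e_i)=0$ for $m\in \re M''$ and $i\ge 1$ (by the decomposition $M''\cong \re M''\otimes_{\R}\O$ of Proposition \ref{prop:real_part}), we get $\re(\lif h)(x)=h(x)$. Applying this with $h=(f\circ g)_{\R}$ yields $\re (f\circledcirc g)(x)=\re f(g(x))$, which is exactly the claim. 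Equivalently, in the displayed formula each summand $e_i m_i$ with $m_i\in\re M''$ and $i\ge1$ has vanishing real part.

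For the vanishing statements, it suffices to show that every term $\operatorname{Re} f(B_{e_i}(g,x))$ is zero.
\begin{enumerate}
\item If $g$ is octonionic linear, then $B_{e_i}(g,x)=g(x)e_i-g(xe_i)=0$ identically.
\item If $x\in\re M$, then $B_p(g,x)=0$ for all $p$, by Theorem \ref{lem:paralinear-char2}(2).
\item If $f$ is octonionic linear, then by \eqref{eq:refx=frex} we have $\re f(B_{e_i}(g,x))=f(\re B_{e_i}(g,x))$, and this is $f(0)=0$ because $g$ is right para-linear.
\end{enumerate}
In each case $[f,g,x]_{\circledcirc}=0$, and hence $(f\circledcirc g)(x)=f(g(x))$.

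The only step I regard as needing care is the validity of formula \eqref{eq:[fgx]}, which the paper states as easy to verify. If it has to be checked, I would expand $\lif(f\circ g)_{\R}(x)=\sum_i \re f(g(x\overline{e_i}))e_i$. I would then rewrite $g(x\overline{e_i})=g(x)\overline{e_i}-B_{\overline{e_i}}(g,x)$ and use $\sum_i\re(y\overline{e_i})e_i=y$ for $y=f(g(x))$, with $f$ para-linear, to reduce $\re f(g(x)\overline{e_i})$ to $\re(f(g(x))\overline{e_i})$. Finally, Lemma \ref{lem:secass prop} handles the signs and conjugates. The rest of the argument is immediate.
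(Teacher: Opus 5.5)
Your proof is correct and follows the paper's own reasoning: the paper reads the lemma directly off the explicit formula \eqref{eq:[fgx]}. There each summand $e_i\operatorname{Re} f(B_{e_i}(g,x))$ with $i\geq 1$ has zero real part and vanishes as soon as one of $f$, $g$, $x$ is associative, exactly as in your three cases. Your verification of \eqref{eq:[fgx]}, using para-linearity of $f$ and the identity $\sum_i \operatorname{Re}(y\overline{e_i})e_i=y$, is also sound, and you were right to work with the first (lifting) case of that formula.
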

	
		%
		%
		
		\section{Octonionic Para-linear bounded operators}

		In this section we assume that \textbf{$V$ is a Banach $\O$-bimodule} (see Definition \ref{def:O-bimod}). Let $\mathscr{B}_{\R}(V)$ be the set of all bounded real linear operators. For any real linear operators $T,S\in \mathscr{B}_{\R}(V)$, we denote their standard composition, consistent with the classical case, by
		$$T\circ S, \text{  or simply } TS.$$
		Denote by $\mathscr{B}_{\O}(V)$ the set of all bounded octonionic linear operators  and  $\mathscr{B}_{\mathcal{RO}}(V)$ the set of all bounded right  octonionic para-linear operators respectively. For any $T\in \mathscr{B}_{\R}(V)$, the norm $\fsh{T}$ of $T$ is defined as in classical case which is actually independent with the octonionic scalar:
		$$\fsh{T}:=\sup_{\fsh{x}=1} \fsh{Tx}.$$
		
		Recall the definition of \textit{$\mathbb{O}$-algebra} introduced in \cite{huoqinghai2025Oselfadjoint}:
		\begin{mydef}
			\label{def:O alg}
			An $\mathbb{O}$-bimodule $\mathcal{U}$ with multiplication $\mathcal{U} \times \mathcal{U} \to \mathcal{U}$ is an \textit{$\mathbb{O}$-algebra} if:
			\begin{enumerate}
				\item The product  is para-bilinear, i.e., the left (right) product operator is right (left) para-linear;
				\item There exists an {element} $ e\in \operatorname{Re}\mathcal{U}$, {called  the \textbf{unit} of $\algma$,} such that $$(pe)x=px,\qquad  x(pe)=xp$$
				for all $x\in \algma$ and  $ p\in\spo$.
			\end{enumerate}
		\end{mydef}

		Following the proof in the Hilbert case \cite{huoqinghai2025Oselfadjoint}, we can prove the following:
		\begin{thm}\label{thm:BROV is Oalg}
			Let $V$ be a Banach $\O$-bimodule. Then	$(\mathscr{B}_{\mathcal{RO}}(V),\circledcirc)$ forms an $\O$-algebra.
		\end{thm}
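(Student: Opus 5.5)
We follow the Hilbert-space argument of \cite{huoqinghai2025Oselfadjoint} and only need to check that the operator-level identities survive boundedness and the Banach structure. First, $\mathscr{B}_{\mathcal{RO}}(V)$ is an $\O$-bimodule under the multiplications \eqref{eqdef:pf} and \eqref{eqdef:fp}; this is Theorem \ref{thm:para_bimodule}. We add one check: $p\odot T$ and $T\odot p$ are bounded. The second right associator satisfies $\fsh{B_p(T,x)}\le 2|p|\fsh{T}\fsh{x}$, so both operators are bounded. Next, the regular composition is an operation on $\mathscr{B}_{\mathcal{RO}}(V)$. By Definition \ref{def:right mod regular composition}, $T\circledcirc S=\lif (T\circ S)_{\R}$ is right para-linear. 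It is bounded because $\lif$ and $\re$ are given by finite real-linear formulas, \eqref{eq:lif} and \eqref{eq:real_part_formula}, each with norm bounded by a universal constant. The unit is the identity $\mathcal I$. It is octonionic linear, so $\mathcal I\in\re\mathscr{B}_{\mathcal{RO}}(V)$ by Theorem \ref{thm:para_bimodule}. The unit axiom $(p\mathcal I)\circledcirc T=p\odot T$ and $T\circledcirc(p\mathcal I)=T\odot p$ then follows by evaluating both sides on $x\in\re V$, where Lemma \ref{lem:vanishing[f,g,x]=0}(2) turns $\circledcirc$ into ordinary composition. We conclude with the Uniqueness Lemma \ref{lem:Uniqueness Lemma}.

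The main step is para-bilinearity. We must show that $L_T:S\mapsto T\circledcirc S$ is right para-linear on the bimodule $\mathscr{B}_{\mathcal{RO}}(V)$, and that $R_S:T\mapsto T\circledcirc S$ is left para-linear. By Theorem \ref{lem:paralinear-char2}(2), it suffices to verify $B_p(L_T,S)=0$ for $S\in\re\mathscr{B}_{\mathcal{RO}}(V)=\mathscr{B}_{\O}(V)$, that is,
\[T\circledcirc(S\odot p)=(T\circledcirc S)\odot p,\qquad S\in\mathscr{B}_{\O}(V),\ p\in\O.\]
The left-handed analogue, with $T$ octonionic linear, is needed for $R_S$. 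Since both sides are right para-linear, the Uniqueness Lemma reduces the identity to $x\in\re V$.

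At such $x$ we expand. When $S$ is octonionic linear, $B_p(S,\cdot)=0$, so $(S\odot p)(x)=S(px)=S(x)p$. Hence $T\circledcirc(S\odot p)(x)=T(S(x)p)$, by Lemma \ref{lem:vanishing[f,g,x]=0}(2). Because $S$ is octonionic linear, \eqref{eq:refx=frex} gives $S(x)\in\re V$, so $T(S(x)p)=T(S(x))p$ by Theorem \ref{lem:paralinear-char2}(2). On the other side, $(T\circledcirc S)\odot p$ at $x$ equals $(T\circ S)(px)-B_p(T\circledcirc S,x)$. The associator term vanishes for $x\in\re V$. For the first term, $px=xp$ and $S(xp)=S(x)p$, which again gives $T(S(x))p$. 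The two sides agree.

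The left-handed case, where we fix $S$ and vary $T$ in $\re\mathscr{B}_{\mathcal{RO}}(V)$, is handled symmetrically. Here $p\odot T$ involves $B_p(T,\cdot)=0$, so $(p\odot T)\circledcirc S(x)=pT(S(x))+[\ldots]$. We expect this to be the main obstacle. We will need Theorem \ref{thm:para_linear_char}(3), which expresses the composition associator through $f_{\R}([x,p,e_i])$, together with the observation that these associators vanish when $T$ is octonionic linear (Lemma \ref{lem:vanishing[f,g,x]=0}(3)). The bookkeeping reproduces the Hilbert-case computation verbatim, since no inner product is used there.
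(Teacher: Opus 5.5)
Your route differs from the paper's. The paper writes no argument of its own; it only asserts that the Hilbert-space proof of \cite{huoqinghai2025Oselfadjoint} carries over. You instead verify the axioms directly, and every step you actually carry out is correct:
\begin{itemize}
\item closure of $\mathscr{B}_{\mathcal{RO}}(V)$ under $\odot$, via $\fsh{B_p(T,x)}\le 2|p|\fsh{T}\fsh{x}$;
\item boundedness of $T\circledcirc S$;
\item the unit $\mathcal I\in\mathscr{B}_{\O}(V)$, where $p\odot\mathcal I=\mathcal I\odot p=L_p$;
\item right para-linearity of $S\mapsto T\circledcirc S$. You reduce it, via Theorem \ref{lem:paralinear-char2}(2), to $T\circledcirc(S\odot p)=(T\circledcirc S)\odot p$ for $S\in\mathscr{B}_{\O}(V)$, check this on $\re V$, and conclude with the Uniqueness Lemma.
\end{itemize}
What this buys over the paper's citation is a proof that visibly uses no inner product, only the Uniqueness Lemma and Lemma \ref{lem:vanishing[f,g,x]=0}.

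The one unfinished piece is the left para-linearity of $T\mapsto T\circledcirc S$. You do not prove it; you defer it to the Hilbert case and call it the main obstacle. The tool you point to does not work as suggested. The associator that appears is $[p\odot T,S,x]_{\circledcirc}$, and $p\odot T$ is not octonionic linear even when $T$ is. For example, on $V=\O$ with $T=\mathcal I$ and $S=L_q$, one has $L_p\circledcirc L_q=L_{pq}$, so $[L_p,L_q,x]_{\circledcirc}=[p,q,x]\neq 0$ in general. Hence Lemma \ref{lem:vanishing[f,g,x]=0}(3) gives nothing at a general $x$.

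You never need a general $x$, though. Use the left-handed version of the implication (2)$\Rightarrow$(1) in Theorem \ref{lem:paralinear-char2}. Its proof is symmetric: for $x_i\in\re M$ one gets $pf(e_ix_i)-f(p(e_ix_i))=-[p,e_i,f(x_i)]$, which has zero real part. So it suffices to show $(p\odot T)\circledcirc S=p\odot(T\circledcirc S)$ for $T\in\mathscr{B}_{\O}(V)$.

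Both sides are right para-linear, so evaluate at $x\in\re V$:
\begin{itemize}
\item Since $B_p(T,\cdot)=0$, we have $p\odot T=L_p\circ T$. Lemma \ref{lem:vanishing[f,g,x]=0}(2) then gives $((p\odot T)\circledcirc S)(x)=pT(S(x))$.
\item By \eqref{eqdef:pf}, $(p\odot(T\circledcirc S))(x)=pT(S(x))+B_p(T\circledcirc S,x)=pT(S(x))$, because $B_p(\cdot,x)$ vanishes for right para-linear maps when $x\in\re V$.
\end{itemize}
The Uniqueness Lemma finishes the argument. So the left half is exactly as easy as the right one, and with this filled in your proof is complete.
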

		
		\begin{lemma}\label{lem:tensor of BROV}
			Let $V$ be a Banach $\O$-bimodule. Then	as $\O$-algebras, we have	$$\mathscr{B}_{\mathcal{RO}}(V)\simeq \mathscr{B}_{\mathbb{O}}(V)\otimes \O.$$
		\end{lemma}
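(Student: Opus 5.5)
The plan is to realize the isomorphism by the map
\[
\Theta:\mathscr{B}_{\mathbb{O}}(V)\otimes_{\R}\O\to \mathscr{B}_{\mathcal{RO}}(V),\qquad \Theta\Big(\sum_{i=0}^7 T_i\otimes e_i\Big):=\sum_{i=0}^7 e_i\odot T_i ,
\]
where $\odot$ is the bimodule multiplication \eqref{eqdef:pf}. We then check that $\Theta$ is bijective, that it is an isomorphism of $\O$-bimodules, and finally that it intertwines the algebra products.

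\textbf{Bimodule part.} By Theorem \ref{thm:para_bimodule}, $\mathscr{B}_{\mathcal{RO}}(V)$ is an $\O$-bimodule whose real part is the space of octonionic linear maps. Boundedness is preserved, because $\odot$ by a scalar and the operator $\re$ (see \eqref{eq:real_part_formula}) are bounded operations. Hence $\re\mathscr{B}_{\mathcal{RO}}(V)=\mathscr{B}_{\O}(V)$. Proposition \ref{prop:real_part}(3)--(4) applied to this bimodule then gives the decomposition $T=\sum e_i\odot T_i$ with $T_i\in\mathscr{B}_{\O}(V)$ uniquely determined, and this yields the bimodule isomorphism $M\cong \re M\otimes\O$. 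Concretely, $T_0=\ext(T_{\R}|_{\re V})$ by \eqref{eq:re f}, and each $T_i$ is obtained from $\re(\overline{e_i}\odot T)$. Each $T_i$ is a finite combination of bounded operations on $T$, so it is bounded. Thus $\Theta$ is a bijective bimodule isomorphism, where $\O$ acts on the tensor product through the regular bimodule $\O$.

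\textbf{Algebra part.} Here the multiplication on $\mathscr{B}_{\O}(V)\otimes\O$ is $(S\otimes p)(T\otimes q)=(S\circ T)\otimes pq$, and we must show
\[
(p\odot S)\circledcirc(q\odot T)=(pq)\odot(S\circ T)\qquad\text{for } S,T\in\mathscr{B}_{\O}(V).
\]
By the Uniqueness Lemma \ref{lem:Uniqueness Lemma}, it suffices to evaluate both sides on $x\in\re V$.

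On the left, Lemma \ref{lem:vanishing[f,g,x]=0}(2) gives $(p\odot S)\circledcirc(q\odot T)(x)=(p\odot S)((q\odot T)(x))$. For $x\in\re V$, Theorem \ref{lem:paralinear-char2}(2) gives $B_q(T,x)=0$, so $(q\odot T)(x)=qT(x)$. Next, $(p\odot S)(y)=pS(y)+B_p(S,y)$, and $B_p(S,y)=0$ because $S$ is octonionic linear. Hence the left side equals $p\,S(qT(x))=p(q\,ST(x))$.

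On the right, $((pq)\odot ST)(x)=(pq)\,ST(x)$, since $ST$ is octonionic linear. Because $ST(x)\in\re V$ by \eqref{eq:refx=frex}, Definition \ref{def:real part} gives $[p,q,ST(x)]=0$, so $p(q\,ST(x))=(pq)\,ST(x)$ and the two sides agree. The unit $\mathcal I\otimes 1$ maps to $\mathcal I$, and the general case follows by bilinearity.

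\textbf{Main obstacle.} The delicate point is bilinearity over real combinations when $\circledcirc$ is combined with the nonassociative scalar actions. The identity is checked only on elementary tensors, and extending it requires $\circledcirc$ to be real bilinear. This holds because $\lif$, $\ext$, $\re$ and composition are all real linear. We must also confirm that the $\O$-algebra structure on the tensor product, in the sense of Definition \ref{def:O alg}, is the one induced as above, so that $\Theta$ preserves both the scalar actions and the product simultaneously.
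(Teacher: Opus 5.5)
Your proof is correct. It shares the paper's overall reduction to the identity $(p\odot S)\circledcirc(q\odot T)=(pq)\odot(S\circ T)$ for $S,T\in\mathscr{B}_{\O}(V)$, but it proves that identity by a different route.

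\textbf{The paper's route.} The paper works entirely inside the $\O$-algebra $\mathscr{B}_{\mathcal{RO}}(V)$. It uses that $S$ and $T$ lie in its real part, so for instance $q\odot S=S\odot q$. It then invokes the para-bilinearity of $\circledcirc$ from Theorem \ref{thm:BROV is Oalg}: the map $R\mapsto (p\odot T)\circledcirc R$ is right para-linear and $X\mapsto X\circledcirc S$ is left para-linear. Their second associators vanish at real arguments, which lets the paper pull $q$ and $p$ outside the product. Finally, $T\circledcirc S=TS\in\mathscr{B}_{\O}(V)$ lets it combine the scalars into $pq$.

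\textbf{Your route.} You never use para-bilinearity. You evaluate both sides on $\re V$ and conclude with the Uniqueness Lemma \ref{lem:Uniqueness Lemma}. The only inputs are the explicit formula \eqref{eqdef:pf}, the vanishing of composition associators on $\re V$ (Lemma \ref{lem:vanishing[f,g,x]=0}), and $[p,q,ST(x)]=0$ because $ST(x)\in\re V$.

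\textbf{Comparison.} The paper's argument is shorter and coordinate-free, but it presupposes Theorem \ref{thm:BROV is Oalg}, which the paper only asserts by analogy with the Hilbert case. Your argument is self-contained. Moreover, the product $(S\otimes p)(T\otimes q)=ST\otimes pq$ on $\mathscr{B}_{\O}(V)\otimes\O$ is para-bilinear because $\re[p,q,r]=0$. So your isomorphism actually re-derives the $\O$-algebra structure of $\mathscr{B}_{\mathcal{RO}}(V)$ rather than assuming it.

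You also make explicit several things the paper compresses into ``This induces an $\O$-algebra isomorphism'':
\begin{itemize}
\item the bimodule decomposition $T=\sum e_i\odot T_i$;
\item uniqueness and boundedness of the components $T_i=\re(\overline{e_i}\odot T)$;
\item the extension from elementary tensors to general elements by real bilinearity of $\circledcirc$.
\end{itemize}
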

		\begin{proof}
			Let $T,S\in  \mathscr{B}_{\mathbb{O}}(V)$ be two octonionic linear operators. Note that by Theorem \ref{thm:para_bimodule}, the real part of the bimodule $\mathscr{B}_{\mathcal{RO}}(V)$, which coincides with its commutative center and associative center,  is $\mathscr{B}_{\mathbb{O}}(V)$. For any $p,q\in \O$, it follows that
			\begin{align*}
				(p\odot T)\circledcirc (q\odot S)&=	(p\odot T)\circledcirc (S\odot q)\\
				&=((p\odot T)\circledcirc S)\odot q\\
				&=(p\odot (T\circledcirc S))\odot q,
			\end{align*}
			Where the second  equality follows from the right para-linearity of the map of the left product
			$$L_{p\odot T}:\mathscr{B}_{\mathbb{O}}(V)\to \mathscr{B}_{\mathbb{O}}(V), \qquad R\mapsto (p\odot T)\circledcirc R,$$  and the third equality follows from the left para-linearity of the map of the of the  right product induced by $S$.
			It is easy to check that $T\circledcirc S=TS$ is also  octonionic linear.
			Hence $$	(p\odot (T\circledcirc S))\odot q=	( (T\circledcirc S)\odot p)\odot q= (T S)\odot (p q)=(pq)\odot(TS).$$
			Therefore \begin{eqnarray}\label{eqpf:tensor BROV}
				(p\odot T)\circledcirc (q\odot S)=(p q)\odot (T S).
			\end{eqnarray}
			This induces an $\O$-algebra isomorphism 	$\mathscr{B}_{\mathcal{RO}}(V)\simeq \mathscr{B}_{\mathbb{O}}(V)\otimes \O$.
		\end{proof}

		\begin{mydef}\label{def:invertible}
			A real linear operator		$T\in \mathscr{B}_{\R}(V)$ is called \textbf{invertible} if there exists $S\in \mathscr{B}_{\R}(V)$ such that
			$$TS=ST=\mathcal{I}.$$  Such  $S$  is denoted by $T^{-1}$ as usual, which is the inverse of $T$ in the {real}  algebra $(\mathscr{B}_{\R}(V),\circ)$.
			$T\in \mathscr{B}_{\R}(V)$ is called \textbf{bounded invertible} if $T$ is invertible and $T^{-1}$ is bounded.
		\end{mydef}

		We next discuss a key notion, the regular inverse of an  operator, which plays the crucial role in the octonionic Riesz-Dunford theory.
		
		{To introduce that notion, we recall that for any $T\in \mathscr{B}_{\R}(V)$, we set
			$$T_{\R}(x):=\re T(x),$$
			moreover, below we shall write ${T^n}|_{\re V}$ in place of $( {T^n})|_{\re V}$.
		}

		\begin{mydef}[Regular powers and regular inverses]\label{def:ncirc}
			For any real linear operator $T\in \mathscr{B}_{\R}(V)$, define for $n\in \mathbb N$ the left and right  \textbf{regular powers} of $T$, respectively,
			$$T^{n\circledcirc}:=\lif (T^n)_{\R},\qquad T^{\circledcirc n}:=\ext (T^n|_{\re V}).$$
			
			If  $T$ is invertible in $\mathscr{B}_{\R}(V)$,  for $n\in \mathbb N$, we define
			$$T^{(-n)\circledcirc}:=(T^{-1})^{n\circledcirc},\qquad T^{\circledcirc (-n)}:=(T^{-1})^{\circledcirc n}.$$
			And denote $$ T^{-\circledcirc}:=T^{(-1)\circledcirc}, \qquad T^{\circledcirc -}:=T^{\circledcirc (-1)}.$$
			$T^{-\circledcirc}$ is called the \textbf{left regular inverse} of $T$ and 	$T^{\circledcirc -}$ is called the \textbf{right regular inverse} of $T$, respectively.
			{For $n=0$ both the regular inverses equal the identity $\mathcal I$.}
		\end{mydef}
		{We note that, by definition, the regular powers of $T\in\mathscr{B}_{\mathbb{R}}(V)$ belong to $\mathscr{B}_{\mathcal{RO}}(V)$.}
		\begin{lemma}\label{lem:circledcirc}
			
			Let $T\in \mathscr{B}_{\R}(V)$. For any $n\in \mathbb N $, we have
			\begin{eqnarray}
				T^{n\circledcirc}&=&T^{(n-1)\circledcirc}\circledcirc T;\label{eq:Tnlif}\\
				T^{\circledcirc n}&=&T\circledcirc T^{\circledcirc (n-1)}.\label{eq:Tnext}
			\end{eqnarray}
			Further if $T$ is invertible in $\mathscr{B}_{\R}(V)$, then these  also hold for  $n\leqslant 0$.
		\end{lemma}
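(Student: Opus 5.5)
Both identities follow by unwinding the definitions of the regular powers and of the regular composition in Definition \ref{def:right mod regular composition}, and then comparing operators on a determining set via the Uniqueness Lemma \ref{lem:Uniqueness Lemma}.

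\emph{Proof of \eqref{eq:Tnext}.} The right-hand side is $T\circledcirc T^{\circledcirc(n-1)}$. Here $T\in\mathscr{B}_{\R}(V)$ and $T^{\circledcirc(n-1)}\in\mathscr{B}_{\mathcal{RO}}(V)$, so we are in the second case of the regular composition. Hence
\[
T\circledcirc T^{\circledcirc(n-1)}=\ext\big(T\circ T^{\circledcirc(n-1)}\big)|_{\re V}.
\]
By definition of $\ext$, for $x\in\re V$ we have $T^{\circledcirc(n-1)}(x)=T^{n-1}(x)$. Therefore $(T\circ T^{\circledcirc(n-1)})|_{\re V}=T^n|_{\re V}$, and applying $\ext$ gives $T^{\circledcirc n}$.

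\emph{Proof of \eqref{eq:Tnlif}.} The right-hand side is $T^{(n-1)\circledcirc}\circledcirc T$, which falls in the first case of the regular composition. So it equals $\lif\big(T^{(n-1)\circledcirc}\circ T\big)_{\R}$. Since $\lif$ is a bijection, it suffices to show
\[
\re\, T^{(n-1)\circledcirc}(y)=\re\, T^{n-1}(y)\qquad\text{for all } y\in V.
\]
Taking $y=Tx$ then gives $\big(T^{(n-1)\circledcirc}\circ T\big)_{\R}=(T^n)_{\R}$, which is the claim.

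To prove the displayed identity, note that by definition $T^{(n-1)\circledcirc}(y)=\sum_i \re\big(T^{n-1}(y\overline{e_i})\big)\,e_i$ with $e_0=1$. Each term with $i\ge1$ is a real-part element times $e_i$, so its real part vanishes. This holds because $\re(ae_i)=0$ for $a\in\re V$ and $i\ge 1$, by formula \eqref{eq:real_part_formula}, or equivalently by the decomposition in Proposition \ref{prop:real_part}. The $i=0$ term is $\re T^{n-1}(y)$. So $\re\circ\lif g=g$ for every $g\in\operatorname{Hom}_{\R}(V,\re V)$, which is exactly the needed identity.

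\emph{Edge cases and negative exponents.} For $n=1$ the convention $T^{0\circledcirc}=T^{\circledcirc 0}=\mathcal I$ needs a separate check. One verifies that $\lif(T)_{\R}=\mathcal I\circledcirc T$ and $\ext(T|_{\re V})=T\circledcirc\mathcal I$, which is the same computation with $n-1=0$. When $T$ is invertible, nonpositive $n$ are handled by applying the positive case to $T^{-1}$, using the definitions $T^{(-n)\circledcirc}=(T^{-1})^{n\circledcirc}$ and $T^{\circledcirc(-n)}=(T^{-1})^{\circledcirc n}$. The statement with $T$ on the right for negative indices should be read through $T^{-1}$; the main care needed is to match the convention for how $n\le 0$ is interpreted. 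I expect no substantial obstacle: the only genuine input is $\re\circ\lif=\mathrm{id}$ on $\re V$-valued maps, together with $\ext(g)|_{\re V}=g$.
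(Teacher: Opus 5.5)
Your treatment of $n\geqslant 1$ is correct and follows the paper's own proof, which is the same unwinding of definitions. It rests on exactly the two facts $(T^{(n-1)\circledcirc})_{\R}=(T^{n-1})_{\R}$ and $T^{\circledcirc(n-1)}|_{\re V}=T^{n-1}|_{\re V}$. Your observation that $\re(ae_i)=0$ for $a\in\re V$ and $i\geqslant 1$ just makes the first of these explicit. (A small point: to get $\lif A=\lif B$ from $A=B$ you do not need $\lif$ to be a bijection.)

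The nonpositive case, however, is not proved. Applying the positive case to $T^{-1}$ gives, for $n\leqslant -1$, the identities $T^{n\circledcirc}=T^{(n+1)\circledcirc}\circledcirc T^{-1}$ and $T^{\circledcirc n}=T^{-1}\circledcirc T^{\circledcirc (n+1)}$. These are not \eqref{eq:Tnlif}--\eqref{eq:Tnext}. The statement for $n\leqslant 0$ is unambiguous, since every term is defined through Definition \ref{def:ncirc}, and it keeps $T$ as the factor, not $T^{-1}$. There is no convention to reinterpret.

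In particular, your route never yields the $n=0$ instances $T^{-\circledcirc}\circledcirc T=\mathcal I$ and $T\circledcirc T^{\circledcirc -}=\mathcal I$. These are exactly what Remark \ref{rem:regular inv} draws from this lemma to justify the name \emph{regular inverse}.

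The repair is that your own computation works verbatim for every integer $n$. The two definitional facts above hold equally for negative exponents, with $T^{-k}:=(T^{-1})^k$. Moreover, once $T$ is invertible, $T^{n-1}\circ T=T^n=T\circ T^{n-1}$ in $\mathscr{B}_{\R}(V)$ for all $n\in\mathbb Z$. For instance,
$T^{-\circledcirc}\circledcirc T=\lif(\re\circ T^{-\circledcirc}\circ T)=\lif(\re\circ T^{-1}\circ T)=\lif(\mathcal I)_{\R}=\mathcal I$.
This is what the paper means by ``the proof is also valid for $n\leqslant 0$''.
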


		\begin{proof}
			Recall the notation   in \eqref{eqdef:fR}.
			By Definition \ref{def:ncirc}, we have $$({T^{(n-1)\circledcirc}})_{\R}=({T^{n-1}})_{\R}.$$
			Noting $T^{(n-1)\circledcirc}\in \mathscr{B}_{\mathcal{RO}}(V)$ and $T \in \mathscr{B}_{\R}(V)$, it follows from  Definition \ref{def:right mod regular composition} that
			\begin{align*}
				T^{n\circledcirc}&=\lif ({T^n})_{\R}\\
				&=\lif ({{T^{n-1}})_{\R}\circ T}\\
				&=\lif ({{T^{(n-1)\circledcirc}})_{\R}\circ T}\\
				&=\lif ({T^{(n-1)\circledcirc}\circ T})_{\R}\\
				&=T^{(n-1)\circledcirc}\circledcirc T.
			\end{align*}
			
			Similarly,
			\begin{align*}
				T^{\circledcirc n}&=\ext( {T^n}|_{\re V})\\
				&=\ext (T\circ {T^{n-1}}|_{\re V})\\
				&=\ext (T\circ {T^{\circledcirc(n-1)}}|_{\re V})\\
				&= T\circledcirc {T^{\circledcirc(n-1)}}.
			\end{align*}
			%
			
			If $T$ is bounded invertible, the proof is also valid for  $n\leqslant 0$.
		\end{proof}
		\begin{rem}\label{rem:regular inv}
			By Lemma \ref{lem:circledcirc}, we conclude that if $T\in \mathscr{B}_{\R}(V)$ is bounded invertible, then
			\begin{eqnarray}
				T^{{-\circledcirc}}\circledcirc T&=\mathcal I\\
				T\circledcirc T^{\circledcirc-}&=\mathcal I.
			\end{eqnarray}
			This justifies the name of left and right regular inverses of $T^{{-\circledcirc}}$ and $T^{{\circledcirc-}}$, respectively.
		\end{rem}
		
		\section{The resolvent operator series identities}
		In this section, we establish some resolvent operator series identities. To this end, below $V$ denotes a Banach $\O$-bimodule, and $R_s:V\to V$ is the real linear map $v\mapsto vs$ for any $s\in \O$. Note that,  in general, $R_s$ is not a right para-linear operator.
		\begin{lemma}\label{lem:Rs-Tinv}
			Let $T\in \mathscr{B}_{\R}(V)$ and $s\in \O$ such that $|s|>\fsh{T}.$
			Then $R_s-T$ is invertible {in $\mathscr{B}_{\R}(V)$}.
		\end{lemma}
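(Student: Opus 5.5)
The plan is to factor $R_s-T = R_s(\mathcal I - R_s^{-1}T)$ and invert the second factor by a Neumann series in the real Banach algebra $(\mathscr{B}_{\R}(V),\circ)$. Since $|s|>\fsh{T}\ge 0$, we have $s\neq 0$.

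First we show that $R_s$ is bounded invertible in $\mathscr{B}_{\R}(V)$ with $R_s^{-1}=R_{s^{-1}}$ and $\fsh{R_s}=|s|$. In an $\O$-bimodule the right associator $[x,p,q]$ is alternating in $p,q$, so $[x,s,\overline{s}] = -[x,\overline{s},s]$. Also $[x,s,s]=0$ and $[x,s,1]=0$, hence $[x,s,\overline{s}]=[x,s,2\re s - s]=0$. Therefore $(xs)\overline{s}=x(s\overline{s})=|s|^2x$, and similarly $(x\overline{s})s=|s|^2 x$. So $R_{s^{-1}}=|s|^{-2}R_{\overline{s}}$ is a two-sided inverse of $R_s$. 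By the norm axiom for normed right modules, $\fsh{xs}=|s|\fsh{x}$. Hence $\fsh{R_s^{-1}}=|s|^{-1}$, and both operators are bounded.

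Next, put $A:=R_{s^{-1}}\circ T\in\mathscr{B}_{\R}(V)$. Then $\fsh{A}\le |s|^{-1}\fsh{T}<1$, so the Neumann series $\sum_{n\ge0}A^n$ converges absolutely in the Banach space $\mathscr{B}_{\R}(V)$; completeness of $V$ gives completeness of $\mathscr{B}_{\R}(V)$. Its sum is a two-sided inverse of $\mathcal I-A$ for the associative composition $\circ$. Only real linearity and ordinary composition are used here, so no octonionic associativity issues arise. Finally,
$$R_s-T=R_s\circ(\mathcal I-A),$$
so $R_s-T$ is invertible with inverse $(\mathcal I-A)^{-1}\circ R_{s^{-1}}$, which is bounded. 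Equivalently, $(R_s-T)^{-1}=\sum_{n\ge 0}(R_{s^{-1}}T)^nR_{s^{-1}}$.

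The only step needing care is the identity $(xs)s^{-1}=x$ in a nonassociative module. This follows from alternativity of the right associator, as shown above. Everything else is the classical Neumann series argument.
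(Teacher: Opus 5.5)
Your proof is correct and follows the paper's argument. The paper likewise writes $R_{s^{-1}}(R_s-T)=\mathcal I-R_{s^{-1}}T$, inverts it by a Neumann series using $\fsh{R_{s^{-1}}T}=|s|^{-1}\fsh{T}<1$, and obtains $(R_s-T)^{-1}=(\mathcal I-R_{s^{-1}}T)^{-1}R_s^{-1}$; your alternativity check that $R_{s^{-1}}$ is a genuine two-sided inverse of $R_s$ fills in a step the paper uses without comment.
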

		\begin{proof}
			It is easy to verify that $\fsh{R_{s^{-1}}T}=\abs{s^{-1}}\fsh{T}<1$.
			Hence the operator
			\begin{align*}
				R_{s^{-1}}(R_s-T)=\mathcal I-R_{s^{-1}}T
			\end{align*}
			is invertible as a real linear operator.
			This implies that $R_s-T$ is invertible. More precisely, we have
			$$ (R_s-T)^{-1}=(\mathcal I-R_{s^{-1}}T)^{-1}{R_s}^{-1}$$
			where we use the fact that $R_sR_{s^{-1}}T=T$.
		\end{proof}

		Recall the notion of $\spc_J$-projection operator $\pi_J$ \eqref{eqdef:piJ} and 	 the definition \eqref{eq:CJM} of  ${\mathbb C_J(V)} $.	 We now formulate a key result, namely resolvent operator series identities.
		\begin{thm}\label{thm:Rs-T}
			Let $T\in \mathscr{B}_{\mathcal{RO}}(V)$ and let $s\in \spc_J\subseteq\O$ for some $J\in \mathbb S$ be such that $|s|>\fsh{T}.$
			\begin{enumerate}
				\item 	For all $x\in \spc_J(V)$, we have
				\begin{eqnarray}\label{eq:1resolveformula}
					(R_s-T)\sum_{n\geqslant 0}\left(T^{\circledcirc n}\odot s^{-1-n}\right)(x)=x+\alpha(s,T)(x).
				\end{eqnarray}
				Here $\alpha(s,T):\spc_J(V)\to V$ is defined by
				\begin{eqnarray}
					\alpha(s,T)(x):=\sum_{n\geqslant 0}[T,T^{\circledcirc n}, xs^{-1-n}]_{\circledcirc}.
				\end{eqnarray}
				\item For all $x\in V$, we have
				\begin{eqnarray}\label{eq:2resolveformula}
					\pi_J \sum_{n\geqslant 0}(s^{-1-n}\odot T^{n\circledcirc})(R_s-T)(x)=\pi_J x+\beta(s,T)(x).
				\end{eqnarray}
				Here $\beta(s,T):V\to \spc_J(V)$ is defined by
				\begin{eqnarray}
					\beta(s,T)(x):=\pi_J \sum_{n\geqslant 0}[T^{n \circledcirc},T,x]_{\circledcirc}s^{-1-n}.
				\end{eqnarray}
			\end{enumerate}
			
			All the series converge in norm.
		\end{thm}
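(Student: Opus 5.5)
The argument is a telescoping computation, done termwise on the series; the octonionic difficulty shows up only as composition associators. First I settle convergence. Because $\re$ is a bounded projection and the extension/lifting of a bounded operator is bounded, there is a constant $C$ with $\fsh{T^{\circledcirc n}}\leqslant C\fsh{T}^n$ and $\fsh{T^{n\circledcirc}}\leqslant C\fsh{T}^n$. The same holds for $p\odot$ and $\odot p$: for instance $\fsh{f\odot p}\leqslant C'\abs{p}\fsh{f}$, by \eqref{eqdef:pf}, \eqref{eqdef:fp} and boundedness of $B_p$. Since $|s|>\fsh{T}$, this gives norm convergence of $\sum_n T^{\circledcirc n}\odot s^{-1-n}$, of $\sum_n s^{-1-n}\odot T^{n\circledcirc}$, and of the associator series defining $\alpha$ and $\beta$ (each associator is bounded by a constant times $\fsh{T}^{n+1}|s|^{-1-n}\fsh{x}$).

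For part (1), fix $x\in\spc_J(V)$. The key identity is $(T^{\circledcirc n}\odot s^{-1-n})(x)=T^{\circledcirc n}(xs^{-1-n})$ for $x\in\spc_J(V)$ and $s\in\spc_J$. By \eqref{eqdef:fp}, $(f\odot p)(x)=f(px)-B_p(f,x)$. Here $px=xp$ because $x\in\spc_J(V)$ and $p\in \spc_J$, and $B_p(f,x)=0$ by Theorem \ref{lem:paralinear-char2}(3). Applying $R_s-T$, the $R_s$ term gives $T^{\circledcirc n}(xs^{-1-n})s=T^{\circledcirc n}(xs^{-n})$. This uses $\spc_J$-linearity of $T^{\circledcirc n}$ on $\spc_J(V)$ (Theorem \ref{lem:paralinear-char2}(4)) and the fact that $xs^{-1-n}\in \spc_J(V)$. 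The $T$ term is
\[
T(T^{\circledcirc n}(xs^{-1-n}))=(T\circledcirc T^{\circledcirc n})(xs^{-1-n})-[T,T^{\circledcirc n},xs^{-1-n}]_{\circledcirc},
\]
and $T\circledcirc T^{\circledcirc n}=T^{\circledcirc(n+1)}$ by Lemma \ref{lem:circledcirc}. Hence the $n$-th term equals $T^{\circledcirc n}(xs^{-n})-T^{\circledcirc(n+1)}(xs^{-(n+1)})+[T,T^{\circledcirc n},xs^{-1-n}]_{\circledcirc}$. Summing, the series telescopes to $x+\alpha(s,T)(x)$, because $T^{\circledcirc 0}=\mathcal I$ and the tail tends to $0$. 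Both $R_s-T$ and the passage to the limit are continuous, so they can be exchanged.

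For part (2), I use the dual bookkeeping, now on the left. The expected identity is $\pi_J\big((p\odot f)(y)\big)=\pi_J\big(f(y)p\big)$ for $p\in \spc_J$ and any $y\in V$. To get it, write $(p\odot f)(y)=pf(y)+B_p(f,y)$; then $\pi_J B_p(f,y)=0$ by Lemma \ref{lem:piJBpfx=0}, and $\pi_J(pz)=\pi_J(z)p$ by \eqref{eq:piJ}. So the $n$-th term becomes $\pi_J\big(T^{n\circledcirc}(xs-Tx)\big)s^{-1-n}$. I then write $T^{n\circledcirc}(T x)=(T^{n\circledcirc}\circledcirc T)(x)-[T^{n\circledcirc},T,x]_{\circledcirc}$ and use $T^{n\circledcirc}\circledcirc T=T^{(n+1)\circledcirc}$ from Lemma \ref{lem:circledcirc}.

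The main obstacle is the $R_s$ term $\pi_J\big(T^{n\circledcirc}(xs)\big)s^{-1-n}$, which should become $\pi_J\big(T^{n\circledcirc}(x)\big)s^{-n}$. For a para-linear $f$ we have $f(xs)=f(x)s-B_s(f,x)$, and the associator part is killed by $\pi_J$ via Lemma \ref{lem:piJBpfx=0}. It then remains to show $\pi_J(ys)=\pi_J(y)s$, which is \eqref{eq:piJ}. With this in place the sum telescopes to $\pi_J x$, since $T^{0\circledcirc}=\mathcal I$, plus $\pi_J\sum_n[T^{n\circledcirc},T,x]_{\circledcirc}s^{-1-n}=\beta(s,T)(x)$; here $\pi_J$ commutes with right multiplication by $s^{-1-n}$. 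The delicate points are therefore checking that $\pi_J$ annihilates exactly the second associators and commutes with $\spc_J$-scalars. Both are supplied by Lemma \ref{lem:piJBpfx=0} and \eqref{eq:piJ}.
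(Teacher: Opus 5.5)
Your proposal is correct and follows essentially the same route as the paper. In part (1) you use $B_p(f,x)=0$ on $\spc_J(V)$ to rewrite each term as $T^{\circledcirc n}(xs^{-1-n})$ and telescope, with the $T$-term producing the composition associators. In part (2) you use that $\pi_J$ annihilates second associators and commutes with $\spc_J$-scalars, then telescope via $T^{n\circledcirc}\circledcirc T=T^{(n+1)\circledcirc}$, exactly as the paper does.

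The only difference is in how convergence is obtained, and it is cosmetic. You derive it from operator-norm bounds on $\ext$, $\lif$ and the $\odot$-multiplications. The paper instead estimates $\|T^{\circledcirc n}(x)\|$ and $\|(s^{-1-n}\odot T^{n\circledcirc})(x)\|$ pointwise, using the $\spc_J$-decomposition of $x$ and Theorem~\ref{thm:para_linear_char}.
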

		\begin{proof}
			\begin{enumerate}
				\item 	Fix $x\in \spc_J(V)$ arbitrarily, then Theorem \ref{lem:paralinear-char2} implies that $B_{p}(x,f)=0$ for any  right para-linear map $f$ and any $p\in\spc_J$. Hence by the  definition of scalar multiplication of para-linear operators in Theorem \ref{thm:para_bimodule}, we get for any $s\in \spc_J$ that
				\begin{eqnarray}\label{eqpf:Tncirc}
					( T^{\circledcirc n}\odot s^{-1-n})(x)=  T^{\circledcirc n}(s^{-1-n}x)=T^{\circledcirc n}(xs^{-1-n})=T^{\circledcirc n}(x)s^{-1-n}.
				\end{eqnarray}
				
				Since $\fsh{\re v}\leqslant \fsh{v}$ for all $v\in V$ and $x=\pi_J x=\re x+J\re\overline{J}x $ for $x\in\mathbb{C}_J(V)$, we obtain
				\begin{align*}
					\fsh{T^{\circledcirc n}(x)}&=\fsh{T^{\circledcirc n}(\re x)+T^{\circledcirc n}(\re \overline{J}x)J}\\
					&\leqslant\fsh{T^{\circledcirc n}(\re x)}+\fsh{T^{\circledcirc n}(\re \overline{J}x)}\\
					&=\fsh{T^{ n}(\re x)}+\fsh{T^{ n}(\re \overline{J}x)}\\
					&\leqslant2\fsh{T}^n\fsh{x}.
				\end{align*}
				Thus combining with \eqref{eqpf:Tncirc},  we have $$\fsh{( T^{\circledcirc n}\odot s^{-1-n})(x)}=\fsh{ T^{\circledcirc n}(x)}\abs{s^{-1-n}}\leqslant 2\fsh{T}^n\fsh{x}\abs{s}^{-1-n},$$
				so the series $\sum_{n\geqslant 0}\left(T^{\circledcirc n}\odot s^{-1-n}\right)(x)$ converges for $|s|>\fsh{T}$.
				\\	
				Similarly, keeping identity  \eqref{eqpf:Tncirc} in mind, we deduce
				\begin{align*}
					\fsh{[T,T^{\circledcirc n}, xs^{-1-n}]_{\circledcirc}}&=\fsh{(T\circledcirc T^{\circledcirc n})(x s^{-1-n})-T(T^{\circledcirc n}(x s^{-1-n}))}\\
					&\leqslant\fsh{T^{\circledcirc (n+1)}(xs^{-1-n})}+\fsh{T}\fsh{T^{\circledcirc n}(x s^{-1-n})}\\
					&=\fsh{T^{\circledcirc (n+1)}(x)s^{-1-n}}+\fsh{T}\fsh{T^{\circledcirc n}(x )s^{-1-n}}\\
					&\leqslant 4\fsh{T}^{ n+1}\fsh{x}\abs{s}^{-1-n}.
				\end{align*}
				This proves the convergence of the series $\sum_{n\geqslant 0}[T,T^{\circledcirc n}, xs^{-1-n}]_{\circledcirc}(x)$ for $|s|>\fsh{T}$.
				\\	
				Next, we prove the equality \eqref{eq:1resolveformula}. In view of \eqref{eqpf:Tncirc}, we have
				\begin{align*}
					&(R_s-T)\sum_{n\geqslant 0}\left(T^{\circledcirc n}\odot s^{-1-n}\right)(x)\\=&	R_s\sum_{n\geqslant 0}T^{\circledcirc n}(x)s^{-1-n}-T\sum_{n\geqslant 0}T^{\circledcirc n}(xs^{-1-n})\\
					=&\sum_{n\geqslant 0}T^{\circledcirc n}(x)s^{-n}-\sum_{n\geqslant 0}\left((T\circledcirc T^{\circledcirc n})(xs^{-1-n})-[T,T^{\circledcirc n}, xs^{-1-n}]_{\circledcirc}\right)\\
					=&\sum_{n\geqslant 0}T^{\circledcirc n}(x)s^{-n}-\sum_{n\geqslant 0} T^{\circledcirc (n+1)}
					(x)s^{-1-n}+\sum_{n\geqslant 0}[T,T^{\circledcirc n}, xs^{-1-n}]_{\circledcirc}\\
					=&x+\alpha(s,T)(x).
				\end{align*}
				\item
				Note that by the definition of scalar of  para-linear operators in Theorem \ref{thm:para_bimodule}, for any $x\in V$, we have
				\begin{eqnarray}\label{oldeq:sodotT}
					\re (s^{-1-n}\odot T^{n\circledcirc})(x)&=&\re( s^{-1-n} T^{n\circledcirc}(x))\\&=&\re(  T^{n\circledcirc}(x)s^{-1-n})\notag \\&=&\re T^{n\circledcirc}( xs^{-1-n})\notag\\&=&\re T^{n}( xs^{-1-n}).\notag
				\end{eqnarray}

				By  Theorem \ref{thm:para_linear_char},  we conclude from \eqref{oldeq:sodotT} that
				$$(s^{-1-n}\odot T^{n\circledcirc})(x)=\sum_{i=0}^7e_i\re (s^{-1-n}\odot T^{n\circledcirc})(x\overline{e_i})=\sum_{i=0}^7e_i\re  T^{n}((x\overline{e_i})s^{-1-n}).$$
				Thus \begin{align*}
					\fsh{(s^{-1-n}\odot T^{n\circledcirc})(x)}&\leqslant\sum_{i=0}^7\fsh{\re  T^{n}((x\overline{e_i})s^{-1-n})}\\
					&\leqslant\sum_{i=0}^7\fsh{ T^{n}((x\overline{e_i})s^{-1-n})}\\
					&\leqslant\sum_{i=0}^7\fsh{ T}^{n}\fsh{(x\overline{e_i})s^{-1-n}}\\
					&=8\fsh{ T}^{n}\fsh{x}\fsh{s^{-1-n}}.
				\end{align*}
				This proves the convergence of the series  $\sum_{n\geqslant 0}(s^{-1-n}\odot T^{n\circledcirc})(x)$ for any $x\in V$.
				\\		
				Similarly, one can verify that
				$$\fsh{[T^{n \circledcirc},T,x]_{\circledcirc}}\leqslant 16 \fsh{T}^{n+1}\fsh{x}.$$
				This shows the convergence of the series  $\sum_{n\geqslant 0}[T^{n \circledcirc},T,x]_{\circledcirc}s^{-1-n}$ for any $x\in V$.
				\\			
				Finally, 	we prove the equality \eqref{eq:2resolveformula}.
				Hence by the  definition of scalar multiplication of para-linear operators in Theorem \ref{thm:para_bimodule} and \eqref{eqpf:piJB=0} in Lemma \ref{lem:piJBpfx=0}, we have
				$$	\pi_J (s^{-1-n}\odot T^{n\circledcirc})(x)=	\pi_J (s^{-1-n} T^{n\circledcirc}(x))+\pi_JB_{s^{-1-n}}(T^{n\circledcirc},x)=\pi_J (s^{-1-n} T^{n\circledcirc}(x)).$$
				In view of properties of $\pi_J$ in \eqref{eq:piJ} and \eqref{eqpf:piJB=0}, we have
				\begin{eqnarray}\label{eq:sodotT}
					\pi_J\, (s^{-1-n}\odot T^{n\circledcirc})(x)&=&\pi_J\, (s^{-1-n} T^{n\circledcirc}(x))\\
					&=&\pi_J\, (T^{n\circledcirc}(x)s^{-1-n})\notag \\&=&\pi_J\, T^{n\circledcirc}( xs^{-1-n}).\notag
				\end{eqnarray}
				Thanks to \eqref{eq:sodotT} and \eqref{eqpf:piJB=0}, we deduce
				\begin{align*}
					&	\pi_J \sum_{n\geqslant 0}(s^{-1-n}\odot T^{n\circledcirc})(R_s-T)(x)\\
					=&	\pi_J \sum_{n\geqslant 0}(s^{-1-n}\odot T^{n\circledcirc})(xs-Tx)\\
					=&	\pi_J \sum_{n\geqslant 0} T^{n\circledcirc}(xs^{-n})-(T^{n\circledcirc}(Tx))s^{-1-n}+B_{s^{-1-n}}(T^{n\circledcirc},Tx)\\
					=&	\pi_J \sum_{n\geqslant 0} T^{n\circledcirc}(x)s^{-n}-\big((T^{n\circledcirc}\circledcirc T) (x)-[T^{n\circledcirc},T,x]_\circledcirc\big)s^{-1-n}\\
					=&		\pi_J \sum_{n\geqslant 0} T^{n\circledcirc}(x)s^{-n}-T^{{(n+1)}\circledcirc} (x)s^{-1-n}+[T^{n\circledcirc},T,x]_\circledcirc s^{-1-n}\\
					=&	\pi_J  x+\beta(s,T)(x).
				\end{align*}
			\end{enumerate}	
			This completes the proof.
		\end{proof}

		Recall the notion of left and right regular inverse of $T$    in Definition \ref{def:ncirc}. Below we prove that  for  $|s|>\fsh{T}$ the  resolvent operator series converges to  the left and right regular inverse of $R_s-T$ provided the associator terms $\alpha(s,T)$ and $\beta(s,T)$ vanish, respectively.
		\begin{thm}
			Let $T\in \mathscr{B}_{\mathcal{RO}}(V)$  and   $s\in \spc_J\subseteq \O$ for some $J\in \mathbb S$ be such that $|s|>\fsh{T}$.
			\begin{enumerate}
				\item  If $\alpha(s,T)$ in Theorem \ref{thm:Rs-T} vanishes, then  we have
				\begin{eqnarray}\label{eq:extinvRs-TcJv}
					(R_s-T)^{-1}|_{\spc_J(V)}&=&\sum_{n\geqslant 0}T^{\circledcirc n}\odot s^{-1-n}|_{\spc_J(V)}
				\end{eqnarray}
				and\begin{eqnarray}
					(R_s-T)^{\circledcirc -}&=&\sum_{n\geqslant 0}T^{\circledcirc n}\odot s^{-1-n}.\label{eq:extinvRs-T}
				\end{eqnarray}
				\item 	If $\beta(s,T)$ in Theorem \ref{thm:Rs-T} vanishes, then  we have
				\begin{eqnarray}
					\pi_J	(R_s-T)^{-1}(x)&=&\pi_J\sum_{n\geqslant 0}(s^{-1-n}\odot T^{n\circledcirc})(x) \label{eq:piJlifinvRs-T}
				\end{eqnarray} for all $x\in V$ and
				\begin{eqnarray}
					(R_s-T)^{-\circledcirc }&=&\sum_{n\geqslant 0}s^{-1-n}\odot T^{n\circledcirc} \label{eq:lifinvRs-T}.
				\end{eqnarray}
			\end{enumerate}

		\end{thm}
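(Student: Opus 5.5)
Both parts follow the same pattern. First use Theorem \ref{thm:Rs-T} with the associator term set to zero, which gives a one-sided identity on $\spc_J(V)$ (resp. after $\pi_J$). Then use the invertibility of $R_s-T$ from Lemma \ref{lem:Rs-Tinv} to identify the series with $(R_s-T)^{-1}$ on the relevant piece. Finally pass to the regular inverse via the Uniqueness Lemma \ref{lem:Uniqueness Lemma}, together with the defining formulas $T^{\circledcirc -}=\ext(T^{-1}|_{\re V})$ and $T^{-\circledcirc}=\lif (T^{-1})_{\R}$.

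For (1), set $S:=\sum_{n\geqslant 0}T^{\circledcirc n}\odot s^{-1-n}$. Each term lies in $\mathscr{B}_{\mathcal{RO}}(V)$, and the series converges in operator norm by an estimate of the type in the proof of Theorem \ref{thm:Rs-T}. That estimate is $\fsh{T^{\circledcirc n}}\le 8\fsh{T}^n$, since $\fsh{\re v}\le\fsh v$ and $\ext$ costs at most a factor $8$; the $\odot s^{-1-n}$ multiplication costs at most another constant factor. So $S\in\mathscr{B}_{\mathcal{RO}}(V)$. With $\alpha(s,T)=0$, identity \eqref{eq:1resolveformula} reads $(R_s-T)S(x)=x$ for $x\in\spc_J(V)$. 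Applying $(R_s-T)^{-1}$, which exists and is bounded by Lemma \ref{lem:Rs-Tinv}, gives $S|_{\spc_J(V)}=(R_s-T)^{-1}|_{\spc_J(V)}$, which is \eqref{eq:extinvRs-TcJv}. In particular $S|_{\re V}=(R_s-T)^{-1}|_{\re V}$. Both $S$ and $(R_s-T)^{\circledcirc -}=\ext((R_s-T)^{-1}|_{\re V})$ are right para-linear and agree on $\re V$. Part (2) of the Uniqueness Lemma then yields \eqref{eq:extinvRs-T}.

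For (2), set $S':=\sum_{n\geqslant 0}s^{-1-n}\odot T^{n\circledcirc}\in\mathscr{B}_{\mathcal{RO}}(V)$; convergence again comes from the bound $8\fsh{T}^n\fsh{x}\abs{s}^{-1-n}$ from the proof of Theorem \ref{thm:Rs-T}. With $\beta(s,T)=0$, \eqref{eq:2resolveformula} gives $\pi_J S'(R_s-T)(y)=\pi_J y$ for all $y\in V$. Substituting $y=(R_s-T)^{-1}x$ yields \eqref{eq:piJlifinvRs-T}. Since $\re\pi_J=\re$, taking real parts gives $S'_{\R}=((R_s-T)^{-1})_{\R}$. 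Hence $S'$ and $(R_s-T)^{-\circledcirc}=\lif((R_s-T)^{-1})_{\R}$ are two right para-linear operators with the same real part. Part (1) of the Uniqueness Lemma gives \eqref{eq:lifinvRs-T}.

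The main point needing care is justifying that the infinite sums are genuine bounded right para-linear operators. This means checking norm convergence of the operator series, and that $\mathscr{B}_{\mathcal{RO}}(V)$ is closed under norm limits; the latter holds because para-linearity, $\re B_p(f,x)=0$, is preserved under pointwise limits, as $\re$ and $B_p$ are continuous. A second point to verify is $\re\circ\pi_J=\re$, which is immediate from \eqref{eqdef:piJ}, since $\re(J\re(\overline{J}x))=0$ because $J\re(\overline J x)$ is purely imaginary with respect to the decomposition of Proposition \ref{prop:real_part}. The rest is direct.
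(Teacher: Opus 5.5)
Your proposal is correct and follows essentially the same route as the paper: apply Theorem \ref{thm:Rs-T} with the vanishing associator term, then invert $R_s-T$ via Lemma \ref{lem:Rs-Tinv} (restricting to $\spc_J(V)$ in part (1), substituting $y=(R_s-T)^{-1}x$ in part (2)), and conclude with the corresponding half of the Uniqueness Lemma. Your additional checks fill in details the paper leaves implicit: operator-norm convergence of the series on all of $V$ (the paper's proof of Theorem \ref{thm:Rs-T} bounds part (1) only on $\spc_J(V)$), closedness of $\mathscr{B}_{\mathcal{RO}}(V)$ under limits, and $\re\circ\pi_J=\re$.
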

		\begin{proof}
			\begin{enumerate}
				\item 	In view of Theorem \ref{thm:Rs-T}, for all $x\in \spc_J(V)$ we have
				\begin{eqnarray}\label{eqpf:RS-T}
					(R_s-T)\sum_{n\geqslant 0}T^{\circledcirc n}\odot s^{-1-n}(x)=x+\alpha(s,T)(x)=x.
				\end{eqnarray}
				Lemma \ref{lem:Rs-Tinv} implies that $	R_s-T$ is invertible. Hence the desired equality  \eqref{eq:extinvRs-TcJv}  follows   immediately by applying $	(R_s-T)^{-1}$ on both sides of \eqref{eqpf:RS-T}. 	Note that since both sides of equality \eqref{eq:extinvRs-T} are right para-linear, by the Uniqueness Lemma \ref{lem:Uniqueness Lemma}, it suffices to verify \eqref{eq:extinvRs-T}  on $\re V$.
				By Definition \ref{def:ncirc}, for all $v\in \re V$  we have
				$$ (R_s-T)^{\circledcirc -}(v)=(R_s-T)^{-1}(v)=\sum_{n\geqslant 0}T^{\circledcirc n}\odot s^{-1-n}(v).$$		
				This proves  \eqref{eq:extinvRs-T}.
				
				\item  For any $x\in V$, set $(R_s-T)^{-1 }(x)=y$, thus $$x=(R_s-T)(y).$$
				It follows from \eqref{eq:2resolveformula} that
				$$\pi_J\sum_{n\geqslant 0}(s^{-1-n}\odot T^{n\circledcirc}) ((R_s-T)(y))=\pi_J\, y,$$
				that is,
				$$\pi_J \sum_{n\geqslant 0}(s^{-1-n}\odot T^{n\circledcirc}) (x)=
				\pi_J (R_s-T)^{-1 }(x).$$
				This proves \eqref{eq:piJlifinvRs-T}. Taking the real parts of both sides of \eqref{eq:piJlifinvRs-T}, we get
				\begin{eqnarray}
					\re \sum_{n\geqslant 0}(s^{-1-n}\odot T^{n\circledcirc}) (x)=
					\re (R_s-T)^{-1 }(x)
				\end{eqnarray}
				for all $x\in V$.
				Hence by the Uniqueness Lemma \ref{lem:Uniqueness Lemma}, we obtain  \eqref{eq:lifinvRs-T}.
			\end{enumerate}
		\end{proof}

		\begin{lemma}\label{lem:powass}
			Let $T\in \mathscr{B}_{\mathcal{RO}}(V)$. Then the following are equivalent:
			\begin{enumerate}
				\item $[T,T^{\circledcirc n},x]_{\circledcirc}=0$ for all $x\in V$ and $n=0,1,2,\cdots$.
				\item $T^n\in \mathscr{B}_{\mathcal{RO}}(V)$ for $n=0,1,2,\cdots$.
				\item $[T^{n\circledcirc},T,x]_{\circledcirc}=0$ for all $x\in V$ and $n=0,1,2,\cdots$.
			\end{enumerate}
			Moreover, under any of the above conditions, we have
			$$T^{\circledcirc n}=T^n=T^{n\circledcirc}$$  and hence
			$$T^n\circledcirc T^m=T^{m+n}.$$
		\end{lemma}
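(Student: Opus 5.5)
I will prove (1)$\Leftrightarrow$(2) and (3)$\Leftrightarrow$(2) by induction on $n$. Both arguments rest on two observations. First, by definition $T^{\circledcirc n}$ and $T^{n\circledcirc}$ are right para-linear. Second, $T^{\circledcirc n}$ agrees with $T^n$ on $\re V$, while $T^{n\circledcirc}$ has the same real part $(\cdot)_{\R}$ as $T^n$. Hence, by the Uniqueness Lemma \ref{lem:Uniqueness Lemma}, the following three conditions are equivalent:
\[
T^n\in \mathscr{B}_{\mathcal{RO}}(V),\qquad T^{\circledcirc n}=T^n,\qquad T^{n\circledcirc}=T^n.
\]
Indeed, if $T^n$ is right para-linear, then $T^n$ and $\ext(T^n|_{\re V})$ are both para-linear and agree on $\re V$, so they coincide. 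The same reasoning with $\lif$ and $(\cdot)_{\R}$, using part (1) of the Uniqueness Lemma, gives the third condition. Conversely, each equality exhibits $T^n$ as a right para-linear operator.

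\textbf{(2)$\Rightarrow$(1) and (2)$\Rightarrow$(3).} Assume (2). The observation above gives $T^{\circledcirc n}=T^n$. By \eqref{eq:Tnext},
\[
T\circledcirc T^{\circledcirc n}=T^{\circledcirc(n+1)}=T^{n+1}=T\circ T^{\circledcirc n},
\]
so $[T,T^{\circledcirc n},x]_{\circledcirc}=0$ for all $x$. Statement (3) follows in the same way, using $T^{n\circledcirc}=T^n$ and \eqref{eq:Tnlif}.

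\textbf{(1)$\Rightarrow$(2).} I will argue by induction on $n$. The cases $n=0,1$ are trivial. Suppose $T^n=T^{\circledcirc n}$. By (1) and \eqref{eq:Tnext},
\[
T^{\circledcirc(n+1)}=T\circledcirc T^{\circledcirc n}=T\circ T^{\circledcirc n}=T\circ T^n=T^{n+1},
\]
so $T^{n+1}$ is para-linear. The direction (3)$\Rightarrow$(2) is symmetric: from $T^n=T^{n\circledcirc}$, assumption (3) and \eqref{eq:Tnlif} give
\[
T^{(n+1)\circledcirc}=T^{n\circledcirc}\circledcirc T=T^{n\circledcirc}\circ T=T^{n+1}.
\]

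\textbf{The final assertion.} Under these conditions $T^{\circledcirc n}=T^n=T^{n\circledcirc}$, as shown above. Moreover $T^n$ and $T^m$ are right para-linear, with $T^{n}\circ T^m=T^{n+m}$ also right para-linear. The regular composition $T^n\circledcirc T^m$ equals $\lif((T^n\circ T^m)_{\R})$. This is a para-linear map whose real part agrees with that of the para-linear map $T^{n+m}$, so the Uniqueness Lemma gives $T^n\circledcirc T^m=T^{m+n}$.

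The only delicate point is checking that the recursions \eqref{eq:Tnlif} and \eqref{eq:Tnext} are applied with the correct case of Definition \ref{def:right mod regular composition}: the para-linear factor must sit in the right slot. Otherwise the argument is routine.
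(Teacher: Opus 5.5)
Your proposal is correct and follows essentially the same route as the paper. For (2)$\Rightarrow$(1) and (2)$\Rightarrow$(3), the Uniqueness Lemma identifies $T^{\circledcirc n}$ (resp.\ $T^{n\circledcirc}$) with $T^n$, and for (1)$\Rightarrow$(2) the recursion \eqref{eq:Tnext} drives an induction; you also spell out (3)$\Rightarrow$(2) via \eqref{eq:Tnlif} and the identity $T^n\circledcirc T^m=T^{m+n}$, both of which the paper leaves implicit.
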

		\begin{proof}
			We note that all the assertions hold automatically for $n=0$ since $T^{\circledcirc 0}=T^{0\circledcirc}=\mathcal I$.
			
			$(1)$ $\implies$  $(2)$:
			We claim that $T^n=T^{\circledcirc n}.$ The  proof  is by induction on  $n$.
			Suppose it holds for $n$, then identity \eqref{eq:Tnext} implies that
			$$T^{\circledcirc n}=T\circledcirc T^{\circledcirc (n-1)}=T\circledcirc T^{ (n-1)}.$$
			Note the hypothesis, we thus have
			$$ T^{\circledcirc n}=T\circ T^{ (n-1)}=T^n.$$
			This proves the claim.
			
			$(2)$ $\implies$  $(1)$:
			Since both $T^n$ and $T^{\circledcirc n}$ are right para-linear operators and they coincide on the real part $\re V$, it follows by the Uniqueness Lemma that $$T^n= T^{\circledcirc n}.$$
			Therefore,
			$$[T,T^{\circledcirc n},x]_{\circledcirc}=T\circledcirc T^{\circledcirc n}(x)-T(T^{\circledcirc n}(x))=T^{\circledcirc (n+1)}(x)-T(T^nx)=0.$$
			
			The part $(2)\Longrightarrow (3)$ can be proved with a similar method.
		\end{proof}

		Lemma \ref{lem:powass} inspires the following definition.
		\begin{mydef}\label{def:powerass}
			Let $T\in \mathscr{B}_{\mathcal{RO}}(V)$. $T$ is called \textbf{power-associative} if $T^n\in \mathscr{B}_{\mathcal{RO}}(V)$ for all $n\in \mathbb N$.
		\end{mydef}
		
		As a corollary, we immediately obtain
		\begin{cor}\label{cor:reolv op ser}
			Let $T\in \mathscr{B}_{\mathcal{RO}}(V)$ be a power-associative operator and   $s\in \O$ be such that $|s|>\fsh{T}$. Then  we have
			\begin{eqnarray}
				(R_s-T)^{\circledcirc -}&=&\sum_{n\geqslant 0}T^{ n}\odot s^{-1-n}\label{eq:powextinvRs-T};\\
				(R_s-T)^{-\circledcirc }&=&\sum_{n\geqslant 0}s^{-1-n}\odot T^{n} \label{eq:powlifinvRs-T}.
			\end{eqnarray}
			
		\end{cor}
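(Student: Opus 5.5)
The plan is to derive Corollary \ref{cor:reolv op ser} from the preceding theorem by showing that both associator terms $\alpha(s,T)$ and $\beta(s,T)$ of Theorem \ref{thm:Rs-T} vanish, and then replacing the regular powers by ordinary powers. First, since $s\in\O$ is arbitrary, I choose $J\in\mathbb S$ with $s\in\spc_J$; if $s$ is real, any $J$ works. The hypothesis $|s|>\fsh{T}$ is exactly the one required in Theorem \ref{thm:Rs-T} and in the theorem following it.

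Since $T$ is power-associative, Lemma \ref{lem:powass} applies. Its condition (1) gives $[T,T^{\circledcirc n},y]_{\circledcirc}=0$ for all $y\in V$ and all $n$. In particular, each term $[T,T^{\circledcirc n},xs^{-1-n}]_{\circledcirc}$ vanishes, so $\alpha(s,T)=0$ on $\spc_J(V)$. Likewise, condition (3) gives $[T^{n\circledcirc},T,x]_{\circledcirc}=0$, so every summand defining $\beta(s,T)$ is zero and $\beta(s,T)=0$. Part (1) of the preceding theorem then yields $(R_s-T)^{\circledcirc -}=\sum_{n\geqslant0}T^{\circledcirc n}\odot s^{-1-n}$, and part (2) yields $(R_s-T)^{-\circledcirc}=\sum_{n\geqslant0}s^{-1-n}\odot T^{n\circledcirc}$.

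Finally, the ``moreover'' clause of Lemma \ref{lem:powass} gives $T^{\circledcirc n}=T^n=T^{n\circledcirc}$. Substituting these into the two series gives \eqref{eq:powextinvRs-T} and \eqref{eq:powlifinvRs-T}. Note that $T^n$ lies in $\mathscr{B}_{\mathcal{RO}}(V)$, so the products $T^n\odot s^{-1-n}$ and $s^{-1-n}\odot T^n$ are defined.

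I see no real obstacle. The only point needing care is that the right-hand sides are defined through $\odot$, which does not depend on the choice of $J$, while the left-hand sides are intrinsic. Hence the identity holds independently of the auxiliary choice of $J$, and convergence in norm is already supplied by Theorem \ref{thm:Rs-T}.
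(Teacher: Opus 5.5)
Your proposal is correct and is essentially the paper's own argument, which the paper states as immediate. Power-associativity, through Lemma \ref{lem:powass}, makes both associator terms $\alpha(s,T)$ and $\beta(s,T)$ vanish and gives $T^{\circledcirc n}=T^n=T^{n\circledcirc}$, so the preceding theorem, applied with $J$ chosen so that $s\in\spc_J$, yields both identities.
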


		\section{$\mathbb C_J$-extendable and $\mathbb C_J$\lpa power associative operators.}
		To describe the slice regularity of the resolvent operator,   we introduce the new notions of  $\mathbb C_J$-extendable and $\mathbb C_J$\lpa  power associative operators in this section.
		
		\subsection{$\mathbb C_J$-extendable power associative operators}
		
		\begin{mydef}\label{def:ext pa}
			Let $T\in \mathscr{B}_{\R}(V)$ and $J\in \mathbb S$.
			$T$ is called \textbf{$\mathbb C_J$-extendable power associative}, if
			\begin{eqnarray}
				T^n(vs)=T^n(v)s,\qquad n=1,2,\dots
			\end{eqnarray}
			for any $v\in \re V$ and any $s\in \mathbb C_J$.
		\end{mydef}
		\begin{lemma}\label{lem:cy epa}
			An operator $T\in \mathscr{B}_{\R}(V)$ is  {$\mathbb C_J$-extendable power associative} if and only if for  {$n\in\mathbb{N}$}, $T^n|_{\mathbb C_J(V)}$ is right $\mathbb C_J$-linear, i.e.,
			$$T^n(zs)=T^n(z)s$$ for any $z\in \mathbb C_J(V)$ and any $s\in \mathbb C_J$.
		\end{lemma}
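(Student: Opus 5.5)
One direction is immediate: if $T^n(zs)=T^n(z)s$ holds for all $z\in\spc_J(V)$ and all $s\in\spc_J$, then taking $z=v\in\re V\subseteq\spc_J(V)$ gives exactly the condition of Definition \ref{def:ext pa}. So the content of Lemma \ref{lem:cy epa} lies in the converse.

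For the converse, fix $n\geqslant 1$. By real linearity of $T^n$ it suffices to check the identity for $z=v+wJ$ with $v,w\in\re V$ and $s=a+bJ$ with $a,b\in\R$. The real scalars $a$ commute with everything, and $T^n$ is real linear, so it is enough to treat $s=J$. We then need
\[
T^n((v+wJ)J)=T^n(v+wJ)J.
\]

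First I compute the left side in the module. Since $w\in\re V$ is associative (Proposition \ref{prop:real_part} and Definition \ref{def:real part}), we have $(wJ)J=w(JJ)=-w$. Hence the left side is
\[
T^n(vJ)-T^n(w)=T^n(v)J-T^n(w),
\]
using the hypothesis with $s=J$ applied to $v$.

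Next I compute the right side. By the hypothesis, $T^n(wJ)=T^n(w)J$. Therefore the right side equals
\[
T^n(v)J+(T^n(w)J)J.
\]
The step needing care is the identity $(T^n(w)J)J=-T^n(w)$, because $T^n(w)$ need not lie in $\re V$. It follows from the alternativity of right associators in a right $\O$-module: $[x,J,J]=-[x,J,J]$, so $[x,J,J]=0$. Consequently $(xJ)J=x(JJ)=-x$ for every $x\in V$.

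Combining the two computations, both sides equal $T^n(v)J-T^n(w)$. This yields right $\spc_J$-linearity of $T^n|_{\spc_J(V)}$ for every $n$, which completes the equivalence. The case $n=0$ is trivial because $T^0=\mathcal I$.

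The only genuine obstacle is the nonassociative rewriting $(xJ)J=-x$ for a general $x\in V$, and alternativity handles it.
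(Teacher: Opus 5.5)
Your proof is correct and takes essentially the same route as the paper: write $z=v+wJ$ with $v,w\in\re V$, use the hypothesis on $\re V$, and compare both sides by direct expansion. The paper's expansion also needs $(T^n(y)J)J=-T^n(y)$ for an element that need not lie in $\re V$, and uses it without comment; your appeal to alternativity of right associators supplies that justification.
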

		\begin{proof}
			We only show the   necessity.
			Let $T$ be {$\mathbb C_J$-extendable power associative}.  For any $z=x+yJ\in \mathbb C_J(V)$ with $x,y\in \re V$, any $s=a+bJ\in \mathbb C_J$ with $a,b\in \R$, and {$n\in\mathbb{N}$},
			we have
			\begin{align*}
				T^n(zs)&=T^n((x+yJ)(a+bJ))\\
				&=aT^n(x)-bT^n(y)+(aT^n(y)+bT^n(x))J\\
				&=(T^n(x)+T^n(y)J)(a+bJ)\\
				&=T^n(z)s.
			\end{align*}
			This proves the lemma.
		\end{proof}

		\begin{rem}

			If $T$ is $\mathbb C_J$-extendable power associative for all $J\in \mathbb S$, then Lemma \ref{lem:cy epa} shows that $B_s(f,z)=0$ for all $s\in \mathbb C_J$ and all $z\in \mathbb C_J(V) $. It follows from Theorem \ref{lem:paralinear-char2} that   $T^n$ is right para-linear. By Definition \ref{def:powerass}, this means $T$ is power associative.
		\end{rem}
		\begin{lemma}\label{lem:CJpower associative}
			Let $T$ be  $\mathbb C_J$-extendable power associative for some $J\in \mathbb S$. Then for all  $s\in \mathbb C_J$,  we have
			\begin{eqnarray}
				R_s T^n|_{\spc_J(V)}&=	 T^nR_s|_{\spc_J(V)},\label{eqpf:extasp}\\
				(R_sT)^n|_{\spc_J(V)}&=R_s^nT^n|_{\spc_J(V)}.\label{eq:RsTn=RsnTn}
			\end{eqnarray}
		\end{lemma}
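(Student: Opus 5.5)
The first identity \eqref{eqpf:extasp} is exactly Lemma \ref{lem:cy epa} read as an operator identity. For $z\in\spc_J(V)$ and $s\in\spc_J$ we have $zs\in\spc_J(V)$. Indeed, writing $z=x+yJ$ with $x,y\in\re V$ and $s=a+bJ$, the associativity of the real part elements gives $zs=(ax-by)+(bx+ay)J\in\spc_J(V)$. Then $R_sT^n(z)=T^n(z)s=T^n(zs)=T^nR_s(z)$ by Lemma \ref{lem:cy epa}, which proves \eqref{eqpf:extasp}. The observation that $R_s$ maps $\spc_J(V)$ into itself will be used repeatedly.

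For \eqref{eq:RsTn=RsnTn} I would argue by induction on $n$; the case $n=0$ is trivial. Before that, I need that $T$ itself maps $\spc_J(V)$ into $\spc_J(V)$, and this is the step I expect to be the main obstacle, since a general real linear $T$ need not preserve the slice. The fallback is to avoid it: I will never need $T(z)\in\spc_J(V)$, only the commutation of \emph{powers} of $T$ with $R_s$ on the slice. I will prove the stronger statement
$$(R_sT)^n z=T^n(z)s^n,\qquad z\in \spc_J(V),$$
and then identify $T^n(z)s^n=R_s^nT^n(z)$. This identification holds because $R_s^n(w)=(\cdots(ws)s\cdots)s=ws^n$ for any $w\in V$, by the alternativity of right associators, as $s$ generates an associative subalgebra of $\O$.

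To prove the displayed identity inductively, I expand $(R_sT)^{n}$ not as $R_sT(R_sT)^{n-1}$ but by pushing the $R_s$ factors to the right through powers of $T$, using \eqref{eqpf:extasp} at each stage. Concretely, $(R_sT)^n z=R_sTR_sT\cdots R_sTz$. I will move the innermost $R_s$ (after $T$) leftwards by rewriting $R_s T^{k}$ applied to elements of the slice. To make this work I will reorganize by showing, by induction on $k$, that
$$(R_sT)^k R_s^{m}|_{\spc_J(V)}=R_s^{k+m}T^k|_{\spc_J(V)}\quad\text{via}\quad T^k R_s^{m}=R_s^mT^k \text{ on } \spc_J(V).$$
The second relation follows from \eqref{eqpf:extasp} and the invariance of $\spc_J(V)$ under $R_s$.

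If the slice-invariance issue really blocks the interleaved form, I would use the fact that $R_s$ is invertible and commutes with $T^k$ on $\spc_J(V)$. This expresses $(R_sT)^n$ through the telescoping $T^n=T^{n-1}\cdot T$ applied to $R_s$-translates of $z$, reducing everything to applications of $T^k$ on slice elements and of $R_s$. If needed, I would finally check carefully whether the hypothesis implicitly forces $T(\spc_J(V))\subseteq\spc_J(V)$ in the setting where the lemma is used.
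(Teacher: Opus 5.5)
Your proof of \eqref{eqpf:extasp} is fine, and so is the observation that $R_s$ maps $\mathbb{C}_J(V)$ into itself with $R_s^k z = zs^k$. The gap is in \eqref{eq:RsTn=RsnTn}. You correctly sense that $T$ need not preserve $\mathbb{C}_J(V)$, but you never write an inductive step that avoids this, and the mechanisms you do sketch require it.

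If you peel off the innermost factor, you get $(R_sT)^k R_s^m z=(R_sT)^{k-1}R_s^{m+1}T(z)$. This is what your $R_s^m$ bookkeeping and the ``telescoping $T^n=T^{n-1}\cdot T$'' point to. The induction hypothesis would then have to be applied at $T(z)$, which in general is not in the slice.

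Your last fallback also fails, because the hypothesis does not force slice invariance. Take $V=\mathbb{O}$, so that $\operatorname{Re}V=\mathbb R$ and $\mathbb C_J(V)=\mathbb C_J$, and let $T=L_{e_1}$. Then $T^n(vs)=ve_1^ns=T^n(v)s$ for $v\in\mathbb R$ and $s\in\mathbb C_{e_4}$, so $T$ is $\mathbb C_{e_4}$-extendable power associative. Yet $T(1)=e_1\notin\mathbb C_{e_4}$. Moreover, the lemma is later applied to operators such as $(R_{s_0}-T)^{-1}$, which typically do not preserve the slice.

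The missing idea is to peel off the outermost factor. This is exactly the expansion $(R_sT)^n=(R_sT)(R_sT)^{n-1}$ that you explicitly set aside. For $z\in\mathbb C_J(V)$, the induction hypothesis together with \eqref{eqpf:extasp} gives $(R_sT)^{n-1}z=T^{n-1}(z)s^{n-1}=T^{n-1}(zs^{n-1})$. Hence
\[
(R_sT)^n z = T\big(T^{n-1}(zs^{n-1})\big)s = T^n(zs^{n-1})s = T^n(zs^{n})=T^n(z)s^n .
\]
This uses only three facts: $T\circ T^{n-1}=T^n$, $zs^{n-1}\in\mathbb C_J(V)$, and Lemma~\ref{lem:cy epa} for $T^{n-1}$ and $T^n$.

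Neither the induction hypothesis nor Lemma~\ref{lem:cy epa} is ever applied at a vector of the form $T(\cdot)$. So no invariance of $\mathbb C_J(V)$ under $T$ is needed, and this is precisely the paper's argument.

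Your auxiliary identity $(R_sT)^kR_s^m=R_s^{k+m}T^k$ on $\mathbb C_J(V)$ is true. It follows in the same way by peeling $R_sT$ off on the left, though the extra parameter $m$ is superfluous.
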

		\begin{proof}
			Identity \eqref{eqpf:extasp} follows from Lemma \ref{lem:cy epa} immediately.
			We prove \eqref{eq:RsTn=RsnTn} by induction. It  clearly holds for $n=1$. Suppose \eqref{eq:RsTn=RsnTn} holds for $n-1\geqslant1$.
			For all $v\in \spc_J (V)$ and $s\in \spc_J$ , we conclude from \eqref{eqpf:extasp} and induction hypothesis that
			\begin{align*}
				(	(R_sT)^n|_{\spc_J(V)})(v)&=(R_sT)	(	(R_sT)^{n-1}|_{\spc_J(V)})(v)\\
				&=(R_sT)	(	R_s^{n-1}T^{n-1}(v))\\
				&=(R_sT)	(	T^{n-1}(vs^{n-1})\\
				&=T^n(vs^{n-1})s\\
				&=T^n(vs^{n}).
			\end{align*}	This proves the lemma. 	
		\end{proof}
		
		We introduce a sequence useful in the sequel, and study some properties.
		\begin{lemma}\label{lem:amn}
			The sequence $\{a_{m,n}\}$  defined by \begin{eqnarray}\label{def:amn}
				a_{m,n}:= \binom{m + n - 1}{m - 1},\qquad m\geqslant 1, n\geqslant 0,
			\end{eqnarray}         satisfies the following properties:
			\begin{enumerate}
				\item For any integer $m>1$,  we have
				\begin{eqnarray}\label{eqdef:amn}
					a_{m,n}=\sum_{k=0}^na_{m-1,k}, \quad n=0,1,2\cdots.
				\end{eqnarray}
				\item 
				For $m=2,3,\dots$, $n=1,2,\dots$, we have $$ a_{m,n}-a_{m,n-1}=a_{m-1,n}.$$
				\item For $n=1,2,\dots$, we have \begin{eqnarray}\label{ineq:amn}
					a_{m,n}\leqslant (1+n)^m.
				\end{eqnarray}
				\item For any real linear operators $A,B\in \mathscr{B}_{\R}(V)$, if $\fsh{A}\fsh{B}<1$, then  for any integer $m>1$, we have
				\begin{eqnarray}\label{eq:sumamnAnBn}
					\sum_{n\geqslant 0}a_{m-1,n}A^nB^{(m-1)+n}=\sum_{n\geqslant 0}a_{m,n}(A^nB^{(m-1)+n}-A^{n+1}B^{m+n}).
				\end{eqnarray}
				All the series converge in norm.
			\end{enumerate}
		\end{lemma}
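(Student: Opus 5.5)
Parts (1)–(3) are elementary identities for binomial coefficients, and (4) is a rearrangement of absolutely convergent series. I will treat them in order, using $a_{m,n}=\binom{m+n-1}{m-1}$, which counts the monomials of degree $n$ in $m$ variables.

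For (1) I use the hockey-stick identity. We have $a_{m-1,k}=\binom{m+k-2}{m-2}$, and
$$\sum_{k=0}^n\binom{m-2+k}{m-2}=\binom{m-1+n}{m-1}=a_{m,n},$$
which can be checked by induction on $n$ via Pascal's rule. Part (2) follows at once by subtracting the instances of (1) at $n$ and $n-1$; equivalently, it is Pascal's rule $\binom{m+n-1}{m-1}-\binom{m+n-2}{m-1}=\binom{m+n-2}{m-2}$.

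For (3) I induct on $m$. For $m=1$ we have $a_{1,n}=1\le 1+n$. Assuming the bound for $m-1$, part (1) gives
$$a_{m,n}=\sum_{k=0}^n a_{m-1,k}\le (n+1)\max_{k\le n}(1+k)^{m-1}=(1+n)^m.$$
Alternatively, one can bound the product directly: $a_{m,n}=\prod_{j=1}^{m-1}\frac{n+j}{j}\le (1+n)^{m-1}$.

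For (4), set $q=\fsh{A}\fsh{B}<1$. Using (3), each series is dominated in norm by a constant (depending on $\fsh{B}$ and $m$) times $\sum_n (1+n)^m q^n<\infty$, so all the series converge absolutely in $\mathscr{B}_{\R}(V)$. I then split the right-hand side as
$$\sum_{n\ge0}a_{m,n}A^nB^{m-1+n}-\sum_{n\ge0}a_{m,n}A^{n+1}B^{m+n},$$
which is legitimate since each piece converges separately. In the second sum I shift the index $n\mapsto n-1$, so it becomes $\sum_{n\ge1}a_{m,n-1}A^nB^{m-1+n}$. Combining the two sums, the right-hand side equals
$$a_{m,0}B^{m-1}+\sum_{n\ge1}(a_{m,n}-a_{m,n-1})A^nB^{m-1+n}.$$
Since $a_{m,0}=1=a_{m-1,0}$ and, by (2), $a_{m,n}-a_{m,n-1}=a_{m-1,n}$ for $n\ge1$, this is exactly the left-hand side.

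The only point needing care is justifying the splitting and reindexing, which is exactly what the absolute-convergence estimate from (3) provides. No commutativity of $A$ and $B$ is used, because every term keeps the fixed ordering $A^n$ followed by $B^{\cdot}$.
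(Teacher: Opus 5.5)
Your proposal is correct and follows essentially the same route as the paper: the hockey-stick identity for (1), (2) as an immediate consequence, induction on $m$ via (1) for (3), and for (4) absolute convergence from the bound in (3) followed by the index shift and (2). Your comparison with $\sum_n (1+n)^m q^n$ plays the role of the paper's root-test estimate, and your alternative product bound $a_{m,n}\leqslant (1+n)^{m-1}$ is a harmless sharpening.
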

		\begin{proof}
			Part (1) follows from the combinatorial identity			
			\[
			\sum_{k=0}^{n} \binom{m +k- 1}{m  - 1} = \binom{m + n}{m}, \quad n \in \mathbb{N}, m>0.			\]
			
			Part (2) follows from part (1) immediately. We prove part (3) by  induction on $m$. Clearly, $a_{1,n}=1\leqslant 1+n$ for all $n$. For $m\geqslant 2$, it follows from \eqref{eqdef:amn} and  the induction hypothesis that
			\begin{align*}
				a_{m,n}&=\sum_{k=0}^na_{m-1,k}\notag\\
				&\leqslant\sum_{k=0}^n(1+n)^{m-1}\notag\\
				&=(1+n)^m.
			\end{align*}
			This proves the inequality \eqref{ineq:amn}.
			
			The inequality \eqref{ineq:amn} enables us to get
			\begin{align*}
				\varlimsup_{n\to \infty}	\sqrt[n]{\fsh{a_{m,n}A^n{B}^{n}}}
				&\leqslant \varlimsup_{n\to \infty}	\sqrt[n] {(1+n)^m}\fsh{A}\fsh{B}\\
				&=\fsh{A}\fsh{B}<1.
			\end{align*}
			This actually shows the convergence of all the series in \eqref{eq:sumamnAnBn}.
			
			Finally, we prove \eqref{eq:sumamnAnBn}. Utilizing part (2) proved above, we obtain
			\begin{align*}
				&\sum_{n\geqslant 0}a_{m,n}(A^nB^{(m-1)+n}-A^{n+1}B^{m+n})\notag\\
				=&\sum_{n\geqslant 0}a_{m,n}A^n{B}^{(m-1)+n}-\sum_{n\geqslant 1}a_{m,n-1}A^{n}{B}^{m+(n-1)}\notag\\
				=&\sum_{n\geqslant 1}(a_{m,n}-a_{m,n-1})A^n{B}^{(m-1)+n}+a_{m,0}{B}^{m-1}\\
				=&\sum_{n\geqslant 1}a_{m-1,n}A^n{B}^{(m-1)+n}+a_{m-1,0}A^0{B}^{m-1}\\
				=&\sum_{{n\geqslant 0}}a_{m-1,n}A^n{B}^{(m-1)+n}.
			\end{align*}
			This completes the proof.
		\end{proof}
		\begin{lemma}\label{lem:Rs-TinvCpow}
			Let $T\in \mathscr{B}_{\mathcal{RO}}(V)$ be a power-associative operator and   $s\in\mathbb C_J\subseteq  \O$ such that $|s|>\fsh{T}$. Then $(R_s-T)^{-1}$ is $\mathbb C_J$-extendable power associative.
			
		\end{lemma}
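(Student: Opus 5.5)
The plan is to write $(R_s-T)^{-m}$ explicitly on the slice $\spc_J(V)$ as a power series in $T$ with binomial coefficients $a_{m,n}$. Right $\spc_J$-linearity of each term will then give the claim. By Lemma \ref{lem:cy epa}, it suffices to show that for every $m\geqslant 1$ the operator $(R_s-T)^{-m}|_{\spc_J(V)}$ is right $\spc_J$-linear, i.e. $(R_s-T)^{-m}(zq)=(R_s-T)^{-m}(z)q$ for $z\in\spc_J(V)$ and $q\in\spc_J$.

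\textbf{Step 1 (slice linearity of powers of $T$).} Since $T$ is power-associative, every $T^n$ lies in $\mathscr{B}_{\mathcal{RO}}(V)$. Theorem \ref{lem:paralinear-char2}(4) then gives that $T^n|_{\spc_J(V)}$ is right $\spc_J$-linear: $T^n(zp)=T^n(z)p$ for $z\in \spc_J(V)$ and $p\in\spc_J$. In other words, $T$ itself is $\spc_J$-extendable power associative, so Lemma \ref{lem:CJpower associative} is available. Note that $T^n$ need not map $\spc_J(V)$ into itself. I will therefore always move the scalar inside the argument, writing $T^n(z)p=T^n(zp)$, so that every argument of $T^n$ stays in $\spc_J(V)$.

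\textbf{Step 2 (candidate formula and convergence).} For $m\geqslant 1$ and $z\in\spc_J(V)$, set
\[
W_m(z):=\sum_{n\geqslant 0}a_{m,n}\,T^n\!\left(z s^{-m-n}\right),
\]
with $a_{m,n}$ as in Lemma \ref{lem:amn}, and put $W_0(z):=z$. Since $\fsh{T^n(zs^{-m-n})}\leqslant \fsh{T}^n\abs{s}^{-m-n}\fsh{z}$ and $a_{m,n}\leqslant(1+n)^m$ by \eqref{ineq:amn}, the root test shows the series converges in norm for $\abs{s}>\fsh{T}$. For $m=1$ this is exactly the series of Corollary \ref{cor:reolv op ser} restricted to $\spc_J(V)$, which uses \eqref{eqpf:Tncirc}.

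\textbf{Step 3 (telescoping).} Apply $R_s-T$ termwise. Using Step 1, $T^n(zs^{-m-n})s=T^n(zs^{-m-n+1})$, so
\[
(R_s-T)W_m(z)=\sum_{n\geqslant 0}a_{m,n}\Big(T^n(zs^{-(m-1)-n})-T^{n+1}(zs^{-m-n})\Big).
\]
Reindex the second sum and use $a_{m,n}-a_{m,n-1}=a_{m-1,n}$ together with $a_{m,0}=1=a_{m-1,0}$ (Lemma \ref{lem:amn}(2)). This is the same manipulation as in \eqref{eq:sumamnAnBn}, and it yields $(R_s-T)W_m(z)=W_{m-1}(z)$; for $m=1$ one gets $z$. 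Since $R_s-T$ is invertible by Lemma \ref{lem:Rs-Tinv}, we get $W_m=(R_s-T)^{-1}W_{m-1}$ on $\spc_J(V)$. Induction on $m$ then gives $(R_s-T)^{-m}|_{\spc_J(V)}=W_m$.

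\textbf{Step 4 (conclusion).} For $q\in\spc_J$, commutativity of $\spc_J$ and Step 1 give
\[
T^n(zq\,s^{-m-n})=T^n\big((zs^{-m-n})q\big)=T^n(zs^{-m-n})q.
\]
Summing over $n$, this gives $W_m(zq)=W_m(z)q$, and hence $(R_s-T)^{-m}$ is right $\spc_J$-linear on $\spc_J(V)$ for every $m$. By Lemma \ref{lem:cy epa}, $(R_s-T)^{-1}$ is $\spc_J$-extendable power associative.

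The main point requiring care is Step 3. The telescoping only works because every scalar can be pushed inside the argument of $T^n$, which needs $zs^{-k}\in\spc_J(V)$ and the slice linearity from Step 1. One must also justify the termwise application of the bounded operator $R_s-T$ and the reindexing, both of which follow from absolute norm convergence.
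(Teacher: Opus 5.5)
Your proposal is correct and follows essentially the same route as the paper: write $(R_s-T)^{-m}|_{\spc_J(V)}$ as $\sum_{n\geqslant 0}a_{m,n}T^nR_s^{-m-n}$, verify it by induction using the telescoping identity $a_{m,n}-a_{m,n-1}=a_{m-1,n}$, and read off right $\spc_J$-linearity termwise before invoking Lemma \ref{lem:cy epa}. The only cosmetic difference is in the base case $m=1$: you obtain it directly by telescoping, while the paper cites \eqref{eq:extinvRs-TcJv}, which rests on the vanishing of $\alpha(s,T)$.
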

		\begin{proof}
			We first claim for all $z\in \mathbb C_J(V)$ and $s\in\mathbb C_J$
			\begin{eqnarray}\label{eqpf:claimRs-Tm}
				(R_s-T)^m\sum_{n\geqslant 0}\binom{m + n - 1}{m - 1}T^n{R_s}^{-m-n}(z)=z
			\end{eqnarray}
			for $m=1,2,\cdots$.

			Let $z\in \mathbb C_J(V)$. Since $T$ is power-associative, by Theorem \ref{lem:paralinear-char2}, we obtain   that $B_p(T^n,z)=0$ for all $p\in \mathbb C_J$.
			It follows that
			$$T^{\circledcirc n}\odot s^{-1-n}(z)=T^{\circledcirc n}(s^{-1-n}z)+B_{s^{-1-n}}(T^{\circledcirc n},z)=T^{ n}(zs^{-1-n})=T^{ n}R_{s^{-1-n}}z.$$
			Thus \eqref{eq:extinvRs-TcJv} implies the case $m=1$.
			For $m>1$, suppose that the statement holds for $m-1 $, then we prove it for $m$.

			For simplicity, as before, we set $$a_{m,n}=\binom{m + n - 1}{m - 1}$$.
			
			Since $\fsh{T}\fsh{{R_s}^{-1}}<1$, it follows from Lemma \ref{lem:amn} that the series $\sum_{n\geqslant 0}a_{m,n}T^n{R_s}^{-m-n}(z)$ is convergent.
			We get from   \eqref{eqpf:extasp} that
			\begin{align}\label{eqpf:Rs-Tm}
				&(R_s-T)^m\sum_{n\geqslant 0}a_{m,n}T^n{R_s}^{-m-n}(z)\notag\\
				=&(R_s-T)^{m-1}\sum_{n\geqslant 0}a_{m,n}(R_s-T)T^n{R_s}^{-m-n}(z)\notag\\
				=&(R_s-T)^{m-1}\sum_{n\geqslant 0}a_{m,n}{R_s}^{-m-n+1}T^n(z)-a_{m,n}T^{n+1}{R_s}^{-m-n}(z)\notag\\
				=&(R_s-T)^{m-1}\sum_{n\geqslant 0}a_{m,n}T^n{R_{s^{-1}}}^{(m-1)+n}(z)-a_{m,n}T^{n+1}{R_{s^{-1}}}^{m+n}(z)\notag\\
				=&(R_s-T)^{m-1}\sum_{n\geqslant 0}a_{m,n}\big(T^n{R_{s^{-1}}}^{(m-1)+n}-T^{n+1}{R_{s^{-1}}}^{m+n}\big)(z)\notag\\
				=&(R_s-T)^{m-1}\sum_{n\geqslant 0}a_{m-1,n}T^n{R_s}^{-(m-1)-n}(z),
			\end{align}
			where in the last line we used \eqref{eq:sumamnAnBn}.
			Combing the induction hypothesis with \eqref{eqpf:Rs-Tm}, we conclude  that
			$$(R_s-T)^m\sum_{n\geqslant 0}a_{m,n}T^n{R_s}^{-m-n}(z)=z,$$
			which proves  \eqref{eqpf:claimRs-Tm}.

			By identity \eqref{eqpf:claimRs-Tm}, we obtain
			\begin{align}\label{eq:rs-tn}
				(R_s-T)^{-m}|_{ \mathbb C_J(V)}=\sum_{n\geqslant 0}\binom{m + n - 1}{m - 1}T^n{R_s}^{-m-n}|_{ \mathbb C_J(V)}.
			\end{align}
			
			Note the operators  $T^n$ and ${R_s}$ are both right ${ \mathbb C_J}$-linear when restricted to ${ \mathbb C_J(V)}$, hence $(R_s-T)^{-m}|_{ \mathbb C_J(V)}$ is also right ${ \mathbb C_J}$-linear. In view of Lemma \ref{lem:cy epa}, we get $(R_s-T)^{-1}$ is $\mathbb C_J$-extendable power associative.
		\end{proof}
		
		\begin{rem}
			The quaternionic analog of identity \eqref{eq:rs-tn} has already been obtained in   \cite{huoxu2023power}. 	
		\end{rem}
		\subsection{$\mathbb C_J$\lpa power associative operators.}
		In this subsection
		we introduce and study  the notion of $\mathbb C_J$\lpa power associative operator.
		\begin{mydef}\label{def:lpa}
			Let $T\in \mathscr{B}_{\R}(V)$ and $J\in \mathbb S$.
			$T$ is called   \textbf{$\mathbb C_J$\lpa power associative}, if
			\begin{eqnarray}
				\re (T^n(vs))=\re (T^n(v)s), \qquad n=1,2,\dots
			\end{eqnarray}
			for all $v\in V$ and all $s\in \mathbb C_J$.
		\end{mydef}
		\begin{rem}
			If $T$ is $\mathbb C_J$\lpa power associative for all $J\in \mathbb S$, then it follows from definition of right para-linearity that   $T^n$ is right  para-linear and hence  $T$ is power associative.
		\end{rem}
		Recall the  definition \eqref{eqdef:piJ} of $\pi_J$. We prove the following:
		\begin{lemma}\label{lem:cy lpa}
			An operator $T\in \mathscr{B}_{\R}(V)$ is $\mathbb C_J$\lpa power associative if and only if for any  $s\in \mathbb C_J$ and any  $n\in \mathbb N$, it holds
			\begin{eqnarray}\label{eqpf:lieftdefeq}
				\pi_J\, R_sT^n&=	\pi_J\, T^n{R_s}.
			\end{eqnarray}
		\end{lemma}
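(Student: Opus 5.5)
The plan is to show that the condition $\re(T^n(vs))=\re(T^n(v)s)$ for all $v\in V$, $s\in\spc_J$ is equivalent to the equality of the two real linear maps $\pi_J R_s T^n$ and $\pi_J T^n R_s$. Since $\pi_J x=\re x+J\re(\overline{J}x)$, the identity \eqref{eqpf:lieftdefeq} applied to $v$ reads
\[
\re(T^n(v)s)+J\re\big(\overline{J}(T^n(v)s)\big)=\re(T^n(vs))+J\re\big(\overline{J}\,T^n(vs)\big).
\]
Because $\spc_J(V)=\re V\oplus J\re V$ is a direct sum, this is equivalent to two real-part identities: one without $\overline{J}$ and one with the factor $\overline{J}$ on the left.

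\emph{Sufficiency.} Assume \eqref{eqpf:lieftdefeq} and apply $\re$ to both sides. Since $\re\,\pi_J=\re$ (the term $J\re(\overline{J}x)$ has vanishing real part, as $\re(Jm)=0$ for $m\in\re V$), we obtain $\re(T^n(v)s)=\re(T^n(vs))$. This is exactly Definition \ref{def:lpa}.

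\emph{Necessity.} Assume $\re(T^n(vs))=\re(T^n(v)s)$ for all $v\in V$ and $s\in\spc_J$. The $\re$-component of \eqref{eqpf:lieftdefeq} is then immediate. For the $J$-component we must show
\[
\re\big(\overline{J}(T^n(v)s)\big)=\re\big(\overline{J}\,T^n(vs)\big).
\]
We proceed in three steps.
\begin{enumerate}
\item Since $\re[p,m]=0$ and $\re[p,q,m]=0$ for all $p,q$ (Proposition \ref{prop:real_part}(2)), we have
\[
\re(\overline{J}m)=\re(m\overline{J})\quad\text{and}\quad \re\big(\overline{J}(ms)\big)=\re\big((ms)\overline{J}\big)=\re\big(m(s\overline{J})\big).
\]
This uses the bimodule associator identities, which transfer freely under $\re$.
\item Set $m=T^n(v)$. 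Then
\[
\re\big(\overline{J}(T^n(v)s)\big)=\re\big(T^n(v)(s\overline{J})\big)=\re\big(T^n(v(s\overline{J}))\big),
\]
where the last equality applies the hypothesis with $s\overline{J}\in\spc_J$ in place of $s$.
\item Similarly,
\[
\re\big(\overline{J}\,T^n(vs)\big)=\re\big(T^n(vs)\overline{J}\big)=\re\big(T^n((vs)\overline{J})\big),
\]
again by the hypothesis, now with the vector $vs$ and the scalar $\overline{J}\in\spc_J$.
\end{enumerate}
It remains to compare $v(s\overline{J})$ and $(vs)\overline{J}$. Their difference is the right associator $[v,s,\overline{J}]$ with $s,\overline{J}\in\spc_J$. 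This vanishes: $s=a+bJ$, so by real linearity and alternativity $[v,s,\overline{J}]=-b[v,J,J]=0$. Hence the two expressions in steps 2 and 3 coincide, which gives the $J$-component. Since $\pi_J$ is determined by these two components, \eqref{eqpf:lieftdefeq} follows.

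The main point needing care is the first step: justifying that $\re(\overline{J}(ms))=\re(m(s\overline{J}))$ for arbitrary $m\in V$, not just $m\in\spc_J(V)$. We would verify it via the decomposition $m=\sum e_i m_i$ with $m_i\in\re V$, or via Proposition \ref{prop:real_part}(2) applied to the commutator $[\overline{J},ms]$ and the associator $[m,s,\overline{J}]$. Everything else is bookkeeping with the direct sum $\re V\oplus J\re V$.
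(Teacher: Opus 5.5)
Your proof is correct and is essentially the paper's argument. The paper writes $\pi_J=\re_V+R_J\re_V R_{\overline{J}}$ and, exactly as you do, applies the hypothesis once with the scalar $s\overline{J}\in\mathbb{C}_J$ and once with the vector $vs$ and scalar $\overline{J}$, using $R_{\overline{J}}R_s=R_{s\overline{J}}$ (i.e.\ $[v,s,\overline{J}]=0$). You also prove the sufficiency direction via $\re\,\pi_J=\re$, which the paper omits.
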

		\begin{proof}
			We only show the   necessity.
			Let $T$ be  a {$\mathbb C_J$\lpa power associative} operator.  By definition we have
			\begin{eqnarray}\label{eqpf:reRsTn}
				\operatorname{Re}_VR_sT^n=	\operatorname{Re}_V T^n{R_s}.
			\end{eqnarray}
			Here we use the notation  $	\operatorname{Re}_V$ to emphasize that this is a map  defined on $V$, which can  be \textbf{composed} with other operators in $\mathscr{B}_{\R}(V)$.
			Recall that  $$\pi_J=	\operatorname{Re}_V+R_J	\operatorname{Re}_V R_{\overline{J}}.$$
			Using the equalities $R_sR_J=R_{Js}=R_{sJ}=R_JR_s$ for $s\in\mathbb{C}_J$  and \eqref{eqpf:reRsTn} we deduce that
			\begin{align*}
				\pi_J\, R_sT^n&=(\operatorname{Re}_V+R_J\operatorname{Re}_V R_{\overline{J}})(R_sT^n) \\
				&=\operatorname{Re}_VR_sT^n+R_J\operatorname{Re}_V R_{s\overline{J}}T^n\\
				&=\operatorname{Re}_VT^nR_s+R_J\operatorname{Re}_V T^nR_{s\overline{J}}\\
				&=\operatorname{Re}_VT^nR_s+R_J\operatorname{Re}_V T^nR_{\overline{J}}R_s\\
				&=\operatorname{Re}_VT^nR_s+R_J\operatorname{Re}_V R_{\overline{J}}T^nR_s\\
				&=\pi_J\, T^n{R_s}.
			\end{align*}
			This proves the lemma.
		\end{proof}

		\begin{lemma}\label{lem:CJliftpower associative}
			Let $T$ be  $\mathbb C_J$\lpa power associative for some $J\in \mathbb S$. Then for any  $s\in \mathbb C_J$ and any integer $n$,  we have
			\begin{eqnarray}\label{eq:liftRsTn=RsnTn}
				\pi_J\, (R_sT)^n&=\pi_J\, (R_s^nT^n).
			\end{eqnarray}
		\end{lemma}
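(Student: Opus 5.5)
We argue by induction on $n\geqslant 1$, following the pattern of Lemma \ref{lem:CJpower associative} but with $\pi_J$ applied on the left throughout. The identity to be proved is one between operators in $\mathscr{B}_{\R}(V)$, where $\pi_J$ is composed on the left. The case $n=1$ is trivial, and $n=0$ gives $\mathcal I=\mathcal I$. (For negative $n$, if that reading is intended, one replaces $R_s$ by $R_{s^{-1}}$ and $T$ by $T^{-1}$, assuming invertibility. The main content is $n\geqslant 1$.)

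The basic tool is Lemma \ref{lem:cy lpa}, namely $\pi_J R_s T^k=\pi_J T^k R_s$ for all $s\in\spc_J$ and all $k$. We also use that $\pi_J$ is $\spc_J$-linear by \eqref{eq:piJ}, that is, $\pi_J R_s=R_s\pi_J$. A second fact is needed: $\pi_J$ kills anything that does not project to $\spc_J(V)$. To handle this, we write $\pi_J=\operatorname{Re}_V+R_J\operatorname{Re}_V R_{\overline J}$ and reduce the claim to the real-part statement
\[
\operatorname{Re}_V (R_sT)^n=\operatorname{Re}_V R_s^nT^n .
\]
This reduction works because $R_{\overline J}$ commutes with $R_s$ and with $R_{\overline J}R_s^n$, so applying the real-part identity with $s$ and with $s\overline J$ gives the $\pi_J$ version.

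For the induction step we write $(R_sT)^n=(R_sT)(R_sT)^{n-1}$. The obstacle is that the induction hypothesis only controls $\pi_J(R_sT)^{n-1}$, whereas here $(R_sT)^{n-1}$ sits to the right of further operators. It is therefore more natural to peel factors off the right:
\[
(R_sT)^n=(R_sT)^{n-1}R_sT .
\]
The plan is to prove the stronger statement
\[
\pi_J (R_sT)^n=\pi_J T^n R_s^n \quad\text{for all } s\in\spc_J,
\]
which agrees with $\pi_J R_s^nT^n$ by Lemma \ref{lem:cy lpa} applied with $s^n\in\spc_J$.

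The induction is carried out in the following order.
\begin{enumerate}
\item Write $\pi_J(R_sT)^n=\pi_J R_s\,T(R_sT)^{n-1}$.
\item Commute the leftmost $R_s$ past $\pi_J$, using that $\pi_J$ commutes with $R_s$.
\item Try to move the remaining $R_s$ factors to the right through the powers of $T$ using Lemma \ref{lem:cy lpa}.
\end{enumerate}
The hard point is step 3. Lemma \ref{lem:cy lpa} lets us swap $R_s$ only with a full power $T^k$ directly under $\pi_J$, never with a product $T R_s T$.

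To overcome this, we plan to expand $(R_sT)^n$ as a word $R_sTR_sT\cdots R_sT$ and prove by a secondary induction the following claim. For any word $W=R_{s_1}T^{k_1}R_{s_2}T^{k_2}\cdots$ with all $s_i\in\spc_J$, one has
\[
\pi_J W=\pi_J\, T^{\sum k_i}R_{\prod s_i}.
\]
The key sub-step merges the first two blocks: $\pi_J R_{s_1}T^{k_1}R_{s_2}T^{k_2}\cdots$. We first use Lemma \ref{lem:cy lpa} to move $R_{s_1}$ inside, obtaining $\pi_J T^{k_1}R_{s_1}R_{s_2}\cdots$, and then note that $R_{s_1}R_{s_2}=R_{s_2s_1}$ since $\spc_J$ is commutative. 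After that, however, $T^{k_1}$ is followed by $R_{s_1s_2}T^{k_2}$, and merging $T^{k_1}$ with $T^{k_2}$ still requires commuting an $R$ past $T^{k_1}$ \emph{without} $\pi_J$ directly in front of it. We expect this to be the main obstacle.

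Our first attempt is to bypass it via the real-part form of the definition, $\re T^k(vs)=\re (T^k(v)s)$ for \emph{all} $v\in V$, and to apply it with $v$ replaced by intermediate vectors. The trouble is that $\re$ sits outside only the outermost operator. We would then try to exploit the fact that $\pi_J$ is a projection with complementary part $(\mathcal I-\pi_J)$, and check whether the cross terms in $\pi_J T^{k_1}(\mathcal I-\pi_J)\cdots$ vanish. If this fails, we fall back on the special structure of $(R_sT)^n$, in which every $R$ carries the same $s$: we pair $R_s$ with $R_{\overline{s}}$ using $R_sR_{\overline s}=|s|^2$, and argue via the quadratic $s^2-2\re(s)s+|s|^2=0$ to reduce to the span of $\mathcal I$ and $R_J$.
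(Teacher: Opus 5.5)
There is a genuine gap: the proposal never completes the induction step. You correctly note that it is natural to peel off the rightmost factor, but you then switch to peeling off the left. In your word-merging claim you call ``the main obstacle'' a step that is in fact immediate, and the fallbacks you list are neither needed nor carried out (cross terms with $\mathcal I-\pi_J$, pairing $R_s$ with $R_{\overline s}$, the quadratic relation).

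The idea you are missing is that Lemma \ref{lem:cy lpa} is an identity between operators in $\mathscr{B}_{\R}(V)$ with $\pi_J$ on the far left. It can therefore be composed on the right with any operator, and it can be read in either direction. In your merging step, after
\[
\pi_J R_{s_1}T^{k_1}R_{s_2}T^{k_2}\cdots=\pi_J T^{k_1}R_{s_2s_1}T^{k_2}\cdots,
\]
the projection $\pi_J$ \emph{is} directly in front of $T^{k_1}R_{s_2s_1}$. Lemma \ref{lem:cy lpa}, read from right to left, then gives $\pi_J R_{s_2s_1}T^{k_1+k_2}\cdots$, and the induction on the number of blocks closes. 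You never need to commute an $R$ past a product such as $TR_sT$.

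More directly, as in the paper, compose the induction hypothesis on the right with $R_sT$:
\[
\pi_J(R_sT)^n=\pi_J R_s^{n-1}T^{n-1}R_sT=\pi_J T^{n-1}R_s^{n}T=\pi_J R_s^nT^{n-1}T=\pi_J R_s^nT^n .
\]
The second and third equalities are Lemma \ref{lem:cy lpa} applied with $s^{n-1}$ and $s^n$, both in $\mathbb C_J$. Here $R_s^k=R_{s^k}$ holds because $[v,p,q]=0$ for $p,q\in\mathbb C_J$.

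Two side remarks are also incorrect. First, the reduction to $\operatorname{Re}_V(R_sT)^n=\operatorname{Re}_V R_s^nT^n$ does not work as stated. Writing $\pi_J=\operatorname{Re}_V+R_J\operatorname{Re}_VR_{\overline J}$, the second term involves
\[
\operatorname{Re}_V R_{\overline J}(R_sT)^n=\operatorname{Re}_V R_{s\overline J}T(R_sT)^{n-1},
\]
which for $n\geqslant 2$ is not of the form $\operatorname{Re}_V(R_tT)^n$. So ``applying the real-part identity with $s\overline J$'' does not produce it. The detour is unnecessary anyway, since Lemma \ref{lem:cy lpa} is already formulated with $\pi_J$.

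Second, your aside on negative $n$ would require $\pi_J R_sT^{-k}=\pi_J T^{-k}R_s$. This does not follow from $\mathbb C_J$-liftable power associativity, which only constrains positive powers of $T$. The paper's induction likewise only covers $n\geqslant 1$.
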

		\begin{proof}
			We prove \eqref{eq:liftRsTn=RsnTn} by induction. It  clearly holds for $n=1$. Suppose \eqref{eq:liftRsTn=RsnTn} holds for $n-1\geqslant1$.
			We conclude from \eqref{eqpf:lieftdefeq} and the induction hypothesis that
			\begin{align*}
				\pi_J\, (R_sT)^n&=	\pi_J\,(R_sT)^{n-1}		(R_sT)\\
				&=	\pi_J\,R_s^{n-1}T^{n-1}		R_sT\\
				&=	\pi_J\,T^{n-1}R_s^{n-1}		R_sT\\
				&=	\pi_J\,T^{n-1}R_s^{n}	T\\
				&=	\pi_J\,R_s^{n}T^{n-1}	T\\
				&=	\pi_J\,R_s^{n}T^{n},
			\end{align*}	which proves the assertion.
			
		\end{proof}
		
		\begin{lemma}\label{lem:liftRs-TinvCpow}
			Let $T\in \mathscr{B}_{\mathcal{RO}}(V)$ be a power-associative operator and   $s\in\mathbb C_J\subseteq  \O$ such that $|s|>\fsh{T}$. Then $(R_s-T)^{-1}$ is $\mathbb C_J$\lpa power associative.
			
		\end{lemma}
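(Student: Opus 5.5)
The plan is to verify the criterion of Lemma \ref{lem:cy lpa} for the operator $S^{-1}$, where $S:=R_s-T$. That is, for every integer $m\geqslant 1$ and every $t\in\mathbb C_J$ we want
\[
\pi_J\, R_t S^{-m}=\pi_J\, S^{-m}R_t .
\]
As in the proof of Lemma \ref{lem:Rs-TinvCpow}, this will follow from an explicit series for $S^{-m}$. Here it only needs to hold after applying $\pi_J$, and the powers of $R_s$ must stand to the \emph{left} of the powers of $T$.

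\textbf{Step 1: $T$ itself is $\mathbb C_J$-liftable power associative.} Since $T$ is power-associative, each $T^n$ is right para-linear. For $p\in\spc_J$ and $x\in V$, Lemma \ref{lem:piJBpfx=0} gives $\pi_J B_p(T^n,x)=0$, i.e. $\pi_J(T^n(x)p)=\pi_J T^n(xp)$. In particular the real parts agree, so $T$ satisfies Definition \ref{def:lpa}. By Lemma \ref{lem:cy lpa} we then have $\pi_J R_pT^n=\pi_J T^nR_p$ for all $p\in\spc_J$ and all $n$. We also record that $\pi_J$ commutes with $R_p$ for $p\in\spc_J$, by \eqref{eq:piJ}. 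Consequently $\pi_J R_p^{k}T^{n}R_q=\pi_J R_q R_p^{k}T^n$ for all $p,q\in\spc_J$.

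\textbf{Step 2: a projected series for $T^kS^{-1}$.} For each $k\geqslant 0$ we claim
\[
\pi_J T^kS^{-1}=\pi_J\sum_{n\geqslant 0}R_s^{-1-n}T^{n+k}.
\]
The series converges in norm because $\fsh{R_s^{-1}}\fsh{T}<1$. To prove the claim, compose the right-hand side with $S$ on the right and use Step 1 to move $R_s$ past the $T$-powers inside $\pi_J$:
\[
\pi_J R_s^{-1-n}T^{n+k}R_s=\pi_J R_s^{-n}T^{n+k}.
\]
The resulting sum telescopes, $\sum_{n\geqslant0}\pi_J\big(R_s^{-n}T^{n+k}-R_s^{-1-n}T^{n+k+1}\big)=\pi_J T^k$. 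Composing on the right with $S^{-1}$, which exists by Lemma \ref{lem:Rs-Tinv}, gives the claim. Note that right composition preserves identities of the form $\pi_J X=\pi_J Y$.

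\textbf{Step 3: induction on $m$.} We show
\[
\pi_J S^{-m}=\pi_J\sum_{n\geqslant0}a_{m,n}R_s^{-m-n}T^n,
\]
with $a_{m,n}$ as in Lemma \ref{lem:amn}. The case $m=1$ is Step 2 with $k=0$. For the inductive step, write $\pi_J S^{-m}=\big(\pi_J S^{-(m-1)}\big)S^{-1}$ and insert the hypothesis. For each term, pull $R_s^{-(m-1)-k}$ out in front of $\pi_J$ (allowed since $\pi_J$ commutes with $R_s$), then apply Step 2 to $\pi_J T^kS^{-1}$. This gives the double series
\[
\sum_{k\geqslant 0}\sum_{n\geqslant 0}a_{m-1,k}\,\pi_J R_s^{-m-k-n}T^{n+k}.
\]
Regroup it by $N=n+k$ and use Lemma \ref{lem:amn}(1), $a_{m,N}=\sum_{k\leqslant N}a_{m-1,k}$. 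The rearrangement is justified by absolute convergence, using the bound \eqref{ineq:amn}.

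\textbf{Step 4: conclusion.} By Step 3 and the commutation rule recorded at the end of Step 1,
\[
\pi_J R_tS^{-m}=R_t\,\pi_J S^{-m}=\sum_n a_{m,n}R_tR_s^{-m-n}\pi_J T^n=\sum_n a_{m,n}R_s^{-m-n}\pi_J T^nR_t=\pi_J S^{-m}R_t .
\]
Here we used that $R_t$ and $R_s$ commute for $t,s\in\spc_J$. Lemma \ref{lem:cy lpa} then finishes the proof.

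I expect the main obstacle to be the bookkeeping in Step 3. Every identity holds only after $\pi_J$ is applied on the left, so at each step I must check that $\pi_J$ can legitimately be moved past the $R_s$-powers. This is why the series is arranged with $R_s$ on the left and $T$ on the right.
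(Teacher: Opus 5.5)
Your proposal is correct and follows essentially the paper's route: expand $\pi_J(R_s-T)^{-m}$ as the series $\pi_J\sum_n a_{m,n}R_s^{-m-n}T^n$ by induction on $m$ (the paper writes $T^nR_s^{-m-n}$, which agrees after $\pi_J$), move $R_\lambda$ through using $\pi_J R_\lambda=R_\lambda\pi_J$ and the $\mathbb C_J$-liftability of $T$ itself coming from power-associativity, and conclude with Lemma \ref{lem:cy lpa}. The remaining differences are bookkeeping. You prove the base case by direct telescoping instead of citing \eqref{eq:piJlifinvRs-T}, and you run the induction as a regrouped Cauchy product via Lemma \ref{lem:amn}(1) rather than by checking the candidate series against $(R_s-T)^m$ via \eqref{eq:sumamnAnBn}. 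You also state all identities as operator identities on $V$, which is what the induction (applied to $(R_s-T)^{m-1}v$) and the definition of liftability actually need; the paper writes its claim only for $v\in\mathbb C_J(V)$.
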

		\begin{proof}
			We first claim for all ${v}\in \mathbb C_J(V)$,
			\begin{eqnarray}\label{eqpf:lifclaimRs-Tm}
				\pi_J	\sum_{n\geqslant 0}\binom{m + n - 1}{m - 1}T^n{R_s}^{-m-n}(v)=\pi_J\,(R_s-T)^{-m}(v)
			\end{eqnarray}
			for $m=1,2,\cdots$, where the numbers $\binom{m + n - 1}{m - 1}$ shall be denoted by $a_{m,n}$  for simplicity.

			Note that case $m=1$ follows from  \eqref{eq:piJlifinvRs-T}.
			Suppose it holds for case $m-1 $ ($m>1$), we prove the case $m$.
			Since $\fsh{T}\fsh{{R_s}^{-1}}<1$, it follows from Lemma \ref{lem:amn}  the convergence of the series $	\sum_{n\geqslant 0}a_{m,n}T^n{R_s}^{-m-n}(x)$ for any $x\in V$.
			
			Moreover \eqref{eqpf:lieftdefeq} gives that $$\pi_J T^n{R_s}^{-m-n}T=\pi_J {R_s}^{-m-n}T^nT=\pi_J {R_s}^{-m-n}T^{n+1}=\pi_J T^{n+1}{R_s}^{-m-n}.$$
			Combining this and \eqref{eq:sumamnAnBn} we get
			\begin{align*}\label{eqpf:lifRs-Tm}
				&	\pi_J	\sum_{n\geqslant 0}a_{m,n}T^n{R_s}^{-m-n}(R_s-T)^m(v)\notag\\
				=&	\pi_J	\sum_{n\geqslant 0}a_{m,n}T^n{R_s}^{-m-n}(R_s-T)(R_s-T)^{m-1}(v)\notag\\
				=&	\pi_J	\sum_{n\geqslant 0}a_{m,n}\Big(T^n{R_s}^{-m-n+1}-T^{n+1}{R_s}^{-m-n}\Big)\Big((R_s-T)^{m-1}(v)\Big)\notag\\
				=&	\pi_J	\sum_{n\geqslant 0}a_{m-1,n}T^n{R_s}^{-(m-1)-n}\Big((R_s-T)^{m-1}(v)\Big)\notag\\
				=&\pi_J\, v.
			\end{align*}
			The last line used  the induction hypothesis.
			This implies   \eqref{eqpf:lifclaimRs-Tm}.

			Combining  \eqref{eqpf:lifclaimRs-Tm} and \eqref{eqpf:lieftdefeq}, we obtain
			\begin{align}
				\pi_J\,	(R_s-T)^{-m}(v\lambda)=\pi_J\,(R_s-T)^{-m}(v)\lambda
			\end{align}
			for all $\lambda\in \spc_J$.
			In view of Lemma \ref{lem:cy lpa}, we get $(R_s-T)^{-1}$ is $\mathbb C_J$\lpa power associative.
		\end{proof}

		\section{Octonionic spectra}
		As we shall explain in this section, for a right octonionic para-linear operator, two distinct types of octonionic spectra can be introduced. These spectra correspond to the calculus of left  and right slice regular functions, respectively.
		\subsection{Octonionic pull-back spectrum}
		\begin{mydef}\label{def:pullback spec}
			Let $T\in \mathscr{B}_{\mathcal{RO}}(V)$ be bounded right para-linear operator. We define the \textbf{pull-back resolvent set} of $T$ as	
			$$\rho^*(T):=\bigcup_{J\in \mathbb S}\rho_J^*(T),$$
			where the \textbf{pull-back slice-resolvent set} of $T$ for some $J\in \mathbb S$ is defined as
			$$\rho_J^*(T):=\{s\in \mathbb C_J: R_s-T \text{ is invertible in } \mathscr{B}_{\mathbb{R}}(V) \text{ and } (R_s-T)^{-1} \text{ is } \mathbb C_J\text{-extendable power associative}    \}.$$
			The \textbf{pull-back spectrum} of $T$ is defined as	$$\sigma^*(T):=\O\setminus \rho^*(T).$$
		\end{mydef}
		\begin{rem}\label{rem:pull-back spec}
			\begin{enumerate}
				\item  Suppose that $T$ is \textbf{octonionic linear}, then  $V$ can be viewed as a complex linear space, denoted by $V_J$, induced by the complex structure $R_J$, where $J\in \mathbb S$, i.e.,   $$\sqrt{-1}v:=vJ$$ for any $v\in V$.
				Since $T$ is $\O$-linear, $$T:V_J\to V_J$$ can be viewed as a $\spc_J$-linear operator. This feature \textbf{fails for general para-linear operators.}
				Noticing that for any $s\in \spc_J$,  $R_s:V_J\to V_J$  is a $\spc_J$-linear operator, if $R_s-T$ is invertible in $\mathscr{B}_{\R}(V)$, then also $(R_s-T)^{-1}:V\to V$ is $\spc_J$-linear and hence  the  $\mathbb C_J$-extendable power associativity of $(R_s-T)^{-1}$  automatically  holds.
				Indeed, suppose $s\in \spc_J$ and  $R_s-T$ is invertible in $\mathscr{B}_{\mathbb{R}}(V)$. For any $\lambda\in \spc_J$ and any $v\in V$, since
				$$(R_s-T)((R_s-T)^{-1}(v)\lambda)=(R_s-T)((R_s-T)^{-1}(v))\lambda=v\lambda,$$
				it follows that
				$$(R_s-T)^{-1}(v)\lambda=(R_s-T)^{-1}(v\lambda)$$ which shows that  $(R_s-T)^{-1}$ is a $\spc_J$-linear operator on $V_J$. Thus $(R_s-T)^{-n}$ is also $\spc_J$-linear for each integer $n$ and hence $(R_s-T)^{-1}$ is  $\mathbb C_J$-extendable power associative.
				
				{In other words, for an octonionic linear operator $T$, the pull-back slice-resolvent set of $T$  coincides with}
				$$\rho_J(T):=\{s\in \mathbb C_J: R_s-T \text{ is invertible in } \mathscr{B}_{\mathbb{R}} (V)\}.$$

				\item {The previous discussion applies also to quaternionic right linear operators.
					In other words, for a quaternionic linear operator $T$, the pull-back slice-resolvent set of $T$, for $J\in\mathbb{S}$, coincides with}
				$$\rho_J(T):=\{s\in \mathbb C_J: R_s-T \text{ is invertible in } \mathscr{B}_{\mathbb{R}} (V)    \}.$$
				\item 	In view of Lemma \ref{lem:cy epa}, when the operator $(R_s-T)^{-1}$ is $\mathbb C_J$-extendable power associative we deduce that
				$$(R_s-T)^{-n}|_{\spc_J(V)}\text{ is right $\spc_J$-linear}.$$	If we define the embedding map
				\begin{eqnarray}
					\iota_J:\spc_J(V)\to V,\qquad x \mapsto x
				\end{eqnarray}
				and	if we use this  to  pull back  the operator $(R_s-T)^{-1}$:
				$$\iota_J^*((R_s-T)^{-1}):=(R_s-T)^{-1} \circ \iota_J:\spc_J(V)\to V,$$
				then we obtain the classical resolvent operator $(R_s-T)^{-1}$.  This is precisely why we adopt the notations
				$\rho^*(T)$ and $\sigma^*(T)$.

			\end{enumerate}
			
		\end{rem}
		
		\begin{lemma}	\label{lem:m=1case}
			Let $T\in \mathscr{B}_{\mathcal{RO}}(V)$.	Let $s_0\in \rho_J^*(T)$ for some $J\in \mathbb S$. Define $\delta_0:=\fsh{(R_{s_0}-T)^{-1}}^{-1}$. Denote by $B(s_0,\delta_0)$  the open disk in $\spc_J$ centered at $s_0$ with radius $\delta_0$. Then for all $s\in B(s_0,\delta_0)$, $R_s-T$ is invertible {in $\mathscr{B}_{\mathbb{R}}(V)$} and
			\begin{eqnarray}\label{eq:R_s-T}
				(R_s-T)^{-1}|_{ \mathbb C_J(V)}=\sum_{n\geqslant 0}{(R_{s_0}-T)}^{-1-n}R_{s_0-s}^{n}|_{ \mathbb C_J(V)}.
			\end{eqnarray}
			
		\end{lemma}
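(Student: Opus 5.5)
The plan is the classical Neumann-series perturbation argument. I write $R_s-T=(R_{s_0}-T)-R_{s_0-s}$, using $R_s=R_{s_0}-R_{s_0-s}$, which holds because $R$ is real linear in the scalar. The key point is that on $\spc_J(V)$ all the operators involved commute. First, $R_{s_0-s}$ maps $\spc_J(V)$ into itself, since $s_0-s\in\spc_J$. Second, since $s_0\in\rho_J^*(T)$, the operator $(R_{s_0}-T)^{-1}$ is $\spc_J$-extendable power associative. By Lemma \ref{lem:cy epa} and Lemma \ref{lem:CJpower associative}, applied to $(R_{s_0}-T)^{-1}$ in place of $T$, each power $(R_{s_0}-T)^{-n}$ restricted to $\spc_J(V)$ is right $\spc_J$-linear. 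Consequently
\[
R_{s_0-s}(R_{s_0}-T)^{-n}|_{\spc_J(V)}=(R_{s_0}-T)^{-n}R_{s_0-s}|_{\spc_J(V)},
\]
and also $(R_{s_0}-T)^{-1-n}R^n_{s_0-s}z=(R_{s_0}-T)^{-1-n}(z)(s_0-s)^n$ for $z\in\spc_J(V)$.

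Invertibility in $\mathscr{B}_{\R}(V)$ needs no slice structure. Since $\fsh{R_{s_0-s}}=|s_0-s|<\delta_0$, we have $\fsh{(R_{s_0}-T)^{-1}R_{s_0-s}}<1$. Hence $\mathcal I-(R_{s_0}-T)^{-1}R_{s_0-s}$ is invertible by a Neumann series, and
\[
R_s-T=(R_{s_0}-T)\bigl(\mathcal I-(R_{s_0}-T)^{-1}R_{s_0-s}\bigr)
\]
is invertible with bounded inverse
\[
\sum_{n\geqslant0}\bigl((R_{s_0}-T)^{-1}R_{s_0-s}\bigr)^n(R_{s_0}-T)^{-1}.
\]
This series converges in operator norm.

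For formula \eqref{eq:R_s-T}, I will compute directly on $\spc_J(V)$, as in Lemma \ref{lem:Rs-TinvCpow}. Fix $z\in\spc_J(V)$ and set $y=\sum_n (R_{s_0}-T)^{-1-n}(z)(s_0-s)^n$. This converges because $\fsh{(R_{s_0}-T)^{-1-n}}\leqslant\delta_0^{-1-n}$ (the bound is by powers of the norm, with no slice issue) and $|s_0-s|<\delta_0$. Apply $R_s-T=(R_{s_0}-T)-R_{s_0-s}$ termwise. The first part gives $\sum_n (R_{s_0}-T)^{-n}(z)(s_0-s)^n$. For the second part, $R_{s_0-s}$ acting on $(R_{s_0}-T)^{-1-n}(z)(s_0-s)^n$ gives $(R_{s_0}-T)^{-1-n}(z)(s_0-s)^{n+1}$. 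This step uses the fact that $(R_{s_0}-T)^{-1-n}(z)\in V$ is right-multiplied successively by elements of $\spc_J$, so I need $(w\,a)b=w(ab)$ for $a,b\in\spc_J$ and $w\in V$. That identity holds because the right associator is alternating, so $[w,a,b]=0$ whenever $a,b$ lie in the same $\spc_J$ (write $b=\alpha+\beta a$ up to reals, or use $[w,J,J]=0$). The two sums telescope to $z$. So $(R_s-T)y=z$, and since $R_s-T$ is invertible, $y=(R_s-T)^{-1}z$. Rewriting $(R_{s_0}-T)^{-1-n}(z)(s_0-s)^n$ as $(R_{s_0}-T)^{-1-n}R^n_{s_0-s}z$ via the right $\spc_J$-linearity above gives \eqref{eq:R_s-T}.

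The main obstacle is the bookkeeping of right multiplications by $\spc_J$-scalars on elements of $V$ outside $\spc_J(V)$. Powers $R_{s_0-s}^n$ must agree with $R_{(s_0-s)^n}$, and they must commute past $(R_{s_0}-T)^{-1-n}$. The first is handled by the alternativity of right associators, which gives $R_aR_b=R_{ba}$ for $a,b\in\spc_J$. The second is exactly where the hypothesis $s_0\in\rho^*_J(T)$, i.e.\ the extendable power associativity, enters, through Lemma \ref{lem:cy epa}.
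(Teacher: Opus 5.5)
Your argument is correct. The invertibility step is the same as the paper's. For \eqref{eq:R_s-T}, however, you take a different route.

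The paper derives the formula. It writes $(R_s-T)^{-1}=(R_{s_0}-T)^{-1}\bigl(\mathcal I-R_{s_0-s}(R_{s_0}-T)^{-1}\bigr)^{-1}$ and expands the second factor as a Neumann series. It then converts $\bigl(R_{s_0-s}(R_{s_0}-T)^{-1}\bigr)^n$ into $(R_{s_0}-T)^{-n}R_{s_0-s}^n$ on $\spc_J(V)$ via the power identity \eqref{eq:RsTn=RsnTn}.

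You verify a candidate instead. You show by telescoping that the series is a right inverse of $R_s-T$ on $\spc_J(V)$, and conclude from the invertibility already established. Your route needs only the slice linearity of the powers $(R_{s_0}-T)^{-n}$ on $\spc_J(V)$ (Lemma \ref{lem:cy epa}), together with $[w,a,b]=0$ for $a,b\in\spc_J$. It never uses the identity \eqref{eq:RsTn=RsnTn}. It is also exactly the telescoping verification the paper uses for the induction step in Theorem \ref{thm:rhosliceopen}, so it serves as the natural base case of that induction. The paper's route has the advantage of producing the formula rather than presupposing it.

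One step should be written out. When you apply $R_{s_0}-T$ to $(R_{s_0}-T)^{-1-n}(z)(s_0-s)^n$, proceed in three moves:
\begin{itemize}
\item first rewrite it as $(R_{s_0}-T)^{-1-n}\bigl(z(s_0-s)^n\bigr)$, using $z(s_0-s)^n\in\spc_J(V)$;
\item then cancel one factor;
\item then pull the scalar back out.
\end{itemize}
This order matters because $T$ itself need not commute with right multiplication by $\spc_J$-scalars on vectors outside $\spc_J(V)$, and $(R_{s_0}-T)^{-1-n}(z)$ generally lies outside $\spc_J(V)$. You stated the needed identity in your first paragraph, so this is a matter of making the step explicit, not a gap.
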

		\begin{proof}
			The convergence of the series in \eqref{eq:R_s-T} is obvious.
			
			It follows from  $s_0\in\rho_J^*(T) $  that $(R_{s_0}-T)^{-1}$ is $\mathbb C_J$-extendable power associative. Hence for any $v\in \spc_J(V)$, we deduce from \eqref{eq:RsTn=RsnTn} that for all  $n\in \mathbb N$,  $${(R_{s_0-s}(R_{s_0}-T)^{-1})}^{n}(v)={(R_{s_0}-T)}^{-n}R_{s_0-s}^{n}{(v)}.$$
			Note that
			\begin{eqnarray}\label{eqpf:Rs-Tisinv}
				R_s-T=R_{s_0}-T-R_{s_0-s}=(R_{s_0}-T)(\mathcal{I}-(R_{s_0}-T)^{-1}R_{s_0-s}).
			\end{eqnarray}
			Since  $s\in  B(s_0,\delta_0)$, we get  that $(\mathcal{I}-(R_{s_0}-T)^{-1}R_{s_0-s})$ is invertible {in $\mathscr{B}_{\mathbb{R}}(V)$}.
			Combining this with \eqref{eqpf:Rs-Tisinv}, we deduce that also $	R_s-T$ is invertible.

			Since
			$$(R_s-T)^{-1}=(R_{s_0}-T-R_{s_0-s})^{-1}=(R_{s_0}-T)^{-1}(\mathcal{I}-R_{s_0-s}(R_{s_0}-T)^{-1})^{-1},$$
			it follows that
			\begin{align*}
				(R_s-T)^{-1}(v)&=(R_{s_0}-T)^{-1}(\mathcal{I}-R_{s_0-s}(R_{s_0}-T)^{-1})^{-1}(v)\\
				&=(R_{s_0}-T)^{-1}\sum_{n\geqslant 0}{(R_{s_0-s}(R_{s_0}-T)^{-1})}^{n}(v)\\
				&=(R_{s_0}-T)^{-1}\sum_{n\geqslant 0}{(R_{s_0}-T)}^{-n}R_{s_0-s}^{n}(v)\\
				&=\sum_{n\geqslant 0}{(R_{s_0}-T)}^{-1-n}R_{s_0-s}^{n}(v).
			\end{align*}
			This completes the proof.
		\end{proof}

		\begin{thm}\label{thm:rhosliceopen}
			Let $T\in \mathscr{B}_{\mathcal{RO}}(V)$. 
			Then	$\rho^*(T)$ is slice-open.
		\end{thm}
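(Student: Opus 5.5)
Slice-openness of $\rho^*(T)$ means that $\rho^*(T)\cap\spc_I$ is open in $\spc_I$ for every $I\in\mathbb S$. For $s\in\rho^*(T)\cap\spc_I$ there are two cases. If $s\notin\R$, then $\spc_I\cap\spc_J=\R$ for $J\neq\pm I$, and $\rho^*_{-I}(T)=\rho^*_I(T)$ since $\spc_{-I}=\spc_I$ and the defining conditions coincide. Hence $s\in\rho^*_I(T)$, and it suffices to show that each $\rho_J^*(T)$ is open in $\spc_J$. If $s\in\R$, then $s\in\rho_J^*(T)$ for some $J$. I will show that for real $s$ the extendability condition is independent of $J$, so that $s\in\rho_I^*(T)$, and openness of $\rho_I^*(T)$ again finishes the argument. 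For real $s$, note that $R_s=s\mathcal I$ is octonionic linear. I will check that $(R_s-T)^{-1}$ commutes with $R_\lambda$ on the relevant slices: for $v\in\re V$ and $\lambda\in\spc_I$, compute $T(v\lambda)=T(v)\lambda$ (para-linearity, Theorem \ref{lem:paralinear-char2}). The inverse, however, needs more care. I would instead use $\spc_J$-extendability at real $s$ to get $(R_s-T)^{-n}(v)\in\re V$-compatible identities. This step is delicate, and I expect it to be the main obstacle. My first attempt is to observe that real scalars commute with everything, so $(R_s-T)^{-n}(vr)=(R_s-T)^{-n}(v)r$ for $r\in\R$. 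The remaining condition, $(R_s-T)^{-n}(vI)=(R_s-T)^{-n}(v)I$, then has to be derived, perhaps via the $G_2$-symmetry, or else by accepting the slice topology only on nonreal parts plus a separate argument.

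The core step is openness of $\rho_J^*(T)$ in $\spc_J$. Fix $s_0\in\rho_J^*(T)$ and $\delta_0=\fsh{(R_{s_0}-T)^{-1}}^{-1}$. By Lemma \ref{lem:m=1case}, for $s\in B(s_0,\delta_0)$ the operator $R_s-T$ is invertible in $\mathscr B_\R(V)$, and on $\spc_J(V)$ we have $(R_s-T)^{-1}=\sum_n (R_{s_0}-T)^{-1-n}R^n_{s_0-s}$. Each summand is right $\spc_J$-linear on $\spc_J(V)$: $R_{s_0-s}$ preserves $\spc_J(V)$ and commutes with $R_\lambda$ for $\lambda\in\spc_J$, and $(R_{s_0}-T)^{-1-n}$ is right $\spc_J$-linear there by Lemma \ref{lem:cy epa}. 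These summands map $\spc_J(V)$ to $V$, so the composition $(R_{s_0}-T)^{-1-n}R^n_{s_0-s}$ only uses $(R_{s_0}-T)^{-k}$ on $\spc_J(V)$, which is fine. Thus $(R_s-T)^{-1}|_{\spc_J(V)}$ is right $\spc_J$-linear.

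For higher powers I mimic Lemma \ref{lem:Rs-TinvCpow}. I claim that on $\spc_J(V)$
\begin{equation*}
(R_s-T)^{-m}=\sum_{n\ge0}\binom{m+n-1}{m-1}(R_{s_0}-T)^{-m-n}R_{s_0-s}^n .
\end{equation*}
I will prove this by induction on $m$. Apply $(R_s-T)=(R_{s_0}-T)-R_{s_0-s}$ on the left, and use that $R_{s_0-s}$ commutes with $(R_{s_0}-T)^{-k}$ on $\spc_J(V)$ (equation \eqref{eqpf:extasp} applied to $(R_{s_0}-T)^{-1}$). Then Lemma \ref{lem:amn}(4) telescopes the coefficients, with $A=R_{s_0-s}$ and $B=(R_{s_0}-T)^{-1}$ commuting on $\spc_J(V)$, and convergence follows from $\fsh{A}\fsh{B}<1$. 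Each term is right $\spc_J$-linear on $\spc_J(V)$, hence so is $(R_s-T)^{-m}$. Lemma \ref{lem:cy epa} then gives $\spc_J$-extendable power associativity, so $B(s_0,\delta_0)\subseteq\rho_J^*(T)$.
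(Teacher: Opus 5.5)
Your treatment of nonreal points is correct, and your core step is exactly the paper's proof: the binomial expansion of $(R_s-T)^{-m}|_{\spc_J(V)}$ around $s_0$, proved by induction from Lemma \ref{lem:m=1case}, \eqref{eqpf:extasp} and Lemma \ref{lem:amn}(4), followed by Lemma \ref{lem:cy epa}. This correctly shows that each $\rho^*_J(T)$ is open in $\spc_J$. The gap is the real-point step, which you leave unproved. The paper's proof passes over the same point with its opening sentence ``it suffices to show that $\rho_J^*(T)$ is open in $\spc_J$''. Your suspicion is well founded, but the step cannot be supplied, by $G_2$-symmetry or any other means. For real $s$, $\spc_J$-extendability of $(R_s-T)^{-1}$ can depend on $J$, and then $\rho^*(T)$ is not slice-open.

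Here is a counterexample. Take $V=\O^2$ and $T(w_1,w_2)=(e_1w_1+w_2,\ e_2w_2)$. Then $T\in\mathscr{B}_{\mathcal{RO}}(V)$, because $T(xp)=T(x)p$ for $x\in\re V=\R^2$. Put $\mathbb H=\mathrm{span}_{\R}\{1,e_1,e_2,e_3\}$.

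First, real points lie in $\rho^*(T)$. For real $s$, $R_s-T$ is invertible, being triangular with diagonal $L_{s-e_1},L_{s-e_2}$. It maps $\mathbb H^2$ bijectively onto itself and commutes there with $R_\lambda$ for $\lambda\in\mathbb H$, by associativity of $\mathbb H$. Hence every $(R_s-T)^{-n}$ is right $\mathbb H$-linear on $\mathbb H^2\supseteq\spc_{e_1}(V)$, and $s\in\rho^*_{e_1}(T)\subseteq\rho^*(T)$.

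Second, $0\notin\rho^*_{e_4}(T)$. One computes $(R_0-T)^{-1}(0,1)=(e_3,e_2)$, while $(R_0-T)^{-1}(0,e_4)=(-e_7,e_6)\neq(e_7,e_6)=(e_3,e_2)e_4$.

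Third, openness fails on $\spc_{e_4}$. By norm continuity of $s\mapsto(R_s-T)^{-1}$, the inequality $(R_s-T)^{-1}(ve_4)\neq(R_s-T)^{-1}(v)e_4$ with $v=(0,1)$ persists for $s\in\spc_{e_4}$ near $0$. A nonreal $s\in\spc_{e_4}$ can only lie in $\rho^*_{\pm e_4}(T)=\rho^*_{e_4}(T)$. So $\rho^*(T)\cap\spc_{e_4}$ contains $0$ but coincides with a real interval near $0$, and is therefore not open in $\spc_{e_4}$. This $T$ is not power-associative, but the statement is made for arbitrary $T\in\mathscr{B}_{\mathcal{RO}}(V)$.

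What is true is the per-slice openness that you and the paper both prove. Slice-openness is recovered if a real point is admitted to the resolvent set only when it lies in $\rho^*_J(T)$ for \emph{every} $J$. This works because for real $s_0$ the operator $(R_{s_0}-T)^{-1}$, and hence the radius $\delta_0=\fsh{(R_{s_0}-T)^{-1}}^{-1}$, does not depend on $J$.
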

		\begin{proof}
			It suffices to show that $\rho_J^*(T)$ is  open in $\mathbb C_J$ for any $J\in \mathbb S$.
			
			For any $s_0\in\rho_J^*(T) $, denote by $\delta_0:=\fsh{(R_{s_0}-T)^{-1}}^{-1}$ and denote by $B(s_0,\delta_0)$ the open disk center at $s_0$ with radius $\delta_0$.
			Claim: for every $s\in B(s_0,\delta_0)$, $R_s-T$ is invertible and
			\begin{eqnarray}\label{eq:pfRs-T}
				(R_s-T)^{-m}|_{ \mathbb C_J(V)}=\sum_{n\geqslant 0}\binom{m + n - 1}{m - 1}{(R_{s_0}-T)}^{-m-n}R_{s_0-s}^{n}|_{ \mathbb C_J(V)}.
			\end{eqnarray}
			We shall  denote the number $\binom{m + n - 1}{m - 1}$ by $a_{m,n}$  for simplicity.
			
			We first show the convergence of the series in \eqref{eq:pfRs-T}.
			Since $s_0\in\rho_J^*(T) $, it follows that $(R_{s_0}-T)^{-1}$ is $\mathbb C_J$-extendable power associative and by Lemma \ref{lem:cy epa} we have for any $v\in \mathbb C_J(V)$ and any $m\in \mathbb N$
			$${(R_{s_0}-T)}^{-m-n}R_{s_0-s}^{n}(v)={(R_{s_0}-T)}^{-m-n}(v){{(s_0-s)}}^{n}.$$
			Combining  \eqref{ineq:amn} and noting ${\delta_0}^{-1}=\fsh{(R_{s_0}-T)^{-1}}$, we conclude that
			\begin{align*}
				\fsh{a_{m,n}{(R_{s_0}-T)}^{-m-n}R_{s_0-s}^{n}v}&\leqslant(1+n)^m{\delta_0}^{-m-n}\fsh{v}\abs{s_0-s}^n\\
				&=\left(\frac{1+n}{\delta_0}\right)^m\left(\frac{\abs{s_0-s}}{\delta_0}\right)^n\fsh{v}.
			\end{align*}
			Since $\abs{s_0-s}<\delta_0,$   we conclude the convergence of the series $$\sum_{n\geqslant 0}a_{m,n}{(R_{s_0}-T)}^{-m-n}R_{s_0-s}^{n}(v)$$ for any $v\in \mathbb C_J(V)$.
			
			We next prove \eqref{eq:pfRs-T} by induction on $m$.
			Case $m=1$ follows from Lemma \ref{lem:m=1case}.
			Set $A:=R_{s_0-s}$ and $B:={(R_{s_0}-T)}^{-1}$ and let	$v\in \mathbb C_J(V)$. We get from \eqref{eqpf:extasp}  that
			$$R_{s_0-s}B^{m+n}R_{s_0-s}^n(v)=R_{s_0-s}R_{s_0-s}^nB^{m+n}(v)={A}^{n+1}B^{m+n}(v)$$ for any integers $m,n$. Thus using identities \eqref{eqpf:extasp},  \eqref{eq:sumamnAnBn} and the induction hypothesis, we get
			\begin{align*}
				&	(R_s-T)^{m}\left(\sum_{n\geqslant 0}a_{m,n}{(R_{s_0}-T)}^{-m-n}R_{s_0-s}^{n}(v)\right)\\
				=&	(R_s-T)^{m-1}(-R_{s_0-s}+R_{s_0}-T)\left(\sum_{n\geqslant 0}a_{m,n}{(R_{s_0}-T)}^{-m-n}R_{s_0-s}^{n}(v)\right)\\
				=&(R_s-T)^{m-1}\left(\sum_{n\geqslant 0} -a_{m,n}R_{s_0-s}B^{m+n}{R_{s_0-s}}^n(v) +a_{m,n}{B}^{m+n-1}{R_{s_0-s}}^n(v)\right)\\
				=&(R_s-T)^{m-1}\left(\sum_{n\geqslant 0} a_{m,n}({A}^n{B}^{m+n-1}-{A}^{n+1}B^{m+n})(v)\right)\\
				=&(R_s-T)^{m-1}\sum_{n\geqslant 0}a_{m-1,n}{(R_{s_0}-T)}^{-(m-1)-n}R_{s_0-s}^{n}(v)\\
				=& v.
			\end{align*}
			This implies the claimed equality \eqref{eq:pfRs-T}.

			By \eqref{eq:pfRs-T}, we have $s\in\rho_J^*(T) $ for every $s\in B(s_0,\delta_0)$. Since $J\in\mathbb{S}$ is arbitrary, this completes the proof.
		\end{proof}
		
		Let us denote by $$V^{*} \text{ and } V^{*_{\O}}$$ the set of all continuous \textit{right para-linear} functionals  and  all continuous  \textit{octonionic linear} functionals on $V$, respectively.
		\begin{thm}\label{thm:Rs-Tsliceregular}
			Let $T$ be a bounded right para-linear operator. For any   $v\in \re V$  and any octonionic linear functional  $\phi\in V^{*_{\O}}$, the function $\phi ((R_s-T)^{\circledcirc -}(v))$ is right slice regular    with respect to the variable $s$ on $\rho^*(T)$.
		\end{thm}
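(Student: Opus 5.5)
Since $v\in \re V$, Definition \ref{def:ncirc} gives $(R_s-T)^{\circledcirc -}(v)=(R_s-T)^{-1}(v)$ for every $s\in\rho^*(T)$. So we must show that $s\mapsto \phi((R_s-T)^{-1}(v))$ is right slice regular on the slice-open set $\rho^*(T)$ (Theorem \ref{thm:rhosliceopen}). Right slice regularity is a condition on each slice separately, so we fix $J\in\mathbb S$ and work on $\rho^*(T)\cap\spc_J$. Here a point $s\in\spc_J$ lies in $\rho^*(T)$ exactly when it lies in $\rho^*_J(T)$, since the slices $\spc_J$ only meet along $\R$. We aim to show, for $s=x+yJ$, that
$$\Big(\frac{\partial}{\partial x}+\frac{\partial}{\partial y}R_J\Big)\phi((R_s-T)^{-1}(v))=0$$
in a neighborhood of each $s_0\in\rho_J^*(T)$.

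The key tool is Lemma \ref{lem:m=1case}. For $s\in B(s_0,\delta_0)$ and $v\in\re V\subseteq\spc_J(V)$ it gives
$$(R_s-T)^{-1}(v)=\sum_{n\geqslant 0}(R_{s_0}-T)^{-1-n}R_{s_0-s}^{n}(v)=\sum_{n\geqslant 0}(R_{s_0}-T)^{-1-n}(v)\,(s_0-s)^n.$$
The second equality uses the $\mathbb C_J$-extendable power associativity of $(R_{s_0}-T)^{-1}$ via Lemma \ref{lem:cy epa}, since $v(s_0-s)^n\in\spc_J(V)$ and $(s_0-s)^n\in\spc_J$. Set $w_n:=(R_{s_0}-T)^{-1-n}(v)\in V$. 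Because $\phi$ is octonionic linear and continuous, we get
$$\phi((R_s-T)^{-1}(v))=\sum_{n\geqslant 0}\phi(w_n)(s_0-s)^n .$$
This series converges locally uniformly on $B(s_0,\delta_0)$, since $\fsh{w_n}\leqslant \delta_0^{-1-n}\fsh{v}$.

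Each term $\phi(w_n)(s_0-s)^n$ has a constant octonionic coefficient on the left and a $\spc_J$-valued holomorphic power on the right. Since $s,s_0\in\spc_J$ commute and lie in an associative subalgebra together with $\phi(w_n)$, Artin's theorem gives
$$\Big(\partial_x+\partial_y R_J\Big)\big(a(s_0-s)^n\big)=a\Big(\partial_x+J\partial_y\Big)(s_0-s)^n=0 .$$
Termwise differentiation is justified by locally uniform convergence of the derivative series, which follows from the same geometric bound. Hence the function is real-analytic and right slice regular on each disk, so it is right slice regular on $\rho_J^*(T)$, and therefore on $\rho^*(T)$. The function is $C^1$ because it is locally given by convergent power series on each slice.

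The main obstacle is justifying the passage from $(R_{s_0}-T)^{-1-n}R^n_{s_0-s}$ to right multiplication by $(s_0-s)^n$. This needs the $\mathbb C_J$-extendable hypothesis, which is built into $\rho_J^*(T)$, applied to the power $(R_{s_0}-T)^{-1-n}$ acting on $\spc_J(V)$. A second, smaller point is the associativity check in the differentiation step: it requires that the nonassociative product $\phi(w_n)\cdot(\text{element of }\spc_J)$ is compatible with right multiplication by $J$. This holds because any two octonions generate an associative subalgebra.
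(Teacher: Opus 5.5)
Your argument is correct to the same extent as the paper's (see the caveat below), but it follows a different route. The paper does not expand the resolvent. It differentiates directly via the resolvent identity in $\mathscr{B}_{\R}(V)$, getting $\partial_x (R_s-T)^{-1}(v)=-(R_s-T)^{-2}(v)$ and $\partial_y (R_s-T)^{-1}(v)=-(R_s-T)^{-1}R_J(R_s-T)^{-1}(v)$. It then cancels the two terms using $\mathbb C_J$-extendability of $(R_s-T)^{-1}$ at the point $s$ itself, only in degrees $n=1,2$ (to rewrite $(R_s-T)^{-1}\big((R_s-T)^{-1}(v)J\big)$ as $(R_s-T)^{-2}(v)J$), together with octonionic linearity of $\phi$. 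The $C^1$ property comes from continuity of inversion.

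You instead pass the hypothesis at the centre $s_0$, in all degrees, through Lemma \ref{lem:m=1case} and Lemma \ref{lem:cy epa}. This gives the local expansion $\sum_{n}\phi\big((R_{s_0}-T)^{-1-n}(v)\big)(s_0-s)^n$, with coefficients on the left as is natural for right slice regular functions. The Cauchy--Riemann check then reduces to a scalar identity settled by Artin's theorem. The paper's route is shorter and purely pointwise. Yours gives more: real analyticity on each slice, the explicit Taylor coefficients of the resolvent, and the Cauchy--Riemann equation on all of $B(s_0,\delta_0)$ from the hypothesis at $s_0$ alone. The price is reliance on the series lemma.

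One sentence of yours should be corrected, although the paper's proof makes the same tacit move (``Since $s\in\rho_J^*(T)$''). For $s\in\spc_J$, membership in $\rho^*(T)$ is equivalent to membership in $\rho^*_J(T)$ only when $s\notin\R$. A real $s$ may belong to $\rho^*(T)$ only through some $\rho^*_K(T)$ with $K\neq\pm J$. Since $\mathbb C_J$-extendability really depends on $J$, Lemma \ref{lem:m=1case} is then unavailable on $\spc_J$.

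For instance, on $V=\mathbb{O}^2$ let $T(x,y)=(e_1y,e_2x)$, a bounded right para-linear operator. Then $(R_0-T)^{-1}(u,w)=(e_2w,e_1u)$ is $\spc_{e_3}$-extendable power associative, since all products stay in the quaternion algebra spanned by $1,e_1,e_2,e_3$; hence $0\in\rho^*(T)$. Now take $v=(1,0)$ and $\phi$ the first coordinate. Along $\spc_{e_4}$ one finds $\partial_x\phi((R_s-T)^{-1}v)|_{s=0}=e_3$ and $\partial_y\phi((R_s-T)^{-1}v)|_{s=0}=-e_7$. So the right Cauchy--Riemann expression at $0$ is $e_3-e_7e_4=2e_3\neq 0$.

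What both arguments actually prove is therefore the Cauchy--Riemann equation on each open set $\rho^*_J(T)\subseteq\spc_J$. That is, the statement holds at non-real points and at real points lying in every $\rho^*_J(T)$. You should state this restriction instead of asserting the identification.
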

		\begin{proof}
			Let us set $g(s)=\phi ((R_s-T)^{\circledcirc -}(v))$ and let   $s=x+yJ\in \rho^*(T)$, where  $x,y\in \R$, $J\in \mathbb S$. Since $v\in \re (V)$,  by definition we have $(R_s-T)^{\circledcirc -}(v)=(R_s-T)^{-1}(v). $
			Denote the set of all  bounded invertible operators by $$G:=\{S\in \mathscr{B}_{\R}(V): S \text{ is invertible in } {\mathscr{B}_{\mathbb{R}}(V)}\}$$
			and note that the inverse map  $$G\to G, \qquad S\mapsto S^{-1}$$ is continuous, so that also the map
			$$(R_s-T)^{-1}:\rho^*(T)\to G$$ is continuous and so $g$ is continuous.
			
			By direct calculations, we have
			\begin{align*}
				&\qquad \frac{\partial}{\partial x}(R_s-T)^{ -1}(v)\\
				&=\lim_{\Delta_x\to 0}\frac{(R_{x+\Delta_x+yJ}-T)^{-1}-(R_{x+yJ}-T)^{-1}}{\Delta_x}(v)\\
				&=\lim_{\Delta_x\to 0}\frac{(R_{x+\Delta_x+yJ}-T)^{-1}((R_{x+yJ}-T)-(R_{x+\Delta_x+yJ}-T))(R_{x+yJ}-T)^{-1}}{\Delta_x}(v)\\
				&=-(R_s-T)^{-1}(R_s-T)^{-1}(v),
			\end{align*}
			where we used the continuity of $(R_s-T)^{-1}:\rho^*(T)\to G$ in the last line.
			Similarly,
			\begin{align*}
				&\qquad \frac{\partial}{\partial y}(R_s-T)^{-1}(v)\\
				&=\lim_{\Delta_y\to 0}\frac{(R_{x+(\Delta_y+y)J}-T)^{-1}-(R_{x+yJ}-T)^{-1}}{\Delta_y}(v)\\
				&=\lim_{\Delta_y\to 0}\frac{(R_{x+(\Delta_y+y)J}-T)^{-1}((R_{x+yJ}-T)-(R_{x+(\Delta_y+y)J}-T))(R_{x+yJ}-T)^{-1}}{\Delta_y}(v)\\
				&=-(R_s-T)^{-1}R_J(R_s-T)^{-1}(v).
			\end{align*}
			Since $s\in  \rho_J^*(T)$, it follows that $(R_s-T)^{-1}$ is $\mathbb C_J$-extendable power associative.
			Combining with  Lemma \ref{lem:CJpower associative},  for any octonionic linear functional  $\phi\in V^{*_{\O}}$ we get
			\begin{align*}
				&\qquad \left (\frac{\partial}{\partial x}+ \frac{\partial}{\partial y}R_J\right)\phi ((R_s-T)^{\circledcirc -}(v))\\
				&=\phi\Big(-(R_s-T)^{-1}(R_s-T)^{-1}(v)-R_J(R_s-T)^{-1}R_J(R_s-T)^{-1}(v)\Big)\\
				&=0.
			\end{align*}
			Since $J\in\mathbb S$ is arbitrary, this completes the proof.
		\end{proof}

		The pull-back spectrum of a power-associative operator intersected with a slice is a nonempty compact set, as we prove below:
		\begin{thm}\label{lem:sigmaTbd}
			Let $T\in \mathscr{B}_{\mathcal{RO}}(V)$ be a power-associative operator.	If $|s|>\fsh{T}$, then $s\in \rho^*(T)$.
		\end{thm}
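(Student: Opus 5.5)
The plan is to reduce the statement directly to Lemma \ref{lem:Rs-Tinv} and Lemma \ref{lem:Rs-TinvCpow}, after placing $s$ on a slice. Given $s\in\O$ with $|s|>\fsh{T}$, write $s=x+yJ$ with $x,y\in\R$ and $J\in\mathbb S$. If $s$ is non-real, $J$ is determined up to sign. If $s\in\R$, any $J\in\mathbb S$ will do, and we fix one. In either case $s\in\spc_J$, and by Definition \ref{def:pullback spec} it suffices to show $s\in\rho_J^*(T)$. That requires two facts: $R_s-T$ is invertible in $\mathscr{B}_{\R}(V)$, and $(R_s-T)^{-1}$ is $\mathbb C_J$-extendable power associative.

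The first fact is Lemma \ref{lem:Rs-Tinv}, since $|s|>\fsh{T}$. That lemma also exhibits $(R_s-T)^{-1}=(\mathcal I-R_{s^{-1}}T)^{-1}R_s^{-1}$ as a Neumann series, so the inverse is bounded.

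The second fact is Lemma \ref{lem:Rs-TinvCpow}. Its hypotheses are exactly that $T$ is power-associative, $s\in\spc_J$ and $|s|>\fsh{T}$, and its conclusion is that $(R_s-T)^{-1}$ is $\mathbb C_J$-extendable power associative. Combining the two facts gives $s\in\rho_J^*(T)\subseteq\rho^*(T)$.

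The only step that needs care is whether Lemma \ref{lem:Rs-TinvCpow} really covers all powers $(R_s-T)^{-m}$, $m\ge1$, as Definition \ref{def:ext pa} requires. Its proof handles this through identity \eqref{eq:rs-tn}. There $(R_s-T)^{-m}|_{\spc_J(V)}$ is a norm-convergent series of the operators $T^nR_s^{-m-n}$, and each of these is right $\spc_J$-linear on $\spc_J(V)$. This holds because $T^n$ is right para-linear (power-associativity together with Theorem \ref{lem:paralinear-char2}), and because $R_s$ commutes with $R_\lambda$ for $s,\lambda\in\spc_J$. Lemma \ref{lem:cy epa} then turns this into $\mathbb C_J$-extendability.

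One point to check in that argument is the base case $m=1$. It relies on \eqref{eq:extinvRs-TcJv}, which needs $\alpha(s,T)=0$, and this follows from power-associativity via Lemma \ref{lem:powass}. The other point is that the map $\spc_J(V)\to\spc_J(V)\subseteq V$ is right $\spc_J$-linear, which is stable under norm limits.
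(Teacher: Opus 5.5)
Your proposal is correct and is the same argument as the paper. The paper proves the statement in one line by combining Lemma~\ref{lem:Rs-Tinv} (invertibility of $R_s-T$ in $\mathscr{B}_{\R}(V)$) with Lemma~\ref{lem:Rs-TinvCpow} ($\mathbb C_J$-extendable power associativity of $(R_s-T)^{-1}$ for the slice $\spc_J\ni s$), which gives $s\in\rho_J^*(T)\subseteq\rho^*(T)$. Your extra checks of the internals of Lemma~\ref{lem:Rs-TinvCpow} are accurate but not needed beyond citing the lemma; these are the base case via $\alpha(s,T)=0$ and the right $\spc_J$-linearity of the terms $T^nR_s^{-m-n}$ on $\spc_J(V)$.
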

		\begin{proof}
			This follows from Lemmas \ref{lem:Rs-Tinv} and  \ref{lem:Rs-TinvCpow} immediately.
		\end{proof}
		
		\begin{thm}\label{thm:extspec cpt}
			Let $T\in \mathscr{B}_{\mathcal{RO}}(V)$ be a power-associative operator. Then	$\sigma_J^*(T)$ is  compact  and nonempty in $\spc_J$ for any $J\in \mathbb S$.
		\end{thm}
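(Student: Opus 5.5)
Here $\sigma_J^*(T)$ is understood as $\sigma^*(T)\cap\spc_J=\spc_J\setminus\rho_J^*(T)$; this uses that $\rho^*(T)\cap\spc_J=\rho_J^*(T)$ for $J$ up to sign, since $\spc_J=\spc_{-J}$ and the defining conditions are symmetric under $J\mapsto -J$. The argument has three parts: closedness, boundedness, and nonemptiness. Closedness and boundedness follow directly from earlier results.

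\textbf{Compactness.} By Theorem \ref{thm:rhosliceopen}, $\rho_J^*(T)$ is open in $\spc_J$, so $\sigma_J^*(T)$ is closed. By Theorem \ref{lem:sigmaTbd}, every $s\in\spc_J$ with $|s|>\fsh{T}$ lies in $\rho^*(T)$. More precisely, Lemmas \ref{lem:Rs-Tinv} and \ref{lem:Rs-TinvCpow} give $s\in\rho_J^*(T)$. Hence $\sigma_J^*(T)\subseteq\{|s|\le\fsh{T}\}$ is bounded, and therefore compact.

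\textbf{Nonemptiness: setup.} I argue by contradiction, mimicking the Liouville argument. Suppose $\sigma_J^*(T)=\emptyset$, so $\rho_J^*(T)=\spc_J$. Fix $v\in\re V$ and $\phi\in V^{*_{\O}}$. Set $h(s):=\phi((R_s-T)^{-1}(v))$ for $s\in\spc_J$.

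\textbf{Holomorphy of $h$.} The proof of Theorem \ref{thm:Rs-Tsliceregular} gives $(\partial_x+\partial_yR_J)h=0$ on $\spc_J$. To obtain genuinely $\spc_J$-valued holomorphic functions, I apply the splitting Lemma \ref{lem:splittinglem} with right multiplication. This writes $h=\sum_{i=0}^3 J_iH_i$ with each $H_i:\spc_J\to\spc_J$ entire. Alternatively, one can observe that $(R_s-T)^{-1}v\in V$ and $\phi$ is $\O$-linear, and project with $\pi_J$-type coordinates.

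\textbf{Decay at infinity.} For $|s|>2\fsh{T}$, Lemma \ref{lem:Rs-Tinv} gives
\[
\fsh{(R_s-T)^{-1}}\le |s|^{-1}\big(1-\fsh{T}|s|^{-1}\big)^{-1}\to0 .
\]
Hence $h(s)\to0$, and so each $H_i\to0$ as $|s|\to\infty$, since the $J_i$ are linearly independent over $\spc_J$. By Liouville, $H_i\equiv0$, so $h\equiv0$.

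\textbf{Contradiction.} Since $h\equiv 0$ for all $\phi\in V^{*_{\O}}$, I need enough octonionic linear functionals to separate points. Composing real functionals on $\re V$ with $\re$ gives such functionals: for $\psi\in(\re V)^*$, the map $\phi=\mathrm{ext}$ of $\psi\circ\re$, i.e.\ $x\mapsto\sum_i\psi(\re(x\overline{e_i}))e_i$, is octonionic linear. By Hahn–Banach on the real Banach space $\re V$ and the decomposition of Proposition \ref{prop:real_part}, these separate points of $V$. It follows that $(R_s-T)^{-1}v=0$ for all $v\in\re V$. Taking $v\neq0$ contradicts injectivity of $(R_s-T)^{-1}$ (assuming $V\ne0$).

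\textbf{Main obstacle.} The hardest step is the separation of points by $V^{*_{\O}}$. That the functionals built above are octonionic linear requires checking with the bimodule decomposition $V\cong\re V\otimes\O$. If this is delicate, I would instead work directly with real functionals applied to $\pi_J(R_s-T)^{-1}v$. This expression is $\spc_J$-valued, and on $\spc_J(V)$ the map $(R_s-T)^{-1}$ is right $\spc_J$-linear. So $\psi\circ\pi_J$ composed with a complex-linearization already yields an entire scalar function, and the same Liouville argument applies.
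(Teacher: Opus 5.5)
Your proposal is correct and follows the paper's own route. Closedness comes from Theorem~\ref{thm:rhosliceopen}, boundedness from Lemmas~\ref{lem:Rs-Tinv} and~\ref{lem:Rs-TinvCpow}, and nonemptiness from the Liouville argument applied to $\phi((R_s-T)^{-1}v)$ via Theorem~\ref{thm:Rs-Tsliceregular}. Your functionals $x\mapsto\sum_i\psi(\re(x\overline{e_i}))e_i$ are indeed octonionic linear and play the role of Lemma~\ref{cor:HBcor}, even avoiding the octonionic Hahn--Banach theorem. Your splitting and decay details simply flesh out the paper's ``canonical method''.

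The one caveat is your side claim $\rho^*(T)\cap\spc_J=\rho_J^*(T)$. Symmetry under $J\mapsto -J$ does not handle real points: $\spc_K$-extendability depends on $K$, and nothing in the definitions forces a real $s\in\rho_K^*(T)$ with $K\neq\pm J$ to lie in $\rho_J^*(T)$. So your argument should be read as proving the statement for $\spc_J\setminus\rho_J^*(T)$. That is the natural meaning of $\sigma_J^*(T)$, and it is also the set whose openness complement the proof of Theorem~\ref{thm:rhosliceopen} actually controls.
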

		\begin{proof}
			By Theorem \ref{lem:sigmaTbd} and Theorem \ref{thm:rhosliceopen}, we get 	$\sigma_J^*(T)$ is  compact. 	Mimicking the  canonical method, we conclude $\sigma_J^*(T)$ is   nonempty from	Theorem \ref{thm:Rs-Tsliceregular}.
		\end{proof}
		
		\subsection{Octonionic push-forward spectrum}
		\begin{mydef}\label{def:pushforward sepc}
			Let $T\in \mathscr{B}_{\mathcal{RO}}(V)$ be a bounded right para-linear operator. We define the \textbf{push-forward resolvent set} of $T$ as	
			$$\rho_{*}(T):=\bigcup_{J\in \mathbb S}{\rho_{*}}_J(T),$$
			where the \textbf{push-forward slice-resolvent set} of $T$ for some $J\in \mathbb S$ is defined as
			$${\rho_{*}}_J(T):=\{s\in \mathbb C_J: R_s-T \text{ is invertible in } \mathscr{B}_{\mathbb{R}}(V) \text{and $(R_s-T)^{-1}$ is $\mathbb C_J$\lpa power associative}    \}.$$
			The \textbf{push-forward spectrum} of $T$ is defined as	$$\sigma_{*}(T):=\O\setminus \rho_*(T).$$
		\end{mydef}
		
		\begin{rem}
			\begin{enumerate}
				\item  Suppose $T$ is $\O$-linear. Reasoning as in Remark \ref{rem:pull-back spec}, if $R_s-T$ is invertible in $\mathscr{B}_{\R}(V)$, then    the  $\mathbb C_J$-liftable power associativity of $(R_s-T)^{-1}$  automatically  holds. Therefore, for $\O$-linear operators $T$, the push-forward slice resolvent of $T$  is
				$$\rho_J(T):=\{s\in \mathbb C_J: R_s-T \text{ is invertible in }   \mathscr{B}_{\R}(V) \}.$$
				This discussion can be repeated in the quaternionic case for (right) quaternionic linear operators.
				
				\item 		Using the map
				\begin{eqnarray}
					\pi_J:V\to \spc_J(V),
				\end{eqnarray}
				we can  push forward  the operator $(R_s-T)^{-1}$:
				$${\pi_J}_*((R_s-T)^{-1}):=\pi_J\circ(R_s-T)^{-1}: V\to \spc_J(V),$$
				which essentially takes the place of the classical resolvent operator $(R_s-T)^{-1}$.  This is  why we adopt the notations
				$\rho_*(T)$ and $\sigma_*(T)$.

			\end{enumerate}
			
		\end{rem}
		
		The counterpart of Theorem \ref{thm:Rs-Tsliceregular} is the following result whose proof is analogous and thus omitted here.
		\begin{thm}\label{thm:pushforw slice open}
			Let $T\in \mathscr{B}_{\mathcal{RO}}(V)$. Then	$\rho_{*}(T)$ is slice-open.
		\end{thm}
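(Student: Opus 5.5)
The approach is to mimic the proof of Theorem \ref{thm:rhosliceopen}, replacing the restriction to $\spc_J(V)$ by the projection $\pi_J$. It suffices to show that ${\rho_*}_J(T)$ is open in $\spc_J$ for every $J\in\mathbb S$. So fix $s_0\in{\rho_*}_J(T)$, set $\delta_0:=\fsh{(R_{s_0}-T)^{-1}}^{-1}$, and let $s\in B(s_0,\delta_0)\subseteq\spc_J$. Exactly as in Lemma \ref{lem:m=1case}, the factorization
$$R_s-T=(R_{s_0}-T)\bigl(\mathcal I-(R_{s_0}-T)^{-1}R_{s_0-s}\bigr)$$
together with a Neumann series shows that $R_s-T$ is invertible in $\mathscr{B}_{\R}(V)$.

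It then remains to prove that $(R_s-T)^{-1}$ is $\spc_J$-liftable power associative. By Lemma \ref{lem:cy lpa}, this amounts to showing
$$\pi_J R_\lambda (R_s-T)^{-m}=\pi_J (R_s-T)^{-m}R_\lambda \quad\text{for all } \lambda\in\spc_J,\ m\geqslant 1.$$
Write $B:=(R_{s_0}-T)^{-1}$ and $A:=R_{s_0-s}$. Since $B$ is $\spc_J$-liftable power associative, Lemma \ref{lem:cy lpa} gives $\pi_J R_\mu B^k=\pi_J B^k R_\mu$ for every $\mu\in\spc_J$ and $k\geqslant 0$. The key claim, analogous to \eqref{eq:pfRs-T} and \eqref{eqpf:lifclaimRs-Tm}, is that for every $x\in V$
$$\pi_J (R_s-T)^{-m}(x)=\pi_J\sum_{n\geqslant 0} a_{m,n}\, B^{m+n}A^{n}(x).$$
The series converges by \eqref{ineq:amn}, with the same estimate as before.

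I would prove the key claim by induction on $m$. I would apply the right-hand side series to $(R_s-T)^m(x)=(R_s-T)^{m-1}(R_{s_0}-T-A)(x)$ and use the commutation relation
$$\pi_J B^{m+n}A^nA=\pi_J A^{n+1}B^{m+n},$$
which follows from repeatedly moving $R_{s_0-s}$ through $B^k$ under $\pi_J$. This commutation rule, combined with the telescoping identity \eqref{eq:sumamnAnBn}, should reduce the case $m$ to the case $m-1$ and yield $\pi_J x$. In the base case $m=1$, I would expand $(\mathcal I-BA)^{-1}=\sum (BA)^n$ and use $\pi_J(BA)^n=\pi_J B^nA^n$. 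That identity is obtained by an argument like Lemma \ref{lem:CJliftpower associative}, pushing each $A$ to the right under $\pi_J$.

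\textbf{Main obstacle.} The delicate point is that $\pi_J$ only commutes with $R_\mu$ when it sits at the far left. Each time I commute $A$ past a power of $B$, I must make sure the identity being used really has the form $\pi_J(\cdots)$ applied to the whole composite, and not an inner factor. I would handle this by proving the auxiliary identity
$$\pi_J\, X\, R_\mu\, B^k = \pi_J\, X\, B^k R_\mu$$
whenever $X$ is a product of $B$'s and $R_\mu$'s (with $\mu\in\spc_J$). Its proof uses the fact that $\pi_J$ is right $\spc_J$-linear and the relations $R_\mu R_\nu=R_{\mu\nu}$ on $\spc_J$.

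Once the key claim holds, its right-hand side commutes with $R_\lambda$ under $\pi_J$, because both $A$ and each $B^k$ do. Hence $\pi_J(R_s-T)^{-m}R_\lambda=\pi_J R_\lambda(R_s-T)^{-m}$, so $s\in{\rho_*}_J(T)$ and ${\rho_*}_J(T)$ is open in $\spc_J$.
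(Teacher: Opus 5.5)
Your proposal is correct and is essentially the proof the paper has in mind: the paper omits it as analogous to Theorem \ref{thm:rhosliceopen}, and you supply the push-forward analogue of \eqref{eq:pfRs-T}, namely $\pi_J(R_s-T)^{-m}=\pi_J\sum_{n\geqslant0}a_{m,n}(R_{s_0}-T)^{-m-n}R_{s_0-s}^n$ on all of $V$, proved by induction with Lemma \ref{lem:amn} and the $\pi_J$-commutation rules of Lemmas \ref{lem:cy lpa} and \ref{lem:CJliftpower associative}. The obstacle you flag is mild: every identity of the form $\pi_J X=\pi_J Y$ survives composition on the right, and combined with $\pi_J R_\mu=R_\mu\pi_J$ for $\mu\in\spc_J$, this is all the base case, the induction step and the final commutation with $R_\lambda$ require.
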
	

		{The following theorem is the counterpart of Theorem \ref{thm:Rs-Tsliceregular} and establishes that a suitably defined function is right
			slice-regular on the push-forward slice-resolvent set.}
		\begin{thm}\label{thm:liftRs-Tsliceregular}
			Let $T$ be a bounded right para-linear  operator. Then 	for any fixed  $v\in V$, $J\in \mathbb S$, and any octonionic linear functional  $\phi\in V^{*_{\O}}$,  define for any $s\in {\rho_*}_{J}(T)$ that  $$g(s):=\phi\Big(	\pi_J\,  (R_s-T)^{-\circledcirc}(v)\Big).$$ Then $g:{\rho_*}_{J}(T)\to \spc_J$ is holomorphic on ${\rho_*}_{J}(T)$.
		\end{thm}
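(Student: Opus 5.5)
The plan is to reduce $g$ to an expression involving only the classical inverse $(R_s-T)^{-1}$, and then show complex differentiability in $s\in\spc_J$ by a resolvent-type difference computation.

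\textbf{Step 1 (removing the lift).} By definition, $(R_s-T)^{-\circledcirc}=\lif\,((R_s-T)^{-1})_{\R}$. For $v\in V$,
$$(R_s-T)^{-\circledcirc}(v)=\sum_{i=0}^7 e_i\re (R_s-T)^{-1}(v\overline{e_i}).$$
Applying $\pi_J$ and using the $\spc_J$-linearity of $\pi_J$ in \eqref{eq:piJ} does not immediately isolate a single term. Instead I would argue as in the proof of \eqref{eq:piJlifinvRs-T}: both $\pi_J(R_s-T)^{-\circledcirc}$ and $\pi_J(R_s-T)^{-1}$ are determined by their real parts. Their real parts agree, since $\re (R_s-T)^{-\circledcirc}=\re (R_s-T)^{-1}$. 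The imaginary $J$-components are $\re(\overline J\,\cdot)$, and $\re\big(\overline J(R_s-T)^{-\circledcirc}(v)\big)=\re (R_s-T)^{-1}(vJ)$ up to sign by para-linearity (Theorem \ref{thm:para_linear_char}(2)). Using $\mathbb C_J$-liftable power associativity with $n=1$ (Definition \ref{def:lpa}, applied to $(R_s-T)^{-1}$), $\re (R_s-T)^{-1}(vJ)=\re\big((R_s-T)^{-1}(v)J\big)$. This yields
$$\pi_J(R_s-T)^{-\circledcirc}(v)=\pi_J (R_s-T)^{-1}(v).$$
Since $\phi$ is octonionic linear, it commutes with $\re$ by \eqref{eq:refx=frex} and with $R_J$. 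Hence $g(s)=\phi(\pi_J(R_s-T)^{-1}v)$, which is $\spc_J$-valued because $\phi(\spc_J(V))\subseteq\spc_J$.

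\textbf{Step 2 (continuity and partial derivatives).} Exactly as in Theorem \ref{thm:Rs-Tsliceregular}, the map $s\mapsto (R_s-T)^{-1}$ is continuous on ${\rho_*}_J(T)$. Its partial derivatives, for $s=x+yJ$, are
$$\partial_x(R_s-T)^{-1}=-(R_s-T)^{-2},\qquad \partial_y(R_s-T)^{-1}=-(R_s-T)^{-1}R_J(R_s-T)^{-1}.$$
Both are continuous, so $g$ is $C^1$. That ${\rho_*}_J(T)$ is open in $\spc_J$ follows from Theorem \ref{thm:pushforw slice open}.

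\textbf{Step 3 (Cauchy–Riemann).} We must check $(\partial_x+J\partial_y)g=0$, i.e.
$$\pi_J(R_s-T)^{-2}v+J\,\pi_J(R_s-T)^{-1}R_J(R_s-T)^{-1}v=0.$$
Write $J\pi_J=\pi_J R_J$ from \eqref{eq:piJ}. By Lemma \ref{lem:cy lpa} applied to $S=(R_s-T)^{-1}$ with $n=1$ and $s=J$, we have $\pi_J R_J S=\pi_J S R_J$. Therefore
$$\pi_J R_J S R_J S=\pi_J S R_J R_J S=-\pi_J S^2,$$
which cancels the first term. Applying $\phi$ gives $(\partial_x+J\partial_y)g=0$, so $g$ is holomorphic on ${\rho_*}_J(T)$.

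\textbf{Main obstacle.} Step 1 is the delicate part: justifying that the projection $\pi_J$ of the lifted inverse equals $\pi_J$ of the ordinary inverse. This requires carefully tracking the $J$-component through $\lif$ and using only the liftability hypothesis. If that identity proves awkward, the fallback is to work directly with the formula $g(s)=\phi(\pi_J\sum_i e_i\re S(v\overline{e_i}))$. Each summand $\re S(v\overline{e_i})$ is real-valued, so one can apply the Step 3 computation to $\pi_J\big(e_i\re S(\cdot)\big)$ separately, using the derivative formulas from Step 2.
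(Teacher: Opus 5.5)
Your proposal is correct and takes essentially the same route as the paper. You first identify $\pi_J(R_s-T)^{-\circledcirc}$ with $\pi_J(R_s-T)^{-1}$ using right para-linearity together with $\mathbb C_J$-liftability of $(R_s-T)^{-1}$; you compare the $\re$ and $\re(\overline{J}\,\cdot)$ components directly, whereas the paper writes $\pi_J=\re_V+R_J\re_V R_{\overline J}$ and commutes $R_{\overline J}$ through at the operator level. You then differentiate the ordinary inverse and cancel the Cauchy--Riemann expression via Lemma \ref{lem:cy lpa} with $n=1$ and scalar $J$, exactly as the paper does; your remarks on $C^1$-regularity and on openness of ${\rho_*}_J(T)$ (Theorem \ref{thm:pushforw slice open}) just make explicit what the paper leaves implicit, and the fallback is not needed.
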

		
		\begin{proof}
			Note that $\phi$ is an octonionic linear functional. It follows from \eqref{eq:refx=frex} that $g$ is $\spc_J$-valued.
			
			Let   $s=x+yJ\in {\rho_*}_J(T)$.
			By Definition \ref{def:ncirc}, we have
			\begin{eqnarray}\label{eq:reV (Rs-T)-circledcirc}
				\re_V (R_s-T)^{-\circledcirc}=\re_V (R_s-T)^{-1}.
			\end{eqnarray}
			Since  $s\in {\rho_*}_J(T)$, it follows that
			\begin{eqnarray}\label{eqpf:reVRs-T}
				\re_V (R_s-T)^{-1}R_{\overline{J}}=\re_V R_{\overline{J}}(R_s-T)^{-1}.
			\end{eqnarray}
			Thus 		we deduce
			\begin{align}
				&\quad \ 	\pi_J\,(R_s-T)^{-\circledcirc}\notag\\
				&=(\re_V +R_J	\operatorname{Re}_V R_{\overline{J}})(R_s-T)^{-\circledcirc}\notag\\
				&=\re_V (R_s-T)^{-\circledcirc}+R_J	\operatorname{Re}_V R_{\overline{J}}(R_s-T)^{-\circledcirc}\notag\\
				&=\re_V (R_s-T)^{-\circledcirc}+R_J	\operatorname{Re}_V (R_s-T)^{-\circledcirc}R_{\overline{J}}&\text{since $(R_s-T)^{-\circledcirc}$ is right para-linear}\notag\\
				&=\re_V (R_s-T)^{-1}+R_J	\operatorname{Re}_V (R_s-T)^{-1}R_{\overline{J}}&\text{using \eqref{eq:reV (Rs-T)-circledcirc} }\notag\\
				&=\re_V (R_s-T)^{-1}+R_J	\operatorname{Re}_V R_{\overline{J}}(R_s-T)^{-1}&\text{using \eqref{eqpf:reVRs-T} }\notag\\
				&=	\pi_J\,(R_s-T)^{-1}.
			\end{align}
			Hence, as we did in the proof of Theorem \ref{thm:Rs-Tsliceregular}, for any $v\in V$ we have
			\begin{align*}
				\pi_J\, \frac{\partial}{\partial x}(R_s-T)^{-\circledcirc}(v)&=	 \frac{\partial}{\partial x}\pi_J\,(R_s-T)^{-\circledcirc}(v)\\
				&=	 \frac{\partial}{\partial x}\pi_J\,(R_s-T)^{-1}(v)\\
				&=-\pi_J\,(R_s-T)^{-1}(R_s-T)^{-1}(v)
			\end{align*}
			and
			\begin{align*}
				\pi_J\, \frac{\partial}{\partial y}(R_s-T)^{-\circledcirc}(v)&=	 \frac{\partial}{\partial y}\pi_J\,(R_s-T)^{-\circledcirc}(v)\\
				&=	 \frac{\partial}{\partial y}\pi_J\,(R_s-T)^{-1}(v)\\
				&=-\pi_J\,(R_s-T)^{-1}R_J(R_s-T)^{-1}(v).
			\end{align*}
			Therefore, we get from \eqref{eq:piJ} that
			\begin{align*}
				\left (\frac{\partial}{\partial x}+ \frac{\partial}{\partial y}{J}\right)g(s)&= 	\left (\frac{\partial}{\partial x}+ \frac{\partial}{\partial y}J\right)\phi (\pi_J\, (R_s-T)^{-\circledcirc}(v))\\
				&=	\phi \circ\pi_J\,\left(\frac{\partial}{\partial x}+R_J\frac{\partial}{\partial y}\right)(R_s-T)^{-\circledcirc}(v)\\
				&=\phi	\Big(		-\pi_J\,(R_s-T)^{-1}(R_s-T)^{-1}(v)	-\pi_J\,R_J(R_s-T)^{-1}R_J(R_s-T)^{-1}(v)\Big)\\
				&=0.
			\end{align*}
			Where we used Lemma \ref{lem:cy lpa} in the last line to deduce
			$$\pi_J\,R_J(R_s-T)^{-1}R_J(R_s-T)^{-1}(v)=-\pi_J\,(R_s-T)^{-1}(R_s-T)^{-1}(v).$$ This completes  the proof.
		\end{proof}		
		\begin{thm}\label{thm:pushforw bound}
			Let $T\in \mathscr{B}_{\mathcal{RO}}(V)$ be a power-associative operator.	If $|s|>\fsh{T}$, then $s\in \rho_{*}(T)$.
		\end{thm}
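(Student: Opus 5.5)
The plan is to mirror the proof of Theorem \ref{lem:sigmaTbd}, with Lemma \ref{lem:liftRs-TinvCpow} replacing Lemma \ref{lem:Rs-TinvCpow}. Let $s\in\O$ with $|s|>\fsh{T}$. First choose $J\in\mathbb S$ with $s\in\spc_J$. If $s$ is real, any $J$ will do; otherwise $J=\operatorname{Im}s/|\operatorname{Im}s|$. By Definition \ref{def:pushforward sepc}, it suffices to show $s\in{\rho_*}_J(T)$, since $\rho_*(T)$ is the union of the slice resolvent sets. Concretely, two conditions must be checked.

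The first condition is that $R_s-T$ is invertible in $\mathscr{B}_{\R}(V)$. This is exactly Lemma \ref{lem:Rs-Tinv}, which only uses $T\in\mathscr{B}_{\R}(V)$ and $|s|>\fsh{T}$, through the Neumann series for $\mathcal I-R_{s^{-1}}T$. Boundedness of the inverse comes from that same Neumann series.

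The second condition is that $(R_s-T)^{-1}$ is $\mathbb C_J$-liftable power associative. This is the content of Lemma \ref{lem:liftRs-TinvCpow}, whose hypotheses are precisely that $T$ is power-associative and $s\in\spc_J$ with $|s|>\fsh{T}$. That lemma rests on the identity \eqref{eq:piJlifinvRs-T}. Its use is legitimate here because power-associativity kills $\beta(s,T)$: by Lemma \ref{lem:powass}, each associator $[T^{n\circledcirc},T,x]_{\circledcirc}$ vanishes.

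The only point needing care is that Lemma \ref{lem:liftRs-TinvCpow} was proved through the identity $\pi_J(R_s-T)^{-m}(v\lambda)=\pi_J(R_s-T)^{-m}(v)\lambda$. Definition \ref{def:lpa}, however, requires $\re((R_s-T)^{-m}(v\lambda))=\re((R_s-T)^{-m}(v)\lambda)$ for all $v\in V$. To pass from the first to the second, I would apply $\re$ to the $\pi_J$ identity and use $\re\circ\pi_J=\re$ together with the right $\spc_J$-linearity \eqref{eq:piJ} of $\pi_J$. I would also check that the series \eqref{eqpf:lifclaimRs-Tm} identity holds for all $v\in V$ and not only for $v\in\spc_J(V)$. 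The induction there uses only \eqref{eqpf:lieftdefeq} and \eqref{eq:piJlifinvRs-T}, both valid on all of $V$, so no restriction arises. With both conditions in hand, $s\in{\rho_*}_J(T)\subseteq\rho_*(T)$, and the proof is complete.
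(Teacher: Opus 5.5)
Your proposal is correct and follows the paper's proof, which simply cites Lemma \ref{lem:Rs-Tinv} for invertibility and Lemma \ref{lem:liftRs-TinvCpow} for $\mathbb C_J$-liftable power associativity, with $J$ chosen so that $s\in\spc_J$. Your extra checks are sound and actually tidy up the paper's argument: the claim \eqref{eqpf:lifclaimRs-Tm} is stated there only for $v\in\spc_J(V)$, but the induction step applies the hypothesis to $(R_s-T)^{m-1}(v)$, so it must indeed be proved on all of $V$. Likewise, the passage from the $\pi_J$-identity to Definition \ref{def:lpa} via $\re\circ\pi_J=\re$ is exactly the sufficiency direction of Lemma \ref{lem:cy lpa} that the paper omits.
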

		\begin{proof}
			The assertion immediately follows from Lemma \ref{lem:Rs-Tinv} and Lemma \ref{lem:liftRs-TinvCpow}.
		\end{proof}
		Hence we also get
		\begin{thm}\label{thm:pushfw cpt}
			Let $T\in \mathscr{B}_{\mathcal{RO}}(V)$ be a power-associative operator. Then	${\sigma_*}_J(T)$ is  compact  and nonempty in $\spc_J$ for any $J\in \mathbb S$.
		\end{thm}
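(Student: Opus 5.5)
Set ${\sigma_*}_J(T):=\sigma_*(T)\cap\spc_J=\spc_J\setminus{\rho_*}_J(T)$. The plan mirrors the proof of Theorem \ref{thm:extspec cpt}, with the push-forward results in place of the pull-back ones. Compactness is immediate. By Theorem \ref{thm:pushforw slice open}, ${\rho_*}_J(T)$ is open in $\spc_J$, so ${\sigma_*}_J(T)$ is closed in $\spc_J$. By Theorem \ref{thm:pushforw bound}, every $s\in\spc_J$ with $|s|>\fsh{T}$ lies in ${\rho_*}_J(T)$, so ${\sigma_*}_J(T)$ is contained in the closed disk of radius $\fsh{T}$. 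Hence it is compact.

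For non-emptiness I would argue by contradiction, as in Liouville's argument. Suppose ${\sigma_*}_J(T)=\emptyset$, so that ${\rho_*}_J(T)=\spc_J$. Fix $v\in V$ and $\phi\in V^{*_{\O}}$, and put $g(s):=\phi(\pi_J(R_s-T)^{-\circledcirc}(v))$. By Theorem \ref{thm:liftRs-Tsliceregular}, $g$ is an entire $\spc_J$-valued holomorphic function.

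\textbf{Decay at infinity.} For $|s|>\fsh{T}$, Corollary \ref{cor:reolv op ser} gives $(R_s-T)^{-\circledcirc}=\sum_{n\geqslant0}s^{-1-n}\odot T^n$. The estimate in the proof of Theorem \ref{thm:Rs-T}(2) bounds the $n$-th term by $8\fsh{T}^n\fsh{v}|s|^{-1-n}$. Therefore
\[
\fsh{(R_s-T)^{-\circledcirc}(v)}\leqslant \frac{8\fsh{v}}{|s|-\fsh{T}}\longrightarrow 0 \qquad (|s|\to\infty).
\]
Since $\pi_J$ and $\phi$ are bounded, $g(s)\to 0$. On the compact disk $g$ is continuous, hence bounded. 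So $g$ is a bounded entire function tending to zero, and Liouville's theorem gives $g\equiv0$.

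\textbf{Reaching a contradiction.} This step is the main obstacle: we must choose $v$ and $\phi$ so that $g$ cannot vanish identically. The idea is to read off the leading coefficient of the expansion at infinity. For large $|s|$, identity \eqref{eq:sodotT} shows that
\[
\pi_J(s^{-1}\odot \mathcal I)(v)=\pi_J(v)s^{-1}.
\]
Hence $s\,g(s)\to\phi(\pi_J v)$ as $|s|\to\infty$, because the remaining terms contribute $O(|s|^{-1})$. If $g\equiv0$, then $\phi(\pi_J v)=0$ for every $v$ and every $\phi$.

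To finish, take $v\in\re V$ with $v\neq0$. Then $\pi_J v=v$. By Hahn--Banach applied on the real Banach space $\re V$, there is a real functional $\phi_0$ with $\phi_0(v)\neq0$. It extends to an octonionic linear functional via the decomposition $V\cong\re V\otimes\O$ of Proposition \ref{prop:real_part}, namely $\phi(\sum x_ie_i)=\sum\phi_0(x_i)e_i$, and this $\phi$ is bounded. Then $\phi(\pi_J v)=\phi_0(v)\neq0$, which is a contradiction.

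If $V=\{0\}$ the statement degenerates, so we tacitly assume $V\neq\{0\}$, as in the classical case. We also need the convergence of $\sum s^{n}\odot\cdots$ to be uniform in order to justify taking the limit of $s\,g(s)$. This follows from the same geometric bound used above.
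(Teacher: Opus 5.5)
Your proposal is correct and follows the route the paper intends. Compactness comes from the openness of ${\rho_*}_J(T)$ in $\spc_J$ (as established in the proof of Theorem~\ref{thm:pushforw slice open}) together with the bound of Theorem~\ref{thm:pushforw bound}, and non-emptiness from the Liouville argument applied to the holomorphic function of Theorem~\ref{thm:liftRs-Tsliceregular}; the paper only indicates this by analogy with Theorem~\ref{thm:extspec cpt}, and you supply the decay estimate and the non-vanishing of the leading coefficient $\phi(\pi_J v)$ that it leaves implicit.

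One small caveat: your identification $\sigma_*(T)\cap\spc_J=\spc_J\setminus{\rho_*}_J(T)$ is not automatic at real points, because a real $s$ lies in every slice $\spc_I$ and could belong to ${\rho_*}_I(T)$ for some $I\neq\pm J$ without belonging to ${\rho_*}_J(T)$. You should simply take ${\sigma_*}_J(T):=\spc_J\setminus{\rho_*}_J(T)$ as the definition, which is exactly what your argument uses.
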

		
		\section{Octonionic functional calculus}
		\subsection{Preliminaries}
		To introduce the functional calculus we first introduce some preliminary definitions and notions. To begin with, we give the following definition:
		\begin{mydef}\label{def:resolvent op}
			Let $V$ be a  Banach $\O$-bimodule, $T\in \mathscr{B}_{\mathcal{RO}}(V)$.
			\begin{enumerate}
				\item For $s\in \rho^*(T)$, we define the \textbf{left resolvent operator}  as
				\begin{eqnarray}\label{eq:lro}
					(	R_s-T)^{\circledcirc -}.
				\end{eqnarray}
				\item For $s\in \rho_*(T)$, we define the \textbf{right resolvent operator}  as
				\begin{eqnarray}\label{eq:rro}
					(	R_s-T)^{-\circledcirc }.
				\end{eqnarray}
			\end{enumerate}
		\end{mydef}
		
		\begin{rem}\label{rem:resop}
			
			\begin{enumerate}
				\item  The bijections $\operatorname{ext} $ and $\operatorname{lif}$ can also be defined in quaternionic case, and are analogous to  those in \eqref{eq:ext} and \eqref{eq:lif}. Hence in view of Definition \ref{def:ncirc},  the operators defined by \eqref{eq:lro} and \eqref{eq:rro} can also be given in quaternionic case. Specifically, if $V$ is a two-sided quaternionic Banach space  (i.e., Banach $\mathbb H$-bimodule), $T$ is a bounded right quaternionic linear operator and $s\in \rho_S(T)$ (see \cite[Definition 4.8.1]{colombo2011noncomfunctcalculus}). Then, applying the quaternionic  counterpart  of the
				Uniqueness Lemma \ref{lem:Uniqueness Lemma}, with some calculations we can prove that
				\begin{eqnarray}
					(	R_s-T)^{\circledcirc -}&=	-(T^ 2 - 2\re [s] T + \abs{s}^2\mathcal{I})^{-1}(T - L_{\overline{s}})\label{eq:lroq}\\
					(	R_s-T)^{-\circledcirc }&=	-(T - L_{\overline{s}})(T^ 2 - 2\re [s] T + \abs{s}^2\mathcal{I})^{-1}.\label{eq:rroh}
				\end{eqnarray}
				These two formulas coincide with the two S-resolvent operators in the quaternionic case \cite[Definition 4.8.3]{colombo2011noncomfunctcalculus}.
				
				We prove only \eqref{eq:lroq}, since the proof of \eqref{eq:rroh} works in a  similar way. Since both sides of \eqref{eq:lroq} are right quaternionic linear operators, by the quaternionic analog Uniqueness Lemma \ref{lem:Uniqueness Lemma}, it suffices to check the equality on the real part $\re V$.
				Since for any $s\neq 0$, there exists $p\in \mathbb H$ with $\abs{p}=1$ such that
				$\overline{s}=ps\overline{p}$, we deduce
				$	R_s-T$ is bounded invertible if and only if $	R_{\overline{s}}-T$ is bounded invertible.
				Note that, {as bounded real linear operator, we can write the factorization}
				$$T^2 - 2\re [s] T + \abs{s}^2\mathcal{I}=(T-R_s)(T-R_{\overline{s}}).$$
				Thus for any $s$ such that $	R_s-T$ is bounded invertible {as real linear operator},
				we have $$(T^ 2 - 2\re [s] T + \abs{s}^2\mathcal{I})^{-1}=(T-R_s)^{-1}(T-R_{\overline{s}})^{-1}.$$
				Combining with
				$$(T - L_{\overline{s}})|_{\re V}=(T - R_{\overline{s}})|_{\re V},$$
				we deduce
				\begin{align*}
					-(T^ 2 - 2\re [s] T + \abs{s}^2\mathcal{I})^{-1}(T - L_{\overline{s}})|_{\re V}&=-(T-R_s)^{-1}(T-R_{\overline{s}})^{-1}(T - R_{\overline{s}})|_{\re V}\\
					&=-(T-R_s)^{-1}|_{\re V}\\
					&=	(	R_s-T)^{\circledcirc -}|_{\re V},
				\end{align*}
				thus proving \eqref{eq:lroq}. We obtain \eqref{eq:rroh} with an analogous reasoning.

				Moreover, in light of Remark \ref{rem:regular inv} (which also holds in the quaternionic case) we have that $R_s-T$, {seen again as real linear operator,} satisfies:
				\begin{align*}
					(R_s-T)\circledcirc	(	R_s-T)^{\circledcirc -}&=\mathcal{I},\\
					(	R_s-T)^{-\circledcirc }	\circledcirc(R_s-T)	&=\mathcal{I}.
				\end{align*}
				This is equivalent to
				\begin{align*}
					(R_s-T)	(	R_s-T)^{\circledcirc -}|_{\re V}&={\mathcal{I}|_{\re V}},\\
					\re_V	(	R_s-T)^{-\circledcirc }	(R_s-T)	&={\re_V\mathcal{I}}.
				\end{align*}
				Using the right quaternionic linearity of $(	R_s-T)^{\circledcirc -}$ and $(	R_s-T)^{-\circledcirc }$, these two equalities become
				\begin{align*}
					\Big	(	(	R_s-T)^{\circledcirc -}R_s-T(	R_s-T)^{\circledcirc -}\Big)|_{\re V}&={\mathcal{I}|_{\re V}},\\
					\re_V\Big(R_s	(	R_s-T)^{-\circledcirc }	-	(	R_s-T)^{-\circledcirc }T\Big)	&={\re_V\mathcal{I}}.
				\end{align*}
				Noticing that $$R_s|_{\re V}=L_s|_{\re V},\qquad 	\re_VR_s=	\re_VL_s,$$
				we thus get
				\begin{align*}
					\Big	(	(	R_s-T)^{\circledcirc -}L_s-T(	R_s-T)^{\circledcirc -}\Big)|_{\re V}&={\mathcal{I}|_{\re V}},\\
					\re_V\Big(L_s	(	R_s-T)^{-\circledcirc }	-	(	R_s-T)^{-\circledcirc }T\Big)	&={\re_V\mathcal{I}}.
				\end{align*}
				{Note that $(	R_s-T)^{\circledcirc -}L_s-T(	R_s-T)^{\circledcirc -}$ and $L_s	(	R_s-T)^{-\circledcirc }	-	(	R_s-T)^{-\circledcirc }T$ are right quaternionic linear operators } hence by the quaternionic counterpart of Uniqueness Lemma we can deduce the quaternionic S-resolvent equations (\cite[Theorem 4.8.4, Definition  4.8.5]{colombo2011noncomfunctcalculus}):
				\begin{align*}
					(	R_s-T)^{\circledcirc -}L_s-T(	R_s-T)^{\circledcirc -}&=\mathcal{I},\\
					L_s	(	R_s-T)^{-\circledcirc }	-	(	R_s-T)^{-\circledcirc }T	&=\mathcal{I}.
				\end{align*}
				\item Definition \ref{def:resolvent op} unifies the notion of  resolvent operator in Banach space over division algebras.
			\end{enumerate}
		\end{rem}

		\begin{mydef}\label{def:Tadmiss}
			\begin{enumerate}
				\item 	Let $U\subseteq \O$ be an \textbf{axially symmetric s-domain} that contains the pull-back spectrum $\sigma^*(T )$, and such that $\partial(U \cap \spc_J )$ is the union of a finite number of continuously differentiable Jordan curves for every $J\in \mathbb S$. We say that $U$ is a \textbf{$T$-left-admissible} open set.
				\item 	Let $U\subseteq \O$ be an \textbf{axially symmetric s-domain} that contains the push-forward spectrum $\sigma_*(T )$, and such that $\partial(U \cap \spc_J )$ is the union of a finite number of continuously differentiable Jordan curves for every $J\in \mathbb S$. We say that $U$ is a \textbf{$T$-right-admissible} open set.
			\end{enumerate}
			
		\end{mydef}
		\begin{rem}
			The requirement that $U$ is an 	\textbf{axially symmetric} s-domain
			is imposed to ensure the validity of the slice Cauchy formula  for slice regular functions for subsequent use. This hypothesis could be eliminated if a non-axially symmetric version of the slice Cauchy formula were developed.
		\end{rem}
		\begin{mydef}\label{def:T-admiss}
			Let $W$ be a slice-open set in $\O$.
			\begin{enumerate}
				\item A function $f\in \mathcal{SR}^L(W)$ is said to be \textbf{locally left regular on $\sigma^* (T )$} if there exists a {$T$-\textbf{left}-admissible domain} $U\subseteq \O$ such that $\overline{U}\subseteq W $. We will denote by $ \mathcal{SR}^L({\sigma^*(T)})$ the set of all locally left regular functions on $\sigma^* (T )$.
				\item A function $f\in \mathcal{SR}^R(W)$ is said to be \textbf{locally right regular on $\sigma^* (T )$} if there exists a {$T$-\textbf{left}-admissible domain} $U\subseteq \O$ such that $\overline{U}\subseteq W $. We will denote by $ \mathcal{SR}^R({\sigma^*(T)})$ the set of locally right regular functions on $\sigma ^*(T )$.
				\item 	A function $f\in \mathcal{SR}_{\R}(W)$ is said to be \textbf{locally slice preserving on $\sigma^* (T )$} if there exists a {$T$-\textbf{left}-admissible domain} $U\subseteq \O$ such that $\overline{U}\subseteq W $. We will denote by $ \mathcal{SR}_{\R}({\sigma^*(T)})$ the set of all locally slice preserving functions on $\sigma^* (T )$.
				\item A function $f\in \mathcal{SR}^L(W)$ is said to be \textbf{locally left regular on $\sigma_* (T )$} if there exists a {$T$-\textbf{right}-admissible domain} $U\subseteq \O$ such that $\overline{U}\subseteq W $. We will denote by $ \mathcal{SR}^L({\sigma_*(T)})$ the set of all locally left regular functions on $\sigma_* (T )$.
				\item A function $f\in \mathcal{SR}^R(W)$ is said to be \textbf{locally right regular on $\sigma_* (T )$} if there exists a {$T$-\textbf{right}-admissible domain} $U\subseteq \O$ such that $\overline{U}\subseteq W $. We will denote by $ \mathcal{SR}^R({\sigma_*(T)})$ the set of locally right regular functions on $\sigma _*(T )$.
				\item 	A function $f\in \mathcal{SR}_{\R}(W)$ is said to be \textbf{locally slice preserving on $\sigma_* (T )$} if there exists a {$T$-\textbf{right}-admissible domain} $U\subseteq \O$ such that $\overline{U}\subseteq W $. We will denote by $ \mathcal{SR}_{\R}({\sigma_*(T)})$ the set of all locally slice preserving functions on $\sigma_* (T )$.
			\end{enumerate}
		\end{mydef}

		\begin{lemma}\label{cor:HBcor}
			Let $V$ be a Banach $\O$-bimodule and $v\in V$.	If for any octonionic linear functional  $\phi\in V^{*_{\O}}$, $\phi (v)=0$, then $v=0$.
		\end{lemma}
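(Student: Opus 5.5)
Since $\re V$ is not preserved by $\phi$ in any obvious way, the plan is to reduce the statement to a real Hahn--Banach argument and then upgrade the resulting real functional to an octonionic linear one by the extension map. Suppose $v\neq 0$. By the decomposition \eqref{eq:redec} in Proposition \ref{prop:real_part}, write $v=\sum_{i=0}^7 e_i v_i$ with $v_i\in\re V$. Then some $v_i\neq 0$; after multiplying $v$ by $\overline{e_i}$ on the left (and using that an octonionic linear $\phi$ satisfies $\phi(\overline{e_i}v)=\overline{e_i}\phi(v)$), we may assume $\re v\neq 0$. Concretely, \eqref{eq:refx=frex} gives $\re\phi(v)=\phi(\re v)$, so it suffices to produce $\phi\in V^{*_{\O}}$ with $\phi(\re v)\neq 0$ when $\re v\ne 0$. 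The reduction step just needs the observation that $\re(\overline{e_i}v)=v_i$ up to the real-part projection, which follows from the explicit formula \eqref{eq:real_part_formula} together with $\re[p,q,m]=\re[p,m]=0$.

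Next, set $w:=\re v\in\re V\setminus\{0\}$. The real subspace $\re V$ is closed, being the image of the bounded projection $\re$ given by \eqref{eq:real_part_formula}, hence a real Banach space. The classical real Hahn--Banach theorem gives a bounded real linear functional $\psi:\re V\to\R$ with $\psi(w)=\fsh{w}\neq 0$. We then define $\phi:=\ext\psi$, that is,
\[
\phi\Big(\sum_{i=0}^7 x_ie_i\Big)=\sum_{i=0}^7\psi(x_i)e_i ,
\]
viewing $\psi$ as $\re V\to\O$ with real values. By Definition \ref{def:lif and ext}, $\phi$ is right para-linear $V\to\O$.

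The main point to check is that $\phi$ is in fact octonionic linear, not merely para-linear, and that it is bounded. For linearity, since $\psi$ is real valued, $\phi|_{\re V}$ takes values in $\R=\re\O$. Via the isomorphism $V\cong\re V\otimes_\R\O$ of Proposition \ref{prop:real_part}(3), $\phi$ is then $\psi\otimes\mathrm{id}_\O$, which commutes with both left and right multiplication by $\O$. In other words, we verify $\phi(x_ie_j\cdot p)=\psi(x_i)(e_jp)=\phi(x_ie_j)p$ for all $x_i\in\re V$ and $p\in\O$, using that the $x_i$ are associative elements. For boundedness, each coordinate map $x\mapsto x_i=\re(\overline{e_i}x)$ is bounded by Proposition \ref{prop:real_part}(5). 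Hence $\fsh{\phi(x)}\leqslant C\fsh{\psi}\fsh{x}$, so $\phi\in V^{*_{\O}}$.

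Finally, $\phi(w)=\psi(w)\neq 0$. By the reduction in the first step, this gives an octonionic linear functional not vanishing at $v$, a contradiction, so $v=0$. I expect the only delicate part to be the reduction to $\re v\neq 0$, since octonionic linearity of $\phi$ and the compatibility $\phi(\re x)=\re\phi(x)$ are both needed. If that reduction is awkward, an alternative is to apply the argument directly to each component $v_i$ and use $\phi(v)=\sum_i e_i\phi(v_i)$ with $\phi(v_i)\in\R$, so that $\phi(v)=0$ forces every $\phi(v_i)=0$.
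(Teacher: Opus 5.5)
Your argument is correct, but it takes a different route from the paper. The paper works through the full para-linear dual. It decomposes an arbitrary $f\in V^{*}$ as $f=\sum_{i=0}^7 e_i\odot f_{(i)}$ with $f_{(i)}\in \re V^{*}=V^{*_{\O}}$. The hypothesis then forces $f(v)=\sum_i e_i f_{(i)}(v)=0$ for every continuous para-linear $f$. Finally it invokes the octonionic Hahn--Banach corollary of \cite{huo2025BLMSHB} to get a para-linear $g$ with $g(v)\neq 0$.

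You instead build the separating octonionic linear functional by hand. You apply the real Hahn--Banach theorem on the closed real subspace $\re V$ and then set $\phi=\ext\psi$. You verify octonionic linearity through $V\cong \re V\otimes_{\R}\O$. Equivalently, by Theorem \ref{thm:para_bimodule}, $\ext$ of a real-valued map is fixed by $\operatorname{Re}_{\Hom_{\mathcal{RO}}(V,\O)}$, hence lies in $\Hom_{\O}(V,\O)$.

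What each route buys: yours is self-contained and elementary, needing only the classical real Hahn--Banach theorem, the structure of the real part, and the boundedness of $x\mapsto \re(\overline{e_i}x)$. The paper's is shorter, but it leans on the externally cited octonionic Hahn--Banach theorem and on the bimodule decomposition of $V^{*}$.

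Of your two endings, the last one is the cleanest. Since $\phi(v)=\sum_j e_j\psi(v_j)$ with $\psi(v_j)\in\R$, $\phi(v)=0$ forces $\psi(v_i)=\fsh{v_i}=0$, a contradiction. The preliminary reduction via $\overline{e_i}v$ is also valid, because $\re(\overline{e_i}v)=v_i$, but it is unnecessary.
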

		\begin{proof}
			From Theorem \ref{thm:para_bimodule} we deduce that $ V^{*}$ can be endowed with an $\O$-bimodule structure. Therefore, in view of the real part decomposition  \eqref{eq:redec}, there is a decomposition of any (right) para-linear functional $f\in V^{*}$ as:
			$$f=\sum_{i=0}^{7}e_i\odot f_{(i)},$$
			where $f_{(i)}\in \re V^{*}= V^{*_{\O}}$ 
			for $i=0,\dots,7$.
			By hypothesis,
			$f_{(i)}(v)=0$ for $i=0,\dots,7$. Hence it follows from the vanishing of  the second associators related with octonionic linear maps that
			$$f(v)=\sum_{i=0}^{7}(e_i\odot f_{(i)})(v)=\sum_{i=0}^{7}e_if_{(i)}(v)=0.$$
			An important corollary of  octonionic Hahn-Banach theorem, see \cite[Corollary 1.2.]{huo2025BLMSHB}, implies that
			for any $v\neq 0$ in $V$, there must exist a (right) para-linear functional $g\in V^{*}$ such that $g(v)\neq 0.$
			This forces $v=0$.
		\end{proof}

		\subsection{Octonionic left slice regular functional calculus}
		
		In this subsection, we shall establish the octonionic left slice regular functional calculus. The key point is the use of the octonionic left resolvent operator $(R_s-T)^{\circledcirc -}$.

		\begin{thm}\label{thm:poly left fctcal}
			Let $m\in \mathbb N, $	$T\in \mathscr{B}_{\mathcal{RO}}(V)$ be a power-associative operator and 	$U\subseteq \O$  be a $T$-left-admissible domain. For any $J\in \mathbb S$,   we have
			\begin{eqnarray}\label{eq:intpolyTJ}
				\dfrac{1}{2\pi} \int_{\partial (U\cap \spc_J)}  (R_s-T)^{\circledcirc -}\odot ({d}s_Js^m )=T^m,
			\end{eqnarray}
			where   ${d}s_J = -{d}sJ$.
		\end{thm}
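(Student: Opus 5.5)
Both sides of \eqref{eq:intpolyTJ} are right para-linear: $T^m$ is right para-linear because $T$ is power-associative, and the left-hand side is an integral of the right para-linear operators $(R_s-T)^{\circledcirc -}\odot(ds_Js^m)$ in the $\O$-bimodule $\mathscr{B}_{\mathcal{RO}}(V)$. By the Uniqueness Lemma \ref{lem:Uniqueness Lemma} it therefore suffices to check the identity on $\re V$. Fix $v\in\re V$. Since $ds_Js^m\in\spc_J$ for $s\in\spc_J$ and $v\in\re V\subseteq\spc_J(V)$, Theorem \ref{lem:paralinear-char2} gives $B_p((R_s-T)^{\circledcirc -},v)=0$ for $p\in\spc_J$. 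Hence
\[
\big((R_s-T)^{\circledcirc -}\odot (ds_Js^m)\big)(v)=(R_s-T)^{\circledcirc -}(v\,ds_Js^m)=(R_s-T)^{-1}(v)\,ds_Js^m ,
\]
where the last equality uses $(R_s-T)^{\circledcirc -}|_{\re V}=(R_s-T)^{-1}|_{\re V}$ and right para-linearity. So the claim reduces to the vector identity
\[
\frac1{2\pi}\int_{\partial(U\cap\spc_J)}(R_s-T)^{-1}(v)\,ds_J s^m=T^m v .
\]

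\textbf{Step 1: replace the contour by a large circle.} For $s\in\rho_J^*(T)\supseteq \spc_J\setminus U$, the map $s\mapsto (R_s-T)^{-1}(v)$ is holomorphic in the sense of the proof of Theorem \ref{thm:Rs-Tsliceregular}. That proof gives $(\partial_x+R_J\partial_y)(R_s-T)^{-1}(v)=0$, using the $\spc_J$-extendability (Lemma \ref{lem:CJpower associative}). Multiplying on the right by the $\spc_J$-valued holomorphic factor $s^m$, Cauchy's theorem applies on $\spc_J$ (viewed through the complex structure $R_J$). It is cleanest to pair with $\phi\in V^{*_\O}$ and use Lemma \ref{cor:HBcor}, so that one works with genuinely $\O$-valued, right-holomorphic functions. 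The integral over $\partial(U\cap\spc_J)$ then equals the integral over a circle $|s|=r$ with $r>\fsh{T}$ and $\{|s|\le r\}\cap\spc_J\supseteq \overline{U\cap\spc_J}$. This is justified because the region between the two contours lies in $\rho_J^*(T)$, and $\sigma^*(T)\cap\spc_J\subseteq U$ by admissibility.

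\textbf{Step 2: expand on the large circle.} On $|s|=r$, Corollary \ref{cor:reolv op ser} together with \eqref{eq:extinvRs-TcJv} gives, for $v\in\re V$,
\[
(R_s-T)^{-1}(v)=\sum_{n\ge0}T^n(v)s^{-1-n},
\]
with uniform norm convergence, which permits term-by-term integration. For each $n$, the quantity $T^n(v)\,s^{-1-n}ds_Js^m$ involves only the fixed vector $T^n v$ and scalars in the commutative field $\spc_J$. In an $\O$-bimodule, $(w a)b=w(ab)$ for $a,b\in\spc_J$ by alternativity of right associators, since $[w,J,J]=0$. So this term equals $T^n(v)\big(s^{m-1-n}ds_J\big)$, and $\frac1{2\pi}\int_{|s|=r}s^{m-1-n}(-J)ds=\delta_{n,m}$. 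This yields $T^m v$.

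\textbf{Main obstacle.} The delicate point is Step 1: justifying the contour deformation for a $V$-valued function when holomorphy is only known after applying $\phi$ and on $\spc_J(V)$-type vectors. I would argue componentwise. Write $\phi((R_s-T)^{-1}v)\in\O$ and split it as $F_0+F_1J_1+F_2J_2+F_3J_3$ with $F_i$ being $\spc_J$-valued holomorphic functions, as in the Splitting Lemma \ref{lem:splittinglem}. Then apply the classical Cauchy theorem to each $F_i\,s^m$, exactly as in the proof of Lemma \ref{lem:intgdf=0}, and finally conclude the vector identity by Lemma \ref{cor:HBcor}.
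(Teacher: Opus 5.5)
Your proof is correct and follows essentially the paper's argument. Both expand the resolvent via Corollary~\ref{cor:reolv op ser} on a circle of radius $r>\fsh{T}$ in $\spc_J$, integrate term by term so that only the $n=m$ term survives, and move the contour by Cauchy's theorem on $\spc_J$. You carry this out on $\re V$ (via the Uniqueness Lemma) and justify the contour deformation through functionals $\phi\in V^{*_{\O}}$, Theorem~\ref{thm:Rs-Tsliceregular} and Lemma~\ref{lem:intgdf=0}, a step the paper only asserts.

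One small slip: $g(s)=\phi((R_s-T)^{-1}(v))$ is right slice regular, so the correct splitting is $g=\sum_{i=0}^{3}J_iG_i$ with $G_i$ holomorphic. In your form $\sum_i F_iJ_i$ the components $F_i$ with $i\geq 1$ are antiholomorphic, not holomorphic. It is cleanest simply to invoke Lemma~\ref{lem:intgdf=0} directly with $f(s)=s^m$.
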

		\begin{proof}
			Consider the power series expansion \eqref {eq:powextinvRs-T} for the resolvent operator $(R_s-T)^{\circledcirc -}$ and a circle  $C_r$ on $\spc_J$ centered in the origin and of radius $r > \fsh{T}$ contained in $U$. We have
			\begin{eqnarray*}
				\dfrac{1}{2\pi} \int_{\partial (U\cap \spc_J)}  (R_s-T)^{\circledcirc -}\odot ({d}s_Js^m)=	\dfrac{1}{2\pi}\sum_{n\geqslant0} T^n\odot\int_{ C_r}   s^{-1-n+m} {d}s_J =T^m.
			\end{eqnarray*}
			The classical complex Cauchy theorem on $\spc_J$ shows that the above integrals are not affected if we replace $C_r$ by $\partial (U\cap \spc_J)$ independently of $J\in \mathbb S$.
		\end{proof}

		\begin{thm}\label{thm:f*TindependentU}
			Let $T\in \mathscr{B}_{\mathcal{RO}}(V)$ be a power-associative operator.	Let $U\subseteq \O$  be a $T$-left-admissible domain, $f\in  {\mathcal{SR}_{\R}}({\sigma^*(T)})$ be a slice preserving function and set ${d}s_J = -{d}sJ$ for $J\in \mathbb S$. Then the integral
			\begin{eqnarray}\label{eq:intfTJ}
				f^*(T)_J:=\dfrac{1}{2\pi} \int_{\partial (U\cap \spc_J)}  (R_s-T)^{\circledcirc -}\odot ({d}s_Jf(s))
			\end{eqnarray}  does not depend on the choice of  $U$.
			Moreover, the map
			\begin{eqnarray}\label{eqdef:f(T)}
				f^*(T):\mathbb S&\to& \mathscr{B}_{\mathcal {RO}}(V)\\
				J&\mapsto& f^*(T)_J\notag
			\end{eqnarray} is continuous, and  for any $I,J\in \mathbb S$, we have
			$$\operatorname{Re}_{\mathscr{B}_{\mathcal{RO}}(V)}\, f^*(T)_J=\operatorname{Re}_{\mathscr{B}_{\mathcal{RO}}(V)}\, f^* (T)_I.$$

		\end{thm}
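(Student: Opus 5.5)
The approach is to reduce every statement to scalar Cauchy-type facts on the single slice $\spc_J$. We do this by evaluating on $\re V$ and testing against octonionic linear functionals.

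\textbf{Step 1: reduction to the real part.} Both sides of any identity between elements of $\mathscr{B}_{\mathcal{RO}}(V)$ are right para-linear. So by the Uniqueness Lemma \ref{lem:Uniqueness Lemma} it suffices to compare them on $v\in\re V$. Fix $J$, $v\in\re V$ and $s\in\partial(U\cap\spc_J)$. Since $f$ is slice preserving, Proposition \ref{prop:slicepre} gives $f(s)\in\spc_J$, hence $ds_Jf(s)\in\spc_J$. For $p\in\spc_J$ and $v\in\re V$, Theorem \ref{lem:paralinear-char2} gives $B_p((R_s-T)^{\circledcirc -},v)=0$. Therefore
\[
\big((R_s-T)^{\circledcirc -}\odot p\big)(v)=(R_s-T)^{\circledcirc -}(pv)=(R_s-T)^{-1}(v)\,p .
\]
The integrand evaluated at $v$ is thus $(R_s-T)^{-1}(v)\,ds_J f(s)$. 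This is continuous in $s$ on $\rho^*(T)\cap\spc_J$ by the continuity of inversion (as in the proof of Theorem \ref{thm:Rs-Tsliceregular}), so the integral is a well-defined element of $\mathscr{B}_{\mathcal{RO}}(V)$.

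\textbf{Step 2: independence of $U$.} Let $U,U'$ be two $T$-left-admissible domains inside the domain $W$ of $f$. Pick $\phi\in V^{*_{\O}}$ and set $g(s)=\phi((R_s-T)^{\circledcirc -}(v))$. By Theorem \ref{thm:Rs-Tsliceregular}, $g$ is right slice regular on $\rho^*(T)$. Apply Lemma \ref{lem:intgdf=0} on the region between $\partial(U\cap\spc_J)$ and $\partial(U'\cap\spc_J)$, or on the difference $(U\cap\spc_J)\setminus\overline{(U''\cap\spc_J)}$ for a smaller admissible $U''\subset U\cap U'$. This region lies in $\rho^*_J(T)\cap W$ and avoids $\sigma^*(T)$, so $\int g\,ds_Jf=0$. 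One point needs care: Lemma \ref{lem:intgdf=0} uses the splitting lemma, which only needs regularity on the slice $\spc_J$. This is exactly what Theorem \ref{thm:Rs-Tsliceregular} provides on $\rho^*_J(T)$, where $(R_s-T)^{-1}$ is $\spc_J$-extendable. By octonionic linearity, $\phi$ commutes with the integral and with right multiplication by $\spc_J$-scalars. Lemma \ref{cor:HBcor} then gives equality of the two integrals at $v$, and Step 1 gives equality as operators.

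\textbf{Step 3: continuity in $J$ and equality of real parts.} By \eqref{eq:re f}, $\re_{\mathscr{B}_{\mathcal{RO}}(V)} f^*(T)_J=\ext\big(\re f^*(T)_J(\cdot)|_{\re V}\big)$. So it suffices to show that $\re\big(f^*(T)_J(v)\big)$ is independent of $J$ for $v\in\re V$.

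The main obstacle is that $\rho^*_J(T)$ genuinely depends on $J$. We therefore cannot rotate a contour from $\spc_J$ to $\spc_I$ through the resolvent set. Instead we use a large contour plus a series expansion. Take $U$ containing the closed ball of radius $r>\fsh{T}$ with $f$ regular there; Step 2 allows enlarging. We cannot assume $W$ contains such a ball, so a more robust route is needed.

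To get that route, write $f|_{\spc_J}$ via the Cauchy formula (Corollary \ref{cor:slicecauchy}). Compare $\re$ of the integrand using the identity $\re\big((R_s-T)^{-1}(v)\,\lambda\big)$ for $\lambda\in\spc_J$. The stem-function structure of $f$ means $f(x+yJ)=\alpha(x,y)+J\beta(x,y)$ with $\alpha,\beta$ real and independent of $J$. Parametrize $\partial(U\cap\spc_J)$ by one fixed curve $\gamma$ in $\spc$, transported to each slice. This is possible because $U$ is axially symmetric.

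Continuity in $J$ then follows if $(R_{\gamma_J(t)}-T)^{-1}$ depends continuously on $J$ uniformly in $t$. This holds because $R_s$ is norm-continuous in $s$ and inversion is continuous; we use the compactness of $\gamma$ together with Theorem \ref{thm:rhosliceopen} for openness. For the real-part equality, I would first prove it for polynomials via Theorem \ref{thm:poly left fctcal}, where $f^*(T)_J=\sum a_mT^m$ is independent of $J$. I would then approximate on a neighbourhood of the spectrum by Runge-type rational slice-preserving functions, reducing to the functions $Q_{s_0}(q)^{-1}$. For these, $\re(\cdot)$ is computed on $\re V$, where $R_{\bar s_0}$ and $R_{s_0}$ act symmetrically. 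Making this approximation step rigorous is the point I expect to be hardest.
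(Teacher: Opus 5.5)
Your Steps~1 and~2 are the paper's own argument for independence of $U$: reduce to $v\in\re V$ and $\phi\in V^{*_{\O}}$, then apply Lemma~\ref{lem:intgdf=0} to $g(s)=\phi((R_s-T)^{-1}(v))$ on the region between two contours.

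For continuity you take a different route, and it is valid. You transport one base curve $\gamma\subset\spc$ to every slice. On $\re V$ the integrand is then $(R_{\gamma_J(t)}-T)^{-1}(v)\,\lambda_J(t)$, with $\lambda_J(t)=\overline{J}\gamma_J'(t)f(\gamma_J(t))\in\spc_J$ continuous in $(J,t)$. The map $(J,t)\mapsto(R_{\gamma_J(t)}-T)^{-1}$ is continuous on the compact set $\mathbb S\times[a,b]$ for three reasons: $s\mapsto R_s$ is norm continuous, $R_s-T$ is invertible on $\partial U\subseteq\rho^*(T)$, and inversion is continuous. Theorem~\ref{thm:rhosliceopen} is not needed for this. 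The bounded map $\ext$ then carries continuity from $\re V$ to $V$. The paper instead derives continuity from its cross-slice identity, by showing that the associator remainder terms tend to $0$ as $J\to I$. Your argument is shorter and does not depend on that identity.

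The genuine gap is the equality $\re_{\mathscr{B}_{\mathcal{RO}}(V)} f^*(T)_J=\re_{\mathscr{B}_{\mathcal{RO}}(V)} f^*(T)_I$, which you never prove. The Runge step you flag as hardest is routine. The map $f\mapsto f^*(T)_J$ is bounded for the sup norm on $\partial U$, uniformly in $J$, and symmetrized Runge approximants are intrinsic rational functions with poles off $\overline U$. The real content is the base case, which you leave unargued. Take $f=Q_{s_0}^{-1}$ with $s_0=x_0+y_0I_0$, $y_0>0$, $[s_0]\cap\overline U=\emptyset$. A residue computation on $\spc_J$ gives, for $v\in\re V$ and $p_J:=x_0+y_0J$,
\[
(Q_{s_0}^{-1})^*(T)_J(v)=\big[(R_{p_J}-T)^{-1}(v)-(R_{\overline{p_J}}-T)^{-1}(v)\big]\frac{J}{2y_0}.
\]
So you must compare resolvents at points $p_J$, $p_I$ lying on \emph{different} slices, which is the original problem again.

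Your remark that $R_{s_0}$ and $R_{\overline{s_0}}$ act symmetrically on $\re V$ does not address this. In general $(R_{p_J}-T)^{-1}(v)$ leaves $\spc_J(V)$. Off $\spc_J(V)$ the para-linear $T$ does not commute with $R_J$; the defect is $B_J(T,\cdot)$. Slice-by-slice information therefore cannot control how these vectors vary with $J$.

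What is missing is an input tying together the values of $g(s)=\phi((R_s-T)^{-1}(v))$ on different slices. The paper supplies it through the right slice Cauchy formula \eqref{eq:rslicecauchy} for $g$, applied on an axially symmetric neighbourhood $U'\subseteq\rho^*(T)$ of $\partial U$. Its steps are:
\begin{enumerate}
\item Write $g$ on $\partial(U\cap\spc_I)$ as an integral over $\partial(U'\cap\spc_J)$ against $S_R^{-1}(q,s)$.
\item Substitute this into $\phi(f^*(T)_I(v))=\frac{1}{2\pi}\int g(s)\,ds_If(s)$.
\item Use Fubini and Corollary~\ref{cor:slicecauchy} to recover $\phi(f^*(T)_J(v))$.
\item What remains are the associators $[A(q),S_L^{-1}(s,q),B(s)]$ and $[A(q),\overline{q}-s,Q_q(s)^{-1}]B(s)$, and their real parts vanish. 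For the second, this is because $\overline{q}-s$, $Q_q(s)^{-1}$ and $B(s)$ lie in the associative subalgebra $\mathbb H_{I,J}$, via \eqref{eq:five_term}.
\end{enumerate}
Some such cross-slice representation of $g$ must be added before your Step~3 yields the real-part equality, whether you use it directly or to settle your rational base cases.
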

		\begin{proof}
			Let us prove that \eqref{eq:intfTJ} does not depend on the choice of $ U$.
			Fixing $J\in \mathbb S$ and $f\in  {\mathcal{SR}_{\R}}({\sigma^*(T)})$, we denote by $T_U$ the operator defined by \eqref{eq:intfTJ} for brevity, so as to emphasize the dependence of the operator on  $U$.
			
			Suppose $U_1,U_2$ are two $T$-left-admissible domains. There exists a $T$-left-admissible domain $U_0$ such that $\overline{U_0}\subseteq U_i$ for $i=1,2$.
			We aim to prove $T_{U_1}=T_{U_2}$.  Note that both  $T_{U_1}$ and $T_{U_2}$ are right para linear operators. Hence thanks to Lemma \ref{lem:Uniqueness Lemma}, it suffices to show  that
			$$T_{U_1}(v)=T_{U_2}(v)$$ for all $v\in \re V$.
			By Lemma \ref{cor:HBcor}, we only need to show that for any octonionic linear functional $\phi\in V^{*_{\O}}$,
			$$\phi(T_{U_1}(v))=\phi(T_{U_2}(v)).$$
			We define the function
			\begin{equation}\label{gesse}
				g(s):=\phi \Big( (R_s-T)^{-1}(v) \Big),\qquad \phi\in V^{*_{\O}},v\in \re V.
			\end{equation}
			It follows from Theorem \ref{thm:Rs-Tsliceregular} that $g$ is  right slice regular on $\rho^*(T)$.  Hence $g\in \mathcal{SR}^R(U_i\setminus \overline{U_0})$ for $i=1,2$.

			By direct calculations, for $i=1,2$,
			\begin{align*}
				\phi(T_{U_i}(v))&=\dfrac{1}{2\pi} \phi\int_{\partial (U_i\cap \spc_J)} \Big( (R_s-T)^{\circledcirc -}\odot ds_Jf(s)\Big)(v)\\
				&=\dfrac{1}{2\pi} \int_{\partial (U_i\cap \spc_J)}\phi \Big( (R_s-T)^{-1}(v) ({d}s_Jf(s))\Big)\\
				&=\dfrac{1}{2\pi} \int_{\partial (U_i\cap \spc_J)}\phi \Big( (R_s-T)^{-1}(v) \Big)({d}s_Jf(s))\\
				&=\dfrac{1}{2\pi} \int_{\partial (U_i\cap \spc_J)}g(s){d}s_Jf(s).
			\end{align*}
			Define an open subset $W_i:=(U_i\setminus \overline{U_0})\cap \spc_J.$ By definition, we have $f\in {\mathcal{SR}_{\R}}(W_i)$. It follows from Lemma
			\ref{lem:intgdf=0} that
			\begin{align*}
				&\qquad\phi(T_{U_i}(v))\\
				&=\dfrac{1}{2\pi} \int_{\partial (U_i\cap \spc_J)}g(s){d}s_Jf(s)-\dfrac{1}{2\pi} \int_{\partial (U_0\cap \spc_J)}g(s){d}s_Jf(s)+\dfrac{1}{2\pi} \int_{\partial (U_0\cap \spc_J)}g(s){d}s_Jf(s)\\
				&=\dfrac{1}{2\pi} \int_{\partial W_i}g(s){d}s_Jf(s)+\dfrac{1}{2\pi} \int_{\partial (U_0\cap \spc_J)}g(s){d}s_Jf(s)\\
				&=\dfrac{1}{2\pi} \int_{\partial (U_0\cap \spc_J)}g(s){d}s_Jf(s).
			\end{align*}
			This shows that  \eqref{eq:intfTJ} does not depend on the choice of $ U$.

			We  next show $$\operatorname{Re}_{\mathscr{B}_{\mathcal{RO}}(V)}\, f^*(T)_I=\operatorname{Re}_{\mathscr{B}_{\mathcal{RO}}(V)}\, f^*(T)_J$$ for any $I,J\in \mathbb S$.
			It suffices to show $$(\operatorname{Re}_{\mathscr{B}_{\mathcal{RO}}(V)}\, f^*(T)_I)(v)=(\operatorname{Re}_{\mathscr{B}_{\mathcal{RO}}(V)}\, f^*(T)_J)(v)$$ for all $v\in \re V$.
			Fix $I,J\in \mathbb S$. In view of \eqref{eq:re f}, this is equivalent to
			\begin{eqnarray*}
				\operatorname{Re}_V (f^*(T)_I(v))=\operatorname{Re}_V (f^*(T)_J(v)), \qquad v\in \re V.
			\end{eqnarray*}
			We shall use $\operatorname{Re}$ to represent $\operatorname{Re}_V$ for simplicity in the sequel if no confusion arises.
			Using Lemma \ref{cor:HBcor} again, we only need to prove
			\begin{eqnarray}\label{eqpf:phirefti}
				\phi\Big(	\re (f^*(T)_I(v))\Big)=\phi\Big(	\re (f^*(T)_J(v))\Big), \qquad v\in \re V,
			\end{eqnarray}
			for all $\phi\in V^{*_{\O}}$.
			By \eqref{eq:refx=frex}, \eqref{eqpf:phirefti} is equivalent to
			\begin{eqnarray}\label{eqpf:rephifti}
				\re \phi(f^*(T)_I(v))=	\re \phi(f^*(T)_J(v)), \qquad v\in \re V.
			\end{eqnarray}
			Fix    a $T$-left-admissible domain $U\subseteq \O$.
			Let $U'$ be an axially symmetric open set such that
			\begin{enumerate}
				\item[a.] $\overline{U'}\subseteq \rho^*(T)$;
				\item[b.] $U'\cap \R\neq \emptyset$;
				\item[c.] ${\partial (U'\cap \spc_K)}$ consists of a finite number of continuously differentiable Jordan curves  for any $K\in \mathbb S$ and $ \partial U\subseteq U '$.
				\item[d.] There exists a constant $C$ such that $C:=\sup_{K\in \mathbb S}\abs{\partial (U'\cap \spc_K)}<\infty$. Here $\abs{\partial (U'\cap \spc_K)}$ represents the length of ${\partial (U'\cap \spc_K)}$.
			\end{enumerate}
			
			{We consider again $g(s)$ as in \eqref{gesse}.}
			The function $g$ is right slice regular in the variable $s$ on the complement of $\sigma^*(T )$ which contains $U'$,  and  $g(s)\to  0$ as $s\to \infty$. 
			By the slice Cauchy integral formula \eqref{eq:rslicecauchy}, for any $s\in \partial (U\cap \spc_I)\subseteq U'$  we can represent $g(s) $ as
			\begin{eqnarray}\label{eq:gs}
				g(s)=\dfrac{1}{2\pi} \int_{{\partial (U'\cap \spc_J)}^-}(g(q){d}q_J)\bullet_s^R S_R^{-1}(q,s)
			\end{eqnarray}
			where the boundary ${\partial (U'\cap \spc_J)}^-$ is oriented clockwise, 	 $\bullet_s^R$ denotes the right slice product of functions with variable
			$s$.
			If $a, b \in \R, a < b$, and $q: [a, b] \to\spc_J$ is a piecewise $C^1$ parametrization of the (counterclockwise oriented) Jordan curve $\partial (U'\cap \spc_J)$ in the plane $\spc_J$, then \eqref{eq:gs} becomes
			\begin{eqnarray}\label{eqpf:g(s)0}
				g(s)=-\dfrac{1}{2\pi} \int_{a}^{b}(g(q)\overline{J}q'(t_1))\bullet_s^R S_R^{-1}(q,s){d}t_1,
			\end{eqnarray}
			where $q$ is   the abbreviation of $q(t_1)$ and $q'(t_1)\in \spc_J$ is  the derivative of $q(t_1)$.
			Note that as a function of variable $s$, $g(q)\overline{J}q'(t_1)$ is just a constant function.
			Theorem \ref{thm:slice algebra} implies that the slice preserving function $Q_{s}(q)^{-1}$  is in the nucleus  of  the alternative algebra of slice functions. Combining   \eqref{eq:SR-1} with   Proposition \ref{prop:slicepre}, we get that
			\begin{align}\label{eqpf:g(s)}
				&\quad\, \, (g(q)\overline{J}q'(t_1))\bullet_s^R S_R^{-1}(q,s)\notag\\
				&=	(g(q)\overline{J}q'(t_1))\bullet_s^R \Big((\overline{q}-s)	\bullet_s^R Q_{q}(s)^{-1}\Big)\notag\\
				&=\Big(g(q)\overline{J}q'(t_1)\bullet_s^R (\overline{q}-s)\Big)\bullet_s^R Q_{q}(s)^{-1}\notag\\
				&=\Big((g(q)\overline{J}q'(t_1)) (\overline{q}-s)\Big) Q_{q}(s)^{-1}\notag\\
				&=(g(q)\overline{J}q'(t_1))S_R^{-1}(q,s) +[g(q)\overline{J}q'(t_1), \overline{q}-s, Q_{q}(s)^{-1}].
			\end{align}
			Substituting \eqref{eqpf:g(s)} and identity \eqref{eq:SL=-SR} into \eqref{eqpf:g(s)0}, we  obtain
			\begin{eqnarray}
				g(s)=\dfrac{1}{2\pi} \int_{a}^{b}(g(q)\overline{J}q'(t_1))S_L^{-1}(s,q) -[g(q)\overline{J}q'(t_1), \overline{q}-s, Q_{q}(s)^{-1}]{d}t_1,
			\end{eqnarray}
			
			We next compute $\phi(f^*(T)_I(v))$. Suppose  $c, d \in \R, c < d$, and $s: [c, d] \to\spc_I$ is a piecewise $C^1$ parametrization of the (counterclockwise oriented) Jordan curve $\partial (U\cap \spc_I)$ in the plane $\spc_I$, and $s'(t_2)\in \spc_I$ is  the derivative of $s(t_2)$.
			\begin{align}\label{eqpf:phifTI}
				&\qquad\phi(f^*(T)_I(v))\notag\\
				&=\dfrac{1}{2\pi}\int_{\partial (U\cap \spc_I)}g(s){d}s_If(s)\notag\\
				&=\dfrac{1}{2\pi}\int_{\partial (U\cap \spc_I)}\dfrac{1}{2\pi} \int_{a}^{b}(g(q)\overline{J}q'(t_1))S_L^{-1}(s,q) -[g(q)\overline{J}q'(t_1), \overline{q}-s, Q_{q}(s)^{-1}]{d}t_1{d}s_If(s)\notag\\
				&=\dfrac{1}{4\pi^2}\int_{c}^{d} \int_{a}^{b}(g(q)\overline{J}q'(t_1))S_L^{-1}(s,q) -[g(q)\overline{J}q'(t_1), \overline{q}-s, Q_{q}(s)^{-1}]{d}t_1\big(\overline{I}s'(t_2)f(s)\big){d}t_2.
			\end{align}
			Denote by $A(q)=g(q)\overline{J}q'(t_1),B(s)=\overline{I}s'(t_2)f(s),$ where $q=q(t_1),s=s(t_2)$. Then \eqref{eqpf:phifTI} becomes
			\begin{align}\label{eqpf:phifTI2}
				&\qquad\phi(f^*(T)_I(v))\notag\\
				&=\dfrac{1}{4\pi^2}\int_{c}^{d} \int_{a}^{b}A(q)S_L^{-1}(s,q) -[A(q), \overline{q}-s, Q_{q}(s)^{-1}]{d}t_1B(s){d}t_2\notag\\
				&=\dfrac{1}{4\pi^2}\int_{c}^{d} \int_{a}^{b}A(q)(S_L^{-1}(s,q)B(s))+[A(q),S_L^{-1}(s,q),B(s)] \\
				&\qquad\qquad\qquad-[A(q), \overline{q}-s, Q_{q}(s)^{-1}]B(s){d}t_1{d}t_2.\notag
			\end{align}
			Using the Fubini theorem, the first term becomes
			\begin{align}
				(\text{I})&:=\dfrac{1}{4\pi^2}\int_{c}^{d} \int_{a}^{b}A(q)(S_L^{-1}(s,q)B(s)){d}t_1{d}t_2\notag\\
				&= \dfrac{1}{4\pi^2} \int_{a}^{b}A(q)\int_{c}^{d}(S_L^{-1}(s,q)B(s)){d}t_2{d}t_1\notag\\
				&= \dfrac{1}{2\pi} \int_{\partial (U'\cap \spc_J)}A(q)\left(\dfrac{1}{2\pi}\int_{\partial (U\cap \spc_I)}S_L^{-1}(s,q)ds_If(s)\right)dt_1\notag\\
				&= \dfrac{1}{2\pi} \int_{\partial (U'\cap \spc_J)}A(q)f(q){d}t_1,\label{eqpf:rephifTI}
			\end{align}
			where we used Corollary \ref{cor:slicecauchy} in the last line.
			Substituting $A(q)=g(q)\overline{J}q'(t_1)$ into \eqref{eqpf:rephifTI}, we have
			\begin{align}\label{eqpf:rephifTI2}
				(\text{I})	&= \dfrac{1}{2\pi} \int_{\partial (U'\cap \spc_J)}\big(g(q)\overline{J}q'(t_1)\big)f(q){d}t_1\notag\\
				&=\dfrac{1}{2\pi} \int_{\partial (U'\cap \spc_J)}g(q)dq_Jf(q).
			\end{align}
			Now observe that $\partial (U'\cap \spc_J)$ is positively oriented and surrounds  {$\sigma^*(T)$}. By the independence of the integral on the open set, we can substitute $\partial (U'\cap \spc_J)$ by $\partial (U\cap \spc_J)$ in \eqref{eqpf:rephifTI2} and we get
			$$		(\text{I})=	 \phi(f^*(T)_J(v)).$$
			It remains to show
			\begin{eqnarray}\label{eqpf:rephift-I}
				\re \Big(\phi(f^*(T)_I(v))-	(\text{I})\Big)=0.
			\end{eqnarray}
			Noticing $f$ is slice preserving, we obtain that $B(s)\in \spc_I$. Since  $\overline{q}-s, Q_{q}(s)^{-1}\in \mathbb H_{I,J}$, where $\mathbb H_{I,J}$ denotes the associative subalgebra of $\O$ generated by $I,J$, it follows from  identity \eqref{eq:five_term} that
			$$\re [A(q), \overline{q}-s, Q_{q}(s)^{-1}]B(s)=\re A(q)[\overline{q}-s, Q_{q}(s)^{-1},B(s)]=0.$$
			Combining this with \eqref{eqpf:phifTI2}, we   get \eqref{eqpf:rephift-I} as desired.

			Finally, we prove the map \eqref{eqdef:f(T)} is continuous.
			By \eqref{eqpf:phifTI2} and above discussion, for any $\phi\in V^{*_{\O}}, v\in \re V$, $\phi(f^*(T)_I(v))-\phi(f^*(T)_J(v))$ becomes
			\begin{eqnarray}
				\dfrac{1}{4\pi^2}\int_{c}^{d} \int_{a}^{b}[A(q),S_L^{-1}(s,q),B(s)] -[A(q), \overline{q}-s, Q_{q}(s)^{-1}]B(s){d}t_1{d}t_2.
			\end{eqnarray}
			Note that $S_L^{-1}(s,q)$ is continuous both in variable $s\in  \spc_I$ and $q\in \spc_J$. Fix $I\in \mathbb S$,  $s\in \partial (U\cap\spc_I)$ and $q_0,q_1\in \R$ such that for any  $K\in \mathbb S$ $$q_0+q_1K\in {\partial (U'\cap \spc_K)}.$$  Denote for each  $J\in \mathbb S$ that $$q_J:=q_0+q_1J.$$
			By Theorem \ref{thm:extspec cpt}, we may assume  $\partial U'$ is compact. 	This, combined with the continuity of
			$S_L^{-1}(s,q)$ in the variable  $q$ (since $s\in \partial (U\cap\spc_I)$), implies that  as 		
			$J\to I$,
			\begin{eqnarray}\label{eqpf:SL-1sq}
				S_L^{-1}(s,q_J)\to S_L^{-1}(s,q_I) \qquad \text{uniformly for  $q_J\in \partial U'$.}
			\end{eqnarray}
			Recall that $A(q)=g(q)\overline{J}q'(t_1)=\phi \Big( (R_q-T)^{-1}(v) \Big)\overline{J}q'(t_1)$ is a function of $t_1$ depending on given $\phi,v$.
			Let us set $$\mathbb S_{V}:=\{v\in V:\fsh{v}=1\},\qquad \mathbb S_{V^{*_{\O}}}:=\{\phi\in V^{*_{\O}}:\fsh{\phi}=1\}.$$
			From the compactness of   $\partial U'$
			and  the continuity of
			$(R_q-T)^{-1}$ with respect to the  variable $q$, it follows   that
			there exists a constant $M$, such that \begin{eqnarray}\label{eqpf:Aqj}
				\qquad\abs{A(q_J)}\leqslant M\abs{q_J'(t_1)} \qquad \text{ for  any $v\in \mathbb S_{V}$, $\phi\in \mathbb S_{V^{*_{\O}}}$ and $q_J\in \partial U'$.}
			\end{eqnarray}
			
			For arbitrary $\varepsilon>0$, it follows from \eqref{eqpf:SL-1sq} that there exists $\delta$ such that for each $J$ satisfying  $\abs{J-I}<\delta$, $\abs{S_L^{-1}(s,q_J)-S_L^{-1}(s,q_I)}<\varepsilon.$ 	Since  $S_L^{-1}(x,y)\in \spc_I$ when $x,y\in \spc_I$, it follows  from $B(s)\in \spc_I$ that $$[A(q_J),S_L^{-1}(s,q_I),B(s)]=0.$$
			Thus we have  \begin{align}
				&\quad\ \abs{\int_{a}^{b}[A(q_J),S_L^{-1}(s,q_J),B(s)]B(s){d}t_1}\notag\\
				&\leqslant	\int_{a}^{b}\abs{[A(q_J),S_L^{-1}(s,q_J),B(s)]B(s)}{d}t_1\notag\\
				&\leqslant	\int_{a}^{b}\abs{[A(q_J),S_L^{-1}(s,q_J)-S_L^{-1}(s,q_I),B(s)]B(s)}{d}t_1\notag\\
				&\qquad+	\int_{a}^{b}\abs{[A(q_J),S_L^{-1}(s,q_I),B(s)]B(s)}{d}t_1\notag\\
				&\leqslant	\int_{a}^{b}2M\abs{q_J'(t_1)}\varepsilon \abs{B(s)}^2{d}t_1\notag\\
				&\leqslant 2M\abs{B(s)}^2 C\varepsilon \label{eqpf:Bs}
			\end{align}
			uniformly for $v\in \mathbb S_{V}$, $\phi\in \mathbb S_{V^{*_{\O}}}$, where we used \eqref{eqpf:Aqj} in the third line and  the requirement $(\text{d})$ imposed on $U'$ in the last line.
			This shows that for any fixed $s\in \partial (U\cap\spc_I)$, as $J\to I$,   \begin{align*}
				\int_{a}^{b}[A(q_J),S_L^{-1}(s,q_J),B(s)]B(s){d}t_1\to 0.
			\end{align*}
			Note that $\abs{B(s)}^2$ is integrable. In view of \eqref{eqpf:Bs}, we conclude from Lebesgue dominated convergence Theorem that
			$$\int_{c}^{d}	\int_{a}^{b}[A(q_J),S_L^{-1}(s,q_J),B(s)]B(s){d}t_1{d}t_2\to 0, \qquad \text{uniformly for $v\in \mathbb S_{V}$, $\phi\in \mathbb S_{V^{*_{\O}}}$}.$$
			Similarly, we can also prove that 	as $J\to I$,
			\begin{align*}
				\int_{c}^{d}\int_{a}^{b}[A(q), \overline{q}-s, Q_{q}(s)^{-1}]B(s){d}t_1{d}t_2\to 0 \qquad \text{uniformly for $v\in \mathbb S_{V}$, $\phi\in \mathbb S_{V^{*_{\O}}}$}.
			\end{align*}
			Namely, as $J\to I$,
			$$	\phi(f^*(T)_J(v))\to\phi(f^*(T)_I(v)),\qquad \text{uniformly for $v\in \mathbb S_{V}$, $\phi\in \mathbb S_{V^{*_{\O}}}$}.$$
			By  Lemma \ref{cor:HBcor} again, this implies that as $J\to I$,
			$$f^*(T)_J(v)\to f^*(T)_I(v),\qquad\text{uniformly for }  v\in \re V\cap  \mathbb S_{V},$$
			which proves
			$	\lim_{J\to I}f^*(T)_J=f^*(T)_I$ as desired, as the asserted continuity follows.
		\end{proof}

		Denote by $\Gamma(\mathbb S, \mathscr{B}_{\mathcal{RO}}(V))$ the set of continuous right para-linear operator-valued sections on the six dimensional sphere $\mathbb S$. We note that the $\O$-bimodule structure of $\mathscr{B}_{\mathcal{RO}}(V)$   induces an $\O$-bimodule structure on $\Gamma(\mathbb S, \mathscr{B}_{\mathcal{RO}}(V))$.

		
		\begin{mydef}[The octonionic left slice regular functional calculus]\label{def:ofunccal}
			Let $T\in \mathscr{B}_{\mathcal{RO}}(V)$ be a power-associative operator.	Define
			\begin{eqnarray}
				(\Phi_T)_0:{\mathcal{SR}_{\R}}({\sigma^*(T)})&\to& \Gamma(\mathbb S, \mathscr{B}_{\mathcal{RO}}(V))\\
				f&\mapsto& (f^*(T))(J):=f^*(T)_J=\dfrac{1}{2\pi} \int_{\partial (U\cap \spc_J)}  (R_s-T)^{\circledcirc -}\odot ({d}s_Jf(s)), \notag
			\end{eqnarray}	
			where $U\subseteq \O$  is a $T$-left-admissible domain.
			
			We then define the octonionic left slice regular functional calculus as $$\Phi_T:=\ext 	(\Phi_T)_0:\mathcal{SR}^L({\sigma^*(T)})\to \Gamma(\mathbb S, \mathscr{B}_{\mathcal{RO}}(V)).$$
		\end{mydef}
		\begin{mydef}\label{def:lsliceinv}
			Let $T\in \mathscr{B}_{\mathcal{RO}}(V)$ be a power-associative operator.	$T$ is called \textbf{(left)  \sinv} if for any $f\in \mathcal{SR}^L({\sigma^*(T)})$, $f^*(T)_J$ is independent of $J\in \mathbb S$.
		\end{mydef}
		
		\begin{rem}\label{rem:sphereinv}
			\begin{enumerate}
				\item 	Let $T\in \mathscr{B}_{\mathcal{RO}}(V)$ be a power-associative operator. 	In view of Theorem \ref{thm:poly left fctcal}, we know that for any  polynomial    $P\in \mathcal{SR}_{\R}({\sigma^*(T)})$, $P^*(T)_J$ is independent of $J\in \mathbb S$. Hence to find a non-{\sinv} operator, one must consider non-polynomial functional calculus.
				\item  The octonionic version of functional calculus of an operator $T$ is an operator-valued function on the six-dimensional sphere $\mathbb S$. We point out that in the quaternionic case, see \cite[Definition 4.10.4]{colombo2011noncomfunctcalculus},
				every quaternionic linear operator is
				``{\sinv}''.   In fact, by Theorem \ref{thm:f*TindependentU}, we have
				$$\re_{\mathscr{B}_{\mathcal{RO}}(V)} (f^*(T)_J-f^*(T)_I)=0$$for any $I,J\in \mathbb S$.
				Considering the real part operator above as a projection onto the nucleus (see Remark \ref{rem:uniform law}), it automatically degenerates into the identity map in the quaternionic case, thereby inducing sphere invariance of $T$.
			\end{enumerate}

		\end{rem}

		\begin{thm}\label{thm:f(T)}
			Let $T\in \mathscr{B}_{\mathcal{RO}}(V)$ be a power-associative operator and	$U\subseteq \O$  be a $T$-left-admissible domain.
			Suppose $f=\sum_{i=0}^7f_{(i)}\bullet^L e_i\in\mathcal{SR}^L({\sigma^*(T)}) $ with $f_{(i)}$ slice preserving for $i=0,\dots,7$. Then 
			\begin{eqnarray}\label{eq:f(T)J}
				f^*(T)_J	&=&\dfrac{1}{2\pi} \int_{\partial (U\cap \spc_J)} 	(R_s-T)^{\circledcirc -}\odot ({d}s_Jf(s)) + \\
				&&	\dfrac{1}{2\pi}\sum_{i=1}^{7} \int_{\partial (U\cap \spc_J)} \big[(R_s-T)^{\circledcirc -},{d}s_Jf_{(i)}(s),e_i\big]_{\mathscr{B}_{\mathcal{RO}}(V)}.\notag
			\end{eqnarray}
			\eqref{eq:f(T)J} still holds when considering any standard orthonormal basis $\{1,J_1,\dots, J_7\}$ instead of $\{1,e_1,\dots, e_7\}$.
			In particular, if $f\in \spc_J(\mathcal{SR}^L({\sigma^*(T)}))$, then
			\begin{eqnarray}\label{eq:spcJf(T)J}
				f^*(T)_J=	\dfrac{1}{2\pi} \int_{\partial (U\cap \spc_J)} 	(R_s-T)^{\circledcirc -}\odot ({d}s_Jf(s)),
			\end{eqnarray}
			and for any $v\in \re V$,
			\begin{eqnarray}\label{eqpf:f*TJv}
				f^{*}(T)_J(v)=\dfrac{1}{2\pi} \int_{\partial (U\cap \spc_J)} (R_s-T)^{ -1}(v) \big({d}s_J{f}(s)\big).
			\end{eqnarray}
		\end{thm}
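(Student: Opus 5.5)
The plan is to unwind Definition \ref{def:ofunccal} of $\Phi_T=\ext(\Phi_T)_0$ and to rewrite the outcome using the bimodule associator in $\mathscr{B}_{\mathcal{RO}}(V)$. By Theorem \ref{thm:slice algebra}, the real part of the $\O$-bimodule $\mathcal{SR}^L(\sigma^*(T))$ is $\mathcal{SR}_{\R}(\sigma^*(T))$. By Proposition \ref{prop:real_part}(4), every $f$ decomposes uniquely as $f=\sum_{i=0}^7 f_{(i)}\bullet^L e_i$ with $f_{(i)}$ slice preserving. Definition \ref{def:lif and ext} of $\ext$ (in its bimodule form, with right multiplication $\odot$ on the target) then gives
\[
f^*(T)_J=\sum_{i=0}^7 \big(f_{(i)}^*(T)_J\big)\odot e_i,\qquad f_{(i)}^*(T)_J=\frac{1}{2\pi}\int_{\partial(U\cap\spc_J)}(R_s-T)^{\circledcirc -}\odot(ds_Jf_{(i)}(s)).
\]
Since $\cdot\odot e_i$ is bounded and real linear on $\mathscr{B}_{\mathcal{RO}}(V)$, I will pass it inside the integral. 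Each integrand then becomes $\big((R_s-T)^{\circledcirc -}\odot c_i(s)\big)\odot e_i$ with $c_i(s):=ds_Jf_{(i)}(s)\in\spc_J$.

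Next I apply the definition of the right associator in the bimodule $\mathscr{B}_{\mathcal{RO}}(V)$:
\[
(A\odot c)\odot e_i=A\odot(ce_i)+[A,c,e_i]_{\mathscr{B}_{\mathcal{RO}}(V)}.
\]
For $i=0$ the associator vanishes. It then remains to show that $\sum_i c_i(s)e_i=ds_Jf(s)$. By Proposition \ref{prop:slicepre}, $f_{(i)}\bullet^L e_i=f_{(i)}e_i$ pointwise, so $f(s)=\sum_i f_{(i)}(s)e_i$. Because $ds_J$ and $f_{(i)}(s)$ both lie in $\spc_J$, the subalgebra generated by $\spc_J$ and $e_i$ is associative, and hence $(ds_Jf_{(i)}(s))e_i=ds_J(f_{(i)}(s)e_i)$. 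Summing over $i$ gives \eqref{eq:f(T)J}.

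The same argument works for any standard orthonormal basis $\{1,J_1,\dots,J_7\}$, since Proposition \ref{prop:real_part} permits that substitution in the real-part decomposition and in the definition of $\ext$.

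For the particular case $f\in\spc_J(\mathcal{SR}^L(\sigma^*(T)))$, I choose the standard basis related to $J$, so that $J_4=J$ and $f=f_{(0)}+f_{(4)}\bullet J$. The only associator that survives is $[A,c,J]$ with $c=a+bJ\in\spc_J$. By alternativity of the bimodule it equals $b[A,J,J]=0$, which gives \eqref{eq:spcJf(T)J}.

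For \eqref{eqpf:f*TJv}, I take $v\in\re V$ and $p\in\O$ and use \eqref{eqdef:fp}, which gives $(A\odot p)(v)=A(pv)-B_p(A,v)=A(vp)$. Here $pv=vp$ because $v$ is real, and $B_p(A,v)=0$ by Theorem \ref{lem:paralinear-char2}(2). Moreover $vp\in\spc_J(V)$ when $p\in\spc_J$. Right $\spc_J$-linearity of the para-linear map $A=(R_s-T)^{\circledcirc-}$ on $\spc_J(V)$ (Theorem \ref{lem:paralinear-char2}(4)) then gives $A(vp)=A(v)p$. Finally, $A(v)=(R_s-T)^{-1}(v)$ by Definition \ref{def:ncirc}. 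Evaluation at $v$ commutes with the integral, so the formula follows.

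I expect the main obstacle to be bookkeeping rather than hard analysis. Two points need care. First, I must make sure that the bimodule version of $\ext$, applied to the real-linear map $(\Phi_T)_0$ on the real part, really produces $\sum_i\Phi(f_{(i)})\odot e_i$ with exactly the associator orientation used above. Second, I must justify exchanging $\odot e_i$, and evaluation, with the operator-valued contour integral; this follows from continuity of $s\mapsto(R_s-T)^{\circledcirc-}$ on $\rho^*(T)$ and from boundedness of the bimodule operations.
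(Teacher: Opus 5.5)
Your proposal is correct and follows essentially the same route as the paper. You write $\Phi_T=\ext(\Phi_T)_0$ as $\sum_i f_{(i)}^*(T)_J\odot e_i$, move $\odot e_i$ inside the integral, split off the associator $[(R_s-T)^{\circledcirc -},ds_Jf_{(i)}(s),e_i]_{\mathscr{B}_{\mathcal{RO}}(V)}$, recombine $\sum_i (ds_Jf_{(i)}(s))e_i=ds_Jf(s)$ using associativity of the subalgebra generated by $\spc_J$ and $e_i$, and finally evaluate at $v\in\re V$ via \eqref{eqdef:fp}, centrality of $v$ and para-linearity — exactly the paper's argument, with your alternativity check $[A,c,J]=0$ for $c\in\spc_J$ supplying a step the paper states without comment.
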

		\begin{proof}
			By definition \eqref{def:ofunccal},
			\begin{align*}
				f^*(T)_J&=(\ext 	(\Phi_T)_0)\left(\sum_{i=0}f_{(i)}\bullet^L e_i\right)\ (J)\\
				&=\sum_{i=0}^7\Big((\ext 	(\Phi_T)_0)(f_{(i)} )\odot e_i\Big)\ (J).
			\end{align*}
			Note that the $\O$-bimodule structure on $\Gamma(\mathbb S, \mathscr{B}_{\mathcal{RO}}(V))$ is induced by the $\O$-bimodule structure on $\mathscr{B}_{\mathcal{RO}}(V)$. Thus $f^*(T)_J$ equals
			\begin{align}
				&\, \quad\sum_{i=0}^7\Big((\ext 	(\Phi_T)_0)(f_{(i)} )(J)\Big)\odot e_i\notag\\
				&=\sum_{i=0}^7\Bigg(\dfrac{1}{2\pi} \int_{\partial (U\cap \spc_J)} (R_s-T)^{\circledcirc -}\odot ds_Jf_{(i)}(s)\Bigg) \odot e_i \notag\\
				&=\dfrac{1}{2\pi}\sum_{i=0}^7 \int_{a}^{b}  (R_s-T)^{\circledcirc -}\odot\Big( \big(\overline{J}s'(t)f_{(i)}(s)\big) e_i \Big)+\big[(R_s-T)^{\circledcirc -},\overline{J}s'(t)f_{(i)}(s),e_i\big]_{\mathscr{B}_{\mathcal{RO}}(V)} dt.\label{eqpf:ftJ}
			\end{align}
			Here in the last line, we suppose  $a,b \in \R, a<b$, and $s: [a,b] \to\spc_J$ is a piecewise $C^1$ parametrization of the (counterclockwise oriented) Jordan curve $\partial (U\cap \spc_J)$ in the plane $\spc_J$, and $s'(t)\in \spc_J$ is  the derivative of $s(t)$.
			Since $f_{(i)}$ is slice preserving for $i=0,\dots,7$, it follows that
			$f_{(i)}(s)\in \spc_J$ for each $s\in \partial (U\cap \spc_J) $. In view of Proposition  \ref{prop:slicepre}, we obtain
			\begin{eqnarray}
				\sum_{i=0}^7	\big(\overline{J}s'(t)f_{(i)}(s)\big) e_i&=&\sum_{i=0}^7 \overline{J}s'(t)f_{(i)}(s) e_i\notag\\
				&=&\sum_{i=0}^7 \overline{J}s'(t)f_{(i)}(s)\bullet^L e_i\notag\\
				&=& \overline{J}s'(t)f(s).\label{eqpf:dsf}
			\end{eqnarray}
			Combining \eqref{eqpf:ftJ} and \eqref{eqpf:dsf}, we obtain $f^*(T)_J$ equals \eqref{eq:f(T)J} as desired.
			
			Let $f\in \spc_J(\mathcal{SR}^L({\sigma^*(T)}))$. Suppose   $$ f=f_{(0)}+f_{(1)}\bullet^L  J$$ with $f_{(0)},f_{(1)}\in \mathcal{SR}_{\R}({\sigma^*(T)} )$. Note that
			$$ \big[(R_s-T)^{\circledcirc -},{d}s_Jf_{(0)}(s),1\big]_{\mathscr{B}_{\mathcal{RO}}(V)}=0,\qquad \big[(R_s-T)^{\circledcirc -},{d}s_Jf_{(1)}(s),J\big]_{\mathscr{B}_{\mathcal{RO}}(V)}=0.$$
			Thus \eqref{eq:f(T)J} implies  \eqref{eq:spcJf(T)J}.
			
			Fix $v\in \re V$ arbitrarily.	We have
			\begin{align*}
				&\, \quad {f}^{*}(T)_J(v)\\
				&=\dfrac{1}{2\pi} \int_{\partial (U\cap \spc_J)}\Big(  (R_s-T)^{\circledcirc -} \odot{d}s_J{f}(s)\Big)(v)  &\text{by \eqref{eq:spcJf(T)J}}\notag\\
				&=\dfrac{1}{2\pi} \int_{\partial (U\cap \spc_J)} (R_s-T)^{\circledcirc -}\Big( \big({d}s_J{f}(s)\big)v\Big)  &\text{using the definition in \eqref{eqdef:fp}}\notag\\
				&=\dfrac{1}{2\pi} \int_{\partial (U\cap \spc_J)} (R_s-T)^{\circledcirc -}\Big( v\big({d}s_J{f}(s)\big)\Big)  &\text{by part(1) of Proposition  \ref{prop:real_part}}\notag\\
				&=	\dfrac{1}{2\pi} \int_{\partial (U\cap \spc_J)} (R_s-T)^{\circledcirc -}(v) \big({d}s_J{f}(s)\big) &\text{since $(R_s-T)^{\circledcirc -}$ is right para-linear}\notag\\
				&=	\dfrac{1}{2\pi} \int_{\partial (U\cap \spc_J)} (R_s-T)^{ -1}(v) \big({d}s_J{f}(s)\big) &\text{by Definition \ref{def:ncirc} and $v\in \re V$}\notag.
			\end{align*}
			This proves \eqref{eqpf:f*TJv} as desired.
		\end{proof}
		
		{
			\begin{rem}
				As we shall prove in Theorem \ref{Cauchyoctonions}, when $T=L_q$, the operator of left multiplication by a nonzero $q\in\mathbb O$, the previous Theorem \ref{thm:f(T)} and Definition \ref{def:ofunccal} are consistent with the Cauchy formula in the octonionic case for slice regular functions, see \cite{Ghiloni2017caot}.
		\end{rem}}
		\subsection{Octonionic right slice regular functional calculus}
		In this subsection, we establish the right slice regular functional calculus for power-associative operators on a Banach $\O$-bimodule $V$.  Since all results herein bear a close similarity to those presented in the previous section, certain proofs are omitted for the sake of brevity. We shall only elaborate on the different parts of the proofs.
		The counterpart of Theorem \ref{thm:poly left fctcal} is the following result:
		\begin{thm}
			Let $m\in \mathbb N, $ 		$T\in \mathscr{B}_{\mathcal{RO}}(V)$ be a power-associative operator and 	$U\subseteq \O$  be a $T$-{right}-admissible domain. For any $J\in \mathbb S$, set ${d}s_J = -{d}sJ$. Then we  have
			\begin{eqnarray}\label{eq:lifpolyTJ}
				\dfrac{1}{2\pi} \int_{\partial (U\cap \spc_J)}  (s^m{d}s_J)\odot(R_s-T)^{-\circledcirc }  =T^m.
			\end{eqnarray}
		\end{thm}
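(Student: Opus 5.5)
The plan mirrors the proof of Theorem \ref{thm:poly left fctcal}, now using the push-forward expansion \eqref{eq:powlifinvRs-T} of Corollary \ref{cor:reolv op ser}. Since $T$ is power-associative, Theorem \ref{thm:pushforw bound} shows that every $s$ with $|s|>\fsh{T}$ lies in $\rho_*(T)$. For such $s$ we have
$$(R_s-T)^{-\circledcirc}=\sum_{n\geqslant 0}s^{-1-n}\odot T^{n},$$
with norm convergence; the bound $\fsh{(s^{-1-n}\odot T^{n})(x)}\leqslant 8\fsh{T}^n\fsh{x}|s|^{-1-n}$ was obtained in the proof of Theorem \ref{thm:Rs-T}. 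Fix $J\in\mathbb S$ and a circle $C_r\subseteq \spc_J$ centred at the origin with radius $r>\fsh{T}$.

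\textbf{Step 1 (series on the circle).} On $C_r$ the convergence is uniform in $s$. We therefore substitute the series and integrate term by term:
$$\frac{1}{2\pi}\int_{C_r}(s^m ds_J)\odot(R_s-T)^{-\circledcirc}=\frac{1}{2\pi}\sum_{n\geqslant 0}\int_{C_r}(s^m ds_J)\odot\big(s^{-1-n}\odot T^n\big).$$
To collapse the double scalar multiplication, write $ds_J=\overline{J}s'(t)\,dt$. All scalars $s^m$, $\overline{J}s'(t)$, $s^{-1-n}$ then lie in $\spc_J$, and $T^n$ lies in the bimodule $\mathscr{B}_{\mathcal{RO}}(V)$. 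For $p,q\in\spc_J$ the bimodule identity $p\odot(q\odot T^n)=(pq)\odot T^n$ holds, because the left associator $[p,q,T^n]$ vanishes when $p,q$ lie in a common associative (even commutative) subalgebra; this uses the alternativity of the bimodule structure from Theorem \ref{thm:para_bimodule}. Hence each term equals
$$\Big(\frac{1}{2\pi}\int_{C_r}s^{m-1-n}ds_J\Big)\odot T^n.$$

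\textbf{Step 2 (evaluation).} Since $ds_J=-J\,ds$ and $J^{-1}=-J$, we get $\frac{1}{2\pi}\int_{C_r}s^{k}ds_J=\frac{1}{2\pi J}\int_{C_r}s^k ds$. By the residue computation in $\spc_J$ this equals $1$ if $k=-1$ and $0$ otherwise. Only $n=m$ survives, and it gives $1\odot T^m=T^m$.

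\textbf{Step 3 (independence of the contour).} It remains to replace $C_r$ by $\partial(U\cap\spc_J)$. This is the step needing care: we need the integrand to be holomorphic on the region of $\spc_J$ between the two curves, which is contained in ${\rho_*}_J(T)$. By Theorem \ref{thm:liftRs-Tsliceregular}, for each $v\in V$ and each $\phi\in V^{*_{\O}}$, the map $s\mapsto \phi(\pi_J(R_s-T)^{-\circledcirc}(v))$ is $\spc_J$-valued holomorphic there. Multiplying by $s^m\in\spc_J$ preserves holomorphy, so Cauchy's theorem in $\spc_J$ gives equality of the two contour integrals after applying $\phi\circ\pi_J\circ\mathrm{ev}_v$. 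To get back to the operators themselves, first note that $\pi_J$ commutes with the left scalar action by $\spc_J$ elements up to a term $B_p$, which is annihilated by $\pi_J$ (Lemma \ref{lem:piJBpfx=0}). Taking real parts then gives equality of $\re$ of both integrals evaluated at arbitrary $v$, using Lemma \ref{cor:HBcor} and \eqref{eq:refx=frex}. Both integrals are right para-linear, so the Uniqueness Lemma \ref{lem:Uniqueness Lemma}(1) concludes that they coincide. Hence the integral over $\partial(U\cap\spc_J)$ equals $T^m$ for every $J$.
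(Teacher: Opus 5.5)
Your proposal is correct and follows the same route as the paper, which omits this proof as the counterpart of Theorem~\ref{thm:poly left fctcal}: expand via \eqref{eq:powlifinvRs-T} on a circle $C_r$, integrate term by term so that only the $n=m$ residue survives, and then move the contour by Cauchy's theorem in $\spc_J$. Your Step~3 spells out a reduction the paper leaves implicit, namely via Theorem~\ref{thm:liftRs-Tsliceregular}, Lemma~\ref{lem:piJBpfx=0}, Lemma~\ref{cor:HBcor} and the Uniqueness Lemma~\ref{lem:Uniqueness Lemma}, as in the proof of Theorem~\ref{thm:main thm2}.

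One small correction to your Step~1 justification: $[p,q,x]=0$ holds because $p,q\in\spc_J$ and the bimodule associator is alternating and real-bilinear. It does not follow from $p,q$ lying in an associative subalgebra, since any two octonions do; for instance $[e_1,e_2,e_4]=2e_7\neq 0$ in $\O$.
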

		\begin{thm}\label{thm:main thm2}
			Let $T\in \mathscr{B}_{\mathcal{RO}}(V)$ be a power-associative operator.	Let $U\subseteq \O$  be a $T$-right-admissible domain and $f\in  {\mathcal{SR}_{\R}}({\sigma_*(T)})$ be a slice preserving function. For any $J\in \mathbb S$, set ${d}s_J = -{d}sJ$. Then the integral
			\begin{eqnarray}\label{eq:lifintfTJ}
				f_*(T)_J:=\dfrac{1}{2\pi} \int_{\partial (U\cap \spc_J)}(f(s){d}s_J)   \odot(R_s-T)^{-\circledcirc }
			\end{eqnarray}  does not depend on the choice of  $U$.
			Moreover, the map
			\begin{eqnarray}\label{eqdef:liff(T)}
				f_*(T):\mathbb S&\to& \mathscr{B}_{\mathcal {RO}}(V)\\
				J&\mapsto& f_*(T)_J\notag
			\end{eqnarray} is continuous, and  for any $I,J\in \mathbb S$, we have
			$$\operatorname{Re}_{\mathscr{B}_{\mathcal{RO}}(V)}\, f_*(T)_J=\operatorname{Re}_{\mathscr{B}_{\mathcal{RO}}(V)}\, f_* (T)_I.$$

		\end{thm}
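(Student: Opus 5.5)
The plan is to mirror the proof of Theorem \ref{thm:f*TindependentU}, replacing the pull-back method by the push-forward one. Here $\pi_J$ and the real part $\re_V$ play the role that the restriction to $\re V$ played before. The Uniqueness Lemma \ref{lem:Uniqueness Lemma} says a right para-linear operator is determined by its real component $f_{\R}$. So to show $T_{U_1}=T_{U_2}$ for two $T$-right-admissible domains, it suffices to show $\re_V T_{U_1}(x)=\re_V T_{U_2}(x)$ for all $x\in V$. By Lemma \ref{cor:HBcor} and \eqref{eq:refx=frex}, this reduces to comparing $\re\phi(T_{U_i}(x))$ for every $\phi\in V^{*_{\O}}$.

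First I compute $\re_V\big((f(s)ds_J)\odot(R_s-T)^{-\circledcirc}\big)(x)$. By \eqref{eqdef:pf} it equals $\re\big((f(s)ds_J)(R_s-T)^{-\circledcirc}(x)\big)$, since $\re B_p=0$. Because $p:=f(s)ds_J\in\spc_J$, we may replace $(R_s-T)^{-\circledcirc}(x)$ by its projection $\pi_J(R_s-T)^{-\circledcirc}(x)$ inside the real part. The proof of Theorem \ref{thm:liftRs-Tsliceregular} gives $\pi_J(R_s-T)^{-\circledcirc}=\pi_J(R_s-T)^{-1}$. Setting $h(s):=\phi(\pi_J(R_s-T)^{-1}(x))$, we get $\re\phi(T_{U}(x))=\re\frac{1}{2\pi}\int_{\partial(U\cap\spc_J)}f(s)ds_J\,h(s)$. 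By Theorem \ref{thm:liftRs-Tsliceregular}, $h$ is a $\spc_J$-valued holomorphic function on ${\rho_*}_J(T)$. Since $f$ is slice preserving, the integrand is an honest complex holomorphic $1$-form on $(U_i\setminus\overline{U_0})\cap\spc_J$. Cauchy's theorem (the complex analogue of Lemma \ref{lem:intgdf=0}) then gives independence of $U$ via an auxiliary domain $U_0$, exactly as before.

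For $\re_{\mathscr{B}_{\mathcal{RO}}(V)} f_*(T)_J=\re_{\mathscr{B}_{\mathcal{RO}}(V)} f_*(T)_I$, I use \eqref{eq:re f}. It reduces the claim to $\re\phi(f_*(T)_J(v))=\re\phi(f_*(T)_I(v))$ for $v\in\re V$, because on $\re V$ the real component is $\re_V$ of the operator. I then follow the double-integral argument of Theorem \ref{thm:f*TindependentU}. Represent the $\spc_I$-holomorphic $h_I(s)$ at points $s\in\partial(U\cap\spc_I)$ by the slice Cauchy formula \eqref{eq:lslicecauchy} over a larger contour $\partial(U'\cap\spc_J)$, using the left kernel (now $f$ sits on the left). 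Next, swap the integrals by Fubini and apply Corollary \ref{cor:slicecauchy} to the inner $s$-integral, which reproduces $f(q)$. The remaining associator terms have real part zero by the five-term identity \eqref{eq:five_term}, since all the scalars involved lie in $\mathbb H_{I,J}$ or $\spc_I$.

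The main obstacle is the step relating $h_I$ and $h_J$. Unlike the pull-back case, $h$ depends on the slice through $\pi_J$, so it is not a single slice regular function on $\rho_*(T)$. I expect the fix to be working with $\re\phi((R_s-T)^{-1}(v))$ together with $\re\phi((R_s-T)^{-1}(v)\overline{J})$. By $\spc_J$-liftability these assemble into a stem-type function, which should let one apply the Cauchy formula across slices. Continuity in $J$ then follows as in the earlier proof: uniform convergence $S_L^{-1}(s,q_J)\to S_L^{-1}(s,q_I)$, the uniform bound \eqref{eqpf:Aqj} from compactness of $\partial U'$ (Theorem \ref{thm:pushfw cpt}), and dominated convergence.
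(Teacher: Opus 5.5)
Your proof that the integral is independent of $U$ is essentially the paper's own argument. Taking $\re_V$ first and using $\re\big((f(s)ds_J)y\big)=\re\big((f(s)ds_J)\pi_Jy\big)$ only reorders the paper's chain of equalities. This is also the only part the paper writes out: the real-part identity and continuity are said to ``follow similarly'' to Theorem \ref{thm:f*TindependentU}.

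That remaining part is where your plan has a genuine gap, at the step you flag yourself. Formula \eqref{eq:lslicecauchy} needs one left slice regular function on an axially symmetric domain. Theorem \ref{thm:liftRs-Tsliceregular} only gives the family of slice-wise holomorphic functions $h_J$. Your proposed repair does not produce such a function. By liftability with $n=1$, $\re\phi\big((R_s-T)^{-1}(v)\overline J\big)=\re\phi\big((R_s-T)^{-1}(v\overline J)\big)$. So your two scalars are just the two real components of $h_J$ again. They are not functions of $(x,y)$ alone, so they cannot be stem components. Concretely, take $V=\O$, $T=L_{e_1}$, $v=1$, $\phi=\mathrm{id}$. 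Then $(R_s-L_{e_1})^{-1}(1)=Q_s(e_1)^{-1}(\overline s-e_1)$. At $s=1+J$, which lies in $\rho_*(L_{e_1})$ by Theorem \ref{thm:pushforw bound}, its real part is $\big(3+2\re(e_1\overline J)\big)/5$. This ranges from $1/5$ to $1$ as $J$ runs over $\mathbb S$.

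What is missing is a push-forward analogue of Theorem \ref{thm:Rs-Tsliceregular}, and it needs all eight real components, not two. For $v\in\re V$ and $\phi\in V^{*_{\O}}$, set
$K(s):=\phi\big((R_s-T)^{-\circledcirc}(v)\big)=\sum_{k=0}^7J_k\,\re\phi\big((R_s-T)^{-1}(v\overline{J_k})\big)$,
using a standard basis related to $J$. In the example above, $K(s)=S_R^{-1}(s,e_1)$ for $|s|>1$.

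To see that $K$ is left slice regular on $\rho_*(T)$, let $A=(R_s-T)^{-1}$, so $\partial_xA=-A^2$ and $\partial_yA=-AR_JA$. Write $JJ_k=\epsilon_kJ_m$, so $\overline{J_k}=\epsilon_k\overline{J_m}J$, and use that $v$ is associative. Then the $J_m$-coefficient of $(\partial_x+J\partial_y)K$ is $-\re\phi\big((A^2+AR_JAR_J)(v\overline{J_m})\big)$. Liftability for $n=1,2$ gives $\re_VAR_JAR_J=\re_VR_JA^2R_J=\re_VA^2R_J^2=-\re_VA^2$, so this coefficient vanishes. Also $K\to0$ at infinity by Corollary \ref{cor:reolv op ser}.

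Moreover, $B_p\big((R_s-T)^{-\circledcirc},v\big)=0$ for associative $v$. Hence $\phi(f_*(T)_I(v))=\frac{1}{2\pi}\int_{\partial(U\cap\spc_I)}(f(s)ds_I)K(s)$ holds exactly, with no $\pi_I$ needed.

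With this lemma, the mirror image of the pull-back argument goes through. Represent $K$ by \eqref{eq:lslicecauchy} on the auxiliary domain $U'$ and apply Fubini. Then use $S_L^{-1}(q,s)=-S_R^{-1}(s,q)$ and the second formula of Corollary \ref{cor:slicecauchy} to recover $f(q)$. The associator terms drop out of the real part as before, since $f(s)ds_I$, $Q_q(s)^{-1}$ and $\overline q-s$ lie in $\mathbb H_{I,J}$. Your continuity argument uses the same representation, so it depends on this lemma too.
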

		
		{
			\begin{proof}
				Let us prove that \eqref{eq:lifintfTJ} does not depend on the choice of $ U$, the rest of the proof follows similarly  to that of Theorem \ref{thm:f*TindependentU} and therefore omitted here.
				
				Fixing $J\in \mathbb S$ and $f\in  {\mathcal{SR}_{\R}}({\sigma_*(T)})$, we denote by $T_U$ the operator defined by \eqref{eq:lifintfTJ} for brevity, so as to emphasize the dependence of the operator on  $U$.
				
				Suppose $U_1,U_2$ are two $T$-right-admissible domains. There exists a $T$-right-admissible domain $U_0$ such that $\overline{U_0}\subseteq U_i$ for $i=1,2$.
				We aim to prove $T_{U_1}=T_{U_2}$.  Note that both  $T_{U_1}$ and $T_{U_2}$ are right para linear operators. Hence thanks to Lemma \ref{lem:Uniqueness Lemma}, it suffices to show  that
				$$\re_V T_{U_1}(v)=\re_V T_{U_2}(v)$$ for all $v\in V$.
				By Lemma \ref{cor:HBcor}, we only need to show that for any octonionic linear functional $\phi\in V^{*_{\O}}$,
				$$\phi(\re_V T_{U_1}(v))=\phi(\re_V T_{U_2}(v)). $$
				Due to \eqref {eq:refx=frex}, this  is equivalent to show
				\begin{eqnarray}\label{eqpf:reVphiTui}
					\re_V	\phi( T_{U_1}(v))=\re_V\phi( T_{U_2}(v))
				\end{eqnarray}
				for any octonionic linear functional $\phi\in V^{*_{\O}}$ and any $v\in V$.

				Fix $J\in \mathbb S$, $\phi\in V^{*_{\O}}$ and  $v\in V$ arbitrarily and define the function $$g(s):=\phi \Big(\pi_J (R_s-T)^{-1}(v) \Big),$$ where $s\in {\rho_*}_J(T)$. {It follows from Theorem \ref{thm:liftRs-Tsliceregular} that $g:{\rho_*}_J(T)\subseteq \spc_J\to \spc_J$ is holomorphic on ${\rho_*}_J(T)$. {Thus}  $g$ is holomorphic on $(U_i\setminus \overline{U_0})\cap \spc_J$ for $i=1,2$.}

				By direct calculations, for $i=1,2$,
				\begin{align}\label{eqpf:ref-*TUi}
					&\, \quad \pi_J	\phi(T_{U_i}(v))\\
					&=	\phi\circ\pi_J\dfrac{1}{2\pi} \int_{\partial (U_i\cap \spc_J)}\Big((f(s){d}s_J)   \odot(R_s-T)^{-\circledcirc }\Big)(v)  \notag\\
					&=\phi\circ\pi_J\dfrac{1}{2\pi} \int_{\partial (U_i\cap \spc_J)}(f(s){d}s_J)   (R_s-T)^{-\circledcirc }(v) &\text{by definition \eqref{eqdef:pf} and \eqref{eqpf:piJB=0}}\notag\\
					&=\phi\circ\pi_J\dfrac{1}{2\pi} \int_{\partial (U_i\cap \spc_J)}   (R_s-T)^{-\circledcirc }(v)(f(s){d}s_J) &\text{by \eqref{eq:piJ} and $f(s)\in \spc_J$}\notag\\
					&=\phi\circ\pi_J\dfrac{1}{2\pi} \int_{\partial (U_i\cap \spc_J)}   (R_s-T)^{-\circledcirc }(v(f(s){d}s_J)) &\text{by \eqref{eqpf:piJB=0}}\notag\\
					&=\phi\circ\pi_J\dfrac{1}{2\pi} \int_{\partial (U_i\cap \spc_J)}   (R_s-T)^{-1 }(v(f(s){d}s_J)) &\text{by Definition \ref{def:ncirc}}\notag\\
					&=\phi\circ\pi_J\dfrac{1}{2\pi} \int_{\partial (U_i\cap \spc_J)}   (R_s-T)^{-1 }(v) (f(s){d}s_J)&\text{since $s\in {\rho_*}_J(T)$ and $f(s)\in \spc_J$}\notag\\
					&=\dfrac{1}{2\pi} \int_{\partial (U_i\cap \spc_J)} \phi\Big(
					\left(\pi_J  (R_s-T)^{-1 }(v) \right)(f(s){d}s_J)\Big)&\text{by \eqref{eq:piJ} and $f(s)\in \spc_J$}\notag\\
					&=\dfrac{1}{2\pi} \int_{\partial (U_i\cap \spc_J)}   g(s) (f(s){d}s_J)&\text{since  $\phi$ is octonionic linear}.\notag
				\end{align}
				
				Define an open subset $W_i:=(U_i\setminus \overline{U_0})\cap \spc_J.$ {By definition, we have $f\in {\mathcal{SR}_{\R}}(W_i)$ {and $g$ is  holomorphic on $W_i$, $i=1,2$.}   This gives}
				\begin{align*}
					&\qquad\pi_J\phi(T_{U_i}(v))\\
					&=\dfrac{1}{2\pi} \int_{\partial (U_i\cap \spc_J)}g(s)f(s){d}s_J-\dfrac{1}{2\pi} \int_{\partial (U_0\cap \spc_J)}g(s)f(s){d}s_J+\dfrac{1}{2\pi} \int_{\partial (U_0\cap \spc_J)}g(s)f(s){d}s_J\\
					&=\dfrac{1}{2\pi}\int_{\partial W_i}g(s)f(s){d}s_J+\dfrac{1}{2\pi} \int_{\partial (U_0\cap \spc_J)}g(s)f(s){d}s_J\\
					&=\dfrac{1}{2\pi} \int_{\partial (U_0\cap \spc_J)}g(s)f(s){d}s_J.
				\end{align*}
				In view of \eqref{eqpf:reVphiTui}, this deduce that  \eqref{eq:lifintfTJ} does not depend on the choice of $ U$.			
			\end{proof}
		}	
		
		Recall the definition of left extension map in Definition \ref{def:lif and ext}.
		\begin{mydef}[The octonionic right slice regular functional calculus]\label{def:lifofunccal}
			Let $T\in \mathscr{B}_{\mathcal{RO}}(V)$ be a power-associative operator.	Define
			\begin{eqnarray}
				\ 	(\Psi_T)_0:{\mathcal{SR}_{\R}}({\sigma_*(T)})&\to& \Gamma(\mathbb S, \mathscr{B}_{\mathcal{RO}}(V))\\
				f&\mapsto& (f_*(T))(J):=f_*(T)_J={\dfrac{1}{2\pi} \int_{\partial (U\cap \spc_J)}  (R_s-T)^{-\circledcirc }\odot( {d}s_Jf(s))} \notag
			\end{eqnarray}	
			where $U\subseteq \O$  is a right $T$-admissible domain.
			
			We then define the octonionic right slice regular functional calculus as the \textbf{left extension} of $	(\Psi_T)_0$: $$\Psi_T:= \operatorname{ext}_L	(\Psi_T)_0:\mathcal{SR}^R({\sigma_*(T)})\to \Gamma(\mathbb S, \mathscr{B}_{\mathcal{RO}}(V)).$$
		\end{mydef}

		\begin{mydef}\label{def:rsliceinv}
			Let $T\in \mathscr{B}_{\mathcal{RO}}(V)$ be a power-associative operator.		$T$ is called \textbf{right \sinv} if for any  $f\in \mathcal{SR}^R({\sigma_*(T)})$, $ f_{*}(T)_J$ is independent of $J\in \mathbb S$.
		\end{mydef}
		\begin{thm}
			Let $T\in \mathscr{B}_{\mathcal{RO}}(V)$ be a power-associative operator and 	$U\subseteq \O$  be a $T$-right-admissible domain.
			Suppose $f=\sum_{i=0}^7e_i\bullet^R f_{(i)}\in\mathcal{SR}^R({\sigma_*(T)}) $ with $f_{(i)}$ slice preserving for $i=0,\dots,7$. Then
			\begin{eqnarray}\label{lifeq:f(T)J}
				f_*(T)_J	&=&\dfrac{1}{2\pi} \int_{\partial (U\cap \spc_J)} (f(s){d}s_J)\odot {(R_s-T)^{-\circledcirc }} - \\
				&&	{\dfrac{1}{2\pi}\sum_{i=1}^{7} \int_{\partial (U\cap \spc_J)} \big[e_i,f_{(i)}(s){d}s_J,(R_s-T)^{-\circledcirc }\big]_{\mathscr{B}_{\mathcal{RO}}(V)}.}\notag
			\end{eqnarray}
			\eqref{lifeq:f(T)J} still holds when substituting any standard orthonormal basis $\{1,J_1,\dots, J_7\}$ for $\{1,e_1,\dots, e_7\}$.
			In particular, if $f\in \spc_J(\mathcal{SR}^R({\sigma_*(T)}))$ (see \eqref{eq:CJM} and Theorem \ref{thm:slice algebra}), then
			\begin{eqnarray}\label{eq:lifspcJf(T)J}
				f_*(T)_J=	\dfrac{1}{2\pi} \int_{\partial (U\cap \spc_J)} (f(s){d}s_J)\odot 	{(R_s-T)^{-\circledcirc }}
			\end{eqnarray}
			and for any $v\in V$,
			\begin{eqnarray}\label{eqpf:ref-*T}
				\operatorname{Re}_{{V}}f_{*}(T)_J(v)=\operatorname{Re}_{{V}}\dfrac{1}{2\pi} \int_{\partial (U\cap \spc_J)}   (R_s-T)^{-1 }(v)(f(s){d}s_J).
			\end{eqnarray}
		\end{thm}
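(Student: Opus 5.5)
We mirror the proof of Theorem \ref{thm:f(T)}, replacing the right extension $\ext$ by the left extension $\operatorname{ext}_L$ and the right $\odot$-action by the left one. By Definition \ref{def:lifofunccal}, $\Psi_T=\operatorname{ext}_L(\Psi_T)_0$ is left para-linear on $\mathcal{SR}^R(\sigma_*(T))$, and the bimodule structure on $\Gamma(\mathbb S,\mathscr{B}_{\mathcal{RO}}(V))$ is computed pointwise in $J$. From the decomposition $f=\sum_{i=0}^7 e_i\bullet^R f_{(i)}$ we therefore get
$$f_*(T)_J=\sum_{i=0}^7 e_i\odot (f_{(i)})_*(T)_J=\sum_{i=0}^7 e_i\odot\Big(\frac{1}{2\pi}\int_{\partial(U\cap\spc_J)}(f_{(i)}(s)ds_J)\odot(R_s-T)^{-\circledcirc}\Big).$$
We parametrize $\partial(U\cap\spc_J)$ by $s(t)$, $t\in[a,b]$, and move $e_i$ inside the integral. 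This is legitimate because left multiplication by $e_i$ is real linear and continuous on $\mathscr{B}_{\mathcal{RO}}(V)$.

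Inside the integral we rebracket using the associator of the $\O$-bimodule $\mathscr{B}_{\mathcal{RO}}(V)$:
$$e_i\odot\big(c_i(t)\odot R\big)=\big(e_ic_i(t)\big)\odot R-[e_i,c_i(t),R]_{\mathscr{B}_{\mathcal{RO}}(V)},$$
where $c_i(t)=f_{(i)}(s)\overline{J}s'(t)$ and $R=(R_s-T)^{-\circledcirc}$. Since $f_{(i)}$ is slice preserving, $f_{(i)}(s)\in\spc_J$, and since $\overline{J}s'(t)\in\spc_J$ as well, Proposition \ref{prop:slicepre} for right slice functions gives $\sum_i e_i f_{(i)}(s)\overline{J}s'(t)=f(s)\overline{J}s'(t)$. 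This is the analogue of \eqref{eqpf:dsf}. Summing over $i$ yields \eqref{lifeq:f(T)J}; the $i=0$ associator vanishes because $e_0=1$. The only point needing care is the sign convention of the associator in the bimodule $\mathscr{B}_{\mathcal{RO}}(V)$, which produces the minus sign, and checking that this sign matches the statement. The same argument works verbatim for any standard orthonormal basis, since only the properties used above (orthonormality and the slice-product identity) enter.

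For $f\in\spc_J(\mathcal{SR}^R(\sigma_*(T)))$ we write $f=f_{(0)}+J\bullet^R f_{(1)}$. The only potentially nonzero associator is $[J,f_{(1)}(s)ds_J,R]$, whose first two entries lie in $\spc_J$. In an $\O$-bimodule an associator with two entries in a common complex subalgebra $\spc_J$ vanishes by alternativity, since $[J,a+bJ,R]=b[J,J,R]=0$. This gives \eqref{eq:lifspcJf(T)J}.

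For \eqref{eqpf:ref-*T} we apply $\operatorname{Re}_V$, evaluate at $v$, and run the chain of identities already used in the proof of Theorem \ref{thm:main thm2}, with $\operatorname{Re}_V$ in place of $\pi_J$. By \eqref{eqdef:pf} and the vanishing of the real part of $B_p$, the left action satisfies $\operatorname{Re}_V\big((p\odot R)(v)\big)=\operatorname{Re}_V(pR(v))=\operatorname{Re}_V(R(v)p)=\operatorname{Re}_V R(vp)$, using Proposition \ref{prop:real_part}(2) and right para-linearity. We then use $\operatorname{Re}_V(R_s-T)^{-\circledcirc}=\operatorname{Re}_V(R_s-T)^{-1}$ (Definition \ref{def:ncirc}). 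Finally, the $\spc_J$-liftable power associativity of $(R_s-T)^{-1}$ for $s\in{\rho_*}_J(T)$ moves $p=f(s)ds_J\in\spc_J$ back out, giving $\operatorname{Re}_V(R_s-T)^{-1}(vp)=\operatorname{Re}_V\big((R_s-T)^{-1}(v)p\big)$. Exchanging $\operatorname{Re}_V$ with the integral completes the argument. We expect the main obstacle to be the associator sign and ordering bookkeeping in the second paragraph.
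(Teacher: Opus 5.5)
Your proposal is correct and follows essentially the same route as the paper. For \eqref{lifeq:f(T)J} and \eqref{eq:lifspcJf(T)J} you mirror Theorem \ref{thm:f(T)}, using the left para-linearity of $\Psi_T=\operatorname{ext}_L(\Psi_T)_0$ together with the rebracketing $e_i\odot(c\odot R)=(e_ic)\odot R-[e_i,c,R]$, which is exactly what produces the minus sign. For \eqref{eqpf:ref-*T} you run the paper's chain $\operatorname{Re}_V\big((p\odot R)(v)\big)=\operatorname{Re}_V\big(R(v)p\big)=\operatorname{Re}_V R(vp)$, then replace $(R_s-T)^{-\circledcirc}$ by $(R_s-T)^{-1}$ under $\operatorname{Re}_V$, and finish with $\spc_J$-liftable power associativity.
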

		\begin{proof}
			Formulas \eqref{lifeq:f(T)J} and \eqref{eq:lifspcJf(T)J} can be proved using the same method as in Theorem \ref{thm:f(T)}, only noting that the octonionic right slice regular functional calculus $\Psi_T$ is \textbf{left para-linear} by definition.
			
			We next establish  \eqref{eqpf:ref-*T}.	By direct calculations, for any $v\in V$, we have
			\begin{align}\label{eqpf:ref-*T2}
				&\, \quad \operatorname{Re}_{{V}}f_{*}(T)_J(v)\\
				&=	\operatorname{Re}_{{V}}\dfrac{1}{2\pi} \int_{\partial (U\cap \spc_J)}\Big((f(s){d}s_J)   \odot(R_s-T)^{-\circledcirc }\Big)(v)  &\text{by \eqref{eq:lifspcJf(T)J}}\notag\\
				&=\operatorname{Re}_{{V}}\dfrac{1}{2\pi} \int_{\partial (U\cap \spc_J)}(f(s){d}s_J)   (R_s-T)^{-\circledcirc }(v) &\text{using  \eqref{eqdef:pf}}\notag\\
				&=\operatorname{Re}_{{V}}\dfrac{1}{2\pi} \int_{\partial (U\cap \spc_J)}   (R_s-T)^{-\circledcirc }(v)(f(s){d}s_J) &\text{by part(2) of Proposition  \ref{prop:real_part}}\notag\\
				&=\operatorname{Re}_{{V}}\dfrac{1}{2\pi} \int_{\partial (U\cap \spc_J)}   (R_s-T)^{-\circledcirc }(v(f(s){d}s_J)) &\text{since $(R_s-T)^{-\circledcirc }$ is right para-linear}\notag\\
				&=\operatorname{Re}_{{V}}\dfrac{1}{2\pi} \int_{\partial (U\cap \spc_J)}   (R_s-T)^{-1 }(v(f(s){d}s_J)) &\text{by Definition \ref{def:ncirc}}.\notag
			\end{align}
			It follows from $f\in \spc_J(\mathcal{SR}^R({\sigma_*(T)})$ that $f(s)\in \spc_J$ for any $s\in \partial (U\cap \spc_J)$. Note that $\partial (U\cap \spc_J)\subseteq \rho_{*J}(T)$.
			By definition \ref{def:pushforward sepc}, we have $$\operatorname{Re}_{{V}}\int_{\partial (U\cap \spc_J)} (R_s-T)^{-1 }(v(f(s){d}s_J))=\operatorname{Re}_{{V}}\int_{\partial (U\cap \spc_J)} \big((R_s-T)^{-1 }(v)\big)(f(s){d}s_J).$$
			Combining this with \eqref{eqpf:ref-*T2}, we get \eqref{eqpf:ref-*T} as desired.
		\end{proof}
		
		\section{Examples}\label{sec:eg}
		We only consider examples of  the pull-back spectrum for octonionic right para-linear operators. The examples of  the push-forward  spectrum can be formulated similarly.
		\subsection{Power-associative operators}
		
		$\newline$
		Recall the definition \eqref{eq:CJM} of  ${\mathbb C_J} $-slice.
		\begin{lemma}\label{lem:TS=TrS}
			Let  $T,S\in \spc_J(\mathscr{B}_{\mathcal{RO}}(V))$. Then  $$TS=T\circledcirc S.$$
		\end{lemma}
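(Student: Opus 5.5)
The plan is to show that the ordinary composition $TS$ is itself right para-linear. The identity $TS=T\circledcirc S$ will then follow from the Uniqueness Lemma \ref{lem:Uniqueness Lemma}, since by Definition \ref{def:right mod regular composition} (second case, with $T$ viewed as a real linear map and $S$ para-linear) we have $T\circledcirc S=\ext\big((T\circ S)|_{\re V}\big)$. This operator is right para-linear and coincides with $TS$ on $\re V$. So once $TS\in \mathscr{B}_{\mathcal{RO}}(V)$ is known, the two para-linear operators agree on $\re V$ and are therefore equal.

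First, make the elements of $\spc_J(\mathscr{B}_{\mathcal{RO}}(V))$ explicit. By Theorem \ref{thm:para_bimodule}, $\re\mathscr{B}_{\mathcal{RO}}(V)=\mathscr{B}_{\O}(V)$, so we can write $T=T_0+J\odot T_1$ and $S=S_0+J\odot S_1$ with $T_0,T_1,S_0,S_1$ octonionic linear. For an octonionic linear $A$ the second associator $B_J(A,x)=A(x)J-A(xJ)$ vanishes. Hence \eqref{eqdef:pf} gives $(J\odot A)(x)=J\,A(x)$, that is, $T(x)=T_0x+J\,T_1x$, and similarly for $S$.

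Next, expand the composition. Octonionic linearity is equivalently left linearity, so $T_0(Jy)=J\,T_0y$ and $T_1(Jy)=J\,T_1 y$. This gives
$$TS(x)=T_0S_0x+J\,T_0S_1x+J\,T_1S_0x+J\big(J\,T_1S_1x\big).$$
In an $\O$-bimodule the left associator is alternating, so $[J,J,y]=0$ and $J(Jy)=J^2y=-y$. Therefore
$$TS=(T_0S_0-T_1S_1)+J\odot(T_0S_1+T_1S_0).$$
Compositions of bounded octonionic linear maps are bounded and octonionic linear. Hence $TS\in\spc_J(\mathscr{B}_{\mathcal{RO}}(V))\subseteq \mathscr{B}_{\mathcal{RO}}(V)$, and in particular $TS$ is right para-linear.

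Finally, apply the argument from the first paragraph: $T\circledcirc S$ and $TS$ are both right para-linear and agree on $\re V$, so they are equal. As a cross-check, the same conclusion follows from formula \eqref{eq:[fgx]} because each $B_{e_i}(S,\cdot)$ is killed in real part after applying $T$. The Uniqueness Lemma route is cleaner, though.

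The only delicate point is the middle step. We must make sure $J\odot A$ reduces to plain left multiplication for octonionic linear $A$, and that the nonassociativity is harmless when pulling $J$ through. Both rest on the vanishing of $B_J(A,\cdot)$ and on the alternativity $[J,J,y]=0$, so I expect no real obstacle.
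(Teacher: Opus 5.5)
Your proof is correct, but it identifies $T\circledcirc S$ differently from the paper.

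\textbf{The paper's route.} It starts from the same decomposition $T=T_0+J\odot T_1$, $S=S_0+J\odot S_1$. It then computes $T\circledcirc S$ algebraically, using the product rule $(p\odot A)\circledcirc(q\odot B)=(pq)\odot(AB)$ from \eqref{eqpf:tensor BROV}. This gives $T_0S_0-T_1S_1+J\odot(T_0S_1+T_1S_0)$. Finally it evaluates at $x$ and asserts the result equals $T(S(x))$. That last equality is exactly the pointwise expansion you write out: the vanishing of $B_J$ for octonionic linear maps, the left linearity of $T_0,T_1$, and $J(Jy)=-y$.

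\textbf{Your route.} You use the same expansion to show that $TS$ itself lies in $\spc_J(\mathscr{B}_{\mathcal{RO}}(V))$. The Uniqueness Lemma \ref{lem:Uniqueness Lemma} then matches it with $\ext\big((TS)|_{\re V}\big)=T\circledcirc S$, so you never compute $T\circledcirc S$ at all.

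\textbf{What each buys.} Your argument is more self-contained. It does not rely on the $\O$-algebra structure of Theorem \ref{thm:BROV is Oalg}, whose proof the paper only refers to the Hilbert case, nor on the para-bilinearity behind \eqref{eqpf:tensor BROV}. It also shows directly that $\spc_J(\mathscr{B}_{\mathcal{RO}}(V))$ is closed under ordinary composition, which turns Proposition \ref{prop:CJispass} into an immediate induction.

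The paper's product rule, however, holds for arbitrary components $e_i\odot T_i$, and the paper reuses it in Proposition \ref{prop:commutativepowass}. Your direct computation does not extend to that setting. It works only because a single imaginary unit is involved, so that $[J,J,\cdot]=0$. With two distinct units the associators $[e_i,e_j,\cdot]$ survive, and $TS$ is generally not para-linear. For instance, on $\O$ the composition $L_{e_1}L_{e_2}$ is not a left multiplication.
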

		\begin{proof}
			Let $$T=T_{0}+J\odot T_{1},\qquad S=S_{0}+J\odot S_{1},$$
			where $T_{0},T_{1},S_{0},S_{1}\in \re \mathscr{B}_{\mathcal{RO}}(V)=\mathscr{B}_{\mathbb{O}}(V)$ are octonionic linear operators.
			Thus for any $x\in V$,  it follows from identity \eqref{eqpf:tensor BROV} that
			\begin{align*}
				(	T\circledcirc S)(x)&=(T_0S_0+J\odot(T_0S_1+T_1S_0)-T_1S_1)(x)\\
				&=(T_0S_0)(x)+(J\odot(T_0S_1+T_1S_0))(x)-(T_1S_1)(x)\\
				&=T_0(S_0(x))+J\big(T_0(S_1(x))+T_1(S_0(x))\big)-T_1(S_1(x))\\
				&=T(S(x)).
			\end{align*}This completes the proof.
		\end{proof}

		\begin{prop}\label{prop:CJispass}
			Any $T\in \spc_J(\mathscr{B}_{\mathcal{RO}}(V))$ is power-associative.
		\end{prop}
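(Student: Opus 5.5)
We need to show $T^n\in\mathscr{B}_{\mathcal{RO}}(V)$ for all $n$, which by Definition \ref{def:powerass} is what power-associativity means. The plan is to show that the ordinary power $T^n$ coincides with an element of $\spc_J(\mathscr{B}_{\mathcal{RO}}(V))$ built by regular composition. Lemma \ref{lem:TS=TrS} is the tool: for $T,S\in\spc_J(\mathscr{B}_{\mathcal{RO}}(V))$ we have $TS=T\circledcirc S$, and the right-hand side is right para-linear.

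The first step is to check that $\spc_J(\mathscr{B}_{\mathcal{RO}}(V))$ is closed under $\circledcirc$. Write $T=T_0+J\odot T_1$ and $S=S_0+J\odot S_1$ with $T_i,S_i\in\mathscr{B}_{\O}(V)$. Using \eqref{eqpf:tensor BROV} from Lemma \ref{lem:tensor of BROV}, together with bilinearity of $\circledcirc$ over the real part, we get
\[
T\circledcirc S=(T_0S_0-T_1S_1)+J\odot(T_0S_1+T_1S_0).
\]
This is exactly the computation already displayed in the proof of Lemma \ref{lem:TS=TrS}. Products of octonionic linear operators are octonionic linear, so $T\circledcirc S\in\spc_J(\mathscr{B}_{\mathcal{RO}}(V))$. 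In short, $\spc_J(\mathscr{B}_{\mathcal{RO}}(V))\cong\mathscr{B}_{\O}(V)\otimes_{\R}\spc_J$ is a subalgebra.

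The second step is an induction on $n$. The cases $n=0$ ($T^0=\mathcal I$, which is octonionic linear) and $n=1$ are trivial. Suppose $T^{n-1}\in\spc_J(\mathscr{B}_{\mathcal{RO}}(V))$. By Lemma \ref{lem:TS=TrS} applied to $T$ and $T^{n-1}$,
\[
T^n=T\circ T^{n-1}=T\circledcirc T^{n-1},
\]
and by the first step the right-hand side lies in $\spc_J(\mathscr{B}_{\mathcal{RO}}(V))\subseteq\mathscr{B}_{\mathcal{RO}}(V)$. This closes the induction, so every $T^n$ is right para-linear and $T$ is power-associative. As a consistency check, Lemma \ref{lem:powass} then gives $T^n=T^{\circledcirc n}=T^{n\circledcirc}$.

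The only potentially delicate point is the first step, which relies on bilinearity of $\circledcirc$ with respect to the $\O$-bimodule structure of Theorem \ref{thm:BROV is Oalg} and on \eqref{eqpf:tensor BROV}. Since $J$ and its powers all lie in the associative, commutative subalgebra $\spc_J$, no associators arise, because $(pq)$ with $p,q\in\{1,J\}$ stays in $\spc_J$. I therefore expect this step to go through directly, exactly as in Lemma \ref{lem:TS=TrS}.
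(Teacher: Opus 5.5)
Your proof is correct and follows essentially the paper's argument: an induction that combines Lemma \ref{lem:TS=TrS} with the closure of $\spc_J(\mathscr{B}_{\mathcal{RO}}(V))$ under $\circledcirc$, which comes from \eqref{eqpf:tensor BROV}. The only cosmetic difference is that the paper runs the induction on the identity $T^n=T^{\circledcirc n}$, invoking \eqref{eq:Tnext} to close the step, whereas you induct on membership of $T^n$ in the $\spc_J$-slice and recover $T^n=T^{\circledcirc n}$ afterwards from Lemma \ref{lem:powass}.
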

		\begin{proof}
			We prove $T^n=T^{\circledcirc n}$ for any integer $n$ by induction.
			It clearly holds for $n=1$. Suppose $T^n=T^{\circledcirc n}$ for $n$. Since $T\in \spc_J(\mathscr{B}_{\mathcal{RO}}(V))$, it follows from identity \eqref{eqpf:tensor BROV} that   $T^{\circledcirc n}\in \spc_J(\mathscr{B}_{\mathcal{RO}}(V))$. Thus
			$$T^{n+1}=T\circ T^n=T\circ T^{\circledcirc n}.$$
			In view of Lemma \ref{lem:TS=TrS} and identity \eqref{eq:Tnext}, we deduce
			\begin{align*}
				T^{n+1}=T\circledcirc  T^{\circledcirc n}= T^{\circledcirc (n+1)}
			\end{align*}
			as desired.
			This shows that $T^n$ is right para-linear for any $n$, i.e., $T$ is power-associative.		
		\end{proof}
		
		\begin{prop}\label{prop:commutativepowass}
			Let $T=\sum_{i=0}^7e_i\odot T_i\in\mathscr{B}_{\mathcal{RO}}(V)$, where $T_i\in \mathscr{B}_{\mathbb O}(V)$ for $i=0,\dots, 7$. If $T_iT_j=T_jT_i$ for all $i,j=0,\dots, 7$, then $T$ is power-associative.		
		\end{prop}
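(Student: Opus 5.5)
The plan is to show directly that $T^n=T^{\circledcirc n}$ for every $n$. Since $T^{\circledcirc n}\in\mathscr{B}_{\mathcal{RO}}(V)$ by construction, this gives $T^n\in\mathscr{B}_{\mathcal{RO}}(V)$, i.e.\ power-associativity in the sense of Definition \ref{def:powerass}.

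\emph{Step 1: model the situation algebraically.} Let $\mathcal A\subseteq\mathscr{B}_{\mathbb O}(V)$ be the real unital algebra generated by $T_0,\dots,T_7$. By hypothesis $\mathcal A$ is commutative and associative. Hence the real algebra $\O\otimes_{\R}\mathcal A$ is alternative, since it is a scalar extension of the alternative algebra $\O$ by a commutative associative algebra. Put $a:=\sum_{i}e_i\otimes T_i$.

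For $b=\sum_j o_j\otimes A_j\in\O\otimes\mathcal A$ and $w\in\re V$, define $b\cdot w:=\sum_j o_j\,A_j(w)$. Every $x\in V$ is a finite sum of elements $e_k w$ with $w\in\re V$, by \eqref{eq:redec}.

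\emph{Step 2: $T$ acts as left multiplication by $a$.} Each $T_i$ is octonionic linear, so $B_{e_i}(T_i,\cdot)=0$ and hence $(e_i\odot T_i)(y)=e_iT_i(y)$. Moreover $T_i$ commutes with left multiplication by octonions, and $T_i$ maps $\re V$ into $\re V$ by \eqref{eq:refx=frex}. Since elements of $\re V$ are associative, we also have $p(q\,u)=(pq)u$ for $u\in\re V$. Combining these facts yields
\[
T(b\cdot w)=\sum_{i,j}(e_io_j)\,T_iA_j(w)=(ab)\cdot w .
\]
So for each fixed $w$, the map $b\mapsto b\cdot w$ intertwines $L_a$ on $\O\otimes\mathcal A$ with $T$ on $V$. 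Iterating gives $T^n(b\cdot w)=(L_a^nb)\cdot w$.

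\emph{Step 3 (main obstacle): identify $L_a^n b$ with $a^nb$.} Here I would use Artin's theorem in the alternative algebra $\O\otimes\mathcal A$. The subalgebra generated by $a$ and $b$ is associative, so $a(a(\cdots(ab)))=a^nb$ with $a^n$ unambiguous. Consequently
\[
T^n(e_kw)=(a^ne_k)\cdot w \qquad\text{for all } k \text{ and all } w\in\re V.
\]
The care needed is only in checking that the scalar extension is genuinely alternative, i.e.\ that the associator is still alternating. This holds because it is $\mathcal A$-multilinear with commutative coefficients.

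\emph{Step 4: compare with $T^{\circledcirc n}$.} By \eqref{eqpf:tensor BROV} and induction with \eqref{eq:Tnext}, the regular power is $T^{\circledcirc n}=\sum_i e_i\odot S_i$, where $a^n=\sum_ie_i\otimes S_i$ with $S_i\in\mathcal A\subseteq\mathscr{B}_{\mathbb O}(V)$. Applying Step 2 to this element gives $T^{\circledcirc n}(e_kw)=(a^ne_k)\cdot w$. Comparing with Step 3 on the spanning set $\{e_kw\}$ gives $T^n=T^{\circledcirc n}$, which finishes the proof.
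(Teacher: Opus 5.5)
Your argument is correct, but it reaches the conclusion by a different mechanism than the paper at the decisive step.

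The paper first expands $T^{\circledcirc n}=\sum_{j_1,\dots,j_n}(e_{j_1}\cdots e_{j_n})_{\otimes_r}\odot(T_{j_1}\cdots T_{j_n})$ with right-nested octonion products, which is essentially your Step 4 written in coordinates. It then proves by induction that the composition associators $[T,T^{\circledcirc n},x]_{\circledcirc}$ vanish. It does this by cancelling octonionic associators in pairs under the swap $j_n\leftrightarrow j_{n+1}$: distinct imaginary units anticommute while the $T_j$ commute. Finally it invokes Lemma \ref{lem:powass}.

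You bypass composition associators entirely. You realize $T$ as left multiplication by $a=\sum_i e_i\otimes T_i$ in the alternative algebra $\O\otimes_{\R}\mathcal A$ through $b\mapsto b\cdot w$, and Artin's theorem performs all the cancellations at once.

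What your route buys:
\begin{enumerate}
\item The commutativity hypothesis enters in one transparent place, namely the alternativity of the scalar extension.
\item No index bookkeeping is needed. In the paper's pairing one must treat separately the terms with $j_n=j_{n+1}$. One must also treat separately the terms where one of $j_n,j_{n+1}$ is $0$: since $e_0$ commutes with every $e_j$, these reduce to lower-order associators rather than cancelling.
\item You get the closed form $T^n=\sum_i e_i\odot S_i$, where $a^n=\sum_i e_i\otimes S_i$. Once Step 2 is applied to $a^n$, you already have $T^n\in\mathscr{B}_{\mathcal{RO}}(V)$. So \eqref{eqpf:tensor BROV} is needed only to identify $T^n$ with $T^{\circledcirc n}$, not for power-associativity itself.
\end{enumerate}

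The paper's route, in turn, stays inside the composition-associator formalism that governs $\alpha(s,T)$ and $\beta(s,T)$ in Theorem \ref{thm:Rs-T}. It also avoids any appeal to Artin's theorem.
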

		\begin{proof}
			Denote by $(e_{j})_{\otimes_r}:=e_j$ and for $n>1$, $$(e_{j_1}\cdots e_{j_n})_{\otimes_r}:=e_{j_1}(e_{j_2}\cdots e_{j_n})_{\otimes_r},$$
			where $j_1,\dots,j_n\in \{0,1,2,\dots,7\}$.
			
			We first show that \begin{eqnarray}\label{eq:TOn=ej1ejnTj}
				T^{\circledcirc n}=\sum_{j_1,\dots,j_n}(e_{j_1}\cdots e_{j_n})_{\otimes_r}\odot (T_{j_1}T_{j_2}\cdots T_{j_n}).
			\end{eqnarray}
			Equality \eqref {eq:TOn=ej1ejnTj} clearly holds  for $n=1$. Suppose 	\eqref {eq:TOn=ej1ejnTj} also holds for case $n$. It follows from identities \eqref{eq:Tnext}  and  \eqref{eqpf:tensor BROV} that
			\begin{eqnarray*}
				T^{\circledcirc (n+1)}&=&T\circledcirc 	T^{\circledcirc (n)}\\
				&=&\sum_{j_1=0}^7(e_{j_1}\odot T_{j_1})\circledcirc\sum_{j_2,\dots,j_{n+1}}(e_{j_2}\cdots e_{j_{n+1}})_{\otimes_r}\odot (T_{j_2}\cdots T_{j_{n+1}})\\
				&=&\sum_{j_1,j_2,\dots,j_{n+1}}e_{j_1}(e_{j_2}\cdots e_{j_{n+1}})_{\otimes_r}\odot T_{j_1}(T_{j_2}\cdots T_{j_{n+1}})\\
				&=&\sum_{j_1,\dots,j_{n+1}}(e_{j_1}\cdots e_{j_{n+1}})_{\otimes_r}\odot (T_{j_1}T_{j_2}\cdots T_{j_{n+1}}).
			\end{eqnarray*}
			
			We next show that \begin{eqnarray}\label{eqpf:[t,TOn,x]=0}
				[T,T^{\circledcirc n},x]_{\circledcirc}=0
			\end{eqnarray}
			for all $x\in V$, for all integer $n$.
			If $n=0$, then $T^{\circledcirc n}=\mathcal I$ and hence \eqref{eqpf:[t,TOn,x]=0} holds.
			If $n=1$, then \begin{align*}
				[T,T^{\circledcirc n},x]_{\circledcirc}&=(T\circledcirc T)(x)-T^2(x)\\
				&=\sum_{j_1,j_2}(e_{j_1}e_{j_2})(T_{j_1}T_{j_2}(x))-\sum_{j_1=0}^7e_{j_1}\odot T_{j_1}\left(\sum_{j_2=0}^7e_{j_2}T_{j_2}(x)\right)\\
				&=\sum_{j_1,j_2}(e_{j_1}e_{j_2})(T_{j_1}T_{j_2}(x))-e_{j_1} \left(e_{j_2}T_{j_1}T_{j_2}(x)\right)\\
				&=\sum_{j_1,j_2}[e_{j_1},e_{j_2},T_{j_1}T_{j_2}(x)].
			\end{align*}
			Exchanging $j_1$ and $j_2$ and using $T_{j_1}T_{j_2}=T_{j_2}T_{j_1}$, we deduce \eqref{eqpf:[t,TOn,x]=0} holds for $n=1$.
			Suppose \eqref{eqpf:[t,TOn,x]=0} also holds for all  integers  $ 1,\dots,n-1$ ($n\geqslant 2$). In view of \eqref{eq:TOn=ej1ejnTj}, we have
			\begin{align*}
				&\qquad[T,T^{\circledcirc n},x]_{\circledcirc}\\
				&=T^{\circledcirc (n+1)}(x)-T(T^{\circledcirc n}(x))\\
				&=\sum_{j_1,\dots,j_{n+1}}\Big((e_{j_1}\cdots e_{j_{n+1}})_{\otimes_r}\odot (T_{j_1}\cdots T_{j_{n+1}})\Big)(x)-\\
				&\qquad\sum_{j_1=0}^7(e_{j_1}\odot T_{j_1})\left(\sum_{j_2,\dots,j_{n+1}}\big((e_{j_2}\cdots e_{j_{n+1}})_{\otimes_r}\odot (T_{j_2}\cdots T_{j_{n+1}})\big)(x)\right)\\
				&=\sum_{j_1,\dots,j_{n+1}}\Big(e_{j_1}(e_{j_2}\cdots e_{j_{n+1}})_{\otimes_r}\Big) (T_{j_1}\cdots T_{j_{n+1}})(x)-\\
				&\qquad\sum_{j_1,\dots,j_{n+1}}e_{j_1}\big((e_{j_2}\cdots e_{j_{n+1}})_{\otimes_r}  T_{j_1}(T_{j_2}\cdots T_{j_{n+1}})(x)\big)\\
				&=-\sum_{j_1,\dots,j_{n-1}}[e_{j_1},(e_{j_2}\cdots e_{j_{n-1}})_{\otimes_r}, (T_{j_1}\cdots T_{j_{n-1}})(T_{j_n}T_{j_{n+1}}x)]+\\
				&\qquad\sum_{\substack{j_1,\dots,j_{n+1}\\j_n\neq j_{n+1}}}[e_{j_1},(e_{j_2}\cdots e_{j_{n+1}})_{\otimes_r}, (T_{j_1}\cdots T_{j_{n+1}})(x)].
			\end{align*}
			By induction hypothesis, the first term in the last equality vanishes. It is easy to verify that if $j_n\neq j_{n+1}$, then $$(e_{j_2}\cdots e_{j_n}e_{j_{n+1}})_{\otimes_r}=-(e_{j_2}\cdots e_{j_{n-1}}e_{j_{n+1}}e_{j_n})_{\otimes_r}.$$
			Exchanging $j_n$ and $j_{n+1}$ and using $T_{j_n}T_{j_{n+1}}=T_{j_{n+1}}T_{j_n}$, we deduce \eqref{eqpf:[t,TOn,x]=0} holds for case $n$.
			
			Finally, we conclude from Lemma \ref{lem:powass} that $T$ is power-associative.
		\end{proof}
		\subsection{1-dimensional  case}
		We  consider the one-dimensional case $V=\O$. We compute the functional calculus  of the operator $$L_q:\O\to\O$$ of left multiplication by a fixed nonzero $q\in \O$. Clearly, $L_q\in \spc_J(\mathscr{B}_{\mathcal{RO}}(\O))$ and by Proposition \ref{prop:CJispass} we have $L_q$ is power-associative. Moreover, one can check that for all $p,q\in \O$,
		\begin{eqnarray}\label{eq:Lqp}
			L_q\odot p=L_{qp}.
		\end{eqnarray}
		
		For $s,q\in \O$, recall the notation \begin{eqnarray}
			Q_{s}(q):=q^2-2(\re s)q+\abs{s}^2
		\end{eqnarray}
		and  $$[q]:=\{a+bJ:a=\re q, b=\abs{\text{Im}q},J\in \mathbb S\}.$$
		Let us denote by $\mathbb H_{s,q}$ the subalgebra of $\mathbb O$ generated by $s$ and $q$.
		
		\begin{lemma}For any $s,q\in \O$, we have
			\begin{eqnarray}\label{eq:1dimLdeltasq}
				L_{	Q_{s}(q)}=(R_s-L_q)(R_{\overline{s}}-L_q)+[L_q,R_{\overline{s}}],
			\end{eqnarray}	
			where the commutator $[L_q,R_{\overline{s}}]$ is defined by
			$$[L_q,R_{\overline{s}}]:=L_qR_{\overline{s}}-R_{\overline{s}}L_q=-[q,\cdot,\overline{s}].$$
			In particular, \begin{eqnarray}\label{eq:1dimLdeltasqHsq}
				L_{	Q_{s}(q)}|_{\mathbb H_{s,q}}=(R_s-L_q)(R_{\overline{s}}-L_q)|_{\mathbb H_{s,q}}.
			\end{eqnarray}	
		\end{lemma}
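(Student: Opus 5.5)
The plan is to evaluate both sides on an arbitrary $x\in\O$ and expand. The right-hand side only involves the real linear maps $L_q$ and $R_s,R_{\overline{s}}$, with ordinary composition. Expanding gives
\begin{align*}
(R_s-L_q)(R_{\overline{s}}-L_q)(x)&=(x\overline{s})s-(qx)s-q(x\overline{s})+q(qx).
\end{align*}
We then simplify each of the four terms using alternativity of $\O$.

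The first term is $(x\overline{s})s=x(\overline{s}s)=\abs{s}^2x$. This holds because the associator $[x,\overline{s},s]$ vanishes: $\overline{s}=2\re s-s$, and $[x,s,s]=0$ by alternativity. The last term is $q(qx)=q^2x$ by left alternativity. For the middle terms, we add the commutator $[L_q,R_{\overline{s}}](x)=q(x\overline{s})-(qx)\overline{s}$. Then
\[
-(qx)s-q(x\overline{s})+q(x\overline{s})-(qx)\overline{s}=-(qx)(s+\overline{s})=-2(\re s)\,qx .
\]
Summing all terms yields $q^2x-2(\re s)qx+\abs{s}^2x=Q_s(q)x=L_{Q_s(q)}(x)$, since $\re s$ and $\abs{s}^2$ are real. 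This proves \eqref{eq:1dimLdeltasq}. The identification $[L_q,R_{\overline{s}}](x)=-[q,x,\overline{s}]$ is just the definition of the associator $[q,x,\overline{s}]=(qx)\overline{s}-q(x\overline{s})$.

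For \eqref{eq:1dimLdeltasqHsq}, restrict to $x\in\mathbb H_{s,q}$. By Artin's theorem, the subalgebra generated by two elements of $\O$ is associative, so $[q,x,\overline{s}]=0$ for $x,q,\overline{s}\in\mathbb H_{s,q}$. Here $\overline{s}=2\re s-s$ lies in $\mathbb H_{s,q}$. Also, $\mathbb H_{s,q}$ is invariant under $L_q$ and $R_s$, $R_{\overline{s}}$, so the compositions make sense on it. Hence the commutator term vanishes on $\mathbb H_{s,q}$.

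There is no real obstacle here. The only point needing care is keeping track of which products are associative: this is exactly what $[x,\overline{s},s]=0$ and Artin's theorem provide.
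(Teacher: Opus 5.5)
Your proof is correct and is essentially the paper's own argument: you expand $(R_s-L_q)(R_{\overline{s}}-L_q)(x)$, use alternativity to get $(x\overline{s})s=\abs{s}^2x$ and $q(qx)=q^2x$, and observe that adding the commutator leaves $-(qx)(s+\overline{s})=-2(\re s)\,qx$. Your appeal to Artin's theorem for the restriction to $\mathbb H_{s,q}$ spells out what the paper leaves implicit in ``In particular''. The invariance of $\mathbb H_{s,q}$ you mention is not needed for this identity; the paper only uses it later, when inverting the restricted operators.
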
	
		\begin{proof}
			The claim follows by direct calculations.
			For any $x\in \O$,
			\begin{align*}
				&\qquad (R_s-L_q)(R_{\overline{s}}-L_q)(x)+[L_q,R_{\overline{s}}](x)\\
				&= (R_s-L_q)(x{\overline{s}-qx})+q(x{\overline{s}})-(qx)\overline{s}\\
				&=x\abs{s}^2-(qx)s-q(x\overline{s})+q^2x+q(x{\overline{s}})-(qx)\overline{s}\\
				&=	Q_{s}(q)\ x.
			\end{align*}
			This completes the proof.
		\end{proof}
		
		\begin{lemma}\label{lem:1dimRs-Lqinv}
			$(R_s-L_q)$ is invertible {as real linear operator} if and only  if $	s\notin [q]$.	
		\end{lemma}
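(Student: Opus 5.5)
The plan is to reduce everything to the associative subalgebra $\mathbb H_{s,q}$ generated by $s$ and $q$, using identity \eqref{eq:1dimLdeltasqHsq}. We also need the elementary fact that $Q_s(q)=0$ if and only if $q\in[s]$, equivalently $s\in[q]$. This holds because $Q_s$ is the characteristic polynomial whose zero set is the sphere $[s]$, and $[s]=[q]$ iff $\re s=\re q$ and $\abs{\operatorname{Im} s}=\abs{\operatorname{Im} q}$.

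\emph{Sufficiency ($s\notin[q]$ implies invertibility).} First, I will show that $R_s-L_q$ is injective on $\O$; since $\O$ is finite-dimensional over $\R$, injectivity gives invertibility. Suppose $xs=qx$ with $x\neq 0$. Then $q=(xs)x^{-1}$ by Artin's theorem applied to the subalgebra generated by $x$ and $s$. It is well known that for $x\neq0$ the map $s\mapsto xsx^{-1}$ preserves the real part and the norm. Indeed, $\re(xsx^{-1})=\re(sx^{-1}x)=\re s$ by the cyclicity $\re((ab)c)=\re(a(bc))$ and the commutativity of the real part. Also $\abs{xsx^{-1}}=\abs{s}$ by multiplicativity of the norm. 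Hence $\re q=\re s$ and $\abs{q}=\abs{s}$, so $\abs{\operatorname{Im}q}=\abs{\operatorname{Im}s}$ and $s\in[q]$, a contradiction. This direction therefore needs no use of \eqref{eq:1dimLdeltasq}.

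\emph{Necessity ($s\in[q]$ implies non-invertibility).} Second, assume $s\in[q]$, so $Q_s(q)=0$. I want a nonzero $x$ with $x s=q x$. If $s$ is real, then $q=s$ is real and $R_s-L_q=0$. Otherwise, $q$ and $s$ are non-real with the same real part and the same imaginary modulus. Since the imaginary units $\operatorname{Im}s/\abs{\operatorname{Im}s}$ and $\operatorname{Im}q/\abs{\operatorname{Im}q}$ lie in the associative subalgebra $\mathbb H_{s,q}$, a quaternion algebra or $\spc_J$, the classical quaternionic fact applies: two elements with the same real part and norm are conjugate. So there is a nonzero $x\in\mathbb H_{s,q}$ with $xs=qx$; concretely one can take $x=\operatorname{Im}q\,\abs{\operatorname{Im}s}+\abs{\operatorname{Im}q}\operatorname{Im}s$, or $x$ orthogonal to both units if $\operatorname{Im}q=-\operatorname{Im}s\,\abs{\operatorname{Im}q}/\abs{\operatorname{Im}s}$. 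Then $(R_s-L_q)x=0$ and the map is not injective.

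Alternatively, \eqref{eq:1dimLdeltasqHsq} gives a one-line argument. Both $R_{\overline s}-L_q$ and $R_s-L_q$ preserve $\mathbb H_{s,q}$, and on it their composition equals $L_{Q_s(q)}=0$. So if $R_s-L_q$ were injective, $R_{\overline s}-L_q$ would vanish on $\mathbb H_{s,q}$. Evaluating at $x=1$ gives $\overline s=q$, and at $x=\operatorname{Im} s$ (or a unit in $\mathbb H_{s,q}$ anticommuting with $\operatorname{Im}q$ when $\mathbb H_{s,q}=\spc_J$, using instead the other ordering) yields a contradiction with $s$ non-real.

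The main obstacle is this degenerate case $\mathbb H_{s,q}=\spc_J$, where $q=\overline s$. There I will simply exhibit an $x$ outside $\spc_J$, orthogonal to $J$, so that $xs=\overline{s}x$ by anticommutation, which gives the kernel directly.
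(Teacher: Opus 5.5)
Your proof is correct, and it takes a genuinely different route from the paper's. The paper settles the lemma in one line: it cites Tian's closed formula for the real determinant ${\det}_{\R}(R_s-L_q)$ of the $8\times 8$ matrix and reads off its zero set.

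You work with the kernel directly instead:
\begin{itemize}
\item For $s\notin[q]$, a relation $xs=qx$ with $x\neq 0$ forces $\abs{q}=\abs{s}$ by multiplicativity of the norm. It also forces $\re q=\re s$, via $q\abs{x}^2=(qx)\overline{x}=(xs)\overline{x}$ and $\re(ab)=\re(ba)$. So $R_s-L_q$ is injective, hence invertible on the finite-dimensional space $\O$.
\item For $s\in[q]$, you exhibit explicit kernel vectors. Write $s=a+bJ$ and $q=a+bK$. When $K\neq -J$, take $x=J+K$; the check $(J+K)s=a(J+K)+b(KJ-1)=q(J+K)$ involves only products of two factors, so no associativity issue arises. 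When $q=\overline{s}$, take an imaginary unit orthogonal to $J$.
\end{itemize}
The paper's route is shorter and gives a quantitative determinant, but it rests on an external computation. Yours is self-contained, uses only alternativity and the norm, and shows where the kernel lives. In particular, for $q=\overline s$ the kernel lies entirely outside $\mathbb H_{s,q}=\spc_J$.

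Two things need tidying.
\begin{itemize}
\item In the degenerate case the vector must be orthogonal to all of $\spc_J$, i.e.\ purely imaginary and orthogonal to $J$. Being orthogonal to $J$, or to the two units $\pm J$, does not give anticommutation: $x=1$ is orthogonal to $J$, yet $1\cdot s\neq \overline{s}\cdot 1$.
\item Discard the ``one-line'' alternative via \eqref{eq:1dimLdeltasqHsq}. It works when $q\neq\overline s$, where the contradiction already appears at $x=1$. But when $q=\overline s$, the algebra $\mathbb H_{s,q}=\spc_J$ is commutative. So $R_{\overline s}-L_q=R_{\overline s}-L_{\overline s}$ vanishes identically on it, including at $x=\operatorname{Im}s$, and $\spc_J$ contains no nonzero element anticommuting with $\operatorname{Im}q$. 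In fact $R_s-L_{\overline s}$ is injective on $\spc_J$, since $x s-\overline{s}x=2b\,xJ$ there. So no argument confined to $\mathbb H_{s,q}$ can detect non-invertibility in exactly this case; your explicit vector outside $\spc_J$ is what carries the proof there.
\end{itemize}
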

		\begin{proof}
			The operator $R_s-L_q$ can be represented by a real matrix from $\R^8$ to itself. From \cite[Theorem 2.14]{tian2000matrixO} the determinant of the real matrix $R_s-L_q$ is
			$${\det} _{\R} (R_s-L_q)=\abs{q-\overline{s}}^4\Big(\big(\re (q-s)\big)^2+\abs{\text{Im} q}^2+\abs{\text{Im} s}^2\Big)\Big(\big(\re (q-s)\big)^2+(\abs{\text{Im} q}-\abs{\text{Im} s})^2\Big).$$
			This implies that $\det (R_s-L_q)=0$ if and only if $\re q=\re s$ and $\abs{\text{Im} q}=\abs{\text{Im} s}$, i.e., $	s\notin [q]$.	
		\end{proof}

		\begin{lemma}
			If $	s\notin [q]$, then
			\begin{eqnarray}\label{eq:Rs-LqinvHsq}
				(R_s-L_q)^{-1}|_{\mathbb H_{s,q}}=L_{{Q_{s}(q)}^{-1}}(R_{\overline{s}}-L_q)|_{\mathbb H_{s,q}}.
			\end{eqnarray}
			In particular, suppose $s\in \mathbb C_{J} $ for some $J\in \mathbb S$ and  $	Q_{s}(q)\neq 0$, we have
			\begin{eqnarray}\label{eq:Rs-LqinvCs}
				(R_s-L_q)^{-1}|_{\mathbb C_{J}}=L_{{Q_{s}(q)}^{-1}({\overline{s}}-q)}|_{\mathbb C_{J}}
			\end{eqnarray}
			and hence
			\begin{eqnarray}\label{eq:(Rs-Lq)circledcirc-}
				(R_s-L_q)^{\circledcirc -}=L_{{Q_{s}(q)}^{-1}({\overline{s}}-q)}.
			\end{eqnarray}
			
		\end{lemma}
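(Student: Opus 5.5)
We first establish \eqref{eq:Rs-LqinvHsq} directly from the factorization \eqref{eq:1dimLdeltasqHsq}. The subalgebra $\mathbb H_{s,q}$ generated by $s$ and $q$ is associative by Artin's theorem. It is invariant under $L_q$, $R_s$, $R_{\overline{s}}$ and $L_{Q_s(q)^{-1}}$, since $\overline{s}=2\re s-s$ and $Q_s(q)\in\mathbb H_{s,q}$. On $\mathbb H_{s,q}$ the operators $L_q$ and $R_s$ commute, because $(qx)s=q(xs)$ there. Hence all the operators $R_s-L_q$, $R_{\overline{s}}-L_q$ and $L_{Q_s(q)}$ commute on $\mathbb H_{s,q}$. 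Since $s\notin[q]$, we have $Q_s(q)\neq 0$: a zero of $Q_s$ must lie in $[s]$, and $[s]=[q]$ would contradict $s\notin[q]$. Applying $L_{Q_s(q)^{-1}}$ to \eqref{eq:1dimLdeltasqHsq} gives
\[
\mathcal I|_{\mathbb H_{s,q}}=(R_s-L_q)\,L_{Q_s(q)^{-1}}(R_{\overline{s}}-L_q)|_{\mathbb H_{s,q}},
\]
using associativity inside $\mathbb H_{s,q}$ to move $L_{Q_s(q)^{-1}}$ past $R_s-L_q$. By Lemma \ref{lem:1dimRs-Lqinv}, $R_s-L_q$ is invertible on $\O$. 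Since it maps $\mathbb H_{s,q}$ into itself and is injective, it is bijective there. Applying $(R_s-L_q)^{-1}$ on the left yields \eqref{eq:Rs-LqinvHsq}.

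For \eqref{eq:Rs-LqinvCs}, take $s\in\spc_J$ and $x\in\spc_J\subseteq\mathbb H_{s,q}$ (if $s$ is real, choose $J$ and work in $\mathbb H_{J,q}$ instead, which still contains $s$). Then
\[
L_{Q_s(q)^{-1}}(R_{\overline{s}}-L_q)x=Q_s(q)^{-1}(x\overline{s}-qx).
\]
Since $x$ and $\overline{s}$ commute in $\spc_J$, this equals $Q_s(q)^{-1}((\overline{s}-q)x)$. Associativity in $\mathbb H_{s,q}$ turns it into $(Q_s(q)^{-1}(\overline{s}-q))x$, which is the claimed left multiplication restricted to $\spc_J$.

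Finally, for \eqref{eq:(Rs-Lq)circledcirc-}, recall that $(R_s-L_q)^{\circledcirc -}=\ext\big((R_s-L_q)^{-1}|_{\re\O}\big)$ with $\re\O=\R$. The operator $L_a$ with $a=Q_s(q)^{-1}(\overline{s}-q)$ is right para-linear, because $L_a\in\spc_{J'}(\mathscr{B}_{\mathcal{RO}}(\O))$ for the imaginary unit $J'$ of $a$. By \eqref{eq:Rs-LqinvCs}, $L_a$ agrees with $(R_s-L_q)^{-1}$ on $\R\subseteq\spc_J$. The Uniqueness Lemma \ref{lem:Uniqueness Lemma} then identifies the two para-linear operators.

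The main subtle step is the invariance and commutation bookkeeping inside $\mathbb H_{s,q}$, including the degenerate case where $s$ and $q$ generate only a copy of $\spc$. That case is harmless, since everything then happens in an associative commutative subalgebra.
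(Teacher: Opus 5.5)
Your proof is correct and is essentially the paper's argument. The factorization \eqref{eq:1dimLdeltasqHsq}, together with associativity on $\mathbb H_{s,q}$ (hence commutation of $L_q$, $R_s$, $R_{\overline{s}}$, $L_{Q_s(q)^{\pm1}}$ there), yields \eqref{eq:Rs-LqinvHsq}; specializing to $x\in\spc_J$ gives \eqref{eq:Rs-LqinvCs}; and the Uniqueness Lemma identifies the regular inverse with the left multiplication.

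Your variant is slightly leaner: you multiply by $L_{Q_s(q)^{-1}}$ and apply the global inverse, instead of inverting each factor on $\mathbb H_{s,q}$. You also treat real $s$, where $\spc_J\not\subseteq\mathbb H_{s,q}$, which the paper passes over.
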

		\begin{proof}
			By Lemma \ref{lem:1dimRs-Lqinv}, 	$R_s-L_q$ is invertible since 	$	Q_{s}(q)\neq 0$.
			Note that $(R_{{s}}-L_q)|_{\mathbb H_{s,q}},(R_{\overline{s}}-L_q)|_{\mathbb H_{s,q}},{L_{{Q_{s}(q)}}}|_{\mathbb H_{s,q}}$ are all maps from ${\mathbb H_{s,q}}$ to itself and hence so are their inverses.
			Thus it follows from identity \eqref{eq:1dimLdeltasqHsq} that
			$${L_{{Q_{s}(q)}}}^{-1}|_{\mathbb H_{s,q}}=(R_{{s}}-L_q)^{-1}|_{\mathbb H_{s,q}}(R_{\overline{s}}-L_q)^{-1}|_{\mathbb H_{s,q}}.$$ Therefore for any $x\in \mathbb H_{s,q}$, we have
			\begin{align*}
				L_{{Q_{s}(q)}^{-1}}(R_{\overline{s}}-L_q)(x)&=(R_{{s}}-L_q)^{-1}|_{\mathbb H_{s,q}}(R_{\overline{s}}-L_q)^{-1}|_{\mathbb H_{s,q}}(R_{\overline{s}}-L_q)(x)\\
				&=(R_{{s}}-L_q)^{-1}|_{\mathbb H_{s,q}}(x).
			\end{align*} This proves \eqref{eq:Rs-LqinvHsq}.
			
			Equality \eqref{eq:Rs-LqinvCs} follows from \eqref{eq:Rs-LqinvHsq} immediately since $s\in \mathbb C_{J} $, for any $x\in \mathbb C_{J}$ we have  $$R_{\overline{s}}x={\overline{s}}x.$$
			\eqref{eq:(Rs-Lq)circledcirc-} follows from the fact that $L_{{Q_{s}(q)}^{-1}({\overline{s}}-q)}$ is right para-linear and both sides of \eqref{eq:(Rs-Lq)circledcirc-} coincide on the real numbers.
		\end{proof}
		
		\begin{thm}\label{thm:sigLq=Sq}
			Let $L_q:\O\to \O$ be left multiplication by a fixed nonzero  octonion $q$. Then	$\sigma^*(L_q)=[q]$.
		\end{thm}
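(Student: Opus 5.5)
We prove $\rho^*(L_q)=\O\setminus[q]$, which is the same as $\sigma^*(L_q)=[q]$, by proving two inclusions.

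\emph{Inclusion $[q]\subseteq\sigma^*(L_q)$.} Let $s\in[q]$. Lemma \ref{lem:1dimRs-Lqinv} says that $R_s-L_q$ is not invertible in $\mathscr{B}_{\R}(\O)$. By Definition \ref{def:pullback spec}, $s\notin\rho_J^*(L_q)$ for every $J\in\mathbb S$ with $s\in\spc_J$. Since $\rho^*(L_q)=\bigcup_J\rho^*_J(L_q)$ and $\rho^*_J(L_q)\subseteq\spc_J$, it follows that $s\notin\rho^*(L_q)$.

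\emph{Inclusion $\O\setminus[q]\subseteq\rho^*(L_q)$.} Take $s\notin[q]$ and fix $J\in\mathbb S$ with $s\in\spc_J$. Lemma \ref{lem:1dimRs-Lqinv} gives invertibility of $R_s-L_q$, and in finite dimension the inverse is automatically bounded. It remains to show that $S:=(R_s-L_q)^{-1}$ is $\spc_J$-extendable power associative. By Lemma \ref{lem:cy epa}, this means checking that $S^n|_{\spc_J}$ is right $\spc_J$-linear for all $n$; here $\spc_J(V)=\spc_J$ because $\re\O=\R$.

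\emph{Step 1: $S$ preserves $\spc_J$.} Identity \eqref{eq:Rs-LqinvCs} gives $S|_{\spc_J}=L_a|_{\spc_J}$ with $a=Q_s(q)^{-1}(\overline s-q)$. In general $a$ does not lie in $\spc_J$, so $S$ need not map $\spc_J$ into itself. Instead, set $\mathbb H:=\mathbb H_{s,q}$, which is associative by Artin's theorem and contains $\spc_J$ (if $s\in\R$, use any $J$ with $q\in\spc_J$ or work in $\mathbb H_{J,q}$). The map $R_s-L_q$ sends $\mathbb H$ to itself and is injective there, so in finite dimension it is a bijection of $\mathbb H$. Hence $S$ maps $\mathbb H$ into $\mathbb H$, and on $\mathbb H$ we have $S|_{\mathbb H}=L_{Q_s(q)^{-1}}(R_{\overline s}-L_q)|_{\mathbb H}$ by \eqref{eq:Rs-LqinvHsq}.

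\emph{Step 2: $S^n$ commutes with $R_\lambda$ on $\mathbb H$.} Fix $\lambda\in\spc_J$ and $x\in\mathbb H$. Because $\mathbb H$ is associative, $R_\lambda$ commutes with $L_q$ and $L_{Q_s(q)^{-1}}$ on $\mathbb H$. Since $\lambda$ and $\overline s$ both lie in $\spc_J$, which is commutative, $R_\lambda$ also commutes with $R_{\overline s}$. Therefore $S(x\lambda)=S(x)\lambda$. As $S$ preserves $\mathbb H$ by Step 1, induction on $n$ gives $S^n(x\lambda)=S^n(x)\lambda$. Restricting to $x\in\spc_J\subseteq\mathbb H$ yields the required right $\spc_J$-linearity. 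This shows $s\in\rho^*_J(L_q)$.

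The main obstacle is Step 1, namely that $S$ preserves the associative subalgebra $\mathbb H$, because it is what allows powers of $S$ to be taken without leaving a region where associativity holds. The argument handles it by finite dimensionality together with the invariance of $\mathbb H$ under $R_s-L_q$. A separate point concerns the degenerate case where $s$ and $q$ generate only a commutative subalgebra: there one must enlarge to some associative $\mathbb H\supseteq\spc_J\cup\{q\}$ that is invariant under $R_s$ and $L_q$, and $\mathbb H_{J,q}$ serves this purpose.
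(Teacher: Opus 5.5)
Your proof is correct and follows essentially the paper's argument. For $s\in[q]$ you use non-invertibility from Lemma \ref{lem:1dimRs-Lqinv}; for $s\notin[q]$ you pass to an associative subalgebra invariant under $R_s-L_q$, where \eqref{eq:Rs-LqinvHsq} makes $(R_s-L_q)^{-1}$, and hence each of its powers, right $\spc_J$-linear.

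Your handling of real $s$ is a small improvement on the paper, which passes over this case. There $\mathbb H_{s,q}$ need not contain $\spc_J$, so you switch to $\mathbb H_{J,q}$; in fact $\mathbb H_{J,q}$ works uniformly for every $s\in\spc_J$. One cosmetic fix: your Step~1 heading ``$S$ preserves $\spc_J$'' contradicts what that step actually (and correctly) proves, namely that $S$ preserves $\mathbb H$.
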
	
		\begin{proof}
			We prove this by showing that $$\rho_J(L_q)=\{s\in \spc_J:s\notin [q] \}$$ for all $J\in \mathbb S.$
			Fix $J\in \mathbb S$ arbitrarily and consider any $s\in \spc_J$ such that $s\notin [q]$. It follows from Lemma \ref{lem:1dimRs-Lqinv} that $R_s-L_q$ is invertible.
			By associativity, the operator $${L_{{Q_{s}(q)}}}|_{\mathbb H_{s,q}}:{\mathbb H_{s,q}}\to {\mathbb H_{s,q}}$$ is right $\spc_J$-linear, i.e., for all $x\in {\mathbb H_{s,q}}$ and all $\lambda\in \spc_J$, $${L_{{Q_{s}(q)}}}(x\lambda)={L_{{Q_{s}(q)}}}(x)\lambda.$$
			It is easy to check that
			$$(R_{\overline{s}}-L_q)|_{\mathbb H_{s,q}}:{\mathbb H_{s,q}}\to{\mathbb H_{s,q}}$$ is also right $\spc_J$-linear. Thus by identity \eqref{eq:Rs-LqinvHsq} we deduce that
			$$(R_{{s}}-L_q)^{-1}|_{\mathbb H_{s,q}}:{\mathbb H_{s,q}}\to{\mathbb H_{s,q}}$$ is right $\spc_J$-linear. This implies that $(R_{{s}}-L_q)^{-n}|_{\mathbb H_{s,q}}$ is also right $\spc_J$-linear for all integers $n$. Therefore for all $x\in \spc_J(\O)=\spc_J$, $\lambda\in \spc_J$, we have
			$$(R_{{s}}-L_q)^{-n}(x\lambda)=(R_{{s}}-L_q)^{-n}|_{\mathbb H_{s,q}}(x\lambda)=(R_{{s}}-L_q)^{-n}|_{\mathbb H_{s,q}}(x)\lambda=(R_{{s}}-L_q)^{-n}(x)\lambda.$$
			This proves that $(R_{{s}}-L_q)^{-1}$ is $\mathbb C_J$-extendable power associative, which means that $s\in \rho_J(L_q)$.
			
			Conversely, if $s\in [q]\cap \spc_J$, then by Lemma \ref{lem:1dimRs-Lqinv} we have that $R_s-L_q$ is not  invertible and hence $s\notin \rho_J(L_q)$.  We conclude that $\rho_J(L_q)=\{s\in \spc_J:s\notin [q] \}$ as desired.	
		\end{proof}
		
		\begin{thm}\label{Cauchyoctonions}
			For all $f\in\mathcal{SR}^L({\sigma^*(L_q))}$, we have $f^*(L_q)_J=L_{f(q)}$ for all $J\in \mathbb S$.
		\end{thm}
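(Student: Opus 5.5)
Since both $f^*(L_q)_J$ and $L_{f(q)}$ are right para-linear maps on $\O$ (the latter is of the form $L_a$, which is right para-linear), the Uniqueness Lemma \ref{lem:Uniqueness Lemma} shows it suffices to compare them on $\re\O=\R$, i.e. to show $f^*(L_q)_J(1)=f(q)$. The first step reduces to slice preserving $f$. Write $f=\sum_{i=0}^7 f_{(i)}\bullet^L e_i$ with $f_{(i)}\in\mathcal{SR}_{\R}(\sigma^*(L_q))$. By definition $\Phi_{L_q}=\ext(\Phi_{L_q})_0$ is right para-linear, so $f^*(L_q)_J=\sum_i f_{(i)}^*(L_q)_J\odot e_i$. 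If we know $f_{(i)}^*(L_q)_J=L_{f_{(i)}(q)}$, then \eqref{eq:Lqp} gives
\[
f^*(L_q)_J=\sum_i L_{f_{(i)}(q)e_i}=L_{\sum_i f_{(i)}(q)e_i}.
\]
Because $f_{(i)}(q)\in\spc_{I_q}$ is slice preserving, Proposition \ref{prop:slicepre} yields $\sum_i f_{(i)}(q)e_i=\sum_i (f_{(i)}\bullet^L e_i)(q)=f(q)$. (Here $q\in\R$ is a trivial case.)

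The second step treats slice preserving $f$ and $v=1$. By Theorem \ref{thm:sigLq=Sq}, $\sigma^*(L_q)=[q]$, so a $T$-left-admissible $U$ contains $[q]$. Formula \eqref{eqpf:f*TJv} and \eqref{eq:(Rs-Lq)circledcirc-} give
\[
f^*(L_q)_J(1)=\frac1{2\pi}\int_{\partial(U\cap\spc_J)}\big(Q_s(q)^{-1}(\overline s-q)\big)\,ds_J f(s)=\frac1{2\pi}\int_{\partial(U\cap\spc_J)} S_L^{-1}(s,q)\,ds_Jf(s).
\]
By Corollary \ref{cor:slicecauchy}, this integral equals $f(q)$. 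To apply it, one replaces $U$ by an axially symmetric $\Omega_D$ with piecewise $C^1$ slice boundaries containing $[q]$, lying inside the domain of $f$, and on which $f$ is regular. Independence from $U$ (Theorem \ref{thm:f*TindependentU}) justifies this replacement. The result holds for every $J$, so sphere invariance follows automatically.

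The main obstacle is bookkeeping in the first step: one must make sure the $\O$-bimodule operations on sections and on $\mathscr B_{\mathcal{RO}}(\O)$ really produce $L_{f_{(i)}(q)}\odot e_i=L_{f_{(i)}(q)e_i}$. This is exactly \eqref{eq:Lqp}, applied pointwise in $J$. One must also check that the bracket $\big[(R_s-T)^{\circledcirc-},ds_Jf_{(i)},e_i\big]$ terms of Theorem \ref{thm:f(T)} are already absorbed by defining the calculus through $\ext$, rather than needing a separate computation. If that fails, I would instead compute directly via \eqref{eq:f(T)J}, using that $L_a$ lies in the nucleus-compatible family closed under $\odot$ by \eqref{eq:Lqp}, so the associator in $\mathscr B_{\mathcal{RO}}(\O)$ reduces to $L_{[a,b,e_i]}$.
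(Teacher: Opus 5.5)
Your proposal is correct and follows the paper's proof. The slice preserving case is handled by identifying $(R_s-L_q)^{\circledcirc -}=L_{S_L^{-1}(s,q)}$ and applying Corollary \ref{cor:slicecauchy}, and the general case follows from $\Phi_{L_q}=\ext(\Phi_{L_q})_0$ together with \eqref{eq:Lqp} and \eqref{eq:sliceprod}. The only cosmetic difference is that you evaluate at $1$ and invoke the Uniqueness Lemma, whereas the paper uses \eqref{eq:Lqp} to pull the integral inside $L$ directly. Your worry about the bracket terms of Theorem \ref{thm:f(T)} is moot, since you work straight from the definition via $\ext$.
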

		\begin{proof}
			Let	$U\subseteq \O$  be a $L_q$-left-admissible domain.  Fix $J\in \mathbb S$ arbitrarily.
			
			We begin with proving $g(L_q)_J=L_{g(q)}$ for all $J\in \mathbb S$ whenever $g\in\mathcal{SR}_{\R}({\sigma^*(L_q)}$.			
			By Definition \ref{def:ofunccal} and identity \eqref{eq:(Rs-Lq)circledcirc-}, we have
			\begin{align*}
				g^*(L_q)_J&=\dfrac{1}{2\pi} \int_{\partial (U\cap \spc_J)}  L_{{Q_{s}(q)}^{-1}({\overline{s}}-q)}\odot {d}s_Jg(s)\\
				&=\dfrac{1}{2\pi} L_{\int_{\partial (U\cap \spc_J)}  {Q_{s}(q)}^{-1}({\overline{s}}-q) ({d}s_Jg(s))}\\
				&=L_{g(q)}.
			\end{align*}
			Here the second equality follows from 		 \eqref	{eq:Lqp}. The last equality follows from the Cauchy integral formula \ref{eq:slicecauchy}.

			Let $f=\sum_{i=0}f_{(i)}\bullet^L e_i\in\mathcal{SR}^L({\sigma^*(T)}) $ with $f_{(i)}$ slice preserving for $i=0,\dots,7$.
			By  definition and direct calculations, we get
			\begin{align*}
				f^*(L_q)_J&=\left(\sum_{i=0}^7f_{(i)}\bullet^L e_i\right)(L_q)_J\\
				&=\sum_{i=0}^7f_{(i)}(L_q)_J\odot e_i\\
				&=\sum_{i=0}^7L_{f_{(i)}(q)}\odot e_i\\
				&=\sum_{i=0}^7L_{f_{(i)}(q) e_i}\\
				&=L_{f(q)}.
			\end{align*}
			The last equality follows from  \eqref{eq:sliceprod}:
			$$f(q)=\sum_{i=0}^7(f_{(i)}\bullet^L e_i)(q)=\sum_{i=0}^7f_{(i)}(q) e_i.$$
		\end{proof}
		
		\subsection{Matrix cases}
		One can verify that any right para-linear operator on $\O^n$  can be represented as an octonionic matrix acting on the octonionic column vectors. The regular composition is just the usual matrix product. Below we illustrate some examples.

		\begin{eg}
			Let $V=\O^3$. Consider the matrix
			$$T=\begin{bmatrix}
				e_1&0&0\\
				0&2e_2&0\\
				0&0&3e_4
			\end{bmatrix}.$$
			Since $$T=e_1\odot \begin{bmatrix}
				1&0&0\\
				0&0&0\\
				0&0&0
			\end{bmatrix}+e_2\odot  \begin{bmatrix}
				0&0&0\\
				0&2&0\\
				0&0&0
			\end{bmatrix}+e_4\odot \begin{bmatrix}
				0&0&0\\
				0&0&0\\
				0&0&3
			\end{bmatrix},$$ it follows from Proposition \ref{prop:commutativepowass} that $T$ is power associative.
			
			Note that, in general, $R_s-T$ may be not right para-linear  and hence is not necessarily expressed as an octonionic matrix. Considering  $R_s-T$ as a real linear operator, we can express  $R_s-T$  as the real block matrix:
			$$R_s-T=\begin{bmatrix}
				R_s-L_{e_1}&0&0\\
				0&R_s-2L_{e_2}&0\\
				0&0&R_s-3L_{e_4}
			\end{bmatrix}.$$
			By Lemma \ref{lem:1dimRs-Lqinv}, $R_s-T$ is invertible if and only if $$s\notin \mathbb S\cup 2\mathbb S\cup 3\mathbb S.$$
			For any $s\notin \mathbb S\cup 2\mathbb S\cup 3\mathbb S$, suppose $s\in \spc_J$ for some $J\in  \mathbb S$. It follows from the proof of Theorem \ref{thm:sigLq=Sq} that $(R_s-L_{e_1})^{-1}, (R_s-2L_{e_2})^{-1},(R_s-3L_{e_4})^{-1}$ are all $\spc_J$-extendable power associative. This shows that $$\sigma^*(T)=\mathbb S\cup 2\mathbb S\cup 3\mathbb S.$$
		\end{eg}

		\begin{eg}\label{eg:non sphere}
			Let $V=\O^2$. Consider the matrix
			$$T=\begin{bmatrix}
				0&-e_1\\
				e_1&0
			\end{bmatrix}.$$
			Obviously $T\in \spc_{e_1}(\mathscr{B}_{\mathcal{RO}}(V))$ and by Proposition \ref{prop:CJispass} we deduce $T$ is  power-associative.
			
			We next compute {$\sigma^*(T)$}.
			We first express    $R_s-T$    as the real block matrix:
			$$R_s-T=\begin{bmatrix}
				R_s&L_{e_1}\\
				L_{\overline{e_1}}			&R_s
			\end{bmatrix}.$$
			Since $T$ is  invertible and power-associative, it follows that $s=0\in \rho^*(T)$. Let us fix a nonzero octonion $s$ and note that
			\begin{eqnarray}
				\begin{bmatrix}
					R_s&L_{e_1}\\
					L_{\overline{e_1}}			&R_s
				\end{bmatrix}
				\begin{bmatrix}
					\mathcal I&-R_{s^{-1}}L_{e_1}\\
					0		&\mathcal I
				\end{bmatrix}=
				\begin{bmatrix}
					R_s&0\\
					L_{\overline{e_1}}			&R_s-L_{\overline{e_1}}	R_{s^{-1}}L_{e_1}
				\end{bmatrix}.
			\end{eqnarray}
			This implies that $R_s-T$ is invertible if and only if $$T_s:=R_s-L_{\overline{e_1}}	R_{s^-1}L_{e_1}:\O\to \O$$ is invertible.
			
			\textbf{Claim1:} $T_s$ is invertible if and only if $s\notin  \{\pm 1\}\cup ( \mathbb S\cap \spc_{e_1}^{\perp_{\R}})$. 	
			
			Here  $\O$ is viewed as an Euclidean space with the  real inner product $\fx{x}{y}_{\R}:=\re x\overline{y}$ for any $x,y\in \O$, so we write $x\perp_{\R} y$ if $\fx{x}{y}_{\R}=0.$
			\begin{proof}[\textbf{Proof of  Claim 1:}]
				We consider $T_sx=0$ for some nonzero $x\in \O$.
				
				
				\begin{enumerate}
					\item If $s\in \spc_{e_1}$, then
					\begin{align*}
						T_sx&=xs-{\overline{e_1}}((e_1x)s^{-1})\\
						&=xs-xs^{-1}-{\overline{e_1}}[e_1,x,s^{-1}]\\
						&=x(s-s^{-1})-[e_1,x,e_1s^{-1}]\\
						&=x(s-s^{-1})
					\end{align*} for some nonzero $x$, which implies $s-s^{-1}=0$, i.e., $s=\pm 1$.
					\item 	Suppose  $s\notin \spc_{e_1} $.
					By direct calculations, we have
					\begin{align*}
						T_sx&=x(s-s^{-1})-[e_1,x,e_1s^{-1}]\\
						&=x(s-s^{-1})+[x,e_1,e_1s^{-1}]\\
						&=x(s-s^{-1})+(xe_1)(e_1s^{-1})-xs^{-1}\\
						&=xs+(xe_1)(e_1s^{-1}).
					\end{align*}
					Hence if  $T_sx=0$  has nonzero solution, then  $xs=-(xe_1)(e_1s^{-1})$	 and thus
					$$\abs{xs}=\abs{(xe_1)(e_1s^{-1})},$$
					which implies that $\abs{s}=\abs{s^{-1}}$. 		
					It follows from $s\notin \spc_{e_1} $ that $\re s=0$.
					Hence we get $s\in \mathbb S\setminus
					\spc_{e_1}$.
					Using $s^{-1}=-s$, we deduce
					\begin{eqnarray}\label{eq:Ax}
						T_sx=xs-(xe_1)(e_1s)=0.
					\end{eqnarray}

					\begin{enumerate}
						\item If $x\in \mathbb H_{e_1,s}$, then $$T_sx=x(s-s^{-1})=0$$ for some nonzero $x$, which implies  $s=\pm 1$, contradicting with $s\notin \spc_{e_1}$. Thus for any $0\neq x\in \mathbb H_{e_1,s}$, $T_sx\neq 0$.
						\item Suppose $x\in \mathbb H_{e_1,s}^{\perp_{\R}}$ and hence $\overline{x}=-x$. Since $x\perp_{\R}e_1,s,e_1s$, it follows from \cite[Corollary 6.13]{harvey1990spinors} that $xe_1=-x\overline{e_1}=e_1\overline{x}$ and
						\begin{align*}
							(xe_1)(e_1s)&=	(e_1\overline{x})(e_1s)\\
							&=-(e_1\overline{(e_1s)})x\\
							&=\overline{x}\overline{(e_1\overline{(e_1s)})}\\
							&=-x(e_1s\overline{e_1}).
						\end{align*}
						Combining with \eqref{eq:Ax}, we obtain that if $$T_sx=0,\qquad \text{ for some nonzero }x\in \mathbb H_{e_1,s}^{\perp_{\R}},$$ then
						$$x(s+e_1s\overline{e_1})=0.$$
						This forces $s+e_1s\overline{e_1}=0$, i.e., $s\overline{e_1}=-\overline{e_1}s$,  which is equivalent with $$\re s\overline{e_1}=-\re \overline{e_1}s=-\re s\overline{e_1}=0,$$ i.e., $s\perp_{\R} e_1$.  Combining  $s\in \mathbb S\setminus
						\spc_{e_1}$, we get
						$$s\in  \mathbb S\cap \spc_{e_1}^{\perp_{\R}}.$$
						
					\end{enumerate}
				\end{enumerate}

			\end{proof}
			
			\textbf{Claim 2:} Let $s\in\spc_J$ for some $J\in  \mathbb S$. If $R_s-T$ is invertible, then  $(R_s-T)^{-1}$ is $\spc_J$-extendable power associative. 	
			
			\begin{proof}[\textbf{Proof of  Claim 2:}]
				We note that $$(R_s-T)|_{\mathbb H_{J,e_1}(V)}=\begin{bmatrix}
					R_s&L_{e_1}\\
					L_{\overline{e_1}}			&R_s
				\end{bmatrix}_{\mathbb H_{J,e_1}(V)}:\mathbb H_{J,e_1}(V)\to \mathbb H_{J,e_1}(V)$$ is a right $\spc_J$-linear operator. Hence $$(R_s-T)^{-1}|_{\mathbb H_{J,e_1}(V)}:\mathbb H_{J,e_1}(V)\to \mathbb H_{J,e_1}(V)$$ is also right $\spc_J$-linear operator and hence so  is $(R_s-T)^{-n}|_{\mathbb H_{J,e_1}(V)}$ for any integer $n$. This proves Claim 2.
			\end{proof}	
			
			Combining Claim 1 and Claim 2, we get $$\sigma^*(T)= \{\pm 1\}\cup ( \mathbb S\cap \spc_{e_1}^{\perp_{\R}}).$$
			This shows {that the spectrum is not necessarily axially symmetric} in the octonionic case.
		\end{eg}
		\section{Algebraic properties of the octonionic functional calculus}
		In this section $V$ is a Banach  octonionic bimodule and $T\in \mathscr{B}_{\mathcal {RO}}(V)$ is power-associative. We aim to characterize the algebraic properties of the octonionic functional calculus $\Phi_T$ and $\Psi_T$.

		Note that by Theorem \ref{thm:BROV is Oalg}, $\mathscr{B}_{\mathcal{RO}}(V)$ is not merely an $\mathbb{O}$-bimodule, but also an $\mathbb{O}$-algebra (see Definition \ref{def:O alg}). Similarly, by Theorem \ref{thm:slice algebra}, $\mathcal{SR}^L(\Omega)$ is not only an $\mathbb{O}$-bimodule, but also an alternative algebra.
		With respect to the $\mathbb{O}$-bimodule structure, Definitions \ref{def:ofunccal} and \ref{def:lifofunccal} imply that $\Phi_T$ (resp. $\Psi_T$) are right (resp. left) para-linear maps. We now focus our attention on the $\O$-algebraic structure.

		Recall the definition of the slice product (see Definition \ref{def:slice product}). The following result can be verified directly.
		\begin{prop}\label{prop:SR O-alg}
			Let $\Omega$ be an axially symmetric s-domain in $\mathbb{O}$. Endowed with the slice product, both the algebras $(\mathcal{SR}^L(\Omega),\bullet^L)$ and $(\mathcal{SR}^R(\Omega),\bullet^R)$ are $\mathbb{O}$-algebras.
		\end{prop}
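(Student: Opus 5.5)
The statement to prove is Proposition \ref{prop:SR O-alg}. I will check the two axioms of Definition \ref{def:O alg} for $(\mathcal{SR}^L(\Omega),\bullet^L)$ and handle $\mathcal{SR}^R(\Omega)$ symmetrically. The bimodule structure comes from Theorem \ref{thm:slice algebra}: $p\bullet^L f=\mathcal I^L(pF)$ and $f\bullet^L p=\mathcal I^L(Fp)$.

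\textbf{Step 1: closure and alternativity.} I will first show that $\mathcal{SR}^L(\Omega)$ is closed under $\bullet^L$. On an axially symmetric s-domain, slice regular functions are slice functions, and $f$ is regular exactly when its stem function $F=F_1+\imath F_2$ is holomorphic. Since $FG$ is again holomorphic, $f\bullet^L g\in\mathcal{SR}^L(\Omega)$. Constant functions, i.e.\ stems $p\in\O\subset\O_{\spc}$, are regular, so the $\O$-scalar multiplications also preserve $\mathcal{SR}^L(\Omega)$. Theorem \ref{thm:slice algebra} then gives that $\mathcal{SR}^L(\Omega)$ is an alternative algebra and an $\O$-bimodule whose real part is $\mathcal{SR}_{\R}(\Omega)$.

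\textbf{Step 2: unit.} The unit will be $e:=1$, the constant function $1$. It has stem $F=1$ with real components, so it is slice preserving and $1\in\operatorname{Re}\mathcal{SR}^L(\Omega)$. By definition of the slice product at the level of stems, $(p\bullet 1)\bullet f=\mathcal I^L(pF)=p\bullet f$ and $f\bullet(p\bullet 1)=\mathcal I^L(Fp)=f\bullet p$. This gives axiom (2).

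\textbf{Step 3: para-bilinearity.} I must show that the left product operator $L_f:g\mapsto f\bullet g$ is right para-linear. By Theorem \ref{lem:paralinear-char2}(2) it suffices to show $B_p(L_f,g)=0$ for $g\in\operatorname{Re}\mathcal{SR}^L=\mathcal{SR}_{\R}$. In that case
\[
B_p(L_f,g)=(f\bullet g)\bullet p-f\bullet(g\bullet p).
\]
Its stem is $(FG)p-F(Gp)=[F,G,p]$. This associator vanishes because $G$ has real components $G_1,G_2$, so $G$ lies in $\R\otimes\spc$, which is in the nucleus of $\O_{\spc}$. This is the step needing the most care: the bimodule associators must be identified with algebra associators in $\O_{\spc}$ pointwise on stems, and I would carry this out in components $F_1,F_2,G_1,G_2$.

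The symmetric argument gives that the right product $R_g$ is left para-linear: $p\bullet(f\bullet g)-(p\bullet f)\bullet g$ has stem $-[p,F,G]$, which vanishes for real-component $F$.

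Finally, for $\mathcal{SR}^R(\Omega)$ the same proof applies with $\mathcal I^R$, $\bullet^R$ and the right Cauchy–Riemann operator replacing their left counterparts.
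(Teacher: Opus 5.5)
Your proof is correct and is exactly the direct verification the paper has in mind; the paper states the proposition without details. You check closure of $\mathcal{SR}^L(\Omega)$ under $\bullet^L$ via holomorphic stems and take the unit $1\in\mathcal{SR}_{\R}(\Omega)$. You then use Theorem \ref{lem:paralinear-char2} to reduce para-bilinearity to the vanishing of the stem associators $[F,G,p]$ and $[p,F,G]$ when one stem takes values in the nucleus $\R\otimes\spc$ of $\O_{\spc}$; equivalently, $B_p(L_f,g)$ is the algebra associator $[f,g,p]_\bullet$ with $g$ in the nucleus $\mathcal{S}_{\R}(\Omega)$ from Theorem \ref{thm:slice algebra}(3).
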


		For any $f=\sum_{0}^7e_i\bullet^R f_{(i)}\in  \mathcal{SR}^R(\Omega)$ where $f_{(i)}\in \mathcal{SR}_{\R}(\Omega)$,  define:
		\begin{eqnarray}
			\tilde{f} :&=\sum_{i=0}^7e_i\bullet^L f_{(i)}\in \mathcal{SR}^L(\Omega).
		\end{eqnarray}
		Similarly, we can also define  $\tilde{g}\in \mathcal{SR}^R(\Omega)$ for $g\in \mathcal{SR}^L(\Omega)$.
		\begin{lemma}\label{lem:ftiled}
			Let  $f\in \spc_J(\mathcal{SR}^R({\Omega}))$ for some $J\in \mathbb S$. Then
			\begin{enumerate}
				\item $\tilde{f}\in \spc_J(\mathcal{SR}^L({\Omega}))$.
				\item $f(s)=\tilde{f}(s)$ for all $s\in \spc_J\cap \Omega.$
				\item If $g\in \spc_J(\mathcal{SR}^L(\Omega))$, then $f\bullet^R \tilde{g}\in \spc_J(\mathcal{SR}^R({\Omega})),\tilde{f}\bullet^L g\in \spc_J(\mathcal{SR}^L({\Omega}))$ and $$\widetilde{{f\bullet^R \tilde{g}}}=\tilde{f}\bullet^L g.$$
				Moreover, for all $s\in \spc_J\cap \Omega$,
				$$f(s)g(s)={{f\bullet^R \tilde{g}}}(s)=\tilde{f}\bullet^L g(s).$$
			\end{enumerate}
		\end{lemma}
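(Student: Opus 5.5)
Everything reduces to the stem-function description of slice functions. Write $f=\sum_{i=0}^7 e_i\bullet^R f_{(i)}$ with $f_{(i)}\in\mathcal{SR}_{\R}(\Omega)$. Since $f\in\spc_J(\mathcal{SR}^R(\Omega))=\mathcal{SR}_{\R}(\Omega)\oplus J\,\mathcal{SR}_{\R}(\Omega)$ (Theorem \ref{thm:slice algebra}), we may use a standard orthonormal basis related to $J$. Then $f=f_{(0)}+J\bullet^R f_{(1)}$, and correspondingly $\tilde f=f_{(0)}+J\bullet^L f_{(1)}$. That this is well defined, i.e.\ independent of the chosen basis, holds because the real-part decomposition of Proposition \ref{prop:real_part} is unique.

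For part (1), $\tilde f$ is visibly in $\mathcal{SR}_{\R}(\Omega)\oplus J\,\mathcal{SR}_{\R}(\Omega)$. This set is $\spc_J(\mathcal{SR}^L(\Omega))$, because the real part of $\mathcal{SR}^L(\Omega)$ is $\mathcal{SR}_{\R}(\Omega)$.

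For part (2), take $s=x+yJ\in\spc_J\cap\Omega$. By Proposition \ref{prop:slicepre}, $f_{(1)}(s)\in\spc_J$, and slice products with slice preserving factors are pointwise products. So $(J\bullet^R f_{(1)})(s)=Jf_{(1)}(s)$ and $(J\bullet^L f_{(1)})(s)=Jf_{(1)}(s)$. In terms of stems this reads $\mathcal I^R(JF)(s)=JF_1+JF_2J=\mathcal I^L(JF)(s)$, where $F$ is the real-valued stem of $f_{(1)}$. Hence $f(s)=f_{(0)}(s)+Jf_{(1)}(s)=\tilde f(s)$.

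For part (3), write $g=g_{(0)}+J\bullet^L g_{(1)}$, so that $\tilde g=g_{(0)}+J\bullet^R g_{(1)}$. Expand $f\bullet^R\tilde g$ by bilinearity. The slice preserving functions lie in the nucleus and centre of the alternative algebra, and the product $J\bullet J$ is the constant $-1$. This gives
\[
f\bullet^R\tilde g=(f_{(0)}g_{(0)}-f_{(1)}g_{(1)})+J\bullet^R(f_{(0)}g_{(1)}+f_{(1)}g_{(0)}),
\]
and the identical expansion with $\bullet^L$ gives $\tilde f\bullet^L g$. These two expressions immediately yield membership in the respective $\spc_J$-slices and the identity $\widetilde{f\bullet^R\tilde g}=\tilde f\bullet^L g$. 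Evaluating at $s\in\spc_J\cap\Omega$ by part (2) gives $(f\bullet^R\tilde g)(s)=(\tilde f\bullet^L g)(s)$. Moreover, all values $f_{(k)}(s),g_{(k)}(s)$ and $J$ lie in the commutative field $\spc_J$, so the right-hand side equals $(f_{(0)}(s)+Jf_{(1)}(s))(g_{(0)}(s)+Jg_{(1)}(s))=f(s)g(s)$.

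The main point needing care is justifying the manipulation $(J\bullet h)\bullet(J\bullet k)=-hk$ for slice preserving $h,k$ in a nonassociative algebra. I would verify it directly on stem functions: $(JH)(JK)=J^2HK=-HK$, which is valid because $H$ and $K$ take values in $\R\otimes\spc$, which is central and associative in $\O_{\spc}$.
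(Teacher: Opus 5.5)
Your proof is correct and follows the paper's route: you decompose $f=f_{(0)}+J\bullet^R f_{(1)}$, set $\tilde f=f_{(0)}+J\bullet^L f_{(1)}$, evaluate on $\spc_J$ via Proposition \ref{prop:slicepre}, and expand the products for part (3). There you supply the ``direct calculations'' the paper omits, and you correctly write $g=g_{(0)}+J\bullet^L g_{(1)}$ (the paper's $\bullet^R$ there is evidently a slip). The only imprecision is your basis-independence remark: writing $J=\sum_i c_i e_i$, the identity $\tilde f=f_{(0)}+J\bullet^L f_{(1)}$ needs the real bilinearity of $\bullet^L$ and $\bullet^R$ in addition to uniqueness of the decomposition, not uniqueness alone.
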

		\begin{proof}
			Suppose $$f=f_{(0)}+J\bullet^Rf_{(1)},$$ where $f_{(0)},f_{(1)}\in \mathcal{SR}_{\R}({\Omega})$.
			\begin{enumerate}
				\item By definition, we have $$\tilde{f}= f_{(0)}+J\bullet^Lf_{(1)}\in \spc_J(\mathcal{SR}^L({\Omega})).$$
				\item 	For any $s\in \spc_J\cap \Omega$, it follows from Proposition \ref{prop:slicepre} that
				$$\tilde{f}(s)=f_{(0)}(s)+(J\bullet^Lf_{(1)})(s)=f_{(0)}(s)+f_{(1)}(s)J=f_{(0)}(s)+Jf_{(1)}(s)=f(s).$$
				\item This follows from direct calculations by writing $g=g_{(0)}+J\bullet^Rg_{(1)}$ with $g_{(0)},g_{(1)}\in \mathcal{SR}_{\R}({\Omega})$.
			\end{enumerate}
		\end{proof}
		
		\begin{mydef}
			Let $V$ be a Banach  octonionic bimodule and $T\in \mathscr{B}_{\mathcal {RO}}(V)$ be power-associative. The \textbf{spectrum} of $T$ is defined as
			\begin{eqnarray}
				\sigma(T):=\sigma_{*}(T)\cup \sigma^*(T).
			\end{eqnarray}
			
			Let $U\subseteq \O$ be an \textbf{axially symmetric s-domain} that contains the  spectrum $\sigma(T )$ be such that $\partial(U \cap \spc_J )$ is the union of a finite number of continuously differentiable Jordan curves for every $J\in \mathbb S$. We say that $U$ is a \textbf{$T$-admissible} open set.
			
			Similar definitions for $ \mathcal{SR}^L({\sigma(T)})$ and $ \mathcal{SR}^R({\sigma(T)})$.
		\end{mydef}
		\begin{thm}\label{thm:f-*T=f*T}
			Let $V$ be a Banach  octonionic bimodule and $T\in \mathscr{B}_{\mathcal {RO}}(V)$ be power-associative.
			If $f\in \spc_J(\mathcal{SR}^R({\sigma(T)})$  for some $J\in \mathbb S$, then
			\begin{eqnarray}\label{eq:ref*=ref-*}
				\operatorname{Re}_{{\mathscr{B}_{\mathcal {RO}}(V)}}f_{*}(T)_J=\operatorname{Re}_{\mathscr{B}_{\mathcal {RO}}(V)} \tilde{f}^{*}(T)_J.
			\end{eqnarray}
		\end{thm}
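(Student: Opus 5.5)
By the Uniqueness Lemma~\ref{lem:Uniqueness Lemma} and formula \eqref{eq:re f}, both sides of \eqref{eq:ref*=ref-*} are octonionic linear. They are therefore determined by their restrictions to $\re V$, and in fact by the real parts of their values there. Concretely, \eqref{eq:re f} gives $\operatorname{Re}_{\mathscr{B}_{\mathcal{RO}}(V)}S=\ext(S_{\R}|_{\re V})$. So it suffices to show
$$\re_V f_*(T)_J(v)=\re_V \tilde f^*(T)_J(v),\qquad v\in \re V.$$

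First I fix a $T$-admissible domain $U$. It is both $T$-left- and $T$-right-admissible, so both calculi may be computed along $\partial(U\cap\spc_J)$, and $\partial(U\cap\spc_J)\subseteq\rho^*_J(T)\cap{\rho_*}_J(T)$.

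For the right side I use Lemma~\ref{lem:ftiled}(1), which gives $\tilde f\in\spc_J(\mathcal{SR}^L(\sigma(T)))$. Then \eqref{eqpf:f*TJv} applies and yields
$$\tilde f^*(T)_J(v)=\frac1{2\pi}\int_{\partial(U\cap\spc_J)}(R_s-T)^{-1}(v)\,\big(ds_J\tilde f(s)\big).$$
For the left side, \eqref{eqpf:ref-*T} gives
$$\re_V f_*(T)_J(v)=\re_V\frac1{2\pi}\int_{\partial(U\cap\spc_J)}(R_s-T)^{-1}(v)\,\big(f(s)ds_J\big).$$
On $\spc_J$ we have $f(s)=\tilde f(s)\in\spc_J$ by Lemma~\ref{lem:ftiled}(2). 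Since $ds_J\in\spc_J$ and $\spc_J$ is commutative, $f(s)ds_J=ds_J\tilde f(s)$. Hence the two integrands agree, and taking $\re_V$ of the first identity finishes the argument.

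The main point to check is the use of \eqref{eqpf:f*TJv} and \eqref{eqpf:ref-*T}. Each was proved using the respective slice-resolvent property: $\spc_J$-extendability for the pull-back side, and $\spc_J$-liftability for the push-forward side. Both hold along $\partial(U\cap\spc_J)$ precisely because $U\supseteq\sigma(T)=\sigma_*(T)\cup\sigma^*(T)$. I also need that the integrals are independent of $U$, which is Theorems~\ref{thm:f*TindependentU} and~\ref{thm:main thm2}, extended via Theorem~\ref{thm:f(T)} to the $\spc_J$-slice functions. This is why the common admissible domain is needed.
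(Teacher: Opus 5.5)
Your proposal is correct and follows the paper's proof essentially verbatim. You reduce via the Uniqueness Lemma and \eqref{eq:re f} to comparing $\re_V$ of the values on $\re V$, then compare \eqref{eqpf:f*TJv} for $\tilde f$ (justified by Lemma~\ref{lem:ftiled}(1)) with \eqref{eqpf:ref-*T} using Lemma~\ref{lem:ftiled}(2); your remark that $f(s)\,ds_J=ds_J\,\tilde f(s)$, since $f(s)=\tilde f(s)$ and $ds_J$ lie in the commutative field $\spc_J$, just spells out a step the paper leaves implicit.
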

		\begin{proof}
			To show \eqref{eq:ref*=ref-*}, by Uniqueness Lemma \ref {lem:Uniqueness Lemma}, it suffices to verify
			\begin{eqnarray}\label{eqpf:ref*=ref-*}
				[\operatorname{Re}_{{\mathscr{B}_{\mathcal {RO}}(V)}}f_{*}(T)_J](v)=[\operatorname{Re}_{\mathscr{B}_{\mathcal {RO}}(V)} \tilde{f}^{*}(T)_J](v)
			\end{eqnarray} for all $v\in \re V$.
			In view of \eqref{eq:re f}, \eqref{eqpf:ref*=ref-*} is equivalent to
			\begin{eqnarray}\label{eqpf:ref-*=ref*}
				\operatorname{Re}_{{V}}[f_{*}(T)_J(v)]=\operatorname{Re}_{V}[ \tilde{f}^{*}(T)_J(v)], \qquad \text{for all $v\in \re V$}.
			\end{eqnarray}
			
			Let $U\subseteq \O$  be a $T$-admissible domain on which $f, \tilde{f}$ are both slice regular.  		
			By Lemma \ref{lem:ftiled} (1), we obtain  from $f\in \spc_J(\mathcal{SR}^R({\sigma(T)})$ that $\tilde{f}\in\spc_J(\mathcal{SR}^L({\sigma(T)})$. In view of  \eqref{eqpf:f*TJv}, we have
			$$\tilde{f}^{*}(T)_J(v)=	\dfrac{1}{2\pi} \int_{\partial (U\cap \spc_J)} (R_s-T)^{ -1}(v) \big({d}s_J\tilde{f}(s)\big).$$
			Comparing this with \eqref{eqpf:ref-*T}, \eqref{eqpf:ref-*=ref*} follows from 	 Lemma \ref{lem:ftiled} (2) immediately.
		\end{proof}

		\begin{thm}
			Let $V$ be a Banach  octonionic bimodule and $T\in \mathscr{B}_{\mathcal {RO}}(V)$  be power-associative.  
			If $f\in \spc_J(\mathcal{SR}^R({\sigma(T)})$, $g\in \spc_J(\mathcal{SR}^L({\sigma(T)})$, then
			\begin{eqnarray}\label{eq:ref-*Tg*T}
				\operatorname{Re}f_{*}(T)_J\circledcirc g^*(T)_J=\operatorname{Re}(f\bullet^R \tilde{g})_{*}(T)_J=\operatorname{Re}(\tilde{f}\bullet^L g)^*(T)_J.
			\end{eqnarray}
			Here $\re$ is  the real part operator on ${\mathscr{B}_{\mathcal {RO}}(V)}$.
		\end{thm}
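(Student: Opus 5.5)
The second equality will come from Theorem \ref{thm:f-*T=f*T} together with Lemma \ref{lem:ftiled}(3). By that lemma, $f\bullet^R\tilde g\in\spc_J(\mathcal{SR}^R(\sigma(T)))$ and $\widetilde{f\bullet^R\tilde g}=\tilde f\bullet^L g$. Applying Theorem \ref{thm:f-*T=f*T} to $h:=f\bullet^R\tilde g$ gives $\re h_*(T)_J=\re\tilde h^*(T)_J=\re(\tilde f\bullet^L g)^*(T)_J$. So the real work is the first equality. Following the paper's standard reduction, by \eqref{eq:re f} and the Uniqueness Lemma \ref{lem:Uniqueness Lemma} it suffices to show
\[
\re_V\big[(f_*(T)_J\circledcirc g^*(T)_J)(v)\big]=\re_V\big[(\tilde f\bullet^L g)^*(T)_J(v)\big]
\]
for all $v\in\re V$.

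For the left side, $v\in\re V$ lets me use Lemma \ref{lem:vanishing[f,g,x]=0}(2) to replace $\circledcirc$ by the ordinary composition $f_*(T)_J(g^*(T)_J(v))$. Set $w:=g^*(T)_J(v)$. By \eqref{eqpf:f*TJv}, with a $T$-admissible $U$,
\[
w=\frac1{2\pi}\int_{\partial(U\cap\spc_J)}(R_q-T)^{-1}(v)\,(dq_Jg(q)).
\]
Since $q\in\rho^*_J(T)$, \eqref{eq:rs-tn}-type $\spc_J$-linearity (Lemma \ref{lem:cy epa}) shows that $w\in\spc_J(V)$ whenever $v\in\re V$, because $(R_q-T)^{-1}$ maps $\spc_J(V)$ into $V$, and I would need more than this. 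Next I apply \eqref{eqpf:ref-*T} to $f$ at the vector $w$ and take a second contour $\partial(U'\cap\spc_J)$ enclosing the first:
\[
\re_V f_*(T)_J(w)=\re_V\frac{1}{4\pi^2}\int_{\partial(U'\cap \spc_J)}\int_{\partial(U\cap\spc_J)}(R_s-T)^{-1}\big((R_q-T)^{-1}(v)\,dq_Jg(q)\big)(f(s)ds_J).
\]

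The core step is a resolvent identity on the slice. For $s,q\in\spc_J$ in the resolvent set, using $R_sR_q=R_qR_s$ on $\spc_J(V)$ and the $\spc_J$-linearity from the extendable/liftable properties, I expect
\[
(R_s-T)^{-1}(R_q-T)^{-1}x=\big[(R_q-T)^{-1}x-(R_s-T)^{-1}x\big](s-q)^{-1}
\]
to hold for $x\in\spc_J(V)$, at least after applying $\pi_J$. Here the scalar $(s-q)^{-1}\in\spc_J$ can be moved freely.

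Substituting this identity splits the double integral into two terms. The term with $(R_s-T)^{-1}$ integrated in $q$ over the inner contour vanishes by Cauchy, since $q\mapsto g(q)(s-q)^{-1}$ is holomorphic inside for $s$ on the outer curve. In the other term the $s$-integral gives $f(q)$ by the classical Cauchy formula on $\spc_J$, since all quantities are $\spc_J$-valued and commute. This leaves
\[
\re_V\frac1{2\pi}\int_{\partial(U\cap\spc_J)}(R_q-T)^{-1}(v)\,dq_J\,f(q)g(q).
\]
By Lemma \ref{lem:ftiled}(2)(3), $f(q)g(q)=(\tilde f\bullet^Lg)(q)$ on $\spc_J$. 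Then \eqref{eqpf:f*TJv} for $\tilde f\bullet^L g\in\spc_J(\mathcal{SR}^L)$ identifies this expression with the right side.

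The main obstacle is justifying the resolvent identity and the moves of $(R_s-T)^{-1}$ past $\spc_J$-scalars when its argument is only known to lie in $V$ rather than $\spc_J(V)$. My plan is to work throughout under $\re_V$, or equivalently under $\pi_J$, using the liftable property (Lemma \ref{lem:cy lpa}: $\pi_J R_\lambda(R_s-T)^{-n}=\pi_J(R_s-T)^{-n}R_\lambda$) for the outer resolvent. I would also pair with $\phi\in V^{*_{\O}}$ via Lemma \ref{cor:HBcor} so that all integrands become $\spc_J$-valued holomorphic functions, where Fubini and Cauchy apply classically.
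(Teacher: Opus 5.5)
Your proposal is correct and is essentially the paper's proof. It uses the same reduction to $v\in\re V$, the same resolvent identity (from $\spc_J$-extendability of the $g$-resolvent and $\spc_J$-liftability, under $\pi_J$, of the $f$-resolvent), the same Cauchy splitting of the double integral, and the same derivation of the second equality from Theorem~\ref{thm:f-*T=f*T} with Lemma~\ref{lem:ftiled}(3). The one difference is the nesting: you put the $f$-contour outside the $g$-contour, whereas the paper nests them the other way. As a result your surviving term carries $(R_q-T)^{-1}(v)$ and is matched with $(\tilde f\bullet^L g)^*(T)_J$ via \eqref{eqpf:f*TJv}, instead of with $(f\bullet^R\tilde g)_*(T)_J$ via \eqref{eqpf:ref-*T}; this is harmless.

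Your aside that $w=g^*(T)_J(v)\in\spc_J(V)$ is not justified: extendability gives right $\spc_J$-linearity on $\spc_J(V)$, not that $\spc_J(V)$ is mapped into itself. Nothing depends on it, though, since you apply the resolvent identity to $v$ itself.
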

		\begin{proof}
			Fix $v\in \re V$ arbitrarily. Let $U\subseteq \O$  be a $T$-admissible domain on which $f, g$ are both slice regular.	Let $U_1,U_2\subseteq \O$  be two $T$-admissible domains satisfying:
			\begin{enumerate}
				\item $U_1\cup \partial U_1\subsetneq U_2$;
				\item $U_2\cup \partial U_2\subseteq U$.
			\end{enumerate}	
			In view of  \eqref{eqpf:ref-*T} and \eqref{eqpf:f*TJv}, we get
			\begin{eqnarray}\label{eqpf:ref-*g*}
				&\quad\,&	\operatorname{Re}_V\Big(f_{*}(T)_J\circledcirc g^*(T)_J\Big)(v)\\
				&=&\operatorname{Re}_Vf_{*}(T)_J\big( g^*(T)_J(v)\big)\notag\\
				&=&\operatorname{Re}_V\dfrac{1}{2\pi} \int_{\partial (U_1\cap \spc_J)}\Big((f(s){d}s_J)   \odot(R_s-T)^{-\circledcirc }\Big)\left( \dfrac{1}{2\pi} \int_{\partial (U_2\cap \spc_J)}\Big(  (R_q-T)^{\circledcirc -} \odot{d}q_Jg(q)\Big)(v)\right)\notag\\
				&=&\dfrac{1}{4\pi^2}\operatorname{Re}_V \int_{\partial (U_1\cap \spc_J)}\Big((f(s){d}s_J)   \odot(R_s-T)^{-\circledcirc }\Big)\left(  \int_{\partial (U_2\cap \spc_J)}  (R_q-T)^{-1}(v) {d}q_Jg(q)\right)\notag\\
				&=&\dfrac{1}{4\pi^2}\operatorname{Re}_V \int_{\partial (U_1\cap \spc_J)}   (R_s-T)^{-1 }\left(\left(  \int_{\partial (U_2\cap \spc_J)}  (R_q-T)^{-1}(v) {d}q_Jg(q)\right)(f(s){d}s_J)\right)\notag\\
				&=&\dfrac{1}{4\pi^2}\operatorname{Re}_V \int_{\partial (U_1\cap \spc_J)}  \int_{\partial (U_2\cap \spc_J)}  (R_s-T)^{-1 }\left(   (R_q-T)^{-1}(v) \right)\, g(q)f(s){d}q_J{d}s_J.\notag
			\end{eqnarray}
			The last equality follows from  $f(s),g(q)\in \spc_J$ for any  $s\in \partial (U_1\cap \spc_J), q\in \partial (U_2\cap \spc_J)$ and $(R_s-T)^{-1}$ (resp. $(R_q-T)^{-1}$) is $\mathbb C_J$-liftable (resp. extendable) power associative.
			
			Fix $s\in \partial (U_1\cap \spc_J), q\in \partial (U_2\cap \spc_J)$ arbitrarily. Since $(R_s-T)^{-1}$ is $\mathbb C_J$-extendable power associative, it follows from \eqref{eqpf:extasp} that for any $x\in \spc_J(V)$, we have
			\begin{align*}
				((R_s-T)^{-1}-	(R_q-T)^{-1})(x)&=	\Big((R_s-T)^{-1}R_{q-s}	(R_q-T)^{-1}\Big)(x)\\
				&=\Big((R_s-T)^{-1}	(R_q-T)^{-1}\Big)(x(q-s)).
			\end{align*}
			Noticing $q-s\neq 0$ by $U_1\cup \partial U_1\subsetneq U_2$, this implies that for all $x\in \spc_J(V)$,
			\begin{eqnarray}
				((R_s-T)^{-1}-	(R_q-T)^{-1})(x(q-s)^{-1})=(R_s-T)^{-1}	(R_q-T)^{-1}(x).		
			\end{eqnarray}
			Combining this with \eqref{eqpf:ref-*g*}, we obtain from $(R_s-T)^{-1}$ (resp. $(R_q-T)^{-1}$) is $\mathbb C_J$-liftable (resp. extendable) power associative that
			\begin{align}\label{eqpf:ref-*g*T2}
				&\quad\,	\operatorname{Re}_V\Big(f_{*}(T)_J\circledcirc g^*(T)_J\Big)(v)\\
				&=\dfrac{1}{4\pi^2}\operatorname{Re}_V \int_{\partial (U_1\cap \spc_J)}  \int_{\partial (U_2\cap \spc_J)} 	((R_s-T)^{-1}-	(R_q-T)^{-1})(v(q-s)^{-1}) g(q)f(s){d}q_J{d}s_J\notag\\
				&=\dfrac{1}{4\pi^2}\operatorname{Re}_V \int_{\partial (U_1\cap \spc_J)}  \int_{\partial (U_2\cap \spc_J)} 	\Big((R_s-T)^{-1}(v(q-s)^{-1})-	(R_q-T)^{-1}(v(q-s)^{-1}) \Big)g(q)f(s){d}q_J{d}s_J\notag\\
				&=\dfrac{1}{4\pi^2}\operatorname{Re}_V \int_{\partial (U_1\cap \spc_J)}  \int_{\partial (U_2\cap \spc_J)} 	\Big((R_s-T)^{-1}(v)(q-s)^{-1}-	(R_q-T)^{-1}(v)(q-s)^{-1} \Big)g(q)f(s){d}q_J{d}s_J\notag\\
				&=\dfrac{1}{4\pi^2}\operatorname{Re}_V \int_{\partial (U_1\cap \spc_J)}   	(R_s-T)^{-1}(v)\int_{\partial (U_2\cap \spc_J)}g(q)(q-s)^{-1}{d}q_Jf(s){d}s_J\notag\\
				&\quad-\dfrac{1}{4\pi^2}\operatorname{Re}_V	\int_{\partial (U_2\cap \spc_J)}(R_q-T)^{-1}(v) g(q)\int_{\partial (U_1\cap \spc_J)}f(s)(q-s)^{-1}{d}s_J{d}q_J\notag\\
				&=\dfrac{1}{2\pi}\operatorname{Re}_V \int_{\partial (U_1\cap \spc_J)}   	(R_s-T)^{-1}(v)g(s)f(s){d}s_J\notag	\end{align}
			We used the Fubini theorem in the penultimate line. In view of  formula \eqref{eqpf:ref-*T} and  Lemma \ref {lem:ftiled} (3), \eqref{eqpf:ref-*g*T2} implies that
			\begin{align*}
				\operatorname{Re}_V\Big(f_{*}(T)_J\circledcirc g^*(T)_J\Big)(v)=	\operatorname{Re}_V\Big((f\bullet^R \tilde{g})_{*}(T)_J\Big)(v).
			\end{align*}
			Then the  Uniqueness Lemma \ref {lem:Uniqueness Lemma} and  \eqref{eq:re f} implies that
			$$	\operatorname{Re}f_{*}(T)_J\circledcirc g^*(T)_J=\operatorname{Re}(f\bullet^R \tilde{g})_{*}(T)_J.$$
			Here  $\re$ is  the real part operator on ${\mathscr{B}_{\mathcal {RO}}(V)}$. By  Lemma \ref {lem:ftiled} (3) and Theorem \ref{thm:f-*T=f*T}, we obtain
			$$\operatorname{Re}(f\bullet^R \tilde{g})_{*}(T)_J=\operatorname{Re}(\tilde{f}\bullet^L g)^*(T)_J.$$
		\end{proof}
		Recall the notion of sphere invariance of $T$ in Definitions \ref{def:lsliceinv} and \ref{def:rsliceinv}.
		
		\begin{thm}\label{thm:algebra prop}
			Let $V$ be a Banach  octonionic bimodule and $T\in \mathscr{B}_{\mathcal {RO}}(V)$  be power-associative.  
			The following hold:			
			\begin{enumerate}
				\item Let $T$ be left \sinv. If $f\in \mathcal{SR}_{\R}({\sigma(T)})$, $g\in \mathcal{SR}^L({\sigma(T)})$, then
				\begin{eqnarray}\label{eq:reslicef-*Tg*T}
					\operatorname{Re}f_{*}(T)\circledcirc g^*(T)=\operatorname{Re}(f{g})^{*}(T)
				\end{eqnarray}
				\item
				Let $T$ be right \sinv. If $f\in \mathcal{SR}^R({\sigma(T)})$, $g\in \mathcal{SR}_{\R}({\sigma(T)})$, then
				\begin{eqnarray}\label{eq:ref-*Tsliceg*T}
					\operatorname{Re}f_{*}(T)\circledcirc g^*(T)=\operatorname{Re}({f} g)_{*}(T).
				\end{eqnarray}
			\end{enumerate}
			Here $\re$ is  the real part operator on $\Gamma(\mathbb S, \mathscr{B}_{\mathcal{RO}}(V))$.
		\end{thm}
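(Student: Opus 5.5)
The plan is to reduce both assertions to the slice-wise product rule just proved, namely \eqref{eq:ref-*Tg*T}, by decomposing the non-slice-preserving factor along a standard orthonormal basis adapted to $J$. I treat part (1); part (2) is symmetric, with the roles of the left and right calculi exchanged.

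\textbf{Setup.} Fix $J\in\mathbb S$ and choose a standard orthonormal basis $\{1,J_1,\dots,J_7\}$ related to $J$, with $J_4=J$. Since the real part operator on $\Gamma(\mathbb S,\mathscr{B}_{\mathcal{RO}}(V))$ acts pointwise in $J$, it suffices to prove
\[
\operatorname{Re}\, f_*(T)_J\circledcirc g^*(T)_J=\operatorname{Re}\,(fg)^*(T)_J
\]
for every $J$.

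\textbf{Decomposing $g$.} Write $g=\sum_{i=0}^{7} g_{(i)}\bullet^L J_i$ with $g_{(i)}$ slice preserving, and group the terms into $\spc_J$-pieces. Since $J_{2k+1}=J_{2k}J$ for $k=0,\dots,3$ (with $J_0=1$), we have
\[
g=\sum_{k=0}^{3} G_k\bullet^L J_{2k},\qquad G_k:=g_{(2k)}+g_{(2k+1)}\bullet^L J\in \spc_J(\mathcal{SR}^L(\sigma(T))),
\]
up to the bookkeeping of associators, which vanish because the $g_{(i)}$ lie in the nucleus.

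\textbf{Linearity of the calculus in $g$.} By Definition \ref{def:ofunccal}, $\Phi_T=\ext(\Phi_T)_0$ is right para-linear, so
\[
g^*(T)_J=\sum_{i}(g_{(i)})^*(T)_J\odot J_i .
\]
I take real parts of the regular composition with $f_*(T)_J$, where $f$ is slice preserving, so $f_*(T)_J$ lies in a $\spc_J$-slice. The product is para-bilinear because $\mathscr{B}_{\mathcal{RO}}(V)$ is an $\O$-algebra (Theorem \ref{thm:BROV is Oalg}). Hence the right factors $J_i$ can be moved outside up to associators, and these associators have zero real part by Proposition \ref{prop:real_part}(2). This reduces the claim to
\[
\operatorname{Re}\bigl((f_*(T)_J\circledcirc (g_{(i)})^*(T)_J)\odot J_i\bigr)=\operatorname{Re}\bigl((fg_{(i)})^*(T)_J\odot J_i\bigr).
\]
For $f$ slice preserving, $\tilde f=f$ and $f\bullet^L g_{(i)}=fg_{(i)}$ by Proposition \ref{prop:slicepre}. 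Applying \eqref{eq:ref-*Tg*T} to $f$ and $g_{(i)}$ (both in $\spc_J$) gives $\operatorname{Re}(f_*\circledcirc g_{(i)}^*)=\operatorname{Re}(fg_{(i)})^*$.

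\textbf{Main obstacle.} The difficulty is that $\operatorname{Re}$ does not commute with $\odot J_i$. So knowing only the real parts of $A_i:=f_*(T)_J\circledcirc (g_{(i)})^*(T)_J$ and $B_i:=(fg_{(i)})^*(T)_J$ does not immediately control $\operatorname{Re}(A_i\odot J_i)$. Here the sphere-invariance hypothesis enters. Because $T$ is left sphere invariant, $(fg_{(i)})^*(T)_J$ is independent of $J$. Computing it on the slice $\spc_{I}$ for each $I$ and using that the result is then $\spc_I$-valued for every $I$, I expect $B_i$ to be $\O$-linear, i.e.\ real. The same argument with $f_*(T)$ and $g_{(i)}^*(T)$, both slice preserving, makes $A_i$ real as well, via Lemma \ref{lem:TS=TrS}.

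For real elements $X$ one has $\operatorname{Re}(X\odot J_i)=\delta_{i0}X$. Thus both sides reduce to the $i=0$ term, which is already settled by \eqref{eq:ref-*Tg*T}. Finally, $(fg)^*(T)_J=\sum_i(fg_{(i)})^*(T)_J\odot J_i$ by the para-linearity of $\Phi_T$, which closes the argument.

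I expect the hardest point to be verifying rigorously that sphere invariance forces the slice-preserving calculus to land in $\mathscr{B}_{\O}(V)$. I would attempt this via the $\pi_I$-projections and the Uniqueness Lemma \ref{lem:Uniqueness Lemma}, testing on $v\in\re V$ with octonionic linear functionals as in Lemma \ref{cor:HBcor}.
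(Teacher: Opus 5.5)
\textbf{There is a genuine gap}, and it is exactly at the step you flagged. Left sphere invariance does not force the slice preserving calculus into $\mathscr{B}_{\O}(V)$, and no argument can make it do so. Take $h(z)=z$. By Theorem~\ref{thm:poly left fctcal}, $h^*(T)_J=T$ for every $J\in\mathbb S$, so $J$-independence holds automatically. Yet $h^*(T)_J$ is octonionic linear only when $T$ is.

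Concretely, on $V=\O$ fix $J$ with its standard basis and let $T=L_{J_1}$; this operator is power-associative, and sphere invariant by Theorem~\ref{Cauchyoctonions}. Take $f\equiv 1$ and $g(z)=z\bullet^L J_1$. Then $f_*(T)_J=\mathcal I$, the only nonzero component is $g_{(1)}(z)=z$, so $A_1=L_{J_1}$, and by \eqref{eq:Lqp}
\[
\re\bigl(A_1\odot J_1\bigr)=\re L_{J_1J_1}=-\mathcal I\neq 0 .
\]
Your argument, however, would reduce the left-hand side to its $i=0$ term, which is $0$.

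The heuristic fails because slice preservation of $h$ only puts the scalar ${d}s_I h(s)$ into $\spc_I$. The resolvent $(R_s-T)^{\circledcirc -}$ still carries the imaginary components of $T$: for $T=L_q$ one gets $h^*(T)_I=L_{h(q)}$, which lies in the slice of $q$, not of $I$. The same example refutes your side claim that $f_*(T)_J$ lies in a $\spc_J$-slice. Also, in the standard basis $J_{2k+1}\neq J_{2k}J$; the correct pairing is $J_{k+4}=J_kJ$ for $k=0,\dots,3$. Even the correct grouping into $\spc_J$-pieces leaves the factors $J_1,J_2,J_3\notin\spc_J$, so the same obstacle remains.

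Your reduction to $\re(A_i\odot J_i)=\re(B_i\odot J_i)$ is sound. What is missing is how to obtain that identity, and the paper does it without any realness: it never detaches the imaginary unit from $g_{(i)}$. For $i\geq 1$, note that $g_{(i)}\bullet^L J_i\in\spc_{J_i}(\mathcal{SR}^L(\sigma(T)))$ and $\tilde f=f$. Applying \eqref{eq:ref-*Tg*T} on the slice $\spc_{J_i}$, rather than on your fixed $\spc_J$, gives
\[
\re\, f_*(T)_{J_i}\circledcirc (g_{(i)}\bullet^L J_i)^*(T)_{J_i}=\re\,\bigl(f(g_{(i)}\bullet^L J_i)\bigr)^*(T)_{J_i}.
\]
The case $i=0$ is immediate on $\spc_J$. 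Sphere invariance is then used only in its literal sense, $J$-independence, to transport each of these identities from $J_i$ to the common $J$, and one sums over $i$. This is the paper's proof, written there with $e_i$ in place of $J_i$.

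In your notation, para-bilinearity gives $\re(A_i\odot J_i)=\re\bigl(f_*(T)_J\circledcirc(g_{(i)}\bullet^L J_i)^*(T)_J\bigr)$, which is exactly the transported left side above. Note that this transport also needs $f_*(T)_{J_i}=f_*(T)_J$. Left sphere invariance, as defined, does not state this, and the paper takes it for granted.
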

		
		\begin{proof}
			We prove the  part (1), the  part (2) is similar.
			
			Let   $f\in \mathcal{SR}_{\R}({\sigma(T)})$ and $T$ be left \sinv.	Suppose $g=\sum_{0}^7e_i\bullet^L g_{(i)}$. For any $i=1,\dots,7$, we have $e_i\bullet^L g_{(i)}\in \spc_{e_i}(\mathcal{SR}^L({\sigma(T)})$. In view of \eqref{eq:ref-*Tg*T}, we conclude from  $\tilde{f}=f$ and Proposition \ref{prop:slicepre} that for any $i=0,\dots,7$,
			\begin{align*}
				&\quad\,	\operatorname{Re}_{\mathscr{B}_{\mathcal{RO}}(V)}f_{*}(T)_{e_i}\circledcirc (e_i\bullet^L g_{(i)})^*(T)_{e_i}\\
				&=\operatorname{Re}_{\mathscr{B}_{\mathcal{RO}}(V)}(\tilde{f}\bullet^L (e_i\bullet^L g_{(i)}))^*(T)_{e_i}\\
				&=\operatorname{Re}_{\mathscr{B}_{\mathcal{RO}}(V)}(f(e_i\bullet^L g_{(i)}))^*(T)_{e_i}.
			\end{align*}
			Since $T$ is left \sinv, it follows that for any $i=0,\dots,7$,
			$$\operatorname{Re}_{\Gamma(\mathbb S, \mathscr{B}_{\mathcal{RO}}(V))}f_{*}(T)\circledcirc (e_i\bullet^L g_{(i)})^*(T)=\operatorname{Re}_{\Gamma(\mathbb S, \mathscr{B}_{\mathcal{RO}}(V))}(f(e_i\bullet^L g_{(i)}))^*(T).$$
			This implies \eqref{eq:reslicef-*Tg*T}.
		\end{proof}
		
		\begin{rem}
			Similar results also hold in the quaternionic case \cite[Theorem 4.11.6]{colombo2011noncomfunctcalculus}. 	Let $V$ be a two-sided quaternionic Banach space and $T$ be a right quaternionic-linear operator. Denote by $\sigma_S(T)$ the quaternionic S-spectrum of $T$. Using the same symbol as in Theorem \ref{thm:algebra prop}, we rewrite the quaternionic result as follows:
			\begin{enumerate}
				\item If $f\in \mathcal{SR}_{\R}({\sigma_S(T)})$, $g\in \mathcal{SR}^L({\sigma_S(T)})$, then \begin{eqnarray}\label{eq:leftH}
					(fg)(T)=f(T)g(T).
				\end{eqnarray}
				\item If $f\in \mathcal{SR}^R({\sigma(T)})$, $g\in \mathcal{SR}_{\R}({\sigma(T)})$, then \begin{eqnarray}\label{eq:rightH}
					(fg)(T)=f(T)g(T).
				\end{eqnarray}
			\end{enumerate}
		\end{rem}
		\section{Concluding Remarks}
		\label{sec:conclusion}
		
		
		\subsection{Unified framework for   Riesz-Dunford theory over  division algebras}
		In this article, we establish the octonionic version of the Riesz-Dunford functional calculus theory for \textbf{bounded, para-linear, power-associative} operators acting on Banach \(\mathbb{O}\)-bimodules. {As such, it extends the Riesz-Dunford functional calculus theory in complex and quaternionic case \cite{Dunford1958,colombo2011noncomfunctcalculus}.
			We list below the main differences among  these Riesz-Dunford functional calculi:}
		\begin{enumerate}
			\item
			The operators considered in these three cases are \textbf{complex linear},\textbf{ right quaternionic linear} and \textbf{right octonionic para-linear} (Definition \ref{def:para_linear}), respectively.
			\item For right quaternionic linear operators, there is only one type of spectrum {associated with the functional calculus}, i.e., the S-spectrum \cite{colombo2011noncomfunctcalculus}; while for right octonionic  para-linear  operators, there are two types of spectra, i.e., the \textbf{pull-back spectrum} and the \textbf{push-forward spectrum} (Definitions  \ref{def:pullback spec} and \ref{def:pushforward sepc})
			
			\item	For complex linear operators, there exists one type of functional calculus. In contrast, for the quaternionic and the octonionic cases, there are two types of functional calculi {corresponding to \textbf{left (resp. right) slice regular} functions which, in view of the non-commutativity, form different classes of functions although somewhat equivalent.}
			
		\end{enumerate}

		{When studying operators over the algebras $\mathbb H$ and $\O$, we can consider these operators also on the real or complex subfields. Of course, this approach cannot be taken to deal with some specific questions, for example the study of subspaces associated with linear operators, and below we discuss some further issues.}
		
		\begin{enumerate}
			\item   A right \textbf{quaternionic linear} or \textbf{octonionic linear} operator $T$ can be regarded as a $\spc_J$-linear operator acting on the space $V_J$ (see Remark  \ref{rem:pull-back spec}). Here $V_J$, as a set, is identical to the original space, but it is endowed with a complex vector structure induced by $R_J$.
			Thus a quaternionic or octonionic linear operator can be thought of as a special complex linear operator. In contrast, a right \textbf{octonionic  para-linear} operator can  not  be regarded as a $\spc_J$-linear  or quaternionic linear  operator. The method developed in this article is the first theoretical framework to deal with the functional calculus of octonionic para-linear operators.
			
			\item {As explained above, an octonionic para-linear operator cannot be treated as a $\spc_J$-linear operator.} Consequently, the S-spectral functional calculus, which is presented in quaternionic form rather than in complex form, holds significant importance from a unified perspective of division algebras.
		\end{enumerate}
		
		The results established in this article further provide a unified framework for investigating the Riesz-Dunford functional calculus theory over the division algebras \(\mathbb{C}\), \(\mathbb{H}\), and \(\mathbb{O}\). 	We highlight the following key facts, which guarantee the theory we developed is consistent with these three Riesz-Dunford functional calculus theories:

		\begin{enumerate}
			\item The notion of \textbf{para-linearity}   can be defined over other division algebras and degenerates into classical linearity when the algebra under consideration is associative \cite{huoqinghai2022Riesz}.
			\item The requirement of \textbf{power-associativity}  (Definition \ref{def:powerass}) is automatically satisfied    when the algebra under consideration is associative.
			\item The \textbf{resolvent operators} $(R_s-T)^{\circledcirc -}$ and $(R_s-T)^{-\circledcirc }$ (Definition \ref{def:resolvent op}) share the same form over  $\spc,\mathbb H,\O$ (see equalities \eqref{eq:lroq} and \eqref{eq:rroh}). The \textbf{pull-back spectrum, push-forward spectrum} (Definitions  \ref{def:pullback spec} and \ref{def:pushforward sepc}) share the same form over  $\spc,\mathbb H,\O$ (see  Remarks \ref{rem:resop} and  \ref{rem:pull-back spec}).
			\item \textbf{Functional calculi formulas} \eqref{eq:f(T)J}  and \eqref{lifeq:f(T)J} reduce to the quaternionic formulas(Definition 4.10.4 in \cite{colombo2011noncomfunctcalculus}), as every operator $T$ in quaternionic case is \textbf{``\sinv''} (see Remark \ref{rem:sphereinv}) and  associators therein vanish automatically in the quaternionic case. {Similarly, formulas \eqref{eq:f(T)J}  and \eqref{lifeq:f(T)J} reduce to their complex counterparts.}
		\end{enumerate}

		Let $\sigma(T)$ denote the spectrum of a bounded complex linear operator $T$ defined on a complex Banach space, and let $\sigma_S(T)$ denote the S-spectrum of a bounded right quaternionic linear operator $T$ defined on a two-sided quaternionic Banach space, respectively.

		For comparison, we have compiled the definitions and properties of functional calculi over  the division algebras $\spc,\mathbb H$ (see \cite{colombo2011noncomfunctcalculus} and \cite{Dunford1958})  and $\O$ in the Tables \ref{tab:spec}, \ref{tab:formula}, and \ref{tab:prop} below.
		
		
		\begin{table}[htbp]
			\centering
			\caption{Spectra over  Division Algebras}
			\label{tab:spec}
			\renewcommand{\arraystretch}{4.7}  
			\setlength{\tabcolsep}{4pt}        
			\begin{tabular}{|c|>{\Centering\arraybackslash}m{2.78cm}|>{\Centering\arraybackslash}m{10.2cm}|}
				\hline
				{Algebra} & {Spectrum} & {Definition of Spectrum} \\
				\hline
				$\mathbb{C}$
				& spectrum $\sigma(T)$
				& $\displaystyle \sigma(T):=\spc\setminus\rho(T) \qquad\text{ where }\newline
				\rho(T) :=\{s\in \spc: s\mathcal I-T \text{ is bounded invertible}\}	$ \\
				\hline
				{$\mathbb{H}$}
				& S-spectrum $\sigma_S(T)$
				& $\displaystyle \sigma_S(T):={\mathbb{H}}\setminus\rho_S(T), \qquad\text{ where }\newline
				\rho_S(T) :=\{s\in \mathbb H: T^2 - 2\re [s] T + \abs{s}^2\mathcal I \text{ is bounded invertible}	\}$ \\
				\hline
				\multirow{3}{*}{$\mathbb{O}$}
				& pull-back spectrum $\sigma^*(T)$
				& $\displaystyle \sigma^*(T):=\O\setminus \bigcup_{J\in \mathbb S}\rho_J^*(T), \qquad\text{ where }\newline	\rho_J^*(T):=\{s\in \mathbb C_J: R_s-T \text{ is invertible in {$\mathscr{B}_{\mathbb{R}(V)}$} and } (R_s-T)^{-1} \text{ is $\mathbb C_J$-extendable power associative}    \}. 	$ \\		\cline{2-3}
				& push-forward spectrum $\sigma_*(T)$
				&  $\displaystyle \sigma_*(T):=\O\setminus \bigcup_{J\in \mathbb S}{\rho_*}_J(T), \qquad\text{ where }\newline	{\rho_*}_J(T):=\{s\in \mathbb C_J: R_s-T \text{ is invertible in {$\mathscr{B}_{\mathbb{R}(V)}$} and } (R_s-T)^{-1} \text{ is $\mathbb C_J$-liftable power associative}    \}. 	$ \\ \cline{2-3} &spectrum $\sigma(T)$ &$\displaystyle \sigma(T):=\sigma^*(T)\cup \sigma_*(T)$\\
				\hline
			\end{tabular}
		\end{table}

		\begin{table}[htbp]
			\centering
			\caption{Functional Calculi Formulas over  Division Algebras}
			\label{tab:formula}
			\renewcommand{\arraystretch}{3.4}  
			\setlength{\tabcolsep}{4pt}        
			\begin{tabular}{|c|>{\Centering\arraybackslash}m{2.78cm}|>{\Centering\arraybackslash}m{10.2cm}|}
				\hline
				{Algebra} & {Function} & {Definition of Functional Calculi $\Phi$, $\Psi$ } \\
				\hline
				$\mathbb{C}$
				& $f\in\mathcal{A}(\sigma(T))$
				& $\displaystyle  \Phi_T (f):=f(T)	=\dfrac{1}{2\pi \sqrt{-1}} \int_{\gamma} 	(s\mathcal I-T)^{ -1} f(s)ds 	$ \\
				\hline
				\multirow{2}{*}{$\mathbb{H}$}
				& $f\in \mathcal{SR}^L(\sigma_S(T))$
				& $\displaystyle   \Phi_T (f):=f(T)	=\dfrac{1}{2\pi} \int_{\partial (U\cap \spc_J)} 	(R_s-T)^{\circledcirc -} ({d}s_Jf(s)) 	$ \\
				\cline{2-3}
				& $f\in \mathcal{SR}^R(\sigma_S(T))$
				& $\displaystyle  \Psi_T (f):=	f(T)	=\dfrac{1}{2\pi} \int_{\partial (U\cap \spc_J)}( f(s){d}s_J)	(R_s-T)^{-\circledcirc } $ \\
				\hline
				\multirow{2}{*}{$\mathbb{O}$}
				& $\displaystyle f=\sum_{i=0}^7f_{(i)}\bullet^L e_i\in\mathcal{SR}^L(\sigma^*(T))$
				& $\displaystyle (\Phi_T (f))(J):=f^*(T)_J	=\dfrac{1}{2\pi} \int_{\partial (U\cap \spc_J)} 	(R_s-T)^{\circledcirc -}\odot ({d}s_Jf(s)) + {	\dfrac{1}{2\pi}}\sum_{i=1}^{7} \int_{\partial (U\cap \spc_J)} \big[(R_s-T)^{\circledcirc -},{d}s_Jf_{(i)}(s),e_i\big]_{\mathscr{B}_{\mathcal{RO}}(V)}	$ \\		\cline{2-3}
				& $\displaystyle f=\sum_{i=0}^7e_i \bullet^R f_{(i)}\in\mathcal{SR}^R(\sigma_*(T))$
				& $\displaystyle (\Psi_T (f))(J):=	f_*(T)_J	=\dfrac{1}{2\pi} \int_{\partial (U\cap \spc_J)} (f(s){d}s_J)\odot 	(R_s-T)^{-\circledcirc } - 	\dfrac{1}{2\pi}\sum_{i=1}^{7} \int_{\partial (U\cap \spc_J)} \big[e_i,f_{(i)}(s){d}s_J,(R_s-T)^{-\circledcirc }\big]_{\mathscr{B}_{\mathcal{RO}}(V)}$ \\
				\hline
			\end{tabular}
		\end{table}

		In the Table 3, given an open subset $U\subset\spc$  we denote by $\mathcal{A}(U)$ the set of all  holomorphic functions on  $U$.
		
		\begin{table}[htbp]
			\centering
			\caption{Functional Calculi over  Division Algebras}
			\label{tab:prop}
			\renewcommand{\arraystretch}{1.6}  
			\begin{tabular}{|c|>{\Centering}m{1.93cm}|c|c|>{\Centering}m{2cm}|>{\Centering}m{3.3cm}|}
				\hline
				Algebra & Operator $T$ & Resolvent Operator & Spectrum & Function Class & Properties of Functional Calculi $\Phi$, $\Psi$ \\
				\hline
				$\mathbb{C}$
				& $\mathbb{C}$-linear
				& $(s\mathcal I - T)^{-1}$
				& $\sigma(T)$
				& $\mathcal{A}(\sigma(T))$
				& $\mathbb{C}$-linear algebraic homomorphism \\
				\hline
				\multirow{2}{*}{$\mathbb{H}$}
				& \multirow{2}{*}{{right $\mathbb{H}$-linear}}  
				& $(R_s - T)^{\circledcirc -}$
				& $\sigma_S(T)$
				& $\mathcal{SR}^L(\sigma_S(T))$
				& right $\mathbb{H}$-linear; satisfies \eqref{eq:leftH} \\
				\cline{3-6}
				& & $(R_s - T)^{-\circledcirc}$
				& $\sigma_S(T)$
				& $\mathcal{SR}^R(\sigma_S(T))$
				& left $\mathbb{H}$-linear; satisfies \eqref{eq:rightH} \\
				\hline
				\multirow{2}{*}{$\mathbb{O}$}
				& {\multirow{2}{2cm}{right $\mathbb{O}$-para-linear; power-associative}}  
				& $(R_s - T)^{\circledcirc -}$
				& $\sigma^*(T)$
				& $\mathcal{SR}^L(\sigma^*(T))$
				& right $\mathbb{O}$-para-linear; satisfies \eqref{eq:reslicef-*Tg*T} \\
				\cline{3-6}
				& & $(R_s - T)^{-\circledcirc}$
				& $\sigma_*(T)$
				& $\mathcal{SR}^R(\sigma_*(T))$
				& left $\mathbb{O}$-para-linear; satisfies \eqref{eq:ref-*Tsliceg*T} \\
				\hline
			\end{tabular}
		\end{table}
		\bigskip
		
		\subsection{Pull-back and Push-forward Methods in Octonionic Case}	
		From these  tables, we can observe that the two types of functional calculi defined over
		$\mathbb{H}$ have, formally,  identical. Furthermore, their respective proof procedures are also highly analogous \cite{colombo2011noncomfunctcalculus}. In contrast, while the two kinds of functional calculi over
		$\mathbb{O}$ follow a similar constructive framework, they differ fundamentally in their intrinsic properties. The study of the octonionic case uncovers two crucial bijections—$\lif$, $\ext$ (see Definition \ref{def:lif and ext})—which play a key role in the octonionic version of the Riesz-Dunford theory.	
		Roughly speaking, there are essentially two approaches:
		\begin{enumerate}
			\item \textbf{Pull-back}: first we restrict to the real part $\re V$ or $\spc_J$-slice $\spc_J(V)$, then \textbf{extend} to the whole space via the bijection $\ext$;
			
			\item \textbf{Push-forward}: first we confine to the results obtained after projecting onto the real part, then we \textbf{lift} to the entire target space via the bijection $\lif$.
		\end{enumerate}
		Moreover, these two methods are also applicable to the quaternionic case, see for example Remark \ref{rem:resop}.
		
		\subsection{Future research}
		We  provide some lines for future research.
		\begin{enumerate}
			\item {Remove or weaken the requirement of \textbf{power-associativity} on $T$.}
			
			The key point of the Riesz-Dunford theory is the definition of resolvent operator.	In view of Theorem \ref{thm:Rs-T}, the left resolvent operator can be defined as
			\begin{eqnarray}
				\ext (R_s-T)^{-1}(\mathcal I+\alpha(s,T))
			\end{eqnarray}
			and the right resolvent operator can be defined as
			\begin{eqnarray}
				\lif (\mathcal I+\beta(s,T))(R_s-T)^{-1}
			\end{eqnarray}
			respectively. Hence, we may assume suitable properties for
			$\alpha(s,T)$ and $\beta(s,T)$ 		to develop the Riesz-Dunford theory.
			For example, if $T$ is  nilpotent, then $\alpha(s,T)$ and $\beta(s,T)$ can be calculated directly and hence the resolvent operator can be given precisely.
			\item Generalize the slice regular functions in the Riesz-Dunford theory—originally defined on axially symmetric domains—to \textbf{non-axially symmetric} domains.
			
			This question makes sense since, in contrast with the S-spectrum in the quaternionic case, the octonionic pull-back spectrum or push-forward spectrum is not necessarily axially symmetric (see Example \ref{eg:non sphere}).
			Thus, there is no necessity to require slice regular functions to be defined on axially symmetric domains. However, for the sake of simplicity —and given the absence of a slice Cauchy integral formula for slice regular functions defined on non-axially symmetric domains— we only study functions defined on an axially symmetric domain that contains the spectrum.
			However, we may consider non-axially symmetric slice topology-domain \cite{Dou2023JEMS} in the definition of $T$-admissible sets (Definition \ref{def:T-admiss}).
			\item Conduct in-depth further study on the \textbf{non-{\sinv}} operator.
			This is a new phenomenon in the octonionic case. It may be relevant for studying the structure of  $\mathbb S^6$.
			\item Investigate the spectral mapping theorem and composition theorem.
			\item Generalize the results in this article to unbounded operators.
		\end{enumerate}

		\bigskip

		\bibliographystyle{plain}

		\bibliography{ref}
		
		\bigskip	\bigskip	\bigskip	\bigskip

	\end{document}